\documentclass[11pt]{amsart}
\usepackage[english]{babel}
\usepackage{amsthm,amsmath,graphicx}
\usepackage{amssymb,bbm,esint}
\numberwithin{equation}{section}
\usepackage{appendix}
\usepackage{geometry}
\newcommand{\hn}{\mathbb{H}^N}
\newcommand{\dist}[2][\xi,\eta]{|{#2}^{-1}\circ{#1}|_{\hn}}
\newcommand{\gagfrac}[3][u,x,y]{\frac{{#1}({#2},t)-{#1}({#3},t)}{\dist[{#2}]{#3}^{Q+2s}}}

\usepackage[dvipsnames]{xcolor}
\usepackage[colorlinks=true,allcolors=ForestGreen]{hyperref}
\usepackage{amsthm}
\DeclareMathOperator*{\esssup}{ess\,sup}
\DeclareMathOperator*{\essinf}{ess\,inf}

\usepackage[capitalize,nameinlink]{cleveref}
\crefname{section}{Section}{Sections}
\crefname{subsection}{Subsection}{Subsections}
\crefname{subsection}{subsection}{subsections}
\crefname{condition}{Condition}{Conditions}
\crefname{hypothesis}{Hypothesis}{Conditions}
\crefname{assumption}{Assumption}{Assumptions}
\crefname{lemma}{Lemma}{Lemmas}
\crefname{prop}{Proposition}{Proposition}
\crefname{claim}{Claim}{Claims}
\crefname{observation}{Observation}{Observations}
\crefname{example}{Example}{Examples}

\crefformat{equation}{\textup{#2(#1)#3}}
\crefrangeformat{equation}{\textup{#3(#1)#4--#5(#2)#6}}
\crefmultiformat{equation}{\textup{#2(#1)#3}}{ and \textup{#2(#1)#3}}
{, \textup{#2(#1)#3}}{, and \textup{#2(#1)#3}}
\crefrangemultiformat{equation}{\textup{#3(#1)#4--#5(#2)#6}}%
{ and \textup{#3(#1)#4--#5(#2)#6}}{, \textup{#3(#1)#4--#5(#2)#6}}%
{, and \textup{#3(#1)#4--#5(#2)#6}}

\Crefformat{equation}{#2Equation~\textup{(#1)}#3}
\Crefrangeformat{equation}{Equations~\textup{#3(#1)#4--#5(#2)#6}}
\Crefmultiformat{equation}{Equations~\textup{#2(#1)#3}}{ and \textup{#2(#1)#3}}
{, \textup{#2(#1)#3}}{, and \textup{#2(#1)#3}}
\Crefrangemultiformat{equation}{Equations~\textup{#3(#1)#4--#5(#2)#6}}%
{ and \textup{#3(#1)#4--#5(#2)#6}}{, \textup{#3(#1)#4--#5(#2)#6}}%
{, and \textup{#3(#1)#4--#5(#2)#6}}

\crefdefaultlabelformat{#2\textup{#1}#3}
\newtheorem{theorem}{Theorem}[section]
\newtheorem{lemma}[theorem]{Lemma}

\newtheorem{proposition}[theorem]{Proposition}

\newtheorem{remark}[theorem]{Remark}        % Use {\rm ...}

\numberwithin{equation}{section}

\def\Yint#1{\mathchoice
	{\YYint\displaystyle\textstyle{#1}}%
	{\YYint\textstyle\scriptstyle{#1}}%
	{\YYint\scriptstyle\scriptscriptstyle{#1}}%
	{\YYint\scriptscriptstyle\scriptscriptstyle{#1}}%
	\!\iint}
\def\YYint#1#2#3{{\setbox0=\hbox{$#1{#2#3}{\iint}$}
		\vcenter{\hbox{$#2#3$}}\kern-.51\wd0}}
\def\longdash{{-}\mkern-3.5mu{-}} 
\def\fiint{\Yint\longdash}

\title[Harnack inequality for Mixed Local-Nonlocal Parabolic equations in Heisenberg Group]{{Mixed Local-Nonlocal Parabolic Equations in Heisenberg Group : Harnack Inequality with an Optimal Tail}}
\begin{document}
\author[ Kar ]{Debraj Kar}
\address{Department of Mathematics,  University of Kalyani }
\email{ kardebrajofficial@gmail.com, debrajmath22@klyuniv.ac.in}
\begin{abstract}
    We prove Harnack inequality with an optimal tail for linear mixed local-nonlocal parabolic equation on the Heisenberg group. For this purpose, we make use of the expansion of positivity method and a clustering lemma, thereby bypassing Moser iteration and logarithmic estimates. \\[2mm]
    {\em Keywords:} Mixed local nonlocal, Harnack inequality, Heisenberg group, Parabolic equation.\\[2mm]
    {\em 2020 Mathematics Subject Classification: } 35D30, 35B65, 35M30, 35R03, 35R11. 
\end{abstract}
\maketitle

\setcounter{secnumdepth}{1}
\setcounter{tocdepth}{1}
\tableofcontents
\section{Introduction}
In this paper, we consider the regularity theory, specifically the Harnack estimate, for the following mixed local-nonlocal parabolic equation on the Heisenberg group
\begin{align}\label{eq1.1}
    \partial_tu-\Delta_{\hn}u+\mbox{ p.v. } \int_{\hn}\gagfrac[u]{\xi}{\eta}d\eta dt=0,
\end{align}
on $\Omega_T:=\Omega\times(0,T]$ where $\Omega$ is a bounded open subset in the Heisenberg Group $\mathbb{H}^N$, $T>0$ and
p.v. denotes the Cauchy principal value. {%\textcolor{red}{see \cite{Nak23_3,CN25_2,Lia24_4,MPPP23_1,SZ23_5}. }}
\subsection{History}
\subsubsection{Context} The parabolic mixed local nonlocal equation arises in the stochastic process that involves superposition of classical random walk and a Le\'vy flight. This problem has gained attention due to its extensive applications in phenomena, such as, in plasma physics \cite{BdCN13_29} and in biology \cite{DLV22_30}.

Harnack inequality is a cornerstone in the field of regularity theory. In 1964, Moser\cite{Mos64_32} proved the Harnack inequality for the linear parabolic equation and H\"older continuity, as its consequence. Later, this result was extended to the quasilinear counterpart by Trudinger \cite{Tru68_33}. For the $p$-Laplace parabolic problem, DiBenedetto \cite{DB88_34} proved intrinsic Harnack inequality for non-negative solutions. Still later, DiBenedetto, Gianazza, and Vespri \cite{DBGV08_36} gave a new approach using the {\em expansion of positivity} and an exponential change of variable to establish Harnack Inequality. In the setting of measure metric spaces, Kinnunen and co-authors \cite{KMMJP21_24} have shown Harnack inequality for Parabolic De Giorgi-class of functions.

Next, we provide a brief overview of the nonlocal case. For the elliptic nonlocal p-Laplace type equations, one can see the work of Di Castro, Kuusi, and Palatucci \cite{DCKP14_35}. In parabolic contexts, we start with the paper of Kassmann and Schwab \cite{KS13_39} where authors prove the Harnack result for $p=2$. This is followed by a series of papers such as those of Felsinger, Kassmann \cite{FK13_42}, Prasad \cite{Pra23_43}, Nakamura \cite{Nak23_3}, Ciani and Nakamura \cite{CN25_2}, Adimuthi, Prasad, and Tewary \cite{APT25_38}, Kar\cite{Kar26_9} where various regularity estimates for nonlocal parabolic equations are established. The regularity theory for the nonlocal heat equation culminates in the paper of Kassmann and Weidner \cite{KW24_40} where Harnack inequality with an optimal tail is proved.

Due to their interesting structure and applications, mixed local and non-local type of problems have attained importance and purely analytic methods have been developed as opposed to their probabilistic origin. In their paper \cite{GK22_44}, Garain and Kinnunen studied the Harnack inequality using {\em tail} estimate and expansion of positivity. Foondum \cite{Foo09_45} proves the Harnack inequality using probabilistic method for the mixed local nonlocal elliptic equations. Moreover, for boundary Harnack result, see \cite{CKSV12_46}. Also, of note is the paper of Nakamura \cite{Nak23_48} fox mixed local-nonlocal doubly nonlinear equations. Also one can look into papers of Shang and Zhang \cite{SZ23_5} for the Harnack result when $p>2$ and Bhowmick et al \cite{BGKL26_52} for the Harnack estimate of nonlinear mixed fractional superposition operator with nonlocal superposition tail.

\subsubsection{Novelty} To the best of our knowledge, this is the first ever result where the Harnack inequality for the mixed local nonlocal parabolic equation on the Heisenberg Group  has been studied taking into account an optimal tail. This type of Harnack result with optimal tail condition is inspired from the result of Kassmann and Weidner \cite{KW24_40} where the nonlocal heat equation  was considered in a Euclidean setup. Unlike their paper, we rely on the De Giorgi method and expansion of positivity to deduce our result. Also, as compared to the paper of Ciani and Nakamura \cite{CN25_2}, where authors prove Harnack type result for nonlocal parabolic De Giorgi class of functions under $L^{p-1+\varepsilon}$ -  tail condition, where $\varepsilon>0$ and $p>1$, our result is optimal as far as tail is concerned. Additionally, Nakamura \cite{Nak23_3} proves their results with additional restriction over $s$ whereas our result is free from such restriction. It becomes possible due to the structure of the equation (\ref{eq1.1}). Moreover, one can find a clustering lemma in the Heisenberg group in this paper (\cref{L6.1}).
\subsection{Notations and Definition}
\subsubsection{Notations} We are going to define some notations which we will use in sequel. 
\begin{itemize}
    \item Take $(x,t)$ to be a point in $\hn\times (0,T]$.
    \item $B_r(\xi_0):=\{\eta\in\hn:\dist[\xi_0]{\eta}<r\}$ represents a ball with center at $\xi_0$ and radius $r$. For simplicity, sometimes we may use $B_r$ instead of $B_r(\xi_0)$. We use $B$ to denote the generic ball. 
    \item Let $z_0:=(\xi_0,t_0)\in \hn\times (0,T]$. Let us define {\em forward} and {\em backward} parabolic cylinder as follows
    \begin{align*}
        \begin{cases}
            & z_0+\mathcal{Q}^\oplus_r(\theta):=B_r(\xi_0)\times (t_0,t_0+\theta r^{2}],\\
            & z_0+\mathcal{Q}^\ominus_r(\theta):=B_r(\xi_0)\times (t_0-\theta r^{2},t_0].
        \end{cases}
    \end{align*}
    Moreover, for simplicity, we denote $z_0+\mathcal{Q}^\oplus_r:=z_0+\mathcal{Q}^\oplus_r(1)$ and $z_0+\mathcal{Q}^\ominus_r:=z_0+\mathcal{Q}^\ominus_r(1)$.
    \item We shall frequently use $f_t$, $\partial_tf$, $\frac{\partial f}{\partial  t}$ to indicate the time derivative of $f$.
    \item For any set $A$, $A^c$ denotes the {\em complement} of the set $A$. 
    \item Throughout this paper, we use \texttt{data} to represent the constant terms $s,Q$.
    \item We denote 
    \begin{align*}
        a_+:=\max\{a,0\},\,a_-:=-\min\{a,0\}\,\,\mbox{ for }a\in\mathbb{R}.
    \end{align*}
\end{itemize}
\subsubsection{Definitions}
Here we want to provide some definitions. Let $\Omega$ be an open subset of $\hn$ and $I$ be a subset of $\mathbb{R}$. 
\begin{itemize}
    \item The {\em Sobolev space} on the Heisenberg group, denoted by $W^{1,p}(\Omega)$, is defined as follows
    \begin{align*}
        W^{1,p}(\Omega):=\Bigg\{v\in L^p(\Omega): \int_{\Omega}|\nabla_{\hn}u|^p\,dy<\infty\Bigg\}.
    \end{align*}
    \item The {\em fractional Sobolev space} on the Heisenberg group, denoted by $W^{s,p}(\Omega)$, is defined by 
    \begin{align*}
        W^{s,p}(\Omega):=\Bigg\{v\in L^p(\Omega): \iint_{\Omega\times\Omega}\frac{|v(x)-v(y)|^p}{\dist[x]{y}^{Q+sp}}\,dx\,dy<\infty\Bigg\}.
    \end{align*}
    We denote $W^{s,p}_0(\Omega)$ as closure of $C_0^\infty(\Omega)$ in $W^{s,p}(\Omega)$.
    \item We introduce the parabolic solution space $$V^{p,q}(B(\xi_0,R)\times I):=L^p(I;W^{1,p}(B(\xi_0,R))\cap L^\infty(I;L^q(B(\xi_0,R)),$$ for $q>1$, $R>1$, $\xi_0\in\hn$ and we define 
	\begin{align*}
		\|f\|_{V^{p,q}(B_R(\xi_0)\times I)}:=\left[\int_I[f(\cdot,t)]^p_{W^{1,p}(B_R(\xi_0))}\,dt+\sup_{t\in I}\|f(\cdot,t)\|_{L^q(B_R(\xi_0))}^q\right]^{\frac1p} .
	\end{align*}
    \item The {\em parabolic tail space}, denoted by $L^m(I;L^m_\gamma(\Omega))$, on the Heisenberg group is defined by 
    $$L^m(I;L^m_\gamma(\Omega)):=\Bigg\{v\in L^m(I;L^m_\textup{loc}(\Omega)):\iint_{I\times\hn}\frac{|v(x,t)|^m}{1+|x|_{\hn}^{Q+\gamma}}\,dx\,dt<\infty\Bigg\},$$ for some $m,\gamma>0$.
    \item {\em Nonlocal parabolic tail : } For $z_0=(x_0,t_0)\in \hn\times I$ and $r,\tau>0$ with $(t_0-\tau,t_0)\subset I$, the nonlocal parabolic tail is defined by 
    $$\textup{ Tail }(v;z_0,r,\tau):=\Bigg(\fint_{t_0-\tau}^{t_0}r^2\int_{\hn\setminus B_r(x_0)}\frac{|v(y,t)|}{\dist[x_0]{y}^{Q+2s}}\,dy\,dt\Bigg).$$
    Note that if $v\in L^1(I;L^1_{2s}(\hn))$ then it holds that $\textup{Tail }(v;z_0,r,\tau)<\infty$. Throughout the paper, we mainly use the following time dependent tail on a parabolic cylinder as follows,$$\textup{Tail}(v(t);r,x_0):=r^2\int_{\hn\setminus B_r(x_0)}\frac{|v(y,t)|}{\dist[x_0]{y}^{Q+2s}}\,dy\,dt.$$
    \item {\em Weak Solution:} A function $u\in \hn\times (0,T]\xrightarrow{} \mathbb{R}$ satisfying
     \begin{align*}
         u\in L^1_{\textup{loc}}(0,T;L^1_{2s}(\hn))\cap L^2_{\textup{loc}}(0,T;W^{1,2}_{\textup{loc}}(\hn))\cap C_{\textup{loc}}([0,T];L^2_{\textup{loc}}(\hn))
     \end{align*}
     is called the weak sub(super) solution of (\ref{eq1.1}) if for every compact set $\mathcal{K}\subset \Omega$, sub interval $[T_1,T_2]\subset (0,T]$ and for every test function $$ \varphi \in W^{1,2}_{\textup{loc}}(0,T;L^2(\mathcal{K}))\cap L^2_{\textup{loc}}(0,T;W^{1,2}_0(\mathcal{K})),$$ following holds
     \begin{align}\tag{WeakForm}\label{wf}
         &\int_\mathcal{K}u\,\varphi\,d\eta\Biggr\vert_{T_1}^{T_2}-\int_{T_1}^{T_2}\int_\mathcal{K} u\partial_t\varphi\,d\eta\,dt+\int_{T_1}^{T_2}\int_\mathcal{K}\nabla_{\hn}u\nabla_{\hn}\varphi\,d\xi\,dt\nonumber\\
			&\quad+\int_{T_1}^{T_2}\iint_{\hn\times\hn}(u(\xi,t)-u(\eta,t))(\varphi(\xi,t)-\varphi(\eta,t))\, K(\xi,\eta,t)d\xi\,d\eta\,dt\leq (\geq)\,0. \nonumber
     \end{align}
\end{itemize}
\subsection{Main Result}
Our first result is local boundedness result proved via De Giorgi method. This is the refined version of its counterpart in \cite{KT25_6}.
\begin{theorem}\label{T1.1}(Local Boundedness Result)
    Suppose $z_0=(\xi_0,t_0)\in\Omega_T$ and $R>0$ . Let $u\in L^1_{\textup{loc}}(0,T;L^1_{2s}(\hn))\cap L^2_{\textup{loc}}(0,T;W^{1,2}_{\textup{loc}}(\hn))\cap C_{\textup{loc}}([0,T];L^2_{\textup{loc}}(\hn))$ be a weak subsolution of the problem (\ref{eq1.1}). Then there exist some constant $C=C(\textup{\texttt{data}}),\, C_1=C_1(Q)$ and $\mathfrak{C}=\mathfrak{C}(\textup{\texttt{data}},\sigma)$ for some $\sigma\in (0,1)$ such that 
    \begin{align*}
         \sup_{z_0+\mathcal{Q}_{\sigma R}^\ominus}u(\xi,t)&\leq \frac{C}{(1-\sigma)^{C_1}}\left[\left(\fiint_{z_0+\mathcal{Q}_{R}^\ominus}u_+^2\,d\xi  dt\right)^\frac{1}{2}+\left(\fint_{t_0-R^2}^{t_0}\left(\fint_{B_R(\xi_0)}u_+(\eta,t)\,d\eta\right)^2\,dt\right)^\frac{1}{2}+1\right]\\
    &\quad\quad\quad\quad +\mathfrak{C}R^{-2}\int_{t_0-R^2}^{t_0}\textup{Tail}(u_+(t);R,\xi_0)dt.
    \end{align*}
\end{theorem}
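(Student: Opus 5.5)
We argue by a De Giorgi iteration on shrinking backward cylinders. The only nonstandard ingredient is that the tail must enter linearly, through its time integral, rather than through a supremum in time.

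\textbf{Setup.} For $j\ge 0$ set
\[
\rho_j=\sigma R+2^{-j}(1-\sigma)R,\qquad \tilde\rho_j=\tfrac12(\rho_j+\rho_{j+1}),\qquad \mathcal{Q}_j=z_0+\mathcal{Q}^\ominus_{\rho_j},
\]
and levels $k_j=k(1-2^{-j})$ with $k>0$ to be chosen. Let $w_j=(u-k_j)_+$. Take cutoffs $\psi_j$ with $\psi_j\equiv1$ on $\mathcal{Q}_{j+1}$, supported in $z_0+\mathcal{Q}^\ominus_{\tilde\rho_j}$, and
\[
|\nabla_{\hn}\psi_j|\lesssim 2^j/((1-\sigma)R),\qquad |\partial_t\psi_j|\lesssim 4^j/((1-\sigma)R)^2.
\]
Horizontal cutoffs exist in $\hn$ because of the smooth homogeneous gauge.

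\textbf{Step 1: Caccioppoli inequality.} Test (\ref{wf}) with $\varphi=w_{j+1}\psi_j^2$; the time regularity is handled by Steklov averages. This gives
\begin{align*}
&\sup_t\int w_{j+1}^2\psi_j^2+\iint|\nabla_{\hn}(w_{j+1}\psi_j)|^2 \lesssim \iint w_{j+1}^2\big(|\nabla_{\hn}\psi_j|^2+|\partial_t\psi_j|\big)\\
&\quad+\iint\!\!\int_{B_{\rho_j}}\!\!\frac{w_{j+1}^2|\psi_j(\xi)-\psi_j(\eta)|^2}{\dist{\xi}^{Q+2s}}
+\int\!\!\Big(\sup_{\xi\in B_{\tilde\rho_j}}\int_{\hn\setminus B_{\rho_j}}\!\frac{w_{j+1}(\eta,t)}{\dist{\xi}^{Q+2s}}d\eta\Big)\int w_{j+1}\psi_j^2\,d\xi\,dt .
\end{align*}
The local nonlocal term is bounded by $\rho^{-2s}$, hence by $R^{-2}$ up to constants, since $2s<2$ and $R\lesssim 1$ can be absorbed in the ``$+1$''. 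This is where we exploit the structure of (\ref{eq1.1}): the local gradient dominates, so no restriction on $s$ is needed.

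For the tail, compare kernels: $\dist[\xi_0]{\eta}\lesssim (2^j/(1-\sigma))\dist{\xi}$ for $\xi\in B_{\tilde\rho_j}$ and $\eta\notin B_{\rho_j}$. This produces the factor $2^{j(Q+2s)}(1-\sigma)^{-(Q+2s)}$ times
\[
\int_{\hn\setminus B_{\sigma R}}\frac{u_+}{|\xi_0^{-1}\eta|^{Q+2s}}d\eta,
\]
which is bounded by $\mathfrak{C}(\sigma)R^{-2}\,\mathrm{Tail}(u_+(t);R,\xi_0)$ plus a term involving $\fint_{B_R}u_+(\cdot,t)$ coming from the annulus $B_R\setminus B_{\sigma R}$. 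That term is exactly why the second average appears in the statement.

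\textbf{Step 2: handling the time-dependent tail (main obstacle).} The tail term has the form $\int T(t)\big(\int w_{j+1}\,d\xi\big)dt$ with $T(t)=R^{-2}\mathrm{Tail}(u_+(t))$, and we only know $T\in L^1$ in time. We cannot bound it by $\sup_t T$. Instead use
\[
\int w_{j+1}\,d\xi\le \frac{1}{k_{j+1}-k_j}\int w_j^2\,d\xi\le \frac{2^{j+1}}{k}\sup_t\int_{B_{\rho_j}} w_j^2 ,
\]
so the term is at most $(2^{j}/k)\,\|T\|_{L^1}\,\sup_t\|w_j\|_2^2$. Choosing
\[
k\ge \mathfrak{C}\int_{t_0-R^2}^{t_0} T\,dt
\]
turns the prefactor into a pure $2^{j}$ factor times the energy $Y$-quantity of the previous step. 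Since the sup-in-time energy is part of the iterated quantity, the recursion still closes. I expect making this step rigorous to be the delicate point: the quantity iterated must be the full energy
\[
Y_j=\sup_t\fint_{B_{\rho_j}}w_j^2+\fiint_{\mathcal{Q}_j}|\nabla_{\hn}w_j|^2 ,
\]
not just the $L^2$ average, and the linear-in-$w$ tail term must be converted to a quadratic one via the level gap. The same level-gap trick, with Cauchy--Schwarz in $t$, handles the annulus averages and yields the $\big(\fint(\fint u_+)^2dt\big)^{1/2}$ term.

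\textbf{Step 3: iteration.} Apply the parabolic Sobolev embedding on $\hn$, with homogeneous dimension $Q$ and Folland--Stein exponent $2Q/(Q-2)$. Combined with Chebyshev, $|\{w_{j+1}>0\}\cap\mathcal{Q}_{j+1}|\le (2^{2j}/k^2)\iint w_j^2$, this gives
\[
Y_{j+1}\le \frac{C\,b^j}{(1-\sigma)^{C_1}}\,k^{-4/(Q+2)}\,Y_j^{1+2/(Q+2)} .
\]
The fast geometric convergence lemma gives $Y_j\to0$, i.e.\ $u\le k$ on $z_0+\mathcal{Q}^\ominus_{\sigma R}$, once
\[
k= \frac{C}{(1-\sigma)^{C_1}}\Big[\big(\fiint u_+^2\big)^{1/2}+\big(\fint(\fint u_+)^2\big)^{1/2}+1\Big]+\mathfrak{C}R^{-2}\int \mathrm{Tail}(u_+(t))\,dt .
\]
Here $C_1$ depends only on $Q$, through the powers of $(1-\sigma)$ in the cutoff gradients. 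The tail's own $\sigma$-dependence is kept in $\mathfrak{C}$ as stated.
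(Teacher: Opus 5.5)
There is a genuine gap in Step~2, at exactly the point you flagged. Your level-gap bound turns the far-tail contribution to the energy of $w_{j+1}$ into
\[
\frac{C(\sigma)\,2^{j(Q+2s+1)}}{k}\Big(\int_{t_0-R^2}^{t_0}T(t)\,dt\Big)\sup_t\int_{B_{\rho_j}}w_j^2\,d\xi\;\lesssim\;\frac{C(\sigma)\,b^j}{\mathfrak{C}}\,Y_j .
\]
This is \emph{linear} in $Y_j$, with a coefficient that grows in $j$. The fast geometric convergence lemma needs $Y_{j+1}\le Cb^jY_j^{1+\epsilon}$. A term $Cb^j\mathfrak{C}^{-1}Y_j$ cannot be absorbed for any fixed $\mathfrak{C}$. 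No choice of levels helps either: the gaps $k_{j+1}-k_j$ must be summable, so $(k_{j+1}-k_j)^{-1}\to\infty$. Your Step~3 recursion simply omits this term.

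A sharper H\"older argument cannot rescue it. Since $T$ is only integrable in time, it must be paired with $\sup_t\int w_{j+1}(\cdot,t)\,d\xi$. On a single time slice you control $w$ only in $L^2_x$; the integrability gain from the parabolic Sobolev inequality appears only after integrating in time. The best available bound is
\[
\int w_{j+1}(\cdot,t)\,d\xi\le\|w_{j+1}(t)\|_{L^2}\,|\{u(\cdot,t)>k_{j+1}\}|^{1/2}\le (k_{j+1}-k_j)^{-1}\|w_j(t)\|_{L^2}^2 ,
\]
which is again of degree two, so there is no superlinear power. The annulus term is different: $\fint_{B_R}u_+$ lies in $L^2$ in time, so it can be paired with an $L^2_t$ norm, and the mixed-norm embedding then gives a genuine gain. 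That is what the paper's auxiliary quantity $\mathcal{Z}_j$ and the two-sequence Lemma~\ref{L2.4} encode. Done literally as in your Step~2, that term would also come out only linear.

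The missing idea is to absorb the far tail into the \emph{level} rather than into the energy. The paper iterates on $u-g(t)$ with
\[
g(t)=\mathfrak{C}R^{-2}\int_{t_0-R^2}^{t}\mathrm{Tail}(u_+(\tau);R,\xi_0)\,d\tau .
\]
Testing with $(u-g-k)_+\psi^2\nu^2$ produces the extra term $-g'(t)\iint (u-g-k)_+\psi^2\nu^2$ (Lemma~\ref{L3.1}). For $\mathfrak{C}=\mathfrak{C}(\texttt{data},\sigma)$ large, this cancels the far-tail term pointwise in $t$. Only the annulus term survives (Lemma~\ref{L3.3}), and it is treated in $L^2_t$ (Lemma~\ref{L4.1}). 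The iteration then yields $u-g\le k$ on $z_0+\mathcal{Q}^\ominus_{\sigma R}$. Since $g$ is nondecreasing, $g(t)\le g(t_0)=\mathfrak{C}R^{-2}\int_{t_0-R^2}^{t_0}\mathrm{Tail}(u_+(t);R,\xi_0)\,dt$ there. This is how the tail enters the estimate additively and only through its time integral.
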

Next, we draw attention to various forms of Harnack inequalities proved in this paper. Starting with Weak Harnack Inequality I, we prove this result with $L^1$ {\em tail} and time gap of order $\rho^2$, for some $\rho>0$:
\begin{theorem}\label{T1.2}(Weak Harnack Inequality I)
    Let $(\xi_0,t_0)\in\Omega_T$, $0<T_1<T_2$ and $R>0$. Let $u$ be local weak supersolution of the problem (\ref{eq1.1}) such that $u\geq 0$ in $\mathcal{Q}:=B_R(\xi_0)\times (T_1,T_2]\Subset\Omega_T$. Then there exist $\gamma>1$ and $q,\delta_0\in (0,1)$, all depending on \textup{\texttt{data}}, such that
    \begin{align*}
        \left(\fint_{B_\rho(\xi_0)}u(\xi,t_0)^q\,d\xi\right)^\frac{1}{q}\leq \gamma\left[\inf_{B_\rho(\xi_0)\times (t_0+\frac{1}{2}\delta_0\rho^2,t_0+\delta_0\rho^2]}u+R^{-2}\int_{T_1}^{T_2}\textup{Tail}[u_-(t);R,\xi_0]\, dt\right],
    \end{align*}
    for some $0<\rho<\frac{R}{2}$. 
\end{theorem}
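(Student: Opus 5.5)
The plan follows the DiBenedetto–Gianazza–Vespri strategy, adapted to the mixed local–nonlocal setting in the style of Liao and of Nakamura. The proof rests on an expansion of positivity with an $L^1$-tail, combined with a clustering argument.

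\textbf{Step 1: expansion of positivity with $L^1$-tail.} We first establish a De Giorgi-type lemma. Suppose $u\ge 0$ in $\mathcal{Q}$ is a supersolution and
\begin{align*}
|\{u(\cdot,t_0)\ge k\}\cap B_\rho(\xi_0)|\ge \alpha|B_\rho(\xi_0)|.
\end{align*}
Then there exist $\eta,\delta_0\in(0,1)$, depending on \texttt{data} and $\alpha$, such that
\begin{align*}
u\ge \eta k \quad\text{in } B_{2\rho}(\xi_0)\times(t_0+\tfrac12\delta_0\rho^2,t_0+\delta_0\rho^2],
\end{align*}
provided $R^{-2}\int_{T_1}^{T_2}\mathrm{Tail}[u_-(t);R,\xi_0]\,dt\le \eta k$. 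This is proved in three stages.
\begin{itemize}
\item Energy estimates for $(u-k)_-$ with cutoffs in the Heisenberg balls give propagation of the measure information in time.
\item A shrinking lemma uses the De Giorgi isoperimetric inequality on $\hn$, i.e.\ a Poincaré-type inequality for the horizontal gradient.
\item A De Giorgi iteration for $(u-k)_-$ turns the smallness of measure into a pointwise lower bound.
\end{itemize}
Throughout, the nonlocal term is harmless. Since $u\ge0$ in $B_R$, the part of the kernel integral coming from $\hn\setminus B_R$ contributes at most $k$ times the tail of $u_-$. Integrating this in time produces the time-integrated $L^1$ tail. This is exactly why the tail appears as $\int\mathrm{Tail}\,dt$ rather than as a supremum in time. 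Because the equation has the local term $-\Delta_{\hn}$, the parabolic scaling is $\rho^2$ and no restriction on $s$ is needed: the nonlocal energy is dominated by $\rho^{-2s}\le C\rho^{-2}$ for $\rho\le1$, or is handled by rescaling.

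\textbf{Step 2: measure-to-pointwise via clustering.} Set
\begin{align*}
\mathcal{T}:=R^{-2}\int_{T_1}^{T_2}\mathrm{Tail}[u_-(t);R,\xi_0]\,dt .
\end{align*}
For every level $k>0$ we aim to prove
\begin{align*}
|\{u(\cdot,t_0)>k\}\cap B_\rho|\le \frac{C}{k^{q_1}}\,\bigl(I+\mathcal{T}\bigr)^{q_1}|B_\rho|,
\qquad I:=\inf_{B_\rho\times(t_0+\frac12\delta_0\rho^2,\,t_0+\delta_0\rho^2]}u .
\end{align*}
The difficulty is that the measure density $\alpha=|\{u(\cdot,t_0)>k\}\cap B_\rho|/|B_\rho|$ may be small. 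We therefore use the clustering lemma \cref{L6.1} on $\hn$, applied to $u(\cdot,t_0)$ truncated at level $k$. The truncation lies in $W^{1,1}$, with its gradient controlled through the energy estimate. The lemma yields a small ball $B_{\varepsilon\rho}(\bar\xi)\subset B_\rho$, with $\varepsilon$ a power of $\alpha$, on which the set $\{u>k\}$ occupies a fixed fraction. We then apply Step 1 from this small ball, iterating $\log(1/\varepsilon)$ times through dyadic enlargements with time steps of order $(2^j\varepsilon\rho)^2$. This is where the time windows must be matched, and it is done with the usual exponential change of time variable or by a direct chaining argument. The outcome is $I\ge \eta^{C\log(1/\alpha)}k-\mathcal{T}$, that is, $I+\mathcal{T}\ge \alpha^{C}k$, which inverts to the desired distribution estimate.

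\textbf{Step 3: conclusion.} Choose $q<1/C$ and integrate the layer-cake formula
\begin{align*}
\fint_{B_\rho} u^q\,d\xi = q\int_0^\infty k^{q-1}\,\frac{|\{u>k\}\cap B_\rho|}{|B_\rho|}\,dk .
\end{align*}
Split the $k$-integral at $k=I+\mathcal{T}$. For small $k$ we bound the density by $1$; for large $k$ we use the power decay from Step 2. This gives $\bigl(\fint_{B_\rho}u(\cdot,t_0)^q\bigr)^{1/q}\le\gamma(I+\mathcal{T})$.

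\textbf{Main obstacle.} The hardest part is Step 2: making the clustering lemma work on $\hn$, and then chaining expansion of positivity from a tiny ball up to $B_\rho$ so that all the time windows land inside the fixed window $(t_0+\frac12\delta_0\rho^2,\,t_0+\delta_0\rho^2]$, while keeping the tail term uniform at every scale. My first attempt would use the exponential time-change trick of \cite{DBGV08_36}. This should let the scale-dependent waiting times be absorbed, since under the local scaling $\rho^2$ all the intermediate waiting times sum to at most a constant times $\rho^2$.
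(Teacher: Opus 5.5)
The overall architecture is the paper's: expansion of positivity via Lemmas~\ref{L5.3}--\ref{L5.4}, clustering on $\hn$, dyadic chaining from a small ball, and the layer-cake formula with $q<1/d$. However, Step 2 fails as written. You apply the clustering lemma to $u(\cdot,t_0)$ and say its gradient is controlled by the energy estimate. It is not: the energy estimates control $\nabla_{\hn}(u-k)_-$ only in $L^2$ over space-time, and $u(\cdot,t_0)$ need not even lie in $W^{1,2}$ at a prescribed time. So hypothesis (C-II) of Lemma~\ref{L6.1} is unavailable at $t_0$.

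The paper proceeds in four moves:
\begin{itemize}
\item It uses Lemma~\ref{L5.3} to propagate the density $\alpha/2$ of $\{u\ge\varepsilon M\}$ to every $t\in(t_0,t_0+\delta\rho^2]$.
\item It bounds $\iint|\nabla_{\hn}(u-M)_-|^2$ by $\gamma M^2|B_{2\rho}|$ on $B_{2\rho}\times(t_0+\frac12\delta\rho^2,t_0+\delta\rho^2]$.
\item It selects, by averaging, a slice $t_1$ at which both the density and a slice-wise gradient bound hold. Clustering and chaining then start from $(\xi_*,t_1)$.
\item Since $t_1$ depends on the solution, and the chained window depends on $\alpha$ through $\delta_*\approx\alpha^{Q+3}$, it invokes Proposition~\ref{P5.5}: positivity on a full ball at one time persists on an interval of length $K\rho^2$, with constants depending only on \texttt{data} and $K$.
\end{itemize}
That long-time propagation is what lets all windows land in a fixed one, and so it resolves your ``main obstacle''. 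The DiBenedetto--Gianazza--Vespri exponential time change is not the relevant tool here; it is a device for intrinsic scaling in degenerate equations.

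Second, ``integrating in time'' does not make the tail harmless in the De Giorgi iteration. It does suffice for measure propagation and for the shrinking step, where you only need $k|B_\rho|\mathcal T\le\epsilon k^2|B_\rho|$. In the iteration, however, the far contribution at step $n$ is $k\int|A_n(t)|\,\tau(t)\,dt$, with $\tau(t)=\int_{\hn\setminus B_R}u_-(\eta,t)|\eta|_{\hn}^{-Q-2s}\,d\eta$ merely in $L^1$ in time. This cannot be bounded by $Cb^nk^2\rho^{-2}|A_n|$, where $|A_n|$ is the space-time measure, because the tail may concentrate at times where $|A_n(t)|\approx|B_n|$. An additive error of size $\epsilon k^2|B_n|$ then destroys the superlinear recursion, since for $Y_n\ll\epsilon$ it becomes sublinear in $Y_n$.

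The missing idea is the time-dependent truncation level used throughout the paper (Lemma~\ref{L3.1} and Lemmas~\ref{L5.1}, \ref{L5.4}, \ref{L6.5}). For supersolutions, truncate at $k_n-g(t)$ with $g'(t)=\mathfrak C\,\tau(t)$. The time derivative then produces $\mathfrak C\iint g'(t)\,w_-\psi^2\nu^2$ on the good side, which absorbs the far-tail term pointwise in $t$. At the end you only need $g(T_2)\le\mathfrak C\mathcal T$ to be a small fraction of the level, and this is exactly what makes an $L^1$-in-time tail suffice.
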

The following result improves the previous result in Theorem \ref{T1.2}. Theorem \ref{T1.2} can be deduced from Theorem \ref{T1.3}: 
\begin{theorem}\label{T1.3}(Weak Harnack Inequality II)
    Let $(\xi_0,t_0)\in\Omega_T$, $0<T_1<T_2$ and $R>0$. Let $u$ be local weak solution of the problem (\ref{eq1.1}) such that $u\geq 0$ in $\mathcal{Q}$. Suppose $B_{2\rho}(\xi_0)\times (-(2\rho)^2,2(4\rho)^2]\subseteq \mathcal{Q}\Subset \Omega_T$, for some $\rho>0$. Then there exists $\varkappa\in (0,1)$, depending on \textup{\texttt{data}}, such that 
    \begin{align*}
      \inf_{B_\rho(\xi_0)}u(t) +R^{-2}\int_{T_1}^{T_2}\textup{Tail}[u_-(t);R,0]\, dt&\geq \varkappa\Bigg[\left(\fiint_{\mathcal{Q}_{2\rho}^\ominus}u_+^2\,d\xi  dt\right)^\frac{1}{2}+\left(\fint_{-{2\rho}^2}^{0}\left(\fint_{B_{2\rho}}u_+(\xi,t)\,d\xi\right)^2\,dt\right)^\frac{1}{2}\\&\quad\quad+\fint_{-{2\rho}^2}^0\textup{Tail}[u_+(t);2\rho,0]\, dt+1\Bigg],
    \end{align*}
    for $t\in (\frac{5}{4}\rho^2,(4\rho)^2]$.
\end{theorem}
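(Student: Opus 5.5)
The plan is to combine the local boundedness estimate of \cref{T1.1} (applied to $u$ as a subsolution) with a weak Harnack estimate obtained from expansion of positivity (applied to $u$ as a supersolution). Normalize $\xi_0=0$, $t_0=0$ and set
\begin{align*}
M:=\left(\fiint_{\mathcal{Q}_{2\rho}^\ominus}u_+^2\,d\xi dt\right)^\frac12+\left(\fint_{-(2\rho)^2}^0\left(\fint_{B_{2\rho}}u_+\,d\xi\right)^2dt\right)^\frac12+\fint_{-(2\rho)^2}^0\textup{Tail}[u_+(t);2\rho,0]\,dt+1.
\end{align*}
We may assume $T:=R^{-2}\int_{T_1}^{T_2}\textup{Tail}[u_-(t);R,0]\,dt\le \varepsilon_0 M$ for a small $\varepsilon_0=\varepsilon_0(\texttt{data})$. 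Otherwise the inequality holds trivially with $\varkappa\le\varepsilon_0$, since $u\ge0$ in $\mathcal{Q}$.

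The first step is to extract a large measure set where $u$ is comparable to $M$. Apply \cref{T1.1} on $\mathcal{Q}_{2\rho}^\ominus$ with $\sigma=\frac12$. Since the tail term there is bounded by $\mathfrak{C}\,\fint\textup{Tail}[u_+]$, this gives $\sup_{\mathcal{Q}^\ominus_{\rho}}u\le C_0M$. Conversely, $M$ is controlled by the $L^2$ and $L^1$ averages plus the tail plus $1$. A standard measure-theoretic argument (a Chebyshev/Paley--Zygmund split of $\fiint u^2$, using the sup bound) then yields one of two alternatives. Either there is a time level $\bar t\in(-\rho^2,0]$ with
\begin{align*}
|\{u(\cdot,\bar t)\ge \nu M\}\cap B_\rho|\ge \nu|B_\rho|
\end{align*}
for some $\nu=\nu(\texttt{data})$, or $M$ is dominated by the tail part $\fint\textup{Tail}[u_+;2\rho]$.

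The second step treats the tail-dominated case. Here the positive mass of $u$ far away acts as a source term in the nonlocal operator: testing the supersolution inequality with a cutoff shows that $u$ is pushed up in $B_\rho$ by an amount proportional to $\fint\textup{Tail}[u_+]$ minus $T$. This is exactly where the structure of (\ref{eq1.1}) is used to avoid restrictions on $s$. The argument yields positivity of measure-type as in the first alternative, at level $\nu\,\textup{Tail}$.

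The third step is expansion of positivity. This is the step we expect to be the main obstacle: the lemma is presumably proved later via the clustering \cref{L6.1} and De Giorgi lemmas. We apply it starting from $\bar t$ at level $k=\nu M$. It gives
\begin{align*}
u\ge \eta k - T\quad\text{on } B_{\rho}\times(\bar t+\tfrac12\delta\rho^2,\bar t+\delta\rho^2],
\end{align*}
with the tail of $u_-$ entering additively. The target interval $(\frac54\rho^2,(4\rho)^2]$ is far to the right of $(-\rho^2,0]$, so we iterate the expansion a bounded number of times, with the number depending only on data. We enlarge to $B_{2\rho}$ and shrink back, using the room $B_{2\rho}\times(-(2\rho)^2,2(4\rho)^2]\subseteq\mathcal{Q}$, to propagate positivity forward in time until it covers every $t\in(\frac54\rho^2,(4\rho)^2]$. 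Each step loses a factor $\eta$ and adds a multiple of $T$. The delicate point is keeping the tail of $u_-$ in the optimal $L^1$-in-time form $R^{-2}\int\textup{Tail}[u_-]$ at each iteration. We would handle this by estimating $\textup{Tail}[u_-;\rho]\le (\rho/R)^{2}\cdot$ terms supported outside $B_R$, since $u_-=0$ in $B_R$, and integrating in time rather than taking a supremum.

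Collecting the bounds gives $\inf_{B_\rho}u(t)\ge \eta^{n}\nu M - CT$. This rearranges to the claim with $\varkappa=\min\{\eta^n\nu,\varepsilon_0\}/C$.
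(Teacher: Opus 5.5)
The step that fails is your first one. The dichotomy you attribute to ``a standard measure-theoretic argument'' is not available, and in fact the inequality is false as written.

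First, $M$ contains the constant $1$, which no property of $u$ can produce. Since (\ref{eq1.1}) is linear, $\lambda u$ is again a nonnegative solution. As $\lambda\to0$ the left-hand side tends to $0$ while the right-hand side tends to $\varkappa$; already $u\equiv0$ violates the statement. In your argument this shows up when $M\approx1$. Then neither $|\{u(\cdot,\bar t)\ge\nu M\}\cap B_\rho|\ge\nu|B_\rho|$ nor tail domination can hold, so the final bound $\inf_{B_\rho}u\ge\eta^{n}\nu M-CT$ has no basis.

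Second, even without the $+1$, a Paley--Zygmund argument needs a lower bound on $\fiint_{\mathcal{Q}^\ominus_\rho}u^2$. But $M$ only records the size of $u$ on the larger cylinder $\mathcal{Q}^\ominus_{2\rho}$, and Theorem~\ref{T1.1} controls $u$ only on $\mathcal{Q}^\ominus_\rho$. Mass in $\mathcal{Q}^\ominus_{2\rho}\setminus\mathcal{Q}^\ominus_\rho$ escapes both of your alternatives, since $\textup{Tail}[u_+;2\rho]$ only sees $\hn\setminus B_{2\rho}$.

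This is not a technicality. Take $u(\xi,t)=p_{t-t_1}(\xi)$, where $p$ is the heat kernel of the operator in (\ref{eq1.1}) and $t_1=-(2\rho)^2-\epsilon$, with $T_1\in(t_1,-(2\rho)^2]$.
\begin{itemize}
\item Then $u\ge0$ everywhere, so $u_-\equiv0$, and $\inf_{B_\rho}u(t)\le|B_\rho|^{-1}$ by conservation of mass.
\item On the other hand, $\int_{B_{2\rho}}p_\tau^2\gtrsim\tau^{-Q/2}$ for small $\tau$. This follows from Cauchy--Schwarz, because a fixed fraction of the unit mass of $p_\tau$ lies in $B_{C\sqrt\tau}$.
\item Since $Q\ge4$, this gives $\fiint_{\mathcal{Q}^\ominus_{2\rho}}u^2\to\infty$ as $\epsilon\to0$.
\end{itemize}

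The paper does not use Theorem~\ref{T1.1} at this point. It applies Lemma~\ref{L6.4}, which rests on the good nonlocal term $\iint w_-\int_{\hn\setminus B_{2\rho}}w_+\,|\eta^{-1}\circ\xi|_{\hn}^{-Q-2s}$ in (\ref{eq3.6}). This produces $t^*\in(-(2\rho)^2,0]$ with $|\{u(\cdot,t^*)\le M\}\cap B_{2\rho}|\le\iota|B_{2\rho}|$. Lemma~\ref{L6.5} then gives $u\ge M/8$ on a short forward cylinder. Finally, Proposition~\ref{P6.2} with $\alpha=1$ (whose Step~IV uses Proposition~\ref{P5.5}), with $K=32$, carries positivity to $B_\rho\times(\frac54\rho^2,(4\rho)^2]$. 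So your Steps~2--3 follow essentially the same route; only your Step~1 is different.

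However, Lemma~\ref{L6.4} yields the small sublevel set only when $\gamma M\le\iota\fint_{-(2\rho)^2}^0\textup{Tail}[u_+(t);2\rho,0]\,dt$. The paper then fixes $M$ proportional to the whole bracket, and the displayed ``$\Longrightarrow$'' does not give back this condition. The paper's proof therefore has the same hole, for the same two reasons. What this chain (and your Steps~2--3) can honestly deliver is a lower bound by the tail term $\fint\textup{Tail}[u_+(t);2\rho,0]\,dt$ alone, with no $+1$. The statement has to be reformulated before either your route or the paper's can be completed.
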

Take $\sigma=\frac{1}{2}$, merging two results in Theorem \ref{T1.1} and Theorem \ref{T1.3} with suitable choice of constants, we have the following
\begin{theorem}\label{T1.4}(Harnack Inequality)
    Let $(\xi_0,t_0)\in\Omega_T$, $0<T_1<T_2$ and $R>0$. Let $u$ be local weak solution of the problem (\ref{eq1.1}) such that $u\geq 0$ in $\mathcal{Q}$. Then there exists a constant $\gamma=\gamma(\textup{\texttt{data}})>0$ such that
    \begin{align*}
        \sup_{B_\rho(\xi_0)\times (t_0-\rho^2,t_0]}u\leq \gamma \left[\inf_{B_\rho(\xi_0)\times (t_0+\frac{5}{4}\rho^2,t_0+(4\rho)^2]}u+R^{-2}\int_{T_1}^{T_2}\textup{Tail}[u_-(t);R,0]\,dt\right],
    \end{align*}
    provided $B_{2\rho}(\xi_o)\times (t_0-(2\rho)^2,t_0+2(4\rho)^2]\subseteq \mathcal{Q}\Subset\Omega_T$.
\end{theorem}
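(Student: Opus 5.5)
Following the remark before the statement, I would take $\sigma=\tfrac12$ in Theorem \ref{T1.1} and combine it with Theorem \ref{T1.3}. After translating, assume $z_0=(0,0)$, so the assumed inclusion is $B_{2\rho}\times(-(2\rho)^2,2(4\rho)^2]\subseteq\mathcal{Q}$. This is exactly the geometry Theorem \ref{T1.3} needs.

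\emph{Step 1 (upper bound).} Apply Theorem \ref{T1.1} with radius $R$ replaced by $2\rho$ and $\sigma=\tfrac12$. Since $u$ is a solution it is also a subsolution, and $\mathcal{Q}^\ominus_{\rho}\subset\mathcal{Q}^\ominus_{2\rho}$. This gives
\begin{align*}
\sup_{B_\rho\times(-\rho^2,0]}u\le C\Bigg[\Big(\fiint_{\mathcal{Q}^\ominus_{2\rho}}u_+^2\Big)^{\frac12}+\Big(\fint_{-(2\rho)^2}^0\Big(\fint_{B_{2\rho}}u_+\Big)^2dt\Big)^{\frac12}+1\Bigg]+\mathfrak{C}(2\rho)^{-2}\int_{-(2\rho)^2}^0\mathrm{Tail}(u_+(t);2\rho,0)\,dt,
\end{align*}
with $C,\mathfrak{C}$ depending only on \texttt{data}. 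The last term equals a constant multiple of $\fint_{-(2\rho)^2}^0\mathrm{Tail}[u_+(t);2\rho,0]\,dt$, because $(2\rho)^{-2}\int_{-(2\rho)^2}^0=\fint_{-(2\rho)^2}^0$. Hence the whole right side is at most $\gamma_1[\,\cdots]$, where $[\,\cdots]$ is precisely the bracket appearing in Theorem \ref{T1.3} and $\gamma_1=\max\{C,\mathfrak{C}\}$.

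\emph{Step 2 (lower bound).} By Theorem \ref{T1.3}, for every $t\in(\tfrac54\rho^2,(4\rho)^2]$,
\[
\varkappa[\,\cdots]\le\inf_{B_\rho}u(t)+R^{-2}\int_{T_1}^{T_2}\mathrm{Tail}[u_-(t);R,0]\,dt .
\]
Taking the infimum over $t$ in this interval gives the infimum over $B_\rho\times(\tfrac54\rho^2,(4\rho)^2]$ on the right. Chaining the two steps yields the claim with $\gamma=\gamma_1/\varkappa$.

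\emph{Main obstacle.} The delicate point is bookkeeping rather than new analysis.
\begin{itemize}
\item The tail terms must match. Theorem \ref{T1.1} produces a $u_+$ tail over $(B_{2\rho})^c$, which has to be absorbed into the bracket controlled by Theorem \ref{T1.3}. Theorem \ref{T1.3} was stated with $\fint\mathrm{Tail}[u_+;2\rho,0]$ in its bracket presumably for exactly this purpose, so this should close.
\item The additive constant $1$ in both estimates is harmless, since it appears on the correct sides.
\item The time windows must be consistent. Theorem \ref{T1.3} is phrased with $t_0=0$ and cylinders $\mathcal{Q}^\ominus_{2\rho}$ ending at time $0$, which is the same cylinder used in Step 1.
\end{itemize}
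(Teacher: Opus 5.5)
Your proposal is correct and follows the paper's own route: the paper obtains Theorem \ref{T1.4} by taking $\sigma=\tfrac12$ in Theorem \ref{T1.1} with radius $2\rho$ and chaining it with Theorem \ref{T1.3}. The $u_+$-tail from the local boundedness estimate is absorbed into the bracket of Theorem \ref{T1.3}, which yields $\gamma=\gamma_1/\varkappa$. The paper only asserts this combination "with suitable choice of constants"; your matching of the sup-cylinder $\mathcal{Q}^\ominus_{\rho}$ and the tail normalization $(2\rho)^{-2}\int_{-(2\rho)^2}^0=\fint_{-(2\rho)^2}^0$ is exactly what it leaves unstated.
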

As far as the Harnack estimate is concerned, the basic difference between elliptic and parabolic problem  is that in the elliptic Harnack inequality, the supremum of the positive solution is controlled by the infimum of the same solution over the same ball whereas in the parabolic case, a {\em waiting time} or {\em delay time} or {\em time lag} is needed. In a counterexample, Moser \cite{Mos64_32} also proves the necessity  of this time gap for caloric functions.  There is also a time lag in our result. In contrast, recently Liao and Weidner \cite{LW25_49} proves time insensitive Harnack for  parabolic nonlocal equation where sup and inf are on overlapping cylinders. 
\subsection{Heisenberg Group} The results in this subsection is based on the book \cite{BLU07_25}. The Heisenberg group is a nilpotent Lie Group.  This is the set $\mathbb{R}^{2N+1}$ equipped with the following group operation
 $$\xi\circ \xi'=(x+x',y+y',\Upsilon+\Upsilon'+2\left(\big<x',y\big>-\big<x,y'\big>\right),$$
 where $$\xi=(z,\Upsilon)=(x,y,\Upsilon)=(x_1,x_2,...,x_N,y_1,y_2,...,y_N,\Upsilon)$$
 and $$\xi'=(z',\Upsilon')=(x',y',\Upsilon')=(x'_1,x'_2,...,x'_N,y'_1,y'_2,...,y'_N,\Upsilon').$$
 A one parameter group of automorphism on $\hn$ is defined as 
 $$\Phi_\lambda(x,y,\Upsilon)=(\lambda x,\lambda y ,\lambda^2\Upsilon),$$
 so that it has a {\em homogeneous dimension} of $Q=2N+2$. Also a {\em homogeneous norm} $d_0$ such that $d_0:\hn\rightarrow [0,\infty]$ is a function satisfying
 \begin{itemize}
     \item $d_0(\Phi_\lambda(\xi))=\lambda d_0(\xi)$ for any $\xi\in\hn$ and $\lambda>0$.
     \item $d_0(\xi)=0$ if and only if $\xi=0$. 
 \end{itemize}
 Define standard {\em homogeneous norm} on $\hn$ by
 $$|\xi|_{\hn}=(|z|^4+|\Upsilon|^2)^{1/4}.$$
 It is well known that all homogeneous norm on $\hn$ are equivalent (see \cite[Corollary 5.1.5]{BLU07_25}). Additionally, Heisenberg group with any homogeneous norm $d_0$ satisfies a pseudo-triangle inequality as in the following lemma. 
 \begin{lemma}\cite[Proposition 5.1.7]{BLU07_25}
     Consider Heisenberg group $\hn$ with homogeneous norm $d_0$. Then there exists a constant $c>0$ such that for all $\xi,\eta\in\hn$, following holds
     \begin{itemize}
         \item $d_0(\xi\circ \eta)\geq \frac{1}{c}d_0(\xi)-d_0(\eta^{-1}),$
         \item $d_0(\xi\circ \eta)\leq c(d_0(\xi)+d_0(\eta)),$
         \item $d_0(\xi\circ \eta)\geq \frac{1}{c}d_0(\xi)-cd_0(\eta).$
     \end{itemize}
 \end{lemma}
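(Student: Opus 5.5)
The plan is to work with the definition of homogeneous norm from \cite{BLU07_25}, which also requires $d_0$ to be continuous. The proof is then a compactness argument combined with the dilations $\Phi_\lambda$. Two structural facts are used throughout. First, each $\Phi_\lambda$ is a group automorphism, so $\Phi_\lambda(\xi\circ\eta)=\Phi_\lambda(\xi)\circ\Phi_\lambda(\eta)$. Second, $\eta^{-1}=-\eta$ in $\hn$ (coordinatewise negation), so $\xi\mapsto\xi^{-1}$ is continuous and commutes with $\Phi_\lambda$.

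\textbf{Step 1: comparison with $|\cdot|_{\hn}$.} The set $\Sigma=\{\xi:|\xi|_{\hn}=1\}$ is a compact subset of $\mathbb{R}^{2N+1}$ not containing $0$. Since $d_0$ is continuous and vanishes only at $0$, it has a minimum $m>0$ and a maximum $M$ on $\Sigma$. Every $\xi\neq 0$ can be written as $\Phi_{|\xi|_{\hn}}(\xi')$ with $\xi'\in\Sigma$. Homogeneity of both norms then gives
\[
m|\xi|_{\hn}\le d_0(\xi)\le M|\xi|_{\hn}.
\]
Consequently $S_0=\{(\xi,\eta): d_0(\xi)+d_0(\eta)=1\}$ is closed and bounded in $\mathbb{R}^{2(2N+1)}$, hence compact.

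\textbf{Step 2: the second inequality.} The map $(\xi,\eta)\mapsto d_0(\xi\circ\eta)$ is continuous, because the group law is polynomial. It is therefore bounded on $S_0$ by some constant $c_1$. For general $(\xi,\eta)\neq(0,0)$, set $\lambda=(d_0(\xi)+d_0(\eta))^{-1}$. Then $(\Phi_\lambda\xi,\Phi_\lambda\eta)\in S_0$, and
\[
\lambda\, d_0(\xi\circ\eta)=d_0\big(\Phi_\lambda\xi\circ\Phi_\lambda\eta\big)\le c_1 .
\]
Unscaling gives $d_0(\xi\circ\eta)\le c_1\big(d_0(\xi)+d_0(\eta)\big)$. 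By the same argument on $\{d_0=1\}$, the function $d_0(\eta^{-1})$ is bounded there by some $c_2$. Homogeneity then yields $d_0(\eta^{-1})\le c_2\,d_0(\eta)$ for all $\eta$.

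\textbf{Step 3: the first and third inequalities.} Write $\xi=(\xi\circ\eta)\circ\eta^{-1}$ and apply Step 2:
\[
d_0(\xi)\le c_1\big(d_0(\xi\circ\eta)+d_0(\eta^{-1})\big).
\]
Rearranging gives $d_0(\xi\circ\eta)\ge \tfrac1{c_1}d_0(\xi)-d_0(\eta^{-1})$, which is the first inequality. Inserting $d_0(\eta^{-1})\le c_2 d_0(\eta)$ gives the third. Taking $c=\max\{c_1,c_2,1\}$ makes all three inequalities hold with a single constant.

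The only delicate point is Step 1. Compactness of the normalized set needs $d_0$ to be continuous, which the paper's abbreviated definition omits, so this assumption (standard in \cite{BLU07_25}) must be invoked explicitly. Once Step 1 is in place, the remaining steps are routine scaling.
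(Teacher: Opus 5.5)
Your argument is correct and is essentially the standard proof behind the cited \cite[Proposition 5.1.7]{BLU07_25}; the paper itself gives no proof. You bound $d_0(\xi\circ\eta)$ on the compact set $\{d_0(\xi)+d_0(\eta)=1\}$ and rescale with $\Phi_\lambda$, then get the lower bounds from $\xi=(\xi\circ\eta)\circ\eta^{-1}$ together with $d_0(\eta^{-1})\le c\,d_0(\eta)$, and you are right that continuity of $d_0$ (part of the definition in \cite{BLU07_25}, omitted in the paper) is needed for the compactness step.
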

 \begin{remark}
     In the case of standard homogeneous norm, the constant $c$ above is replaced by $1$. In that case we may assume that $\hn$ is a metric space equipped with the metric $|\cdot^{-1}\circ \cdot|_{\hn}$ (for details see \cite[Example 5.1]{BFS18_28}). Moreover, this metric is symmetric i.e. $|\xi^{-1}\circ \eta|_{\hn}=|\eta^{-1}\circ \xi|_{\hn}$ for any $\xi,\eta\in\hn.$
 \end{remark}
 Also note that the {\em Haar measure} on $\hn$ is the same as the {\em Lebesgue measure} on $\mathbb{R}^{2N+1}$ and satisfies the following the {\em doubling measure} property, that is for any $R>0$ and $x_0\in\hn$, 
 $$|B_{2R}(x_0)|\leq C(Q)|B_R(x_0)|$$
 This properties make $(\hn,|\cdot^{-1}\circ \cdot|_{\hn},\mathfrak{L}^{2N+1})$ into a $Q$-regular measure metric space with a doubling measure (for more, see \cite[Section 11.3]{HaK00_26} and \cite[Proposition 14.2.9]{HKST15_27}).
\subsection{Plan of the Paper}
This paper is divided into several sections. In Section \ref{S2}, we discuss some known and standard results including parabolic embedding results followed by Section \ref{S3}, where we have refined the Caccioppoli inequality for the mixed local-nonlocal case. In Section \ref{S4}, we deduce the local boundedness result. Section \ref{S5} is dedicated to the results like De Giorgi Lemma, Measure Theoretical results, Expansion of Positivity etc. These type of results are useful during the proofs of results in Section \ref{S6}, where we have shown Weak Harnack Inequality I and II results. 
\subsection{Acknowledgment}
The author is financially supported by CSIR under the file number: $09/0106$
$(13571)/2022$-EMR-I. The author is grateful to Vivek Tewary for suggesting this problem. The author also expresses his gratitude to Krea University for providing the necessary funding and hospitality during the author's visit at Krea University, where this work was conceived.
\section{Some Technical Results}\label{S2}
Regularization in time is one of the important idea in the regularity theory of parabolic PDE. This idea is taken from the paper \cite{BLS21_7}. Let us consider nonnegative, smooth even function $\gamma:\mathbb{R}\rightarrow\mathbb{R}$ with compact support in $(-\frac{1}{2},\frac{1}{2})$ and $\int_\mathbb{R}\gamma=1$. Then for $\phi \in L^1((a,b))$ and $0<\varepsilon<\min \{b-t,t-a\}$, set the convolution as follows
\begin{align}\label{eq2.1}
    \phi^\varepsilon(t):=\frac{1}{\varepsilon}\int_{t-\varepsilon/2}^{t+\varepsilon/2}\gamma\Big(\frac{t-\tau}{\varepsilon}\Big)\phi(\tau)d\tau=\frac{1}{\varepsilon}\int_{-\varepsilon/2}^{\varepsilon/2}\gamma\Big(\frac{\alpha}{\varepsilon}\Big)\phi(t-\alpha)d\alpha,\;\;\;\;\;t\in(a,b).
\end{align}
We now state some technical lemmas regarding this regularization in time. 
\begin{lemma}\cite[Lemma 2.2]{KT25_6}\label{L2.1}
   Let $\Omega\subset\hn$ be a open set. Consider $0<\varepsilon<\varepsilon_0<\frac{T_2-T_2}{2}$ and $q\geq 1$. If $f\in C([T_1,T_2];L^q(\Omega))$, then for every $t\in(T_1+\frac{\varepsilon_0}{2},T_2-\frac{\varepsilon_0}{2})$, the sequence $\phi^\epsilon(\cdot,t)$ converges to $\phi(0,t)$ in $L^q(\Omega)$ as $\varepsilon\rightarrow 0$.
\end{lemma}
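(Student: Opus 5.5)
The plan is to treat \cref{L2.1} as a statement about mollification in time of a Banach-space-valued continuous function, with $L^q(\Omega)$ playing the role of the Banach space. The typos in the statement will be read in the natural way: the condition is $\varepsilon_0<\frac{T_2-T_1}{2}$, the regularized object is $f^\varepsilon(\cdot,t)$ as defined by \eqref{eq2.1} pointwise in $\eta\in\Omega$, and the limit is $f(\cdot,t)$. First I will check that $f^\varepsilon(\cdot,t)$ is well defined for every $t\in(T_1+\frac{\varepsilon_0}{2},T_2-\frac{\varepsilon_0}{2})$. Since $\varepsilon<\varepsilon_0$, the window $[t-\varepsilon/2,t+\varepsilon/2]$ lies inside $[T_1,T_2]$. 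Since $f\in C([T_1,T_2];L^q(\Omega))$, the map $\tau\mapsto f(\cdot,\tau)$ is continuous into $L^q(\Omega)$, so the integral in \eqref{eq2.1} makes sense as an $L^q(\Omega)$-valued Bochner (or Riemann) integral. It also agrees almost everywhere in $\eta$ with the pointwise definition, by Fubini applied to a jointly measurable representative.

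The key step uses $\int_\mathbb{R}\gamma=1$ together with the substitution $\alpha=\varepsilon\beta$ to write
\begin{align*}
 f^\varepsilon(\cdot,t)-f(\cdot,t)=\int_{-1/2}^{1/2}\gamma(\beta)\big(f(\cdot,t-\varepsilon\beta)-f(\cdot,t)\big)\,d\beta .
\end{align*}
Minkowski's integral inequality then gives
\begin{align*}
 \|f^\varepsilon(\cdot,t)-f(\cdot,t)\|_{L^q(\Omega)}\leq \int_{-1/2}^{1/2}\gamma(\beta)\,\|f(\cdot,t-\varepsilon\beta)-f(\cdot,t)\|_{L^q(\Omega)}\,d\beta\leq \sup_{|h|\leq\varepsilon/2}\|f(\cdot,t-h)-f(\cdot,t)\|_{L^q(\Omega)} .
\end{align*}
Here $\gamma\geq0$ was used in the last step.

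By continuity of $f$ at $t$ in the $L^q(\Omega)$-norm, the right-hand side tends to $0$ as $\varepsilon\to0$, which is the claim. Continuity on the compact interval is uniform, so the convergence is in fact uniform in $t$ on the stated range, though only pointwise convergence is needed.

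The only step needing care is justifying Minkowski's integral inequality, that is, interchanging the $L^q$ norm with the $\beta$-integral. I would handle it either via Bochner integrals, where $\|\int g\|\leq\int\|g\|$ holds in any Banach space, or directly via the measurable representative and the classical Minkowski inequality for $q\geq1$. The group structure of $\hn$ plays no role, since the mollification acts only in time.
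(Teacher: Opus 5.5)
Your proposal is correct. Your reading of the typos as $\varepsilon_0<\frac{T_2-T_1}{2}$, $f^\varepsilon(\cdot,t)$ and limit $f(\cdot,t)$ is the intended one, and the argument goes through: after $\alpha=\varepsilon\beta$ the difference is a $\gamma$-weighted average of the increments $f(\cdot,t-\varepsilon\beta)-f(\cdot,t)$, and Minkowski's integral inequality plus the $L^q(\Omega)$-continuity of $t\mapsto f(\cdot,t)$ give the convergence. The paper gives no proof of its own; it only cites \cite[Lemma 2.2]{KT25_6}, with the time regularization taken from \cite{BLS21_7}, and your argument is the standard time-mollification proof of such a statement, so nothing needs to be added.
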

\begin{lemma}\cite[Lemma 2.3]{KT25_6}\label{L2.2}
    Let $0<\varepsilon<\varepsilon_0<\frac{T_1-T_2}{2}$ and $p,q\in [1,\infty]$. Consider $\phi\in L^q\left(T_1,T_2;L^p(\Omega)\right)$. Then there exists a positive constant $C=C(p,q)$ such that for any $\varepsilon\leq \varepsilon_0$
    \begin{align*}
        \|\phi^\varepsilon\|_{L^q\left(T_1+\frac{\varepsilon_0}{2},T_2-\frac{\varepsilon_0}{2};L^p(\Omega)\right)}\leq C.
    \end{align*}
\end{lemma}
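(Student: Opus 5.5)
The plan is to treat the regularization \eqref{eq2.1} as a convolution in time of an $L^p(\Omega)$-valued function and to combine Minkowski's integral inequality in the space variable with Young's convolution inequality in the time variable. The constant $C$ will come out as $\|\phi\|_{L^q(T_1,T_2;L^p(\Omega))}$, with no dependence on $\varepsilon$; the dependence on $p,q$ is only through this norm, so in the statement ``$C=C(p,q)$'' should be read as also depending on $\phi$.

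\textbf{Step 1 (set-up and well-definedness).} Write $\gamma_\varepsilon(\alpha):=\varepsilon^{-1}\gamma(\alpha/\varepsilon)$. Then $\gamma_\varepsilon\ge 0$, $\operatorname{supp}\gamma_\varepsilon\subset(-\varepsilon/2,\varepsilon/2)$ and $\int_{\mathbb{R}}\gamma_\varepsilon=1$. Fix $t\in(T_1+\frac{\varepsilon_0}{2},T_2-\frac{\varepsilon_0}{2})$ and $|\alpha|<\varepsilon/2$. Since $\varepsilon\le\varepsilon_0$, we have $t-\alpha\in(T_1,T_2)$, so
\[
\phi^\varepsilon(\cdot,t)=\int_{-\varepsilon/2}^{\varepsilon/2}\gamma_\varepsilon(\alpha)\,\phi(\cdot,t-\alpha)\,d\alpha
\]
only samples $\phi$ inside its domain of definition. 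Joint measurability of $(x,t)\mapsto\phi^\varepsilon(x,t)$ follows from Fubini--Tonelli, because $\phi$ is measurable on $\Omega\times(T_1,T_2)$.

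\textbf{Step 2 (space norm).} Set $g(\tau):=\|\phi(\cdot,\tau)\|_{L^p(\Omega)}\mathbbm{1}_{(T_1,T_2)}(\tau)$, so that $g\in L^q(\mathbb{R})$ with $\|g\|_{L^q(\mathbb{R})}=\|\phi\|_{L^q(T_1,T_2;L^p(\Omega))}$. Minkowski's integral inequality, valid for $1\le p<\infty$ and trivially for $p=\infty$, gives for each such $t$
\[
\|\phi^\varepsilon(\cdot,t)\|_{L^p(\Omega)}\le\int_{\mathbb{R}}\gamma_\varepsilon(\alpha)\,g(t-\alpha)\,d\alpha=(\gamma_\varepsilon*g)(t).
\]

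\textbf{Step 3 (time norm).} Take the $L^q$ norm over $(T_1+\frac{\varepsilon_0}{2},T_2-\frac{\varepsilon_0}{2})$ and enlarge the interval to $\mathbb{R}$. Young's inequality $\|\gamma_\varepsilon*g\|_{L^q}\le\|\gamma_\varepsilon\|_{L^1}\|g\|_{L^q}$ then yields
\[
\|\phi^\varepsilon\|_{L^q(T_1+\frac{\varepsilon_0}{2},T_2-\frac{\varepsilon_0}{2};L^p(\Omega))}\le\|\phi\|_{L^q(T_1,T_2;L^p(\Omega))}=:C,
\]
uniformly in $\varepsilon\le\varepsilon_0$. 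For $q=\infty$ the same bound is immediate from $\int\gamma_\varepsilon=1$.

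\textbf{Main obstacle.} There is no real difficulty here. The only delicate point is the measure-theoretic justification of Step 2, namely that the vector-valued Minkowski inequality applies. I would handle it by applying Tonelli to $|\phi^\varepsilon|^p$ (for $p<\infty$) together with the integral form of Minkowski's inequality, and by checking the support condition $|\alpha|<\varepsilon/2\le\varepsilon_0/2$ so that no values of $\phi$ outside $(T_1,T_2)$ are used.
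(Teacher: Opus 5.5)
Your proof is correct. The paper gives no proof of this lemma; it only quotes \cite[Lemma 2.3]{KT25_6}, and the time regularization itself comes from \cite{BLS21_7}. Your route is the standard one: first check that $|\alpha|<\varepsilon/2\le\varepsilon_0/2$ keeps $t-\alpha$ inside $(T_1,T_2)$, then use Minkowski's integral inequality in space to reduce to $g(\tau)=\|\phi(\cdot,\tau)\|_{L^p(\Omega)}$, then use Young's inequality $\|\gamma_\varepsilon*g\|_{L^q}\le\|\gamma_\varepsilon\|_{L^1}\|g\|_{L^q}$ in time. It gives the explicit bound $C=\|\phi\|_{L^q(T_1,T_2;L^p(\Omega))}$, uniform in $\varepsilon\le\varepsilon_0$.

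This is the only sensible reading of ``$C=C(p,q)$''. No constant independent of $\phi$ can work: taking $\phi(x,t)=\lambda f(x)$ with $0\ne f\in L^p(\Omega)$ gives $\phi^\varepsilon=\lambda f$ on the smaller interval for every $\lambda>0$. Likewise, the hypothesis should read $\varepsilon_0<\frac{T_2-T_1}{2}$.

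Calling the case $p=\infty$ ``trivial'' slightly undersells it. There you need $|\phi(x,\tau)|\le g(\tau)$ off a single null set of $\Omega\times(T_1,T_2)$. This follows from Tonelli applied to $\{|\phi(x,\tau)|>g(\tau)\}$, once $g$ is known to be measurable. Measurability holds, for instance, by writing $g$ as a supremum over an exhaustion by sets of finite measure of limits of $L^r$ norms.
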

Additionally, we want to mention some useful lemmas which will be used in the sequel. 
\begin{lemma}\cite[Lemma 2.6]{MPPP23_1}\label{L2.3}
    Let $\xi_0\in\hn$ and $|\cdot|_{\hn}$ refers the standard homogeneous norm in $\hn$. Then 
    \begin{align*}
        \int_{B_\rho(\xi_0)}\frac{1}{\dist[\xi_0]{\xi}^{Q+\gamma}}\,d\xi\leq \frac{C(Q,\gamma)}{\rho^\gamma},
    \end{align*}
    for some $\gamma,\rho>0$.
\end{lemma}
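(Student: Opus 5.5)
The estimate has to be read with the domain of integration being the complement $\hn\setminus B_\rho(\xi_0)$, which is how it is used for the tail terms. Over the ball $B_\rho(\xi_0)$ itself the integrand has a non-integrable singularity of order $Q+\gamma>Q$ at $\xi=\xi_0$, so that integral is infinite. This matches the scaling $\rho^{-\gamma}$ and the source \cite[Lemma 2.6]{MPPP23_1}. I will therefore prove
\begin{align*}
\int_{\hn\setminus B_\rho(\xi_0)}\frac{d\xi}{\dist[\xi_0]{\xi}^{Q+\gamma}}\leq \frac{C(Q,\gamma)}{\rho^\gamma},\qquad \gamma,\rho>0.
\end{align*}

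\emph{Reduction to $\xi_0=0$ and $\rho=1$.} First I use that Lebesgue measure is the Haar measure of $\hn$, so it is invariant under left translations. The change of variables $\eta=\xi_0^{-1}\circ\xi$ maps $\hn\setminus B_\rho(\xi_0)$ onto $\hn\setminus B_\rho(0)$ and turns the integrand into $|\eta|_{\hn}^{-(Q+\gamma)}$, so it suffices to take $\xi_0=0$.

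Next I use the dilations $\Phi_\lambda$. Their Jacobian is $\lambda^{2N}\cdot\lambda^2=\lambda^Q$, and $|\Phi_\lambda\eta|_{\hn}=\lambda|\eta|_{\hn}$. Substituting $\eta=\Phi_\rho(\zeta)$ gives
\begin{align*}
\int_{|\eta|_{\hn}\geq\rho}|\eta|_{\hn}^{-Q-\gamma}\,d\eta=\rho^{Q}\rho^{-Q-\gamma}\int_{|\zeta|_{\hn}\geq 1}|\zeta|_{\hn}^{-Q-\gamma}\,d\zeta .
\end{align*}
So the claim reduces to showing that the last integral is finite with a bound depending only on $Q$ and $\gamma$.

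\emph{Dyadic decomposition.} I split $\{|\zeta|_{\hn}\geq1\}$ into the annuli $A_k=B_{2^{k+1}}(0)\setminus B_{2^k}(0)$ for $k\geq0$. On $A_k$ the integrand is at most $2^{-k(Q+\gamma)}$. By the same dilation argument, $|A_k|\leq|B_{2^{k+1}}|=2^{(k+1)Q}|B_1|$, and $|B_1|<\infty$ because $B_1$ is a bounded subset of $\mathbb{R}^{2N+1}$. Summing over the annuli,
\begin{align*}
\int_{|\zeta|_{\hn}\geq1}|\zeta|_{\hn}^{-Q-\gamma}\,d\zeta\leq 2^Q|B_1|\sum_{k\geq0}2^{-k\gamma}=\frac{2^Q|B_1|}{1-2^{-\gamma}}=:C(Q,\gamma),
\end{align*}
and the series converges precisely because $\gamma>0$.

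The argument is routine. The one point needing care is the homogeneity of Lebesgue measure under $\Phi_\lambda$ (exponent $Q=2N+2$ rather than the topological dimension $2N+1$), since this is exactly what produces the exponent $\rho^{-\gamma}$.
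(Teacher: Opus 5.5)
Your proof is correct. The paper gives no argument of its own for this lemma; it simply cites \cite[Lemma 2.6]{MPPP23_1}. So there is nothing to contrast it with: left-invariance of Lebesgue measure, the $\lambda^Q$ scaling under $\Phi_\lambda$, and a dyadic summation is the standard argument for the cited result.

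You are also right to correct the domain to $\hn\setminus B_\rho(\xi_0)$. As printed, the integral over $B_\rho(\xi_0)$ is $+\infty$ for every $\gamma>0$. Moreover, each place the paper invokes the lemma (the tail terms in Lemmas~\ref{L5.1} and~\ref{L5.4}) uses exactly the complement form you proved.
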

Next, we are going to state the lemma concerning the geometric convergence of sequence of numbers.
\begin{lemma}\cite[Chapter 1, Lemma 4.2]{DiB93_10}\label{L2.4}
    Let consider $M,b>1$ and $\kappa,\delta>0$ be given. For every $j\in\mathbb{N}$, assume that
    \begin{align*}
        Y_{j+1}\leq Mb^j\left(Y_j^{1+\delta}+Z_j^{1+\kappa}Y_j^\delta\right),\,\,\textup{ and }\,\, Z_{j+1}\leq Mb^j\left(Y_j+Z_j^{1+\kappa}\right).
    \end{align*}
    Additionally, assume that $Y_0+Z_0^{1+\kappa}\leq (2M)^{-\frac{1+\kappa}{\alpha}}b^{-\frac{1+\kappa}{\alpha^2}}$ for $\alpha=\min\{\kappa,\delta\}$. Then we have the following 
    \begin{align*}
        \lim_{j\rightarrow\infty}Y_j=0=\lim_{j\rightarrow 0} Z_j.
    \end{align*} 
\end{lemma}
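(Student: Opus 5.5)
We argue by induction, assuming (as in all applications) that $Y_j,Z_j\geq 0$, and reading the second limit in the statement as $j\to\infty$. Set
\begin{align*}
d:=(2M)^{-\frac{1+\kappa}{\alpha}}b^{-\frac{1+\kappa}{\alpha^2}},\qquad \lambda:=b^{\frac{1+\kappa}{\alpha}},
\end{align*}
so that $d\leq 1$ (since $2M>1$, $b>1$) and
\begin{align*}
d^\alpha=(2M)^{-(1+\kappa)}\lambda^{-1}.
\end{align*}
We will show that for every $j\geq 0$,
\begin{align*}
Y_j\leq d\,\lambda^{-j}\quad\textup{and}\quad Z_j^{1+\kappa}\leq d\,\lambda^{-j}.
\end{align*}
Since $\lambda>1$, both bounds tend to $0$, so this gives $Y_j\to 0$ and $Z_j\to 0$.

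\emph{Base case.} For $j=0$ the two bounds follow directly from the hypothesis $Y_0+Z_0^{1+\kappa}\leq d$.

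\emph{Inductive step for $Y$.} Assume both bounds hold at step $j$. Then
\begin{align*}
Y_j^{1+\delta}\leq d^{1+\delta}\lambda^{-j(1+\delta)},\qquad Z_j^{1+\kappa}Y_j^\delta\leq d^{1+\delta}\lambda^{-j(1+\delta)}.
\end{align*}
Inserting these into the recursion for $Y$,
\begin{align*}
Y_{j+1}\leq 2M\,d^{1+\delta}\,b^j\lambda^{-j(1+\delta)}.
\end{align*}
We want this to be at most $d\lambda^{-(j+1)}$, which is equivalent to
\begin{align*}
2M\,d^{\delta}\,\lambda\,\big(b\lambda^{-\delta}\big)^j\leq 1.
\end{align*}
Since $\delta\geq\alpha$ we have $\lambda^\delta\geq b^{(1+\kappa)\delta/\alpha}\geq b$, so $(b\lambda^{-\delta})^j\leq 1$. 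Since $d\leq 1$ and $\delta\geq\alpha$, we have $d^\delta\leq d^\alpha$, and therefore
\begin{align*}
2M\,d^\delta\lambda\leq 2M\,d^\alpha\lambda=(2M)^{-\kappa}\leq 1.
\end{align*}

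\emph{Inductive step for $Z$.} Raise the recursion for $Z$ to the power $1+\kappa$ and use $Y_j+Z_j^{1+\kappa}\leq 2d\lambda^{-j}$:
\begin{align*}
Z_{j+1}^{1+\kappa}\leq (2M)^{1+\kappa}\,d^{1+\kappa}\,b^{j(1+\kappa)}\lambda^{-j(1+\kappa)}.
\end{align*}
The target $Z_{j+1}^{1+\kappa}\leq d\lambda^{-(j+1)}$ is equivalent to
\begin{align*}
(2M)^{1+\kappa}\,d^{\kappa}\,\lambda\,\big(b^{1+\kappa}\lambda^{-\kappa}\big)^j\leq 1.
\end{align*}
Since $\kappa\geq\alpha$ we have $\lambda^\kappa=b^{(1+\kappa)\kappa/\alpha}\geq b^{1+\kappa}$, so the bracket is at most $1$. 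Again $d^\kappa\leq d^\alpha$, hence
\begin{align*}
(2M)^{1+\kappa}d^\kappa\lambda\leq (2M)^{1+\kappa}d^\alpha\lambda=1.
\end{align*}
This closes the induction.

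\emph{Main obstacle.} The only delicate point is choosing the decay rate $\lambda$. A rate such as $b^{-j/\alpha}$ fails in the $Z$-step when $\alpha=\kappa$, because the factor $b^{j(1+\kappa)}$ produced by raising the recursion to the power $1+\kappa$ is not absorbed. The two $j$-dependent constraints are $\lambda\geq b^{1/\delta}$ (from $Y$) and $\lambda\geq b^{(1+\kappa)/\kappa}$ (from $Z$). Both are met by $\lambda=b^{(1+\kappa)/\alpha}$, and with this choice the $j$-independent constraints reduce exactly to the smallness threshold $d$ assumed in the statement.
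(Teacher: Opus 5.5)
Your proof is correct and complete: with the decay rate $\lambda=b^{(1+\kappa)/\alpha}$, the $j$-dependent factors and the constants both close exactly as you compute, and the nonnegativity of $Y_j,Z_j$ that you assume is also a hypothesis in the cited source. The paper gives no proof of its own and only cites DiBenedetto's Chapter I, Lemma 4.2, whose standard argument is, as far as I recall, this same induction showing $Y_j\le d\,b^{-(1+\kappa)j/\alpha}$ and $Z_j^{1+\kappa}\le d\,b^{-(1+\kappa)j/\alpha}$.
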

The following lemma gives a relation between the measure of the three level sets of function. This prevents a function from jumping abruptly between two levels without having a significant intermediate region. This lemma is known as {\em De Giorgi Isoperimetric Lemma}.
\begin{lemma}\label{L2.5}
    Let $k<l$ be real numbers and $u\in W^{1,1}(B_\rho(\xi))$. Then there exists a constant $\gamma=\gamma(Q,p)$, independent of $u,\rho,\xi,k,l$ such that
    \begin{align*}
        (l-k)|\{u>l\}|\leq \gamma\frac{\rho^{Q+1}}{|\{u<k\}|}\int_{B_\rho(\xi)\cap \{k<u<l\}}|\nabla_{\hn}u|\,d\xi.
    \end{align*}
\end{lemma}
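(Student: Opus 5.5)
The plan is to reduce \cref{L2.5} to two standard facts on $\hn$: a Poincaré-type inequality on balls, and a pointwise representation of a function through its horizontal gradient integrated along horizontal (sub-Riemannian) curves. First we truncate. Set
\begin{align*}
w:=\min\{(u-k)_+,\,l-k\},
\end{align*}
so that $w\in W^{1,1}(B_\rho(\xi))$, $w=0$ on $\{u\le k\}$, $w=l-k$ on $\{u\ge l\}$, and $\nabla_{\hn}w=\nabla_{\hn}u\,\mathbbm{1}_{\{k<u<l\}}$ almost everywhere. This truncation property of the horizontal gradient is the chain rule for Lipschitz functions composed with $W^{1,1}$ functions along the vector fields $X_i,Y_i$. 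It holds because each $X_i$ is a first-order differential operator, exactly as in the Euclidean case.

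Next we use the representation formula. For $\eta\in\{u<k\}$ and $\zeta\in\{u>l\}$ (Lebesgue points of $w$) we have $w(\zeta)-w(\eta)=l-k$. We integrate this identity over $\eta\in\{u<k\}$ and $\zeta\in\{u>l\}$, which gives
\begin{align*}
(l-k)\,|\{u<k\}|\,|\{u>l\}|\le \int_{B_\rho}\int_{B_\rho}|w(\zeta)-w(\eta)|\,d\eta\,d\zeta .
\end{align*}
We then bound the right-hand side by a $(1,1)$-Poincaré inequality on the Heisenberg ball, in the form
\begin{align*}
\int_{B_\rho}\int_{B_\rho}|w(\zeta)-w(\eta)|\,d\eta\,d\zeta\le C\,\rho\,|B_\rho|\int_{B_\rho}|\nabla_{\hn}w| .
\end{align*}
This is Jerison's Poincaré inequality, valid on $(\hn,|\cdot^{-1}\circ\cdot|_{\hn})$ because $\hn$ is a $Q$-regular doubling space supporting a $1$-Poincaré inequality (see \cite{HaK00_26}). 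Combined with $|B_\rho|=c(Q)\rho^Q$, this yields
\begin{align*}
(l-k)|\{u>l\}|\,|\{u<k\}|\le \gamma\,\rho^{Q+1}\int_{B_\rho\cap\{k<u<l\}}|\nabla_{\hn}u| .
\end{align*}
Dividing by $|\{u<k\}|$ gives the claim. We may assume $|\{u<k\}|>0$, since otherwise the statement is trivial with the right-hand side read as $+\infty$.

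The step we expect to be the main obstacle is justifying the Poincaré inequality on the same ball rather than on an enlarged ball $B_{\lambda\rho}$. In Carnot groups, Heisenberg balls defined by the gauge $|\cdot|_{\hn}$ are not geodesically convex, so the naive argument needs to integrate along curves that may leave $B_\rho$. We would handle this in one of two ways. The first option is to use that gauge balls in $\hn$ are John domains (or Boman chain domains), for which the weak Poincaré inequality self-improves to the strong one on the same ball; this is a theorem of Jerison, and of Hajłasz--Koskela \cite{HaK00_26}. The alternative is to prove the lemma with $B_{\lambda\rho}$ on the right. In that case the statement as written would require all sets to be taken in $B_{\lambda\rho}$, and the constant $\gamma$ would absorb $\lambda$ after rescaling. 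We would take the first route, citing the John-domain property. The dependence of $\gamma$ on $p$ in the statement is vacuous here, since only $L^1$ quantities appear, so $\gamma=\gamma(Q)$.
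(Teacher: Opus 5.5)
Your proposal is correct and is essentially the paper's argument: the same truncation $\min\{(u-k)_+,\,l-k\}$, followed by the $(1,1)$-Poincar\'e inequality on the same ball $B_\rho(\xi)$, which the paper takes directly from Theorem~\ref{T2.6}. That citation settles your same-ball concern, and the representation formula you mention but never use can be dropped. The only cosmetic difference is how the mean is controlled: you use the double integral $\iint|w(\zeta)-w(\eta)|$, whereas the paper writes $|v_{B_\rho}|\le |A|^{-1}\int_{B_\rho}|v-v_{B_\rho}|$ with $A=\{u\le k\}$ and then $\int_{B_\rho}|v|\le 2\frac{|B_\rho|}{|A|}\int_{B_\rho}|v-v_{B_\rho}|$.
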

\begin{proof}
    The significant parts of this proof follows the steps of \cite[Lemma 2.3]{KMMJP21_24}. Set, $A:=\{x\in B_\rho(\xi):u\leq k\}$. Consider $|A|>0$, otherwise the result will be obvious. Define
    \begin{align*}
        u:=\begin{cases}
            &\min\{u,l\}-k,\,\,\,\,\,\,\,\textup{if }\,u>k,\\
            & 0,\,\,\,\,\,\,\,\,\,\,\,\,\,\,\,\,\,\,\,\,\,\,\,\,\,\,\,\,\,\,\,\,\,\,\,\,\textup{ if }\,u\leq k.
        \end{cases}
    \end{align*}
    Then, we have
    \begin{align*}
        \int_{B_\rho(\xi)}|u-u_{B_\rho(\xi)}|dx=\int_{B_\rho(\xi)\setminus A}|u-u_{B_\rho(\xi)}|dx+\int_{A}|u_{B_\rho(\xi)}|dx,
    \end{align*}
    where $u_{B_\rho(\xi)}:=\fint_{B_\rho(\xi)}|u|dx$. Consequently, 
    \begin{align}\label{eq2.2}
        |u_{B_\rho(\xi)}|\leq \frac{1}{|A|}\int_{B_\rho(\xi)}|u-u_{B_\rho(\xi)}|dx.
    \end{align}
    Also, it is obvious that 
    \begin{align}\label{eq2.3}
        \int_{B_\rho(\xi)}|u|dx=\int_{\{u>l\}\cap B_\rho(\xi)}|u|dx+\int_{\{k<u\leq l\}\cap B_\rho(\xi)}|u|dx.
    \end{align}
 Now, with the help of (\ref{eq2.3})
 \begin{align}\label{eq2.4}
     (l-k)|\{u>l\}\cap B_\rho(\xi)|\leq \int_{B_\rho(\xi)}|u|dx&\leq\int_{B_\rho(\xi)}|u-u_{B_\rho(\xi)}|dx+\int_{B_\rho(\xi)}|u_{B_\rho(\xi)}|dx\nonumber\\
     &\leq 2\frac{|B_\rho(\xi)|}{|A|}\int_{B_\rho(\xi)}|u-u_{B_\rho(\xi)}|dx.
 \end{align}
At this point, we recall the Poincar\'e type inequality in Heisenberg group (see \cref{T2.6}) i.e. 
\begin{align*}
    \int_{B_\rho(\xi)}|u-u_{B_\rho(\xi)}|^pdx\leq C(Q,p)\rho^p\int_{B_\rho(\xi)}|\nabla_{\hn}u|^pdx,
\end{align*}
for all $u\in W^{1,p}(B_\rho(\xi))$ with $1\leq p<\infty$. Finally, considering this Poincar\'e inequality with $p=1$ on the right hand side of (\ref{eq2.4}), we obtain the result. 
\end{proof}

Next, we will discuss some important series of results, the {\em parabolic Sobolev embedding} type results. This type of results have been extensively used during the proof of {\em local boundedness} result and more. We start with a version of {\em Sobolev-Poincar\'e inequality} followed by {\em Sobolev inequality}, both on the Heisenberg group.
\begin{theorem}(see \cite{Jer86_15,Lu94_16})\label{T2.6}
    Let $B\subset \mathbb{H}^N$ be any ball in $\mathbb{H}^N$ and let $1
    leq p<\infty$. Then for every $u\in W^{1,p}(B)$, there exists a constant $C=C(Q)$ such that
    \begin{align*}
        \int_B|u-u_B|^p\,d\xi\leq C|\textup{diam}\,B|^p\int_B|\nabla_{\hn}u|^p\,d\xi,
    \end{align*}
\end{theorem}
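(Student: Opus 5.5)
The plan is the standard Jerison route on stratified groups: a weak $(1,1)$ Poincaré inequality, upgraded to the strong form by a Boman chain covering. The case $p=1$ is the heart of the matter, and the general $p$ follows at the end.

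\emph{Step 1 (representation formula).} By density it suffices to take $u$ smooth on $\overline{B}$. For $\xi,\eta\in B=B_r(\xi_0)$, join $\eta$ to $\xi$ by a horizontal curve. One choice is a Carnot–Carathéodory geodesic. Another, concretely in $\hn$, is a piecewise horizontal path built from the group law: first move in the $x,y$ directions, then produce the vertical displacement $\Upsilon$ through a commutator loop of horizontal length $\lesssim |\eta^{-1}\circ\xi|_{\hn}$. Either way the length is comparable to $|\eta^{-1}\circ\xi|_{\hn}$, with the constant depending only on $Q$. Integrating $\nabla_{\hn}u$ along these curves and averaging over a family of curves obtained by left translation and rotation gives the pointwise estimate
\begin{align*}
|u(\xi)-u_{B}|\leq C\int_{\lambda B}\frac{|\nabla_{\hn}u(\eta)|}{|\eta^{-1}\circ\xi|_{\hn}^{Q-1}}\,d\eta .
\end{align*}
Here $\lambda\geq1$ is a fixed enlargement factor, needed because the averaged curves may leave $B$.

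\emph{Step 2 (weak inequality).} Integrate the estimate of Step 1 over $\xi\in B$ and apply Fubini. The kernel satisfies $\int_{B}|\eta^{-1}\circ\xi|_{\hn}^{1-Q}\,d\xi\leq Cr$, by a computation analogous to \cref{L2.3} (integrating over dyadic annuli and using $|B_\rho|=c\rho^Q$). This yields the weak Poincaré inequality
\begin{align*}
\int_B|u-u_B|\,d\xi\leq C r\int_{\lambda B}|\nabla_{\hn}u|\,d\xi .
\end{align*}

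\emph{Step 3 (removing the enlargement).} This is the main obstacle, since the statement has the gradient only on $B$ itself. I would use that Heisenberg balls for the gauge norm are John domains; equivalently, they satisfy the Boman chain condition. Cover $B$ by a Whitney-type family of small balls $B_i$ with $\lambda B_i\subset B$, each connected to a central ball by a chain of overlapping balls of controlled length. The weak inequality is applied on each $B_i$ with gradient on $\lambda B_i\subset B$, and the pieces are glued along the chains. The bounded overlap follows from doubling. This is Jerison's argument, and I would cite \cite{Jer86_15,Lu94_16} for the chain construction rather than redo it. It gives the strong $(1,1)$ inequality with constant $C(Q)$.

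\emph{Step 4 (general $p$).} For $1\le p<\infty$, apply the $p=1$ truncation method (Hajłasz–Koskela) to obtain the $(p,p)$ version. Alternatively, apply Hölder's inequality directly to the potential bound of Step 1, using the fact that the Riesz-type kernel defines an operator bounded on $L^p(B)$ with norm $\lesssim r$. Finally, $\mathrm{diam}\,B\simeq r$ converts the factor $r^p$ into $|\mathrm{diam}\,B|^p$.
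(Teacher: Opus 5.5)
Your outline is correct and is the standard Jerison argument (weak $(1,1)$ inequality from a Riesz-potential representation, removal of the enlargement by a Whitney/Boman chain decomposition, then passage to general $p$), which is all the paper relies on: it gives no proof of this theorem and simply cites \cite{Jer86_15,Lu94_16}. Two small caveats: Jerison's chain construction is carried out for Carnot--Carath\'eodory balls, where the John property follows immediately from geodesics, so for the gauge balls of the statement the John property you invoke needs its own source (e.g.\ the known fact that Kor\'anyi balls are uniform domains); and this route gives a constant depending on $p$ as well as $Q$, consistent with the $C(Q,p)$ used in the proof of \cref{L2.5}.
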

where $u_B:=\fint_{B}u(\xi)\,d\xi$.
\begin{theorem}(\cite{MZ21_17})\label{T2.7}
    Let $u\in W^{1,2}(\Omega)$, $B\Subset\Omega$. Then there exists a constant $C=C(Q)$ such that 
    \begin{align*}
        \|u\|_{L^{2^*}(B)}\leq C\|\nabla_{\hn}u\|_{L^2(B)},
    \end{align*}
    where $2^*=\frac{2Q}{Q-2}$. 
\end{theorem}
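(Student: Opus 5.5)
Theorem~\ref{T2.7} is a cited result (\cite{MZ21_17}), so the plan is to reconstruct it from the sharp Folland--Stein inequality together with the Poincar\'e inequality of Theorem~\ref{T2.6}. As written, a bound with no lower-order term cannot hold for arbitrary $u\in W^{1,2}(\Omega)$: a nonzero constant has vanishing gradient but positive $L^{2^*}(B)$ norm. I would therefore prove it in the form needed later in the De Giorgi iterations of Section~\ref{S4}. The pure statement is proved for $u\in W^{1,2}_0(B)$, equivalently for $u$ compactly supported in $B$, such as $\zeta(u-k)_\pm$ with $\zeta$ a cutoff. The general case is stated with the mean subtracted or with the scaled lower-order term $|B|^{-1/Q}\|u\|_{L^2(B)}$ added. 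Throughout, $Q=2N+2\geq 4>2$, so $2^*=\frac{2Q}{Q-2}$ is finite.

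\emph{Step 1 (global inequality).} For $\varphi\in C_0^\infty(\hn)$ I would prove $\|\varphi\|_{L^{2^*}(\hn)}\leq C(Q)\|\nabla_{\hn}\varphi\|_{L^2(\hn)}$. The route is the representation formula $|\varphi(\xi)|\leq C\int_{\hn}|\nabla_{\hn}\varphi(\eta)|\,|\eta^{-1}\circ\xi|_{\hn}^{1-Q}\,d\eta$. This comes from the fundamental solution $\Gamma\sim|\cdot|_{\hn}^{2-Q}$ of the sub-Laplacian and the homogeneity $|X_j\Gamma|\lesssim|\cdot|_{\hn}^{1-Q}$, where the $X_j$ are the horizontal vector fields. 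I would then apply the Hardy--Littlewood--Sobolev inequality for the fractional integral of order $1$ on the $Q$-regular space $(\hn,|\cdot^{-1}\circ\cdot|_{\hn},\mathfrak L^{2N+1})$. Its proof is metric-measure in nature: split the kernel at radius $r$, use Lemma~\ref{L2.3} to bound the outer part by $r^{1-Q/2}\|f\|_{2}$, bound the inner part by $r\,Mf$, optimize in $r$, and finish with $L^2$-boundedness of the maximal function, which follows from the doubling property. This yields exactly the exponent $2^*$.

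\emph{Step 2 (functions supported in $B$).} Since $C_0^\infty(B)$ is dense in $W^{1,2}_0(B)$, Step 1 gives $\|u\|_{L^{2^*}(B)}\leq C\|\nabla_{\hn}u\|_{L^2(B)}$ with the same constant. The constant is independent of the radius because of dilation invariance: under $\Phi_\lambda$ both sides scale like $\lambda^{1-Q/2}$.

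\emph{Step 3 (general $u\in W^{1,2}(B)$).} Here I would use a Jerison-type Sobolev--Poincar\'e inequality on the ball,
$\|u-u_B\|_{L^{2^*}(B)}\leq C\|\nabla_{\hn}u\|_{L^2(B)}$. It can be obtained in either of two ways:
\begin{itemize}
\item from Step 1 applied to an extension of $u-u_B$ across the Heisenberg ball, where the relevant extension domains (NTA/John domains) are the main technical point; or
\item through the truncation (Maz'ya) argument upgrading the $p=1$ case of Theorem~\ref{T2.6} to the Sobolev exponent.
\end{itemize}
Adding $|u_B|\,|B|^{1/2^*}\leq |B|^{-1/Q}\|u\|_{L^2(B)}$ then gives the inhomogeneous form.

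The main obstacle is Step 3, the passage from the compactly supported case to all of $W^{1,2}(B)$, since it requires either a good extension operator or a truncation argument. Step 1 is standard once Hardy--Littlewood--Sobolev is available on the doubling space. For the purposes of this paper only Step 2 is needed, because the inequality is always applied to cutoff truncations.
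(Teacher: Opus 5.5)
The paper does not prove Theorem~\ref{T2.7}. It quotes it from Mukherjee--Zhong, and for the localized use in Lemma~\ref{L2.9} it only points to Nhieu's extension theorem. Your argument is therefore a genuinely different, self-contained route, and it is sound.

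Steps 1--2 are the standard Folland--Stein proof, and they give the global inequality with $C=C(Q)$, hence the inequality on $W^{1,2}_0(B)$ by density. The ingredients are:
the representation through horizontal derivatives of the fundamental solution (homogeneous of degree $1-Q$);
Hedberg's splitting $I_1f\lesssim r\,Mf+r^{1-Q/2}\|f\|_{L^2}$, optimized at $r=(\|f\|_{L^2}/Mf)^{2/Q}$;
and the identity $2^*(1-\frac{2}{Q})=2$.
For the outer part you need $\int_{\hn\setminus B_r(\xi)}|\eta^{-1}\circ\xi|_{\hn}^{-(2Q-2)}\,d\eta\le Cr^{2-Q}$. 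This is Lemma~\ref{L2.3} with the integral taken over the complement of the ball; as printed, that lemma integrates over $B_\rho$, which is a typo.

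You are also right that the statement is false as printed for general $u\in W^{1,2}(\Omega)$: take $u\equiv 1$. Your corrected form $\|u\|_{L^{2^*}(B)}\le C\|\nabla_{\hn}u\|_{L^2(B)}+|B|^{-1/Q}\|u\|_{L^2(B)}$, with $|B|^{-1/Q}\simeq r^{-1}$, has exactly the shape of the lower-order term $R^{-2}\iint|u|^2$ in Theorem~\ref{T2.10}.

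One correction: your closing claim that the paper needs only Step 2 is not accurate. Theorem~\ref{T2.10} is stated for functions without compact support. It is applied in (\ref{eq4.9}), (\ref{eq4.11}) and (\ref{eq4.13}) to truncations $(u-g-k)_+$ with no spatial cutoff, which is also why Lemma~\ref{L2.9} invokes an extension theorem. So the ball version of your Step 3 is what the paper actually relies on, and that is exactly the step you only sketch.

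Of your two routes for Step 3, they need different geometric conditions:
the extension route needs the Kor\'anyi ball to be a uniform ($(\varepsilon,\delta)$, indeed NTA) domain, which holds because its boundary is smooth and $\hn$ has step two;
the chaining/truncation route needs a John condition to keep the same ball on the right-hand side.
A John condition alone does not yield an extension operator, so in your first bullet the relevant class is NTA/uniform, not John.
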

The next result shows that $W^{1,2}(\Omega)$ is contained in $W^{s,2}(\Omega)$ for any bounded set $\Omega\subset\hn$. This result is a direct consequence of the result \cite[Theorem 2.4]{CTR24_18}.
\begin{lemma}(\cite[Lemma 1.1]{Gar25_19}, \cite[Theorem 2.4]{CTR24_18})\label{L2.8}
    Let $\Omega$ be a bounded domain in $\hn$ and $0<s<1$ and $u\in W_0^{1,2}(\Omega)$. Then there exists a constant $C=C(Q,s,\Omega)>0$ such that
    \begin{align*}
        \iint_{\Omega\times\Omega}\frac{|u(\xi)-u(\eta)|^2}{\dist[\xi]{\eta}^{Q+2s}}\,d\xi d\eta\leq C\int_{\Omega}|\nabla_{\hn}u(\xi)|^2\,d\xi.
    \end{align*}
\end{lemma}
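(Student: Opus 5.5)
Lemma \cref{L2.8} asserts that for $u\in W^{1,2}_0(\Omega)$ the Gagliardo seminorm over $\Omega\times\Omega$ is controlled by $\|\nabla_{\hn}u\|_{L^2(\Omega)}^2$. My plan is to extend $u$ by zero outside $\Omega$, so that $u\in W^{1,2}(\hn)$ with $\|\nabla_{\hn}u\|_{L^2(\hn)}=\|\nabla_{\hn}u\|_{L^2(\Omega)}$, and then enlarge the domain of integration to $\hn\times\hn$. By density it suffices to treat $u\in C_0^\infty(\Omega)$ and pass to the limit with Fatou's lemma. Substituting $\eta=\xi\circ h$ (left translation preserves Haar/Lebesgue measure) gives
\begin{align*}
\iint_{\hn\times\hn}\frac{|u(\xi)-u(\eta)|^2}{|\eta^{-1}\circ\xi|_{\hn}^{Q+2s}}\,d\xi\,d\eta=\int_{\hn}\frac{\|u(\cdot\circ h)-u\|_{L^2(\hn)}^2}{|h|_{\hn}^{Q+2s}}\,dh .
\end{align*}
I then split the $h$-integral into $|h|_{\hn}<1$ and $|h|_{\hn}\ge 1$.

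For $|h|_{\hn}\ge1$, I use the bound $\|u(\cdot\circ h)-u\|_{L^2}^2\le 4\|u\|_{L^2(\Omega)}^2$ together with the tail integral $\int_{|h|\ge1}|h|^{-Q-2s}\,dh\le C(Q,s)$, which follows by the dyadic/homogeneity argument behind \cref{L2.3}. To bring this back to the gradient, I apply a Poincaré inequality for $W^{1,2}_0(\Omega)$ functions. One route is to take a ball $B\supset\Omega$ of radius comparable to $\mathrm{diam}\,\Omega$ and use \cref{T2.7}, since zero extension keeps $u\in W^{1,2}(\hn)$. The Sobolev inequality combined with Hölder on the bounded set $\Omega$ then yields $\|u\|_{L^2(\Omega)}\le C|\Omega|^{1/Q}\|\nabla_{\hn}u\|_{L^2}$. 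Alternatively, I can use \cref{T2.6} on a larger ball on which $u$ vanishes on a set of positive proportion. This is where the dependence of $C$ on $\Omega$ enters.

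For $|h|_{\hn}<1$, the key step is the horizontal difference estimate $\|u(\cdot\circ h)-u\|_{L^2}\le C|h|_{\hn}\|\nabla_{\hn}u\|_{L^2}$. I would prove it by connecting $0$ to $h$ with a horizontal (Chow/Carnot) path of length $\lesssim|h|_{\hn}$, for instance a concatenation of a bounded number of segments along the left-invariant fields $X_j,Y_j$. Along each segment the fundamental theorem of calculus applies, and because the fields are left-invariant, Minkowski's inequality gives the $L^2$ bound. Once this estimate is in hand, the region contributes $C\|\nabla_{\hn}u\|_{L^2}^2\int_{|h|<1}|h|_{\hn}^{2-Q-2s}\,dh$, and this integral is finite since $2-2s>0$, again by a homogeneous-norm polar computation.

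The main obstacle I expect is the translation estimate in the non-commutative setting. The horizontal-path decomposition has to be arranged so that the translation acts on the correct side, consistent with $\eta^{-1}\circ\xi$ and the left-invariance of $\nabla_{\hn}$. The number and length of the segments must also be bounded uniformly through dilation invariance. Specifically, I would write $h=\Phi_{|h|}(h')$ with $|h'|_{\hn}=1$, join $0$ to $h'$ by finitely many horizontal segments, and then scale. If this proves delicate, the statement is also a direct consequence of \cite[Theorem 2.4]{CTR24_18}, which can be invoked in its place.
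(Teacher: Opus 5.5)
Your argument is correct. The comparison is simple because the paper does not prove \cref{L2.8}. It quotes the lemma from \cite[Lemma 1.1]{Gar25_19} and calls it a direct consequence of \cite[Theorem 2.4]{CTR24_18}, i.e.\ of the containment $W^{1,2}(\Omega)\subset W^{s,2}(\Omega)$ for bounded $\Omega$. It leaves implicit the Poincaré step that replaces a full $W^{1,2}$-norm by $\|\nabla_{\hn}u\|_{L^2(\Omega)}$. You instead reprove the embedding directly, in the spirit of the Euclidean Di Nezza--Palatucci--Valdinoci argument, and you make the Poincaré step explicit.

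The details hold up. The substitution $\eta=\xi\circ h$ gives $|\eta^{-1}\circ\xi|_{\hn}=|h|_{\hn}$ and produces right translations $u(\cdot\circ h)$, which are exactly what the left-invariant fields $X_j,Y_j$ control. Three points you use tacitly should be stated explicitly.
\begin{itemize}
\item Lebesgue measure on $\hn$ is also right-invariant. You need this for $\|u(\cdot\circ h)-u\|_{L^2}^2\le 4\|u\|_{L^2}^2$ and for the telescoping along the segments.
\item No compactness is needed for the horizontal path. With the paper's group law, $h=(z,0)\circ(0,\Upsilon)$, where $(z,0)$ is a single horizontal segment. The element $(0,\Upsilon)$ is a commutator of four segments along $X_1,Y_1$, each of length $|\Upsilon|^{1/2}/2$. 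The total length is therefore at most $|z|+2|\Upsilon|^{1/2}\le 3|h|_{\hn}$.
\item For the Sobolev step, apply the global inequality $\|v\|_{L^{2^*}(\hn)}\le C\|\nabla_{\hn}v\|_{L^2(\hn)}$ for compactly supported $v$ to the zero extension. Do not rely on \cref{T2.7} as literally stated, since it cannot hold for nonzero constants.
\end{itemize}

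Your route gives an explicit constant. Splitting the $h$-integral at $|h|_{\hn}=|\Omega|^{1/Q}$ instead of at $1$ gives $C=C(Q,s)\,|\Omega|^{(2-2s)/Q}$, which scales correctly under the dilations $\Phi_\lambda$. The citation in the paper buys only brevity.
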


Next all the theorems and lemmas will be useful for the result {\em Parabolic Sobolev Inequality}. 
\begin{lemma}\label{L2.9}
    Let, $p>q>1$ satisfying $r<2^*$. Let, $\theta\in (0,1)$ be such that 
    \begin{align}\label{eq2.5}
        \theta\left(\frac{1}{2}-\frac{1}{Q}\right)+\frac{1-\theta}{q}=\frac{1}{r}.
    \end{align}
    Then there exists a positive constant $C=C(Q,q,p,\theta)$ such that
    \begin{align*}
        \|u\|_{L^r(B)}\leq C\|u\|_{L^q(B)}^{1-\theta}\|u\|_{W^{1,2}(B)}^\theta.
    \end{align*}
\end{lemma}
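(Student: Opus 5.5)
The inequality is a Gagliardo--Nirenberg type interpolation on the ball $B$. The plan is to interpolate the $L^r$ norm between $L^q$ and $L^{2^*}$ using Hölder's inequality, and then bound the $L^{2^*}$ norm by the $W^{1,2}$ norm through a Sobolev inequality on balls. The hypotheses are a little garbled; we read them as $1< q< r< 2^*$, with $\theta\in(0,1)$ fixed by \eqref{eq2.5}. Since $\frac12-\frac1Q=\frac{1}{2^*}$, relation \eqref{eq2.5} reads
\begin{align*}
\frac{1}{r}=\frac{\theta}{2^*}+\frac{1-\theta}{q}.
\end{align*}

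\textbf{Step 1 (Hölder interpolation).} Write $|u|^r=|u|^{\theta r}|u|^{(1-\theta)r}$ and apply Hölder's inequality with the conjugate exponents $\frac{2^*}{\theta r}$ and $\frac{q}{(1-\theta)r}$. These are admissible exactly because of \eqref{eq2.5}. This gives the standard log-convexity of $L^p$ norms:
\begin{align*}
\|u\|_{L^r(B)}\leq \|u\|_{L^q(B)}^{1-\theta}\,\|u\|_{L^{2^*}(B)}^{\theta}.
\end{align*}
This step involves no geometry of $\hn$.

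\textbf{Step 2 (Sobolev bound on the ball).} It remains to show $\|u\|_{L^{2^*}(B)}\leq C\|u\|_{W^{1,2}(B)}$, where $\|u\|_{W^{1,2}(B)}$ is understood to include the $L^2$ part. \cref{T2.7} as stated controls the $L^{2^*}$ norm by the gradient alone. That is only valid for functions vanishing on the boundary, or up to subtracting constants, so we do not apply it directly to $u$. Instead we argue as follows.
\begin{itemize}
\item Write $u=(u-u_B)+u_B$.
\item Bound $\|u_B\|_{L^{2^*}(B)}=|u_B|\,|B|^{1/2^*}\leq |B|^{\frac{1}{2^*}-\frac12}\|u\|_{L^2(B)}$.
\item Bound the oscillating part $u-u_B$ by the Sobolev--Poincaré inequality on Heisenberg balls: $\|u-u_B\|_{L^{2^*}(B)}\leq C\|\nabla_{\hn}u\|_{L^2(B)}$. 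This is Jerison's result behind \cref{T2.6}, whose full form gives the $L^{2^*}$--$L^2$ Sobolev--Poincaré estimate; equivalently, it is \cref{T2.7} applied to the mean-zero function.
\end{itemize}

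An alternative to Step 2 is to use a cutoff extension. Extend $u$ from $B$ to $W^{1,2}_0$ of a larger ball and apply \cref{T2.7}. This needs an extension operator on Heisenberg balls, which is available because gauge balls are NTA/uniform domains. However, it is more delicate, so we prefer the Poincaré route.

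\textbf{Step 3 (combining).} Insert the bound from Step 2 into Step 1. The constant then depends on $Q,q,r,\theta$ and also on the radius of $B$, through $|B|^{\frac{1}{2^*}-\frac12}$. For a scale-invariant statement one would work on a unit ball and rescale using the dilations $\Phi_\lambda$, with the $L^2$ term weighted by $\rho^{-1}$. We will note this dependence on the radius, or normalize to fixed balls, as the later uses in the paper require.

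\textbf{Main obstacle.} The only nontrivial point is Step 2: getting a full-norm Sobolev embedding on a ball, rather than the gradient-only form of \cref{T2.7}, and tracking how the constant depends on the ball. We resolve it with the Poincaré decomposition above.
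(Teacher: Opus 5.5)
Your proof is correct; Step 1 is the standard H\"older interpolation, common to both arguments, but your Step 2 takes a different route from the paper.

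\textbf{The paper's route.} Following Theorems C.2--C.3 of \cite{KT25_6} with $s=1$, the paper obtains $\|u\|_{L^{2^*}(B)}\le C\|u\|_{W^{1,2}(B)}$ in two moves. It extends $u$ off the gauge ball using the extension theorem from Nhieu's thesis \cite{Nhi96_50}. It then applies the Sobolev inequality of Theorem \ref{T2.7} to the extended function. That theorem is only meaningful for compactly supported or zero-trace functions; as stated it fails for constants.

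\textbf{Your route.} You avoid the extension theorem by writing $u=(u-u_B)+u_B$ and using the $L^{2^*}$--$L^2$ Sobolev--Poincar\'e inequality on $B$ itself. This needs less geometry: only that gauge balls are John domains, rather than the uniform-domain property behind extension. You should cite it properly, though. Theorem \ref{T2.6} is only the $L^p$--$L^p$ inequality, so take the $L^{2^*}$ version from the Sobolev--Poincar\'e literature, e.g.\ Lu's endpoint result \cite{Lu94_16} or the Haj\l{}asz--Koskela self-improvement \cite{HaK00_26}. Also drop the phrase ``Theorem \ref{T2.7} applied to the mean-zero function'', since that theorem as written does not cover this case.

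\textbf{What your route buys.} It makes the scaling explicit:
$\|u\|_{L^{2^*}(B_\rho)}\le C(Q)\bigl(\|\nabla_{\hn}u\|_{L^2(B_\rho)}+\rho^{-1}\|u\|_{L^2(B_\rho)}\bigr)$.
This is exactly the $\rho^{-2}\|u\|_{L^2}^2$ weighting that Theorem \ref{T2.10} and Remark \ref{R2.11} later use.

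\textbf{On the radius caveat.} Your caveat is necessary, not an artifact of your method. With the inhomogeneous $W^{1,2}$ norm, the stated inequality cannot hold with $C$ independent of $B$. For $u\equiv 1$ on $B_\rho$, the ratio of the left side to the right side is of order $\rho^{-\theta}$. The paper's extension argument carries the same dependence, hidden in the norm of the extension operator.
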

\begin{proof}
    This result can be proved similarly to Theorem C.2 and Theorem C.3 of \cite[Appendix]{KT25_6} replacing $s$ with $1$ as we are considering $u\in W^{1,2}(B)$. For this purpose, we need to consider Sobolev embedding result (Theorem \ref{T2.7}). Moreover, note that, in that context we need an extension result for Sobolev spaces in Heisenberg group which can be found in the thesis of Nhieu \cite{Nhi96_50}.
\end{proof}

\begin{theorem}(Parabolic Sobolev Inequality)\label{T2.10}
    Let $\xi_0\in\hn$, $0<T1<T_2$, $R>0$ and $I=(T_1,T_2)$. Then for every $$u\in L^2_{\textup{loc}}(I;W^{1,2}_{\textup{loc}}(B_R(\xi_0))\cap L^\infty_{\textup{loc}}(I;L^2_{\textup{loc}}(B_R(\xi_0))),$$ there exists a positive constant $C=C(Q)$ such that
    \begin{align*}
        \int_I\int_{B_R(\xi_0)}|u(\xi,t)|^{2\left(1+\frac{2}{Q}\right)}d\xi dt\leq C&\left(\int_I\int_{B_R(\xi_0)}|\nabla_{\hn}u(\xi,t)|^2\,d\xi dt+R^{-2}\int_I\int_{B_R(\xi_0)}|u(\xi,t)|^2\,d\xi dt\right)\\&\quad \times\left(\esssup_{t\in I}\int_{B_R(\xi_0)}|u(\xi,t)|^2\,d\xi\right)^\frac{2}{Q}.
    \end{align*}
\end{theorem}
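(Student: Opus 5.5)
The plan is the standard slicewise argument: combine the interpolation inequality of \cref{L2.9} at each fixed time with a Poincaré-type control of the full $W^{1,2}$ norm, and then integrate in time. The exponents are chosen so that the gradient term appears to the first power. Concretely, take $q=2$ and $r=2(1+\frac{2}{Q})$ in \eqref{eq2.5}. Since
\begin{align*}
\frac{1}{r}=\frac{Q}{2(Q+2)}, \qquad \theta\Big(\frac12-\frac1Q\Big)+\frac{1-\theta}{2}=\frac12-\frac{\theta}{Q},
\end{align*}
this forces $\frac{\theta}{Q}=\frac12-\frac{Q}{2(Q+2)}=\frac{1}{Q+2}$, i.e. $\theta=\frac{Q}{Q+2}\in(0,1)$. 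Moreover $r<2^*$ holds for $Q\ge 4$, which is always the case since $Q=2N+2$. Then $r\theta=2$ and $r(1-\theta)=\frac{4}{Q}$.

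First I reduce by scaling to $R=1$. Via the dilation $\Phi_R$ and the left translation by $\xi_0$, both sides of the claimed inequality scale identically. The left side carries a factor $R^{Q+2}$ in the $d\xi\,dt$ measure once time is also rescaled by $R^2$. On the right, the gradient term and the $R^{-2}$ term give $R^{Q}$ (times $R^2$ from time), and the sup term gives $(R^{Q})^{2/Q}=R^2$. The total is therefore $R^{Q+2}\cdot R^{2}$ on both sides after bookkeeping with $u\mapsto u\circ\Phi_R$. This scaling invariance is exactly why the lower-order term must come with the factor $R^{-2}$. So it suffices to prove the inequality on the unit ball $B=B_1(\xi_0)$ with a constant depending only on $Q$; the constant in \cref{L2.9} then depends only on $Q$ because $q,p,\theta$ are fixed in terms of $Q$.

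For a.e.\ $t$, \cref{L2.9} applied to $u(\cdot,t)$ gives
\begin{align*}
\int_B|u(\xi,t)|^{r}\,d\xi\le C\,\|u(\cdot,t)\|_{W^{1,2}(B)}^{2}\,\|u(\cdot,t)\|_{L^2(B)}^{4/Q}\le C\Big(\int_B|\nabla_{\hn}u|^2\,d\xi+\int_B|u|^2\,d\xi\Big)\Big(\esssup_{\tau\in I}\int_B|u(\xi,\tau)|^2\,d\xi\Big)^{2/Q}.
\end{align*}
Integrating over $t\in I$ yields the claim for $R=1$, and scaling back restores $R^{-2}\int_I\int_B|u|^2$. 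Here $\|u\|_{W^{1,2}}^2$ is understood as $\|\nabla_{\hn}u\|_2^2+\|u\|_2^2$. If the statement is read with local spaces, I would apply the argument on compactly contained subcylinders, or simply assume the right-hand side is finite, in which case $u(\cdot,t)\in W^{1,2}(B)$ for a.e.\ $t$.

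The main obstacle is \cref{L2.9} itself on a ball, since it needs a Sobolev embedding for $W^{1,2}(B)$ without zero boundary values, whereas \cref{T2.7} is stated with only the gradient on the right. I would handle this as the paper suggests, through an extension operator for Heisenberg balls (Nhieu) followed by the global Sobolev inequality. Alternatively, I would combine \cref{T2.6} with \cref{T2.7} applied to $u-u_B$ and control $|u_B|$ by $\|u\|_{L^2(B)}$; this produces the inhomogeneous term that becomes $R^{-2}\int|u|^2$ after scaling.
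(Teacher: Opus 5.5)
Your proof is correct and follows essentially the paper's route. The paper simply defers to \cite[Lemma 2.3]{FSZ22_51}, which is a slicewise H\"older/Sobolev(--Poincar\'e) interpolation on the ball followed by integration in time; your use of \cref{L2.9} with $q=2$, $r=2(1+\frac{2}{Q})$, $\theta=\frac{Q}{Q+2}$ does the same, as does your alternative via Sobolev--Poincar\'e for $u-u_B$ with $|u_B|\le |B|^{-1/2}\|u\|_{L^2(B)}$, and reducing to $R=1$ first rightly avoids needing \cref{L2.9} with a radius-independent constant, which cannot hold. One bookkeeping slip in the scaling step: both sides scale like $R^{Q+2}$, since the gradient term gives $R^{Q}\cdot R^{2}\cdot R^{-2}$ and the supremum term gives $R^{2}$, not $R^{Q+2}\cdot R^{2}$; your conclusion that the inequality is dilation invariant is still right.
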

\begin{proof}
    We omit this proof as it is similar to the proof of \cite[Lemma 2.3]{FSZ22_51}.
\end{proof}

\begin{remark}\label{R2.11}
    It holds that for two positive numbers $r$ and $m$ satisfying
   \begin{align}\label{eq2.6}
       \frac{1}{r}+\frac{1}{m}\left(\frac{2}{Q}+\frac{2}{q}-1\right)=\frac{1}{q}
   \end{align}
    with admissible range 
    \begin{align*}
        r\in \left[q,\frac{2Q}{Q-2}\right],\,\, m\in[2,\infty).
    \end{align*}
    Then, there exists a constant $C=C(Q,r,m)$ such that
    \begin{align}\label{eq2.7}
       \|f\|^{\frac{Qrm}{2r+Qm}}_{L^{r,m}(B_R(\xi_0)\times I)} \leq C&\Bigg( \int_I[f(\cdot,t)]^2_{W^{1,2}(B_R(\xi_0))}\,dt + R^{-2} \|f\|^2_{L^2(I;L^2(B_R(\xi_0))} \nonumber\\
        &\quad+\sup_{t\in I} \|u(\cdot,t)\|^q_{L^q(B_R(\xi_0))}\Bigg).
    \end{align}
\end{remark}
\begin{proof}
    This result follows the outline of the proof of \cite[Theorem 1.10]{KT25_6}, which is valid for fractional Sobolev spaces. Since $Q>2$, then for $(r,m)\in \left(\frac{2Q}{Q-2},2\right)$, (\ref{eq2.7}) follows from Theorem \ref{T2.7}. Otherwise, if we take $\theta=\frac{2}{m}$, then (\ref{eq2.5}) holds due to (\ref{eq2.6}). Then proceeding in a similar manner to \cite[Theorem 1.10]{KT25_6}, we obtain the required inequality. 
\end{proof}
\section{Energy Estimate}\label{S3}
\begin{lemma} \textup{(Caccioppoli Inequality I)}\label{L3.1}
    Let $(\xi_0,t_0)\in\Omega_T$ and consider some positive real numbers $l,r,R$ satisfying $0<l<r<R$. Let $u$ be local weak sub(super)-solution to the equation (\ref{eq1.1}) in $\Omega_T$ and take ball $B_r(\xi_0)$ such that $\bar{B}_r(\xi_0)\subset \Omega$. Consider cut-off functions $\nu\in C^\infty(\mathbb{R})$ with $\nu(t)=0$ on $t\leq t_0-\theta_2$ with $\theta_2>0$ satisfying $(t_0-\theta_2,t_0)\subset (t_0-R^2,t_0)\subset (0,T)$ and  $\psi\in C^\infty_0(B_\frac{r+l}{2}(\xi_0))$ with $0\leq \psi\leq 1$ in $\hn$. Then there exists a constant c=c(\textup{\texttt{data}}) such that, 
   \begin{align}\label{C'ppoli-I}
       &\esssup_{t_0-\theta_2<t<t_0}\int_{B_r}(w^2_\pm\psi^2\nu^2)(\xi,t)d\xi+\int_{t_0-\theta_2}^{t_0}\int_{B_r}(w_\pm\psi^2\nu^2)(\xi,t)\Bigg\{\int_{\hn}\frac{w_\mp(\eta,t)}{|\eta^{-1}\circ \xi|^{Q+2s}_{{\hn}}}d\eta\Bigg\}d\xi dt\nonumber\\
    &\quad+\int_{t_0-\theta_2}^{t_0}\iint_{B_r\times B_r}\frac{|(w_\pm\psi\nu)(\xi,t)-(w_\pm\psi\nu)(\eta,t)|^2}{|\eta^{-1}\circ \xi|^{Q+2s}_{\mathbb{H}^N}}d\xi d\eta dt\nonumber\\
   & \quad\quad + \int_{t_1}^{t_2}\int_{B_r(\xi_0)}(|\nabla_{\hn}w_\pm|^2\psi^2\nu^2)(\xi,t)\;d\xi\,dt\nonumber\\
    &\quad\quad\quad \leq c\Bigg(\frac{r^{1-s}}{r-l}\Bigg)^2||w_\pm||^2_{L^2(t_0-\theta_2,t_0;L^2(B_r(\xi_o)))}+c\int_{t_0-\theta_2}^{t_0}\int_{B_r(\xi_0)}(w_\pm^2(\nabla_{\hn}\psi)^2\nu^2)(\xi,t)\;d\xi\;dt\tag{C'ppoli-I}\\
    &\quad\quad\quad\quad +\int_{t_0-\theta_2}^{t_0}\int_{B_r}(w_\pm\psi^2\nu^2)(\xi,t)\Bigg\{\int_{\mathbb{H}^N\setminus B_r}\frac{w_{\pm}(\eta,t)}{|\eta^{-1}\circ \xi|^{Q+2s}_{{\hn}}}dy\Bigg\}d\xi dt\nonumber\\
    &\quad\qquad\qquad\mp\mathfrak{C}\int_{t_0-\theta_2}^{t_0}\int_{B_r}R^{-2}\textup{Tail}(u_\pm(t);R,\xi_0)(w_\pm\psi^2\nu^2)(\xi,t)d\xi dt\nonumber\\
    &\quad\qquad\qquad\qquad +\int_{t_0-\theta_2}^{t_0}\int_{B_r}(w^2_\pm\psi^2\nu)(\xi,t)\partial_t\nu(t)d\xi dt,\nonumber
   \end{align}
   where $w:=u-g-k$ with a level $k\in\mathbb{R}$ and $g(t)=\mathfrak{C}R^{-2}\int_{t_0-R^{2}}^t\textup{Tail }(u_\pm(s);R,\xi_0)\,ds.$
\end{lemma}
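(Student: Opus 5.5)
We test the weak formulation \eqref{wf} with $\varphi=\pm w_\pm\psi^2\nu^2$, where $w=u-g-k$. Since $\varphi$ is not admissible in time, we first regularize in time. We replace $u$ by its mollification $u^\varepsilon$ from \eqref{eq2.1}, take $\varphi^\varepsilon=\pm(u^\varepsilon-g-k)_\pm\psi^2\nu^2$, and pass to the limit $\varepsilon\to0$ at the end using \cref{L2.1} and \cref{L2.2}. Because $g$ depends only on $t$ and $\psi$ is compactly supported in $B_{\frac{r+l}{2}}(\xi_0)$, this is a legitimate test function on $\mathcal K=\bar B_r$. Observe that $\partial_t u=\partial_t w+g'(t)$ with $g'(t)=\mathfrak C R^{-2}\textup{Tail}(u_\pm(t);R,\xi_0)\ge0$. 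The parabolic term then gives
\[
\pm\int\!\!\int \partial_t u\, w_\pm\psi^2\nu^2=\tfrac12\int_{B_r}w_\pm^2\psi^2\nu^2\Big|^{t}-\int\!\!\int w_\pm^2\psi^2\nu\,\partial_t\nu \pm\mathfrak C\int\!\!\int R^{-2}\textup{Tail}(u_\pm(t);R,\xi_0)w_\pm\psi^2\nu^2 .
\]
Moving the last piece to the right produces the $\mp\mathfrak C$ tail term, and the $\partial_t\nu$ term appears as stated. We then take the ess sup over $t\in(t_0-\theta_2,t_0)$. Here $\nu(t_0-\theta_2)=0$, so there is no initial contribution.

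\textbf{Local term.} Since $\nabla_{\hn}w=\nabla_{\hn}u$, we have
\[
\nabla_{\hn}u\cdot\nabla_{\hn}(w_\pm\psi^2)\nu^2=|\nabla_{\hn}w_\pm|^2\psi^2\nu^2+2\psi\nu^2w_\pm\nabla_{\hn}w_\pm\cdot\nabla_{\hn}\psi .
\]
By Young's inequality the cross term is absorbed, which leaves $c\int\!\!\int w_\pm^2|\nabla_{\hn}\psi|^2\nu^2$ on the right.

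\textbf{Nonlocal term.} We split $\hn\times\hn$ into $B_r\times B_r$ and twice $B_r\times(\hn\setminus B_r)$. On $B_r\times B_r$ we use the standard algebraic inequality (as in Di Castro--Kuusi--Palatucci). Since $g$ is constant in space, $u(\xi)-u(\eta)=w(\xi)-w(\eta)$, and
\[
(w(\xi)-w(\eta))\big(w_\pm\psi^2(\xi)-w_\pm\psi^2(\eta)\big)\ge \tfrac12|w_\pm\psi(\xi)-w_\pm\psi(\eta)|^2 -c\max\{w_\pm(\xi),w_\pm(\eta)\}^2|\psi(\xi)-\psi(\eta)|^2 +w_\pm(\xi)\psi^2(\xi)w_\mp(\eta)+w_\pm(\eta)\psi^2(\eta)w_\mp(\xi).
\]
This gives the Gagliardo seminorm term and the good $w_\mp$ term on the left, restricted to $B_r$. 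The error is bounded using $|\psi(\xi)-\psi(\eta)|\le c|\eta^{-1}\circ\xi|_{\hn}/(r-l)$ together with \cref{L2.3}:
\[
\int_{B_r}\frac{\min\{1,\dist[\xi]{\eta}^2/(r-l)^2\}}{\dist[\xi]{\eta}^{Q+2s}}\,d\eta\le c\,\frac{r^{2-2s}}{(r-l)^2},
\]
up to harmless factors. Off the diagonal, with $\xi\in B_r$ and $\eta\notin B_r$, we have
\[
(w(\xi)-w(\eta))w_\pm(\xi)\psi^2(\xi)\ge w_\pm(\xi)\psi^2(\xi)\big(w_\mp(\eta)-w_\pm(\eta)\big)
\]
for the appropriate sign. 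This yields the remaining part of the $w_\mp$ integral over $\hn\setminus B_r$ on the left, and the term $\int w_\pm\psi^2\nu^2\int_{\hn\setminus B_r}w_\pm(\eta)\dist[\xi]{\eta}^{-Q-2s}$ on the right.

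\textbf{Main obstacle.} The delicate step is the time regularization combined with the time-dependent level $g$. We must check that the mollified commutator terms vanish in the limit, and that $g'$ is well defined a.e. Both follow because $\textup{Tail}(u_\pm(\cdot))\in L^1$ by the hypothesis $u\in L^1_{\rm loc}(0,T;L^1_{2s})$, so $g$ is absolutely continuous. A convexity argument, $\partial_t u^\varepsilon\,(u^\varepsilon-g-k)_\pm=\tfrac12\partial_t(\cdot)_\pm^2 \pm g'(\cdot)_\pm$, then applies before letting $\varepsilon\to0$.
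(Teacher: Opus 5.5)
Your proposal is correct and follows essentially the same route as the paper. Both use time mollification and test with $\pm w_\pm\psi^2\nu^2$, and both handle the new local term by the product rule, Young's inequality and absorption, exactly as in the paper's estimate of $I^\varepsilon$. The paper imports the parabolic part with the time-dependent level $g$ and the splitting of the nonlocal term from \cite[Lemma 3.1]{KT25_6} with $p=2$; you fill these in along the standard lines. One point to note: your bound on the $|\psi(\xi)-\psi(\eta)|^2$ error term uses $|\nabla_{\hn}\psi|\le C/(r-l)$, as the paper's proof does, and this hypothesis should be added because the statement omits it.
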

\begin{proof}
    Most of the proof of this result is done in the respective part of \cite[Lemma 3.1]{KT25_6} by changing $p=2$. The only part which we need to estimate rigorously is the third term of the left hand side of (\ref{wf}). For this reason we only present all the estimations of the concerned part only. Moreover, we prove this result only for the subsolution case since all the processes for a supersolution are same.  Before proceeding, we need to borrow some notations and constructions from the proof of \cite[Lemma 3.1]{KT25_6}. These are as follows
    \begin{itemize}
        \item Let us take $t_1=t_0-\theta_2$ and take an arbitrary $t_2$ in $(t_0-\theta_2,t_0)$.
        \item For every $t\in (0,T)$, choose $\phi=\phi(x,t)$ whose spacial support contained in $B_\frac{r+l}{2}(\xi_0)$. 
    \end{itemize}
Using $\phi^\varepsilon(\xi,t)$ as a test function in the weak subsolution formula (\ref{wf})
\begin{align}\label{eq3.1}
    &\int_{B_r(\xi_0)}u(\xi,t_2)\phi^\varepsilon(\xi,t_2)d\xi-\int_{B_r(\xi_0)}u(\xi,t_1)\phi^\varepsilon(\xi,t_1)d\xi\nonumber\\
    &\quad\int_{t_1}^{t_2}\int_{B_r(\xi_0)}u(\xi,t)\phi_t^\varepsilon(\xi,t)d\xi\,dt+\underbrace{\int_{t_1}^{t_2}\int_{B_r(\xi_0)}\nabla_{\hn}u(\xi,t)\nabla_{\hn}\phi^\varepsilon(\xi,t)\,d\xi\,dt}_{I^\varepsilon} \nonumber\\
             &\qquad+\int_{t_1}^{t_2}\iint_{\hn\times\hn}\frac{1}{2}(u(\xi,t)-u(\eta,t))(\phi^\varepsilon(\xi,t)-\phi^\varepsilon(\eta,t))K(\xi,\eta,t)\;d\eta\,d\xi \,dt\leq 0.
\end{align}
As we discussed, we will estimate only $I^\varepsilon$. Consider the smooth cut-off function $\psi$ on $B_r(\xi_0)$ such that $0\leq \psi\leq 1$ in $\hn$ and 
\begin{align*}
    \begin{cases}
        & \mbox{supp }\psi= B_\frac{r+l}{2}(\xi_0),\\
        &\psi\equiv 1\;\;\;\mbox{ in } B_l(\xi_0),\\
        & \nabla_{\hn}\psi\leq \frac{C}{r-l}.
    \end{cases}
\end{align*}
Next, we construct text function $\phi(\xi,t)$ as
$$ \phi(\xi,t)=w^\varepsilon_+(\xi,t)\psi^2(\xi)\nu^2(t)=w^\varepsilon_+\psi^2\nu^2(\xi,t),$$
where $ w^\varepsilon_+(\xi,t):=(u^\varepsilon(\xi,t)-g^\varepsilon(t)-k)_+,\;\;w_+(\xi,t):=(u(\xi,t)-g(t)-k)_+$ and $g$ is denoted by $g(t):=\mathfrak{C}R^{-2}\int_{t_0-R^{2}}^t\textup{Tail}(u_+(\varsigma);R,\xi_0)d\varsigma$. Further, for simplicity we denote
\begin{align*}
    \psi^i(\xi)\nu^j(t)\;\mbox{ as }\,\psi^i\nu^j(\xi,t)\,\mbox{ for some non-negative real numbers }i,j.
\end{align*}
Using this test function, $I^\varepsilon$ becomes
\begin{align}\label{eq3.2}
    I&=\int_{t_1}^{t_2}\int_{B_r(\xi_0)}\nabla_{\hn}u(\xi,t)\nabla_{\hn}(w^\varepsilon_+\psi^2\nu^2)^\varepsilon(\xi,t)\,d\xi\,dt\nonumber\\
    &=\int_{t_1}^{t_2}\int_{B_r(\xi_0)}\nabla_{\hn}u(\xi,t)((\nabla_{\hn}w^\varepsilon_+)\psi^2\nu^2)^\varepsilon(\xi,t)+2\int_{t_1}^{t_2}\int_{B_r(\xi_0)}\nabla_{\hn}u(\xi,t)(w^\varepsilon_+\psi(\nabla_{\hn}\psi)\nu^2)^\varepsilon(\xi,t)\nonumber\\
    &\geq \underbrace{\int_{t_1}^{t_2}\int_{B_r(\xi_0)}\nabla_{\hn}u(\xi,t)((\nabla_{\hn}w^\varepsilon_+)\psi^2\nu^2)^\varepsilon(\xi,t)\;d\xi\,dt}_{D_1^\varepsilon}-2\underbrace{\int_{t_1}^{t_2}\int_{B_r(\xi_0)}\nabla_{\hn}u(\xi,t)(w^\varepsilon_+\psi(\nabla_{\hn}\psi)\nu^2)^\varepsilon(\xi,t)\;d\xi\,dt.}_{D_2^\varepsilon}
\end{align}
{\bf Passage to limit as $\varepsilon\rightarrow 0$ : } Next, we estimate $D_i^\varepsilon$ for $i=1,2$ successively. Begin with $D_1^\varepsilon$, using H\"older inequality,
\begin{align}\label{eq3.3}
    \int_{t_1}^{t_2}&\int_{B_r(\xi_0)}\nabla_{\hn}u(\xi,t)((\nabla_{\hn}w^\varepsilon_+)\psi^2\nu^2)^\varepsilon(\xi,t)\;d\xi\,dt- \int_{t_1}^{t_2}\int_{B_r(\xi_0)}\nabla_{\hn}u(\xi,t)((\nabla_{\hn}w_+)\psi^2\nu^2)(\xi,t)\;d\xi\,dt\nonumber\\
    &\leq \Bigg(\int_{t_1}^{t_2}\int_{B_r(\xi_0)}|\nabla_{\hn}u(\xi,t)|^2\,d\xi\,dt\Bigg)^{1/2}\Bigg(\int_{t_1}^{t_2}\int_{B_r(\xi_0)}|((\nabla_{\hn}w^\varepsilon_+)\psi^2\nu^2)^\varepsilon-((\nabla_{\hn}w_+)\psi^2\nu^2)|^2(\xi,t)\,d\xi\,dt\Bigg)^{1/2}\nonumber\\
   &\stackrel{Lemma 
   \,\ref{L2.1}}{\rightarrow} 0,\;\;\;\;\mbox{ as } \varepsilon\rightarrow 0
\end{align}

The second step makes sense since $\nabla_{\hn}u\in L^2(t_1,t_2;L^2(B_r(\xi_0))$. Next, by a similar argument, we have the following
\begin{align}\label{eq3.4}
    D_2^\varepsilon\rightarrow D_2:= \int_{t_1}^{t_2}\int_{B_r(\xi_0)}\nabla_{\hn}u(\xi,t)(w_+\psi(\nabla_{\hn}\psi)\nu^2)(\xi,t)\;d\xi\,dt.
\end{align}\\[2mm]
Combining results of (\ref{eq3.3}) and (\ref{eq3.4}) into (\ref{eq3.2}), we have for $\varepsilon\rightarrow 0$, 
\begin{align*}
    &\int_{t_1}^{t_2}\int_{B_r(\xi_0)}\nabla_{\hn}u(\xi,t)((\nabla_{\hn}w_+)\psi^2\nu^2)(\xi,t)\;d\xi\,dt\\
    &\qquad -2\int_{t_1}^{t_2}\int_{B_r(\xi_0)}\nabla_{\hn}u(\xi,t)(w_+\psi(\nabla_{\hn}\psi)\nu^2)(\xi,t)\;d\xi\,dt\leq I\longleftarrow I^\varepsilon
\end{align*}
Next, using the Young's inequality, we can rewrite last inequality as follows
\begin{align}\label{eq3.5}
    I&\geq \int_{t_1}^{t_2}\int_{B_r(\xi_0)}(|\nabla_{\hn}w_+|^2\psi^2\nu^2)(\xi,t)\;d\xi\,dt-\epsilon \int_{t_1}^{t_2}\int_{B_r(\xi_0)}(|\nabla_{\hn}w_+|^2\psi^2\nu^2)(\xi,t)\;d\xi\;dt\nonumber\\
    &\quad - C(\epsilon,2)\int_{t_1}^{t_2}\int_{B_r(\xi_0)}(w_+^2(\nabla_{\hn}\psi)^2\nu^2)(\xi,t)\;d\xi\;dt\nonumber\\
    & \geq \frac{1}{2} \int_{t_1}^{t_2}\int_{B_r(\xi_0)}(|\nabla_{\hn}w_+|^2\psi^2\nu^2)(\xi,t)\;d\xi\,dt-c\int_{t_1}^{t_2}\int_{B_r(\xi_0)}(w_+^2(\nabla_{\hn}\psi)^2\nu^2)(\xi,t)\;d\xi\;dt
\end{align}
with $\epsilon=\frac{1}{2}$. Hence we have the desired result.
\end{proof}
\begin{remark}\label{R3.2}
    A different form of the estimate in \ref{C'ppoli-I} is required in order to prove the Harnack inequality. This form can be obtained by some simple modifications to the proof of the Caccioppoli inequality in \cite{KT25_6}. We omit these modifications and simply state the required inequality.
    \begin{align}\label{eq3.6}
        &\int_{B_r}(w_\pm\psi\nu)^2(\xi,t)d\xi\Bigg|_{t_0-\theta_2}^{t_0}+\int_{t_0-\theta_2}^{t_0}\int_{B_r}(w_\pm\psi^2\nu^2)(\xi,t)\Bigg\{\int_{\hn}\frac{w_\mp(\eta,t)}{|\eta^{-1}\circ \xi|^{Q+2s}_{{\hn}}}d\eta\Bigg\}d\xi dt\nonumber\\
    &\quad+\int_{t_0-\theta_2}^{t_0}\iint_{B_r\times B_r}\frac{|(w_\pm\psi\nu)(\xi,t)-(w_\pm\psi\nu)(\eta,t)|^2}{|\eta^{-1}\circ \xi|^{Q+2s}_{\mathbb{H}^N}}d\xi d\eta dt\nonumber\\
   & \quad\quad + \int_{t_1}^{t_2}\int_{B_r(\xi_0)}(|\nabla_{\hn}w_\pm|^2\psi^2\nu^2)(\xi,t)\;d\xi\,dt\nonumber\\
    &\quad\quad\quad \leq C\int_{t_0-\theta_2}^{t_0}\iint_{B_r\times B_r}\max\{w_\pm^2(\xi,t),w^2_\pm(\eta,t)\}\frac{|\psi(\xi)-\psi(\eta)|^2}{|\eta^{-1}\circ \xi|^{Q+2s}_{\mathbb{H}^N}}\nu^2(t)\;d\xi d\eta dt\\
    &\quad\quad\quad\quad+C\int_{t_0-\theta_2}^{t_0}\int_{B_r(\xi_0)}(w_\pm^2(\nabla_{\hn}\psi)^2\nu^2)(\xi,t)\;d\xi dt\nonumber\\
    &\quad\quad\quad\quad\quad +\int_{t_0-\theta_2}^{t_0}\int_{B_r}(w_\pm\psi^2\nu^2)(\xi,t)\Bigg\{\int_{\mathbb{H}^N\setminus B_r}\frac{w_{\pm}(\eta,t)}{|\eta^{-1}\circ \xi|^{Q+2s}_{{\hn}}}dy\Bigg\}d\xi dt\nonumber\\
    &\qquad\qquad\qquad\mp\mathfrak{C}\int_{t_0-\theta_2}^{t_0}\int_{B_r}R^{-2}\textup{Tail}(u_\pm(t);R,\xi_0)(w_\pm\psi^2\nu^2)(\xi,t)d\xi dt\nonumber\\
    &\quad\qquad\qquad\qquad +\int_{t_0-\theta_2}^{t_0}\int_{B_r}(w^2_\pm\psi^2\nu)(\xi,t)\partial_t\nu(t)d\xi dt\nonumber
    \end{align}
\end{remark}\vskip 2mm
Next, we do further calculation on the tail term on the right hand side of the previous estimate (\ref{C'ppoli-I}). Mainly we split it into the {\em middle distance} term and {\em faraway} term. We omit the proof as it has been done in detail in Lemma 3.2 of \cite{KT25_6}. The only change is the additional terms on both sides due to the presence of the local operator.  
\begin{lemma} \textup{(Caccioppoli Inequality II)}\label{L3.3}
    Consider $(\xi_0,t_0)\in\Omega_T$. Let consider positive numbers $\sigma,r,l,R$ such that $0<\sigma R\leq l<\frac{l+r}{2}\leq\frac{(3+\sigma)R}{4}\leq R$. Consider   $B_r(\xi_0)$ such that $\bar{B}_r(\xi_0)\subset \Omega$ and take two non-negative functions $\psi\in C^\infty_0(B_\frac{r+l}{2}(\xi_0))$ satisfying $0\leq \psi\leq 1$ in $\hn$ and $\nu\in C^\infty (\mathbb{R})$ such that $\nu\equiv 0$ on $t\leq t_0-\theta_2$ where $\theta_2>0$ satisfies $(t_0-\theta_2,t_0)\subset (t_0-R^2,t_0)\subset (0,T)$. Let $u$ be a local super(sub)-solution to the problem (\ref{eq1.1}). Then there exists constants $c=c(\textup{\texttt{data}})$ and $\mathfrak{C}=\mathfrak{C}(\textup{\texttt{data,}},\sigma)$ such that
    \begin{align}\label{C'ppoli-II}
        &\esssup_{t_0-\theta_2<t<t_0}\int_{B_r(\xi_0)}(w_\pm\psi\nu)^2(\xi,t)d\xi + \int_{t_0-\theta_2}^{t_0}\int_{B_r(\xi_0)}(|\nabla_{\hn}w_\pm|\psi\nu)^2(\xi,t)\;d\xi\,dt\nonumber\\
			&\quad+\int_{t_0-\theta_2}^{t_0}\iint_{B_r(\xi_0)\times B_r(\xi_0)}\frac{|(w_\pm\psi\nu)(\xi,t)-(w_\pm\psi\nu)(\eta,t)|^2}{|\eta^{-1}\circ \xi|^{Q+2s}_{\hn}}\; d\xi\,d\eta\,dt\nonumber\\
			&\quad\quad	\leq c\Bigg(\frac{r^{1-s}}{r-l}\Bigg)^2\|w_\pm\|^2_{L^2(t_0-\theta_2,t_0;L^2(B(\xi_0,r)))}\tag{C'ppoli-II}\\
            &\quad\quad\quad +c\int_{t_0-\theta_2}^{t_0}\int_{B_r(\xi_0)}(w_\pm(\nabla_{\hn}\psi)\nu)^2(\xi,t)\;d\xi\;dt+\int_{t_0-\theta_2}^{t_0}\int_{B_r(\xi_0)}(w_\pm^2\psi^2\nu)(\xi,t)\partial_t\nu(t)\;d\xi\,dt\nonumber\\
			&\quad\quad\quad\quad+cR^{-2s} \left(\frac{R}{r-l}\right)^{Q+2s}\int_{t_0-\theta_2}^{t_0}\int_{B_r(\xi_0)}w_\pm(\xi,t)\psi^2(\xi)\nu^2(t)\Bigg(\fint_{B_R(\xi_0)} u_\pm(\eta,t)\,d\eta\Bigg)d\xi\,dt.\nonumber
    \end{align}
    where $w$ is defined as in (\ref{C'ppoli-I}).
\end{lemma}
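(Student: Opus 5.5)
We start from Caccioppoli Inequality I, \eqref{C'ppoli-I}, and only need to rework the two tail-type terms on its right-hand side; all other terms carry over unchanged. The local terms are the gradient term on the left and the $(\nabla_{\hn}\psi)^2$ term on the right, and they are untouched by the argument. The nonlocal energy and the $\partial_t\nu$ term are also unchanged. The positive term on the left involving $w_\mp$ is simply dropped.

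The key is the faraway term
\[
\int_{t_0-\theta_2}^{t_0}\int_{B_r}(w_\pm\psi^2\nu^2)(\xi,t)\Big\{\int_{\hn\setminus B_r}\frac{w_\pm(\eta,t)}{|\eta^{-1}\circ\xi|_{\hn}^{Q+2s}}\,d\eta\Big\}\,d\xi\,dt .
\]
First we use that $\xi\in\operatorname{supp}\psi\subset B_{\frac{r+l}{2}}$ and $\eta\notin B_r$. By the triangle inequality (constant $1$ for the standard norm), this gives $|\eta^{-1}\circ\xi|_{\hn}\ge \frac{r-l}{2r}\,|\eta^{-1}\circ\xi_0|_{\hn}$, so the kernel can be recentered at $\xi_0$ at the cost of a factor $(r/(r-l))^{Q+2s}$. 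Next we bound $w_\pm\le u_\pm$ outside $B_r$, which is valid because $g\ge0$ and $k$ is suitably signed, as in \cite{KT25_6}. We then split $\hn\setminus B_r=(B_R\setminus B_r)\cup(\hn\setminus B_R)$.

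On the middle region $B_R\setminus B_r$ we have $|\eta^{-1}\circ\xi_0|_{\hn}\ge r\ge \frac{r-l}{?}$. Using $|\eta^{-1}\circ\xi|_{\hn}\gtrsim r-l$ directly, we get
\[
\int_{B_R\setminus B_r}\frac{u_\pm(\eta,t)}{|\eta^{-1}\circ\xi|^{Q+2s}}\,d\eta \le c\,(r-l)^{-Q-2s}|B_R|\fint_{B_R}u_\pm(\eta,t)\,d\eta \le cR^{-2s}\Big(\frac{R}{r-l}\Big)^{Q+2s}\fint_{B_R}u_\pm(\eta,t)\,d\eta .
\]
This produces exactly the last term of \eqref{C'ppoli-II}.

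On the far region $\hn\setminus B_R$ we instead have $|\eta^{-1}\circ\xi|_{\hn}\ge |\eta^{-1}\circ\xi_0|_{\hn}-\frac{(3+\sigma)R}{4}\ge\frac{1-\sigma}{4}|\eta^{-1}\circ\xi_0|_{\hn}$. Hence this piece is bounded by $c(\sigma)R^{-2}\textup{Tail}(u_\pm(t);R,\xi_0)$. This is where $\mathfrak{C}=\mathfrak{C}(\texttt{data},\sigma)$ enters. We choose $\mathfrak{C}$ at least as large as this constant. The term is then absorbed by the term $\mp\mathfrak{C}\int R^{-2}\textup{Tail}(u_\pm(t);R,\xi_0)\,w_\pm\psi^2\nu^2$, which in the subsolution case carries a favourable sign. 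That sign is exactly why $g(t)$ was built into $w=u-g-k$. The supersolution case is symmetric.

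Finally, the $\operatorname{ess\,sup}$ in time on the left follows because $t_2$ is arbitrary in \eqref{C'ppoli-I}. The main obstacle is the sign and absorption bookkeeping in the far term. Specifically, we must check that the tail-compensating drift $g$ yields a term of the correct sign for both $w_+$ (subsolution) and $w_-$ (supersolution). We must also check that the $\sigma$-dependence of the constant is consistent with how $\mathfrak{C}$ is fixed in $g$. Once the constant is matched, the estimate is the same computation as Lemma 3.2 of \cite{KT25_6}, with the local terms carried along unchanged.
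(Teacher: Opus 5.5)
Your proposal is correct and follows the route the paper indicates; the paper omits the proof and defers to Lemma 3.2 of \cite{KT25_6}. You start from \eqref{C'ppoli-I} and split the tail integral over $\hn\setminus B_r$ into two pieces: the middle region $B_R\setminus B_r$, where $|\eta^{-1}\circ\xi|_{\hn}\ge\frac{r-l}{2}$ yields the $R^{-2s}\big(\frac{R}{r-l}\big)^{Q+2s}\fint_{B_R}u_\pm$ term, and the far region $\hn\setminus B_R$, which the drift term absorbs once $\mathfrak{C}\ge\big(\frac{4}{1-\sigma}\big)^{Q+2s}$. The ``symmetric'' supersolution case should be read via $u\mapsto -u$, i.e.\ with the drift entering as $u+g$ in $w_-$, because with $w=u-g-k$ taken literally the $\mp$ term has the unfavourable sign for $w_-$.
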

\section{Local Boundedness Result} \label{S4}
This section is devoted to the proof of the local boundedness. For this section, we took the inspiration from the respective section of \cite{KT25_6}. In the concerned paper, authors show the local boundedness result for $p>\frac{2Q}{Q+2s}$. As we are considering $p=2$, we can rely on the methods as described in \cite{KT25_6}.  
\subsection{Parameters}Our choice of parameters is as same as in \cite{KT25_6}. Only change we want to mention is the replacement of $p$ and $s$ with $2$ and $1$ respectively. \vskip 1mm 
Define 
\begin{align}
    \kappa=\frac{1}{Q},
\end{align}
and 
\begin{align}
    \hat{q}=2(1+\kappa).
\end{align}\vskip 2mm
Before moving to the next lemma, we want to introduce a notation of a truncated set as follows,
\begin{align}
    \mathcal{A}_\pm^g(t;\xi_0,\rho,k)=\{\xi\in B_\rho(\xi_0):(u(\xi,t)-g(t)-k)_\pm>0\},
\end{align}
where $\rho>0,k\geq 0,\xi_0\in\Omega,t\in (0,T]$ and $B_\rho(\xi_0)\subset\Omega$.
\begin{lemma}\label{L4.1}
    Consider $(\xi_0,t_0)\in\Omega
    _T$. Let consider positive numbers $\sigma,r,l,R$ such that $0<\sigma R\leq l<\frac{l+r}{2}<\frac{(3+\sigma)R}{4}\leq R$. With $(\xi_0,t_0)+\mathcal{Q}_l^\ominus\subset(\xi_0,t_0)+\mathcal{Q}_r^\ominus\subset [t_0-R^2,t_0]\times B_R(\xi_0)\Subset \Omega_T$ and $\sigma\in (0,1)$. Consider   $B_r(\xi_0)$ such that $\bar{B}_r(\xi_0)\subset \Omega$ and take a non-negative functions  $\psi\in C^\infty_0(B_\frac{r+l}{2}(\xi_0))$ satisfying $0\leq \psi\leq 1$ in $\hn$ with $|\nabla_{\hn}\psi|\leq\frac{C}{r-l}$ and $\nu$ supported in $(t_0-\theta_2,t_0]$ such that $\partial_t\nu\leq \frac{C}{\theta_2}$ for some $\theta_2>0$. Let, $u$ be a weak subsolution to the equation (\ref{eq1.1}) and recall $w_+=(u-g(t)-k)_+$ where $g(t)=\mathfrak{C}R^{-2}\int_{t_0-R^2}^{t_0}\textup{Tail }(u_+(s);R,\xi_0)ds$ with $\mathfrak{C}=\mathfrak{C}(\textup{\texttt{data}},\sigma)$ (as in Lemma \ref{L3.1}) then there exists a constant $C=C(\textup{\texttt{data}})$ such that
    \begin{align*}
        &\int_{t_0-l^2}^{t_0}\int_{B_r(\xi_0)}(|\nabla_{\hn}w_\pm|^2\psi^2\nu^2)(\xi,t)\;d\xi\,dt+\esssup_{t\in (t_0-l^{2},t_0)}\int_{B_l(\xi_0)} w^2_+(\xi,t)d\xi\\
			&\qquad\leq C\Bigg[\frac{r^{2(1-s)}}{(r-l)^2}+\frac{2}{(r-l)^2}\Bigg]\iint_{z_0+\mathcal{Q}^\ominus_r}w^2_+(\xi,t)d\xi dt\\
			&\qquad\qquad+Ck^2\Big(\frac{R}{r-l}\Big)^{2(Q+2s)}r^{-1}\Bigg(\int_{t_0-r^{2}}^{t_0}|\mathcal{A}^g_+(t;\xi_0,r,k)|^2 dt\Bigg)^\frac{1}{2},
    \end{align*}
    where $k$ satisfies
    $$k\geq \Big[ \fint_{t_0-R^{2}}^{t_0} \left( \fint_{B_R(\xi_0)} u_+(\eta,t) \,d\eta\right)^2\,dt \Big]^\frac{1}{2}.$$
\end{lemma}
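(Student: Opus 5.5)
Lemma \ref{L4.1} follows from the Caccioppoli inequality (\ref{C'ppoli-II}) of Lemma \ref{L3.3}, applied to $w_+=(u-g-k)_+$. The plan has three steps: choose the cut-off functions, bound the linear term in (\ref{C'ppoli-II}) using the lower bound on $k$, and collect terms.

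\textbf{Choice of cut-offs.} I take $\psi\in C^\infty_0(B_{\frac{r+l}{2}}(\xi_0))$ with $\psi\equiv1$ on $B_l(\xi_0)$ and $|\nabla_{\hn}\psi|\le C/(r-l)$. For the time cut-off, I take $\theta_2=r^2$ and $\nu\equiv1$ on $(t_0-l^2,t_0]$, vanishing for $t\le t_0-r^2$, with $0\le\partial_t\nu\le C/(r^2-l^2)\le C/(r-l)^2$ (using $r+l\ge r-l$).

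Restricting the left side of (\ref{C'ppoli-II}) to $t\in(t_0-l^2,t_0)$ and $B_l(\xi_0)$ gives the supremum term and the gradient term of the statement. On the right side:
\begin{itemize}
\item the first term is $c\,r^{2(1-s)}(r-l)^{-2}\iint_{z_0+\mathcal{Q}^\ominus_r}w_+^2$;
\item the $\nabla_{\hn}\psi$ term is bounded by $c(r-l)^{-2}\iint w_+^2$;
\item the $\partial_t\nu$ term is bounded by $C(r-l)^{-2}\iint w_+^2$.
\end{itemize}
Together these produce the bracket $\frac{r^{2(1-s)}}{(r-l)^2}+\frac{2}{(r-l)^2}$.

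\textbf{The linear term (main obstacle).} The remaining term is
\[
cR^{-2s}\Big(\frac{R}{r-l}\Big)^{Q+2s}\int_{t_0-r^2}^{t_0}\Big(\fint_{B_R}u_+(\eta,t)\,d\eta\Big)\int_{B_r}w_+\,d\xi\,dt .
\]
First, since $w_+$ vanishes off $\mathcal{A}^g_+$, Cauchy–Schwarz in $\xi$ gives
\[
\int_{B_r}w_+\,d\xi\le\|w_+(t)\|_{L^2(B_r)}\,|\mathcal{A}^g_+(t;\xi_0,r,k)|^{1/2}.
\]
Second, I use the sup-in-time quantity. One route is to absorb $\|w_+(t)\|_{L^2}$ via Young's inequality into a small multiple of $\esssup_t\int_{B_r}w_+^2$; this needs a hole-filling/iteration step because the left side only controls radius $l$, which the standard lemma for $r,l$ intervals handles.

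The cleaner route is to avoid $L^2$ norms and use Chebyshev on the level set. On $\mathcal{A}^g_+(t;\xi_0,r,k)$ we have $w_+=(u-g-k)_+$, and $w_+>0$ together with $g\ge0$ forces $u>k$. So I would instead write
\[
\int_{B_r}w_+\le\int_{B_r}w_+^2/k\quad\text{or}\quad\int_{B_r}w_+\le |B_r|^{1/2}\|w_+\|_{L^2}.
\]
To reach the stated form $k^2r^{-1}\big(\int|\mathcal{A}|^2\big)^{1/2}$, I apply Cauchy–Schwarz in time to the product
\[
\Big(\fint_{B_R}u_+\Big)\cdot\int_{B_r}w_+ ,
\]
which gives
\[
\Big(\int_{t_0-r^2}^{t_0}\Big(\fint_{B_R}u_+\Big)^2dt\Big)^{1/2}\Big(\int_{t_0-r^2}^{t_0}\Big(\int_{B_r}w_+\Big)^2dt\Big)^{1/2}.
\]
The first factor is at most $(R^2)^{1/2}k\le Rk$ by the hypothesis on $k$, after enlarging the time interval to $R^2$.

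For the second factor, I bound $\int_{B_r}w_+\le k\,|\mathcal{A}^g_+(t)|$ on the part where $w_+\le k$. The part where $w_+>k$ is handled by $w_+\le w_+^2/k$, which feeds back into the $\iint w_+^2$ terms with a harmless constant.

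\textbf{Collecting powers.} The remaining work is bookkeeping: collect the powers of $R$ and $r$, and enlarge $(R/(r-l))^{Q+2s}$ to the exponent $2(Q+2s)$ to absorb the leftover factors $R^{1-2s}$ and $r^{-1}$. This uses $R\le4r/(3+\sigma)$ relations and $R/(r-l)\ge1$. This absorption of the powers into the constant (depending on $\sigma$ through $\mathfrak{C}$) is the delicate point I expect to check carefully.
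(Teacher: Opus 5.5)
Your proposal has a genuine gap: the piece of the linear term living on $\{w_+>k\}$ does not reduce to $\iint w_+^2$ with a harmless constant. The cut-off choices, the $\nabla_{\hn}\psi$ and $\partial_t\nu$ terms, and the bound $\big(\int_{t_0-r^2}^{t_0}(\fint_{B_R(\xi_0)}u_+\,d\eta)^2dt\big)^{1/2}\le Rk$ are all fine. Your intermediate inequality $\int_{B_r}w_+\le k^{-1}\int_{B_r}w_+^2$ is false where $0<w_+<k$, but your later splitting at level $k$ repairs that.

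The real failure comes after that splitting. Applying Cauchy--Schwarz in time and $w_+\le w_+^2/k$ on $\{w_+>k\}$, the factors of $k$ cancel and that piece contributes
\[
cR^{1-2s}\Big(\frac{R}{r-l}\Big)^{Q+2s}\Big(\int_{t_0-r^2}^{t_0}\Big(\int_{B_r(\xi_0)}w_+^2(\xi,t)\,d\xi\Big)^2dt\Big)^{1/2}.
\]
\begin{itemize}
\item This is the $L^2$-in-time norm of $t\mapsto\int_{B_r}w_+^2$. It is not controlled by the $L^1$-in-time norm $\iint_{z_0+\mathcal{Q}^\ominus_r}w_+^2$; concentrating $w_+$ in a short time window makes the ratio blow up.
\item The natural bound is $r\,\esssup_t\int_{B_r}w_+^2$. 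This is a supremum over the full ball $B_r$ and full interval, and it appears neither on the right-hand side of the lemma nor on its left-hand side.
\item The splitting at the fixed level $k$ leaves this piece with a large fixed coefficient, so it could not be absorbed even if you restored the cut-off.
\item Even granting an $\iint w_+^2$ bound, $R^{1-2s}(R/(r-l))^{Q+2s}$ exceeds the $(r-l)^{-2}$ allowed in the bracket by a factor of order $(R/(r-l))^{Q+2s-2}$. That factor is not a \texttt{data} constant, and it blows up along the iteration of Section \ref{S4}, where $r-l\sim 2^{-j}(1-\sigma)R$.
\end{itemize}

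The missing idea is absorption into the left-hand side of (\ref{C'ppoli-II}) via Young's inequality with a small parameter. This needs no hole-filling provided you keep the factor $\psi^2\nu^2$ in the linear term, which you dropped. The left side of (\ref{C'ppoli-II}) already contains $\esssup_t\int_{B_r}(w_+\psi\nu)^2$ and $\iint(|\nabla_{\hn}w_+|\psi\nu)^2$.

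For instance, start from
\[
\int_{B_r}w_+\psi^2\nu^2\,d\xi\le\Big(\int_{B_r}(w_+\psi\nu)^2d\xi\Big)^{1/2}|\mathcal{A}^g_+(t;\xi_0,r,k)|^{1/2}.
\]
Cauchy--Schwarz in time and Young then bound the linear term by
\[
\tfrac12\esssup_{t}\int_{B_r}(w_+\psi\nu)^2\,d\xi+CR^{2-4s}\Big(\frac{R}{r-l}\Big)^{2(Q+2s)}k^2\int_{t_0-r^2}^{t_0}|\mathcal{A}^g_+(t;\xi_0,r,k)|\,dt .
\]
Finally, $\int_{t_0-r^2}^{t_0}|\mathcal{A}^g_+|\,dt\le r\big(\int_{t_0-r^2}^{t_0}|\mathcal{A}^g_+|^2dt\big)^{1/2}$ gives the stated term, at scales $r\le R\le1$, which the paper also assumes tacitly. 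Note that the doubled exponent $2(Q+2s)$ comes out of Young's inequality, not from an ad hoc enlargement of powers.

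The paper uses the same absorption scheme. It splits the linear term by Young's inequality into a $k^2|\mathcal{A}^g_+|$ term and an $\epsilon$-term. It then sends the $\epsilon$-term through H\"older's inequality and the parabolic Sobolev inequality (Theorem \ref{T2.10}), bounding it by the gradient, $r^{-2}\iint$ and supremum terms of $w_+\psi\nu$, which are absorbed on the left.
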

\begin{proof}
    This lemma is essentially an extension of the Lemma 4.1 of \cite{KT25_6} to the mixed local-nonlocal case. As most of the calculations are similar, we shall only mention the changes.\vskip 1mm
    Assume 
    \begin{align*}
       k\geq \Big[ \fint_{t_0-R^{2}}^{t_0} \left( \fint_{B_R(\xi_0)} u_+(\eta,t) \,d\eta\right)^2\,dt \Big]^\frac{1}{2},
    \end{align*}
    and insert $\theta_2=r^{2}$ into the energy inequality (\ref{C'ppoli-II})  to get
\begin{align*}
			&\esssup_{t_0-r^{2s}<t<t_0}\int_{B_r(\xi_0)}(w_+^2\psi^p\nu^2)(\xi,t)d\xi+\int_{t_0-\theta_2}^{t_0}\int_{B_r(\xi_0)}(|\nabla_{\hn}w_\pm|^2\psi^2\nu^2)(\xi,t)\;d\xi\,dt\\
			&\qquad	\leq C\Bigg(\frac{r^{1-s}}{r-l}\Bigg)^2\|w_+\|^2_{L^2(t_0-r^{2},t_0;L^2(B_r(\xi_0)))}+	\underbrace{\int_{t_0-r^{2}}^{t_0}\int_{B_r(\xi_0)}(w_+^2\psi^2)(\xi,t)\nu(t)\partial_t\nu(t)\;d\xi\,dt}_{I}\\
			&\quad\quad\quad+\underbrace{CR^{-2s}\left(\frac{R}{r-l}\right)^{Q+2s}\int_{t_0-r^{2}}^{t_0}\int_{B_r(\xi_0)}w_+(\xi,t)\psi^2(\xi)\nu^2(t)\Bigg(\fint_{B_R(\xi_0)} u_+(\eta,t)\,d\eta\Bigg)d\xi\,dt}_{II}\\
            & \quad\quad\quad\quad+C\underbrace{\int_{t_0-r^{2}}^{t_0}\int_{B_r(\xi_0)}(w_\pm^2(\nabla_{\hn}\psi)^2\nu^2)(\xi,t)\;d\xi\;dt.}_{III}
\end{align*}
The estimation of $I$ follows from the fact that $\partial_t\nu<\frac{C}{r^2}$ that is
\begin{align}
    I\leq \frac{C}{r^2}\int_{t_0-r^{2}}^{t_0}\int_{B_r(\xi_0)}w^2_+(\xi,t)\,d\xi \,dt.
\end{align} Next, using the fact $|\nabla_{\hn}\psi|\leq \frac{C}{r-l}$, we have in $III$,
\begin{align}
    III\leq \frac{C}{(r-l)^2}\int_{t_0-r^{2}}^{t_0}\int_{B_r(\xi_0)}w^2_+(\xi,t)\,d\xi \,dt.
\end{align}
Now, we try to estimate $II$. This estimation is inspired from the corresponding steps of  lemma 4.1 in \cite{KT25_6}.\vskip 1mm
{\bf Estimation of II: } We start this estimation by noting that
\begin{align*}
    \mathcal{G}(t):=\fint_{B_R(\xi_0)}u_+(\eta,t)\,d\eta\in L^{\infty,2}_{\mbox{loc}}(\Omega_T).
\end{align*}
Now, using the Jensen, H\"older and Youngs' inequality, $II$ implies,
\begin{align*}
    II&=CR^{-2s}\left(\frac{R}{r-l}\right)^{Q+2s}\int_{t_0-r^2}^{t_0}\int_{B_r(\xi_0)}\mathcal{G}(t)(w_+\psi^2\nu^2)(\xi,t)\,d\xi\\
    &\leq CR^{-2s}\left(\frac{R}{r-l}\right)^{Q+2s}\|\mathcal{G}\|_{L^{\infty,2}(z_0+\mathcal{Q}^\ominus_r)}\|w_+\psi\nu\|_{L^{1,2}(z_0+\mathcal{Q}^\ominus_r)}\\
    &\leq C \left(\frac{R}{r-l}\right)^{Q+2s}\|kR^{-s}w_+\psi\nu\|_{L^{1,2}(z_0+\mathcal{Q}^\ominus_r)}\\
    &\leq \frac{Ck^2}{\epsilon}\left(\frac{R}{r-l}\right)^{2(Q+2s)}\frac{1}{R^s}\|\chi_{\{u>g(.)+k\}}\|_{L^{1,2}(z_0+\mathcal{Q}^\ominus_r)}+\frac{\epsilon}{R^s}\|\left(w_+\psi\nu\right)^2\|_{L^{1,2}(z_0+\mathcal{Q}^\ominus_r)}\\
    &\leq \frac{Ck^2}{\epsilon}\left(\frac{R}{r-l}\right)^{2(Q+2s)}r^{-1}\left(\int_{t_0-r^2}^{t_0}|\mathcal{A}^g_+(t;\xi_0,r,k)|^2\right)^\frac{1}{2}+\epsilon\underbrace{\frac{1}{R^s}\|\left(w_+\psi\nu\right)^2\|_{L^{1,2}(z_0+\mathcal{Q}^\ominus_r)}.}_{IV}
\end{align*}
In the sequence of calculations, we use the fact that $\frac{r}{R}<1$ and $r<1$ at the last inequality. Once again using these facts and employing the H\"older inequality in $IV$ to obtain the following
\begin{align*}
    IV&\leq Cr^Q\left(\frac{r}{R^s}\right)\left(\fint_{t_0-r^2}^{t_0}\left(\fint_{B_r}\left(w_+\psi\nu\right)^2\right)^2\right)^\frac{1}{2}\\
    &\leq Cr^Q\left(\frac{r}{R}\right)^s\left(\fint_{t_0-r^2}^{t_0}\left(\fint_{B_r}\left(w_+\psi\nu\right)^{\hat{q}}\right)^\frac{2}{1+\kappa}\right)^\frac{1}{2}\\
    &\leq Cr^Q\left(\frac{r}{R}\right)^s\left(\fint_{t_0-r^2}^{t_0}\left(\fint_{B_r}\left(w_+\psi\nu\right)^{\hat{q}}\right)^2\right)^\frac{2}{2\hat{q}}\\
    &\leq  C\|w_+\psi\nu\|^2_{L^{\hat{q},2\hat{q}}(z_0+\mathcal{Q}^\ominus_r)}.
\end{align*}
Next, we use Theorem \ref{T2.10} such that
\begin{align*}
    IV\leq C\Bigg(&\underbrace{\iint_{z_0+\mathcal{Q}^\ominus_r}|\nabla_{\hn}(w_+\psi\nu)|^2\,d\xi dt}_{V}+r^{-2}\iint_{z_0+\mathcal{Q}^\ominus_r}\left(w_+\psi\nu\right)^2(\xi,t)\,d\xi dt\\
    & +\esssup_{t_0-r^2<t<t_0}\int_{B_r(\xi_0)}\left(w_+\psi\nu\right)^2(\xi,t)\,d\xi\Bigg).
\end{align*}
Next, using the fact that $|\nabla_{\hn}\psi|\leq \frac{C}{r-l}$ in $V$, we have the following,
\begin{align*}
    V\leq \iint_{z_0+\mathcal{Q}^\ominus_r}\left(|\nabla_{\hn}w_+|\psi\nu\right)^2\,d\xi dt+\frac{C}{(r-l)^2}\iint_{z_0+\mathcal{Q}^\ominus_r}w_+^2(\xi,t)\,d\xi dt.
\end{align*}
Then, $II$ will be as follows,
\begin{align*}
    II&\leq Ck^2\left(\frac{R}{r-l}\right)^{2(Q+2s)}R^{2(1-s)}r^{-1}\left(\int_{t_0-r^2}^{t_0}|\mathcal{A}^g_+(t;\xi_0,r,k)|^2\right)^\frac{1}{2}\\
    &\quad\quad +\underbrace{\iint_{z_0+\mathcal{Q}^\ominus_r}\left(|\nabla_{\hn}w_+|\psi\nu\right)^2\,d\xi dt}_{VI}+\frac{C}{(r-l)^2}\iint_{z_0+\mathcal{Q}^\ominus_r}w_+^2(\xi,t)\,d\xi dt\\
    &\quad\quad\quad\quad+\underbrace{r^{-2}\iint_{z_0+\mathcal{Q}^\ominus_r}\left(w_+\psi\nu\right)^2(\xi,t)\,d\xi dt}_{VII}+\underbrace{\esssup_{t_0-r^2<t<t_0}\int_{B_r(\xi_0)}\left(w_+\psi\nu\right)^2(\xi,t)\,d\xi}_{VIII}.
\end{align*}
Merging $VII$ with the estimate $I$, sending $VI,VIII$ to the left, using the facts $(r-l)^2\leq (r^2-l^2)$ and combining all the estimates, we have our result. 
\end{proof}
\subsection{Proof of Local Boundedness Result} Without any loss of generality, we take $(\xi_0,t_0)=(0,0)\in\Omega_T$. This proof follows the idea of \cite{KT25_6}. Moreover, set $B_R(0)=B_R$ such that $\bar{B}_R\subset \Omega$, denote $(0,0)+\mathcal{Q}^\ominus_r=\mathcal{Q}^\ominus_r$. Define
\begin{align*}
    \hat{k}=\left[\fint_{-R^2}^0\left(\fint_{B_R}u_+(\eta,t)\,d\eta\right)^2dt\right]^\frac{1}{2}+1.
\end{align*}
Next, define iterative parameter as follows,
\begin{align*}
    \begin{cases}
        & \rho_j=\left(\sigma+\frac{1-\sigma}{2^{j+1}}\right)R,\\
        &k_j=d+d\left(1-\frac{1}{2^j}\right),
    \end{cases}
\end{align*}
for $\sigma\in (0,1)$, $d>2\hat{k}$ where $d$ will be determined later. Then it is obvious that 
\begin{align*}
    \begin{cases}
        &\rho_j-\rho_{j+1}=\left(\frac{1-\sigma}{2^{j+2}}\right)R ,\\
        & \frac{1}{2}\rho_j\leq \rho_{j+1}\leq \rho_j,\\
        & k_{j+1}-k_j=\frac{d}{2^{j+1}}.
    \end{cases}
\end{align*}
By the definitions of the parameter, it can be verified that 
\begin{align*}
    \frac{\rho_j+\rho_{j+1}}{2}=\left[\sigma+\frac{3(1-\sigma)}{2^{i+3}}\right]R<\left[\frac{3+\sigma}{4}\right]R.
\end{align*}
Moreover, for iterative inequalities, we set
\begin{align}\label{eq4.6}
    \begin{cases}
        &\mathcal{Y}_j=\frac{1}{d^2}\fiint_{\mathcal{Q}^\ominus_{\rho_j}}(u-g(t)-k_j)_+^2\,d\xi dt,\\
        &\mathcal{Z}_j=\left(\fint_{-\rho_j^2}^0\left(\frac{|\mathcal{A}^g_+(t;0,\rho_j,k_{j+1}|}{|B_{\rho_j}|}\right)^2dt\right)^\frac{1}{2(1+\kappa)},\\
        &\Pi_j=\fint_{-\rho^2_j}^0\frac{|\mathcal{A}^g_+(t;0,\rho_j,k_{j+1}|}{|B_{\rho_j}|}\,dt.
    \end{cases}
\end{align}
From (\ref{eq4.6}), we have 
\begin{align}\label{eq4.7}
    \Pi_j\leq (k_{j+1}-k_j)^{-2}d^2\mathcal{Y}_j\leq C2^{2j}\mathcal{Y}_j.
\end{align}
Considering Lemma \ref{L4.1} with $z_0=(0,0), l=\rho_{j+1}, r=\rho_j$ and $k=k_{j+1}$, we obtain
\begin{align}\label{eq4.8}
    \left[\left(u-g-k_{j+1}\right)_+\right]^2_{V^{2,2}\left(\mathcal{Q}^\ominus_{\rho_{j+1}}\right)}&\leq C\left[\frac{\rho_j^{2(1-s)}}{(\rho_j-\rho_{j+1})^2}+\frac{2}{(\rho_j-\rho_{j+1})^2}\right]\iint_{\mathcal{Q}^\ominus_{\rho_j}}\left(u-g-k_{j+1}\right)^2_+\;d\xi dt\nonumber\\
    &\quad\quad+ C\hat{k}^2\left(\frac{R}{\rho_j-\rho_{j+1}}\right)^{2(Q+2s)}{\rho_j}^{-1}\Bigg(\int_{t_0-{\rho_j}^{2}}^{t_0}|\mathcal{A}^g_+(t;0,\rho_j,k_{j+1})|^2 dt\Bigg)^\frac{1}{2}\\
    & =:I+II+III.\nonumber
\end{align}
Next, we estimate $I,II$ and $III$ successively,\vskip 1mm
{\bf Estimation of I :} Using definition in (\ref{eq4.6}), $\rho_j<1$ and $k_{j+1}>k_j$, we have 
\begin{align*}
    I&= C\frac{\rho_j^2}{(\rho_j-\rho_{j+1})^2}\frac{1}{\rho_j^{2s}}\iint_{\mathcal{Q}^\ominus_{\rho_j}}\left(u-g-k_{j+1}\right)_+^2\,d\xi dt\\
    &\leq C\frac{2^{2j}}{(1-\sigma)^2}\frac{1}{\rho_j^2}\iint_{\mathcal{Q}^\ominus_{\rho_j}}\left(u-g-k_j\right)_+^2\,d\xi dt\\
    & \leq C\frac{2^{2j}}{(1-\sigma)^2}\frac{d^2}{\rho_j^2}\mathcal{Y}_j|\mathcal{Q}^\ominus_{\rho_j}|\\
    & \leq C\frac{2^{2j}}{(1-\sigma)^2}\frac{d^2}{\rho_j^2}\mathcal{Y}_j|\mathcal{Q}^\ominus_{\rho_j}|.
\end{align*}\vskip 1mm
{\bf Estimation of II: } Considering the definition in (\ref{eq4.6}),
\begin{align*}
    II&\leq C\frac{2^{2j}}{R^2(1-\sigma)^2}d^2\mathcal{Y}_j|\mathcal{Q}^\ominus_{\rho_j}|\\
    &=C\frac{2^{2j}}{(1-\sigma)^2}\frac{\left[\sigma+\frac{1-\sigma}{2^{j+1}}\right]^2}{\rho_j^2}d^2\mathcal{Y}_j|\mathcal{Q}^\ominus_{\rho_j}|\\
    &\leq C\frac{2^{2j}}{(1-\sigma)^2}\frac{d^2}{\rho_j^2}\mathcal{Y}_j|\mathcal{Q}^\ominus_{\rho_j}|.
\end{align*}
In the sequence of calculations, we have used the fact $\sigma<1$. \vskip 1mm
{\bf Estimation of III: } Using (\ref{eq4.6}), we have 
\begin{align*}
    III&\leq Cd^2\left(\frac{2^j}{1-\sigma}\right)^{2(Q+2s)}\frac{1}{\rho_j}\rho_j^{Q+1}\mathcal{Z}_j^{1+\kappa}\\
    &=C\left(\frac{2^j}{1-\sigma}\right)^{2(Q+2s)}\frac{d^2}{\rho_j^2}|\mathcal{Q}^\ominus_{\rho_j}|\mathcal{Z}_j^{1+\kappa}.
\end{align*}
Now we are ready to estimate $\mathcal{Y}_j$ and $\mathcal{Z}_j$. \vskip 1mm
{\bf Estimation of $\mathcal{Y}_j$: }Next, we estimate $\mathcal{Y}_{j+1}$. In the course of these calculations, we successively use H\"older's inequality, Theorem \ref{T2.10}, Young's inequality and definitions in (\ref{eq4.6}),
\begin{align}\label{eq4.9}
    \mathcal{Y}_{j+1}&=d^{-2}\fiint_{\mathcal{Q}^\ominus_{\rho_{j+1}}}\left(u-g-k_{j+1}\right)_+^2\,d\xi dt\nonumber\\
    &\leq d^{-2}\Pi_j^\frac{2}{2+Q}\left(\fiint_{\mathcal{Q}^\ominus_{\rho_{j+1}}}\left(u-g-k_{j+1}\right)_+^{2\left(1+\frac{2}{Q}\right)}\right)^\frac{Q}{Q+2}\nonumber\\
    &\leq Cd^{-2}\Pi_j^\frac{2}{2+Q}\left|\mathcal{Q}^\ominus_{\rho_j}\right|^{-\frac{Q}{Q+2}}\left[\left(u-g-k_{j+1}\right)_+\right]^2_{V^{2,2}(\mathcal{Q}^\ominus_{\rho_{j+1}})}\\
    &\quad\quad+Cd^{-2}\Pi_j^\frac{2}{2+Q}\left|\mathcal{Q}^\ominus_{\rho_j}\right|^{-\frac{Q}{Q+2}}\rho_{j+1}^{-2}\|(u-g-k_{j+1})_+\|^2_{L^2(\mathcal{Q}^\ominus_{\rho_{j+1}})}\nonumber\\
    &\leq Cd^{-2}\Pi_j^\frac{2}{2+Q}\left|\mathcal{Q}^\ominus_{\rho_j}\right|^{-\frac{Q}{Q+2}}\left[\left(u-g-k_{j+1}\right)_+\right]^2_{V^{2,2}(\mathcal{Q}^\ominus_{\rho_{j+1}})}\nonumber\\
    &\quad\quad+Cd^{-2}\Pi_j^\frac{2}{2+Q}\left|\mathcal{Q}^\ominus_{\rho_j}\right|^{-\frac{Q}{Q+2}}\frac{1}{(\rho_j-\rho_{j+1})^2}\|(u-g-k_{j})_+\|^2_{L^2(\mathcal{Q}^\ominus_{\rho_{j}})}.\nonumber
\end{align}
At first, estimate second integral on the right hand side of (\ref{eq4.9}) like the estimation of $II$. Then combine with (\ref{eq4.8}) and the estimates $I,II$ and $III$ to get the following
\begin{align}\label{eq4.10}
    \mathcal{Y}_{j+1}\leq C\left[\frac{2^{4j}}{(1-\sigma)^2}\mathcal{Y}_j^{1+\frac{2}{2+Q}}+\frac{2^{2j(3+Q)}}{(1-\sigma)^{2(Q+2)}}\mathcal{Y}_j^\frac{2}{2+Q}\mathcal{Z}_j^{1+\kappa}\right].
\end{align}\vskip 1mm
{\bf Estimation of $\mathcal{Z}_j$: } Using Theorem \ref{T2.10}, we get
\begin{align}\label{eq4.11}
    \mathcal{Z}_{j+1}&=\left(\fint_{-\rho_{j+1}^2}^0\left(\frac{|\mathcal{A}^g_+(t;0,\rho_{j+1,k_{j+2}}|}{|B_{\rho_{j+1}}|}\right)^2dt\right)^\frac{1}{2(1+\kappa)}\nonumber\\
    &\leq \left(\rho_{j+1}\right)^{-\frac{Q+1}{1+\kappa}}\left(\int_{-\rho_{j+1}^2}^0\left(\int_{B_{\rho_{j+1}}}\left(\frac{u-g-k_{j+1}}{k_{j+2}-k_{j+1}}\right)^{\hat{q}}d\xi\right)^2dt\right)^\frac{2}{4(1+\kappa)}\\
    &=\left(\rho_{j+1}\right)^{-Q}\left(k_{j+2}-k_{j+1}\right)^{-2}\|(u-g-k_{j+1})_+\|_{L^{\hat{q},2\hat{q}}(\mathcal{Q}^\ominus_{\rho_{j+1}})}\nonumber\\
    &\leq \frac{2^{2j}}{d^2}\left|\mathcal{Q}^\ominus_{\rho_j}\right|^{-\frac{Q}{Q+2}} \left[\left[(u-g-k_{j+1})_+\right]_{V^{2,2}(\mathcal{Q}^\ominus_{\rho_{j+1}})}+\rho^{-2}_{j+1}\|u-g-k_{j+1}\|^2_{L^2(\mathcal{Q}^2_{\rho_{j+1}})}\right].\nonumber
\end{align}
Now proceeding as (\ref{eq4.10}), we obtain
\begin{align}\label{eq4.12}
    \mathcal{Z}_{j+1}\leq C\frac{2^{4j}}{(1-\sigma)^2}\mathcal{Y}_j+\frac{2^{2j(3+Q)}}{(1-\sigma)^{2(Q+2)}}\mathcal{Z}_j^{1+\kappa}.
\end{align}
Further simplifying (\ref{eq4.10}) and (\ref{eq4.12}) as 
\begin{align*}
    \begin{cases}
        &\mathcal{Y}_{j+1}\leq \frac{C}{(1-\sigma)^L}2^{Lj}\left(\mathcal{Y}_j^{1+\frac{2}{2+Q}}+\mathcal{Z}_j^{1+\kappa}\mathcal{Y}_j^\frac{2}{2+Q}\right),\\
        &\mathcal{Z}_{j+1}\leq \frac{C}{(1-\sigma)^L}2^{Lj}\left(\mathcal{Y}_j+\mathcal{Z}_j^{1+\kappa}\right),
    \end{cases}
\end{align*}
for some constant $C=C(\texttt{data})$ and $L=10+2Q$.\vskip 1mm
To use the iteration  lemma \ref{L2.4}, we need to estimate $\mathcal{Y}_0$ and $\mathcal{Z}_0$.\vskip 1mm
{\bf Estimation of $\mathcal{Y}_0$ : } From the definition of (\ref{eq4.6}), we have 
\begin{align*}
    \mathcal{Y}_0&=\frac{C}{d^2}\fiint_{\mathcal{Q}^\ominus_{\rho_0}}(u-g-k_0)^2_+\,d\xi dt\\
    &\leq \frac{C}{d^2}\fiint_{\mathcal{Q}^\ominus_R}(u-g)^2_+\,d\xi dt.
\end{align*}\vskip 1mm
{\bf Estimation of $\mathcal{Z}_0$ : } Following (\ref{eq4.11}), we have 
\begin{align}\label{eq4.13}
    \mathcal{Z}_0&\leq CR^{-Q}\left(\int_{-\rho_0^2}^0\left(\int_{B_{\rho_0}}\left(\frac{u-g-\hat{k}}{d-\hat{k}}\right)^{\hat{q}}d\xi\right)^2dt\right)^\frac{2}{4(1+\kappa)}\nonumber\\
    &\leq CR^{-Q}\left(\frac{1}{d-\hat{k}}\right)^2\left[\left[(u-g-\hat{k})_+\right]^2_{V^{2,2}(\mathcal{Q_{\rho_0}})}+\|(u-g-\hat{k})_+\|^2_{L^2(\mathcal{Q_{\rho_0}})}\right].
\end{align}
In the similar manner, estimate $I$, we have 
\begin{align}\label{eq4.14}
    \frac{1}{(R-\rho_0)^2}\iint_{\mathcal{Q}_R^\ominus}(u-g-\hat{k})^2_+\,d\xi dt\leq \frac{C}{(1-\sigma)^2}R^Q\fiint_{\mathcal{Q}_R^\ominus}(u-g)^2\,d\xi dt.
\end{align}
Similarly for $II$ and $III$, we get respectively
\begin{align}\label{eq4.15}
    \frac{2}{(R-\rho_0)^2}\iint_{\mathcal{Q}^\ominus_R}(u-g-k_0)^2\,d\xi dt\leq \frac{C}{(1-\sigma)^2}R^Q\fiint_{\mathcal{Q}^\ominus_R}(u-g)^2\,d\xi dt,
\end{align}
\begin{align}\label{eq4.16}
    C\hat{k}^2\left(\frac{1}{1-\sigma}\right)^{2(Q+2)}\frac{1}{\rho_0}\left(\int_{-\rho_0}^0\left|\mathcal{A}^g_+(t;0,\rho_0,k_1)\right|^2dt\right)^\frac{1}{2}\leq C\hat{k}^2\left(\frac{1}{1-\sigma}\right)^{2(Q+2s)}R^Q.  
\end{align}
Next, in (\ref{eq4.12}) we choose $k_{j+1}=\hat{k},\rho_{j+1}=\rho_0$ and $\rho_j=R$ which satisfy $\frac{R+\rho_0}{2}=\frac{3+\sigma}{2}R$. Then we have after combining (\ref{eq4.13})-(\ref{eq4.16}). 
\begin{align}\label{eq4.17}
    \mathcal{Z}_0\leq \frac{C}{(1-\sigma)^L}\frac{1}{(d-\hat{k})^2}\left(\fiint_{\mathcal{Q}_R^\ominus}(u-g)_+^2\;d\xi dt+\hat{k}^2\right).
\end{align}
Now, considering $d$ sufficiently large such that
\begin{align*}
    d=\left(\frac{4C}{(1-\sigma)^{2L}}\right)^\frac{1+\kappa}{2\Upsilon}2^{L\frac{1+\kappa}{2\Upsilon}+1}\left[\hat{k}+\left(\fiint_{\mathcal{Q}_R^\ominus}(u-g)^2_+\,d\xi dt\right)^\frac{1}{2}\right],
\end{align*}
where
\begin{align*}
    \Upsilon:=\min\left\{\kappa,\frac{2}{Q+2}\right\}.
\end{align*}
Hence, we get
\begin{align*}
    \mathcal{Y}_0+\mathcal{Z}_0^{1+\kappa}\leq \left(\frac{2C}{(1-\sigma)^L}\right)^\frac{1+\kappa}{\Upsilon}2^{-L\frac{1+\kappa}{\Upsilon^2}}.
\end{align*}
Using Lemma \ref{L2.4} and scaling back, we get, for $C_1=2L\left(\frac{1+\kappa}{2\Upsilon}+1\right)$ and translating back to $z_0=(\xi_0,t_0)$, we have
\begin{align}\label{eq4.18}
    \sup_{z_0+\mathcal{Q}_{\sigma R}^\ominus}u(\xi,t)&\leq \frac{C}{(1-\sigma)^{C_1}}\left[\left(\fiint_{z_0+\mathcal{Q}_{R}^\ominus}u_+^2\,d\xi  dt\right)^\frac{1}{2}+\left(\fint_{t_0-R^2}^{t_0}\left(\fint_{B_R(\xi_0)}u_+(\eta,t)\,d\eta\right)^2\,dt\right)^\frac{1}{2}+1\right]\nonumber\\
    &\quad\quad\quad\quad +\mathfrak{C}R^{-2}\int_{t_0-R^2}^{t_0}\mbox{Tail}(u_+(t);R,\xi_0)dt.
\end{align}

\section{Preliminary Results}\label{S5}
In this section, we are going to discuss some preliminary results. Proofs of these results have used the ideas in \cite{SZ22_8} and \cite{Kar26_9}. Unlike \cite{SZ22_8}, we prove these lemmas with a time dependent level. We derive all these lemmas using the Caccioppoli inequality in (\ref{eq3.6}). 
Before proceeding to the lemmas, we want to mention some notations which we will use throughout the lemmas extensively.
\begin{align*}
    \begin{cases}
        \mathcal{Q}:=B_R(\xi_0)\times(T_1,T_2]\subset\Omega_T,\\
        \mu^+\geq \esssup_{\mathcal{Q}}u,\,\,\mu^-\leq \essinf_{\mathcal{Q}}u,\\
        \omega\geq \mu^+-\mu^-.
    \end{cases}
\end{align*}
Moreover, throughout this paper we use $|B_{mr}|=\mathcal{C}|B_r|$, where $\mathcal{C}$ depends on $Q,\,m(>0)$ as well as the homogeneous norm on the Heisenberg Group.\vskip 1mm
Next, we discuss a class of lemmas starting with De Giorgi Lemma. Here we derive a pointwise estimate by considering measure theoretical information over the parabolic cylinder. 
\begin{lemma}(De Giorgi Lemma)\label{L5.1}
    Let for $0<\rho<R/2$ and consider $\lambda,\delta\in (0,1)$ , assume $z_0+\mathcal{Q}^\ominus_\rho(\delta)\subset B_R(\xi_0)\times (T_1,T_2]=\mathcal{Q}$. Then for locally bounded, local weak sub(super)solution $u$ to (\ref{eq1.1}), there exist $\tilde{\gamma}=\tilde{\gamma}(\textup{\texttt{data}})(>1)$ and $\iota\in(0,1)$ depending on \textup{\texttt{data}} and $\delta$, such that if
     \begin{align*}
         |\{\pm(\mu^\pm-u)\leq \lambda\omega\}\cap (z_0+\mathcal{Q}_\rho^\ominus(\delta))|\leq \iota\,|z_0+\mathcal{Q}_\rho^\ominus(\delta)|.
     \end{align*}
     then either
     \begin{align*}
         \tilde{\gamma}R^{-2}\int_{T_1}^{T_2}\textup{Tail }[(u-\mu^\pm)_\pm(t);R,\xi_0]\,dt>\lambda\omega,
     \end{align*}
     or
     \begin{align*}
         \pm(\mu^\pm-u)\geq \frac{1}{4}\lambda\omega\,\,\,\mbox{ a.e. in }z_0+\mathcal{Q}^\ominus_{\rho/2}(\delta).
     \end{align*}
\end{lemma}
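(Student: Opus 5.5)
We prove the supersolution case (the ``$-$'' sign); the subsolution case is symmetric. Set $w:=u-\mu^-$, so $w\ge 0$ in $\mathcal{Q}$. Assume the first alternative fails:
\[
\tilde\gamma R^{-2}\int_{T_1}^{T_2}\textup{Tail}[w_-(t);R,\xi_0]\,dt\le\lambda\omega .
\]
The aim is to show $w\ge\frac14\lambda\omega$ a.e. in $z_0+\mathcal{Q}^\ominus_{\rho/2}(\delta)$, by a De Giorgi iteration on shrinking cylinders built from the energy estimate (\ref{eq3.6}) with time-dependent levels. We use the nested cylinders
\[
\rho_j=\frac{\rho}{2}+\frac{\rho}{2^{j+1}},\qquad \mathcal{Q}_j:=z_0+\mathcal{Q}^\ominus_{\rho_j}(\delta),\qquad \tilde\rho_j=\tfrac12(\rho_j+\rho_{j+1}).
\]
Take cutoffs $\psi_j$ equal to $1$ on $B_{\rho_{j+1}}$, supported in $B_{\tilde\rho_j}$, with $|\nabla_{\hn}\psi_j|\le C2^j/\rho$, and $\nu_j$ vanishing at $t_0-\delta\rho_j^2$, equal to $1$ on $(t_0-\delta\rho_{j+1}^2,t_0]$, with $|\partial_t\nu_j|\le C2^{2j}/(\delta\rho^2)$.

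The levels absorb the tail through the time-dependent shift $g(t)=\mathfrak{C}R^{-2}\int_{T_1}^{t}\textup{Tail}(w_-(\varsigma);R,\xi_0)\,d\varsigma$, which by assumption satisfies $0\le g\le\lambda\omega/(\tilde\gamma/\mathfrak{C})$. Choosing $\tilde\gamma\ge 8\mathfrak{C}$ gives $g\le\frac18\lambda\omega$. We then work with
\[
w_j:=(w+g-k_j)_-,\qquad k_j=\frac{\lambda\omega}{4}+\frac{\lambda\omega}{2^{j+2}},
\]
so the levels decrease from $\frac12\lambda\omega$ to $\frac14\lambda\omega$. In (\ref{eq3.6}) with the ``$-$'' sign, the term $+\mathfrak{C}R^{-2}\textup{Tail}(u_-)\,w_-$ appears with the favorable sign. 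It cancels the far-away part of the nonlocal tail integral, that is, the contribution of $\hn\setminus B_R$ where $w$ may be negative; this is exactly how Lemma \ref{L3.3} produces the $g$-shift. The middle part $B_R\setminus B_{\rho_j}$ sees $w\ge0$, hence $w_j\le k_j\le\lambda\omega$ there. By Lemma \ref{L2.3} that part is bounded by $C2^{j(Q+2s)}\rho^{-2s}\lambda\omega\iint w_j$, and $\rho^{-2s}\le C\rho^{-2}$ since $\rho<1$.

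On the right side of (\ref{eq3.6}), each term is bounded using $w_j\le\lambda\omega$:
\begin{itemize}
\item the gradient cutoff term $\int w_j^2|\nabla\psi_j|^2$,
\item the nonlocal cutoff term $\iint\max\{w_j^2\}|\psi_j(\xi)-\psi_j(\eta)|^2|\eta^{-1}\circ\xi|^{-Q-2s}$,
\item the time term $\int w_j^2\nu_j\partial_t\nu_j$,
\item the tail remainder.
\end{itemize}
Together they give
\[
\sup_t\int w_j^2\psi_j^2\nu_j^2+\iint|\nabla_{\hn}(w_j\psi_j\nu_j)|^2\le C\frac{2^{Cj}}{\delta\rho^2}(\lambda\omega)^2|A_j|,
\qquad A_j:=\{w+g<k_j\}\cap\mathcal{Q}_j .
\]
Combining with the parabolic Sobolev inequality, Theorem \ref{T2.10}, and Hölder,
\[
(k_j-k_{j+1})^2|A_{j+1}|\le\iint_{\mathcal{Q}_{j+1}}w_j^2\le\Big(\iint (w_j\psi_j\nu_j)^{2(1+2/Q)}\Big)^{\frac{Q}{Q+2}}|A_j|^{\frac{2}{Q+2}} .
\]
With $Y_j:=|A_j|/|\mathcal{Q}_j|$ and the scaling $|\mathcal{Q}_j|\sim\delta\rho^{Q+2}$, this yields
\[
Y_{j+1}\le C(\delta)\,b^j\,Y_j^{1+\frac{2}{Q+2}} .
\]
The factors of $\lambda\omega$ cancel because both the levels and the bounds scale with $\lambda\omega$. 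Because $g\le\frac18\lambda\omega$, we have $A_0\subset\{w\le\frac12\lambda\omega\}\cap\mathcal{Q}_0$. Matching the threshold in the hypothesis (the constants $\frac12$ versus $1$ are harmless since $k_0\le\lambda\omega$), $Y_0\le\iota$. Lemma \ref{L2.4} with $Z_j\equiv0$ and $\iota=\iota(\texttt{data},\delta)$ small then gives $Y_j\to0$. Hence $w+g\ge\frac14\lambda\omega$ a.e. in $z_0+\mathcal{Q}^\ominus_{\rho/2}(\delta)$.

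The main obstacle is this last step: we only get $w\ge\frac14\lambda\omega-g$, not $w\ge\frac14\lambda\omega$. To fix it we start the levels higher, at $k_j\in[\frac38\lambda\omega,\frac34\lambda\omega]$, and take $\tilde\gamma$ large enough that $g\le\frac18\lambda\omega$; then $w\ge\frac38\lambda\omega-\frac18\lambda\omega=\frac14\lambda\omega$. For the initial measure condition this requires $\{w+g<\frac34\lambda\omega\}\subset\{w\le\lambda\omega\}$, which holds since $g\ge0$. The remaining delicate bookkeeping is the sign handling of the tail/$g$ terms in (\ref{eq3.6}) for $w_-$, and tracking the $\delta$-dependence of the time cutoff so that it enters only through $\iota$.
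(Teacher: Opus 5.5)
Your proposal is correct and follows essentially the same route as the paper. Both run a De Giorgi iteration on the shrinking backward cylinders $\rho_j=\frac{\rho}{2}+\frac{\rho}{2^{j+1}}$ using (\ref{eq3.6}) with a time-dependent level shifted by $g$. Both cancel the far part of the tail against the $g'$-term, bound the middle part via Lemma \ref{L2.3} and $\rho^{-2s}\le\rho^{-2}$, close with H\"older, Theorem \ref{T2.10} and Lemma \ref{L2.4}, and finally take $\tilde\gamma$ large so that $g$ is a small multiple of $\lambda\omega$.

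The remaining differences are cosmetic. The paper runs the levels from $\lambda\omega$ down to $\frac12\lambda\omega$ with $g\le\frac14\lambda\omega$, while you use $\frac34\to\frac38$ with $g\le\frac18\lambda\omega$. The paper also writes out the upper-level (subsolution) case. Your decreasing level $\mu^-+k_j-g(t)$ is the correct orientation for the supersolution case. Under it, the $g'$-term actually appears with a minus sign on the right-hand side of (\ref{eq3.6}), not the literal $+$ you quote, and that minus sign is what makes it favorable.
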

\begin{proof}
    For the sake of simplicity, we assume $(\xi_0,t_0)=(0,0)$. We proceed this with the supersolution. The sub counterpart follows the similar steps. For $n=0,1,2,...$, we define $\rho_n,\tilde{\rho}_n,B_n,\tilde{B}_n,\mathcal{Q}_n^\ominus(\delta),\tilde{\mathcal{Q}}_n^\ominus(\delta)$ as follows
    \begin{align*}
        \begin{cases}
            & \rho_n=\frac{\rho}{2}+\frac{\rho}{2^{n+1}},\;\;\tilde{\rho}_n=\frac{\rho_n+\rho_{n+1}}{2},\\
            & B_n=B_{\rho_n}(0),\,\,\tilde{B}_n=B_{\tilde{\rho}_n}(0),\\
            &\mathcal{Q}_n^\ominus(\delta)=B_n\times(-\delta\rho_n^2,0],\,\,\tilde{\mathcal{Q}}_n^\ominus(\delta)=\tilde{B}_n\times(-\delta\tilde{\rho}_n^2,0].
        \end{cases}
    \end{align*}
    Here we consider the Caccioppoli estimate in Remark \ref{R3.2} with domain $B_n$ and parabolic cylinder $\mathcal{Q}^\ominus_n(\delta)$ to $w_+(\xi,t)=(u-g-k_n)_+$ with the time dependent level $g(t)+k_n$ where for $n=0,1,2,...$
    \begin{align*}
        k_n=\mu^+-\lambda_n\omega,\,\,\,\mbox{where } \lambda_n=\frac{\lambda}{2}+\frac{\lambda}{2^{n+1}},
    \end{align*}
    and $$g(t)=\mathfrak{C}\int_{-R^2}^t\int_{\hn\setminus B_R}\frac{|u_+(\eta,\tau)|}{|\eta^{-1}|_{\hn}^{Q+2s}}\,d\eta d\tau,$$ where positive constant $\mathfrak{C}$ to be chosen later.  Moreover two cut-off functions $\psi$ and $\nu$ in $B_n$ and $(-\delta\rho_n^2,0]$ respectively
    \begin{align*}
        \psi=\begin{cases}
            & 1\,\,\,\textup{in } B_{n+1}\\
            & 0\,\,\,\textup{in } \tilde{B}^c_n
        \end{cases}
        \,\,\,\,\,\,\,\,\,\,\textup{such that } |\nabla_{\hn}\psi|\leq \gamma\frac{2^{(n+2)}}{\rho}.
    \end{align*}
    and
    \begin{align*}
        \nu =\begin{cases}
            & 1\,\,\,\textup{in } (-\delta\rho_{n+1}^2,0]\\
            & 0\,\,\,\textup{in } (-\delta\tilde{\rho}_n^2,0]^c
        \end{cases}
        \,\,\,\,\,\,\,\,\,\,\textup{such that } |\partial_t\nu|\leq \gamma\frac{2^{2(n+2)}}{\delta\rho^2}.
    \end{align*}
    Considering all these entities, (\ref{eq3.6}) implies
    \begin{align}\label{eq5.1}
        &\esssup_{-\delta\rho^2_{n+1}<t<0}\int_{B_{n+1}}w^2_+(\xi,t)\,d\xi+\iint_{\mathcal{Q}_{n+1}^\ominus(\delta)}|\nabla_{\hn}w_+|^2\,d\xi dt\nonumber\\
        &\quad +\int_{-\delta\rho^2_{n+1}}\iint_{B_{n+1}\times B_{n+1}}\frac{|w_+(\xi,t)-w_+(\eta,t)|}{\dist[\xi]{\eta}^{Q+2s}}\,d\xi d\eta dt\nonumber\\
        &\quad\quad\leq\gamma \int_{-\delta\rho_n^2}^0\iint_{B_n\times B_n}\max\left\{w^2_+(\xi,t),w^2_+(\eta,t)\right\}\frac{|\psi(\xi)-\psi(\eta)|^2}{\dist[\xi]{\eta}^{Q+2s}}\nu^2(t)\,d\xi d\eta dt\nonumber\\
        &\quad\quad\quad +\gamma\iint_{\mathcal{Q}^\ominus_n(\delta)}\left(w_+\nabla_{\hn}\psi\,\nu\right)^2(\xi,t)\,d\xi dt+ \gamma\iint_{\mathcal{Q}^\ominus_n(\delta)}\left(w_+^2\psi^2\nu\,\partial_t\nu\right)(\xi,t)\,d\xi dt\\
        &\quad\quad\quad\quad+\iint_{\mathcal{Q}^\ominus_n(\delta)}(w_+\psi^2\nu^2)(\xi,t)\left\{\int_{\hn\setminus B_n}\frac{w_+(\eta,t)}{\dist[\xi]{\eta}^{Q+2s}}\,d\eta\right\}d\xi dt\nonumber\\
        &\quad\quad\quad\quad\quad-\mathfrak{C}\iint_{\mathcal{Q}^\ominus_n(\delta)}g'(t)(w_+\psi^2\nu^2)(\xi,t)\,d\xi dt\nonumber\\
        &\quad\quad\quad\quad\quad\quad =: I_1+I_2+I_3+I_4+I_5.\nonumber
    \end{align}
    For the terms $I_2$ and $I_3$, we have the following by using the definition $\mathcal{A}_n:=\{(\xi,t)\in\mathcal{Q}_n^\ominus(\delta):u(\xi,t)-g(t)>k_n\}$
    \begin{align*}
        I_2\leq \gamma\frac{2^{2n}}{\rho^2}(\lambda\omega)^2|\mathcal{A}_n|,
    \end{align*}
    and
    \begin{align*}
        I_3\leq \gamma\frac{2^{2n}}{\delta\rho^2}(\lambda\omega)^2|\mathcal{A}_n|.
    \end{align*}
    In the case of $I_1$, using assumption of our lemma
    \begin{align*}
        I_1&\leq \gamma \frac{2^{2n}}{\rho^2}\,2(\lambda\omega)^2 \int_{\delta\rho_n^2}^0\iint_{B_n\times B_n}\frac{\chi_{\{u(\xi,t)-g(t)>k_n\}}}{\dist[\xi]{\eta}^{Q+2(s-1)}}\,d\xi d\eta dt\\
        &=\gamma \frac{2^{2n}}{\rho^2}\,2(\lambda\omega)^2\iint_{\mathcal{Q}_n^\ominus(\delta)}\chi_{\{u(\xi,t)-g(t)>k_n\}}\,d\xi\left[\int_{B_n}\frac{1}{\dist[\xi]{\eta}^{Q+2(s-1)}}\right]\\
        &\leq \gamma\frac{2^{2n}}{\rho^2}(\lambda\omega)^2|\mathcal{A}_n|.
    \end{align*}
    In the last line we use the fact that $\frac{1}{\rho^{2s}}\leq \frac{1}{\rho^{2}}$ due to $\rho<1$. Next, we want to estimate $I_4$. Prior to this estimate, we note the following observations
    \begin{itemize}
        \item If $|\xi|<\rho_{n+1}$ and $|\eta|>\rho_n$, then
        \begin{align*}
            \frac{\dist[\xi]{\eta}}{|\eta|_{\hn}}\geq 1-\frac{\rho_{n+1}}{\rho_n}\geq \frac{1}{2^{n+2}}.
        \end{align*}
        \item If $|\xi|\leq \rho$ and $|\eta|>R$, then 
        \begin{align*}
            \frac{\dist[\xi]{\eta}}{|\eta|_{\hn}}\geq \frac{1}{2},\,\,\,\,\mbox{provided }\rho\leq\frac{R}{2} .
        \end{align*}
    \end{itemize}
Considering all the entities with the fact that $u\geq 0$ a.e. in $\mathcal{Q}$, $I_4$ implies
\begin{align}\label{eq5.2}
    I_4&=\iint_{\mathcal{Q}^\ominus_n(\delta)}\left(w_+\psi^2\nu^2\right)(\xi,t)\left\{\int_{\hn\setminus B_R}\frac{w_+(\eta,t)}{\dist[\xi]{\eta}^{Q+2s}}\,d\eta+\int_{\hn\setminus B_n}\frac{w_+(\eta,t)}{\dist[\xi]{\eta}^{Q+2s}}\,d\eta\right\}d\xi dt\nonumber\\
    &\leq \gamma 2^{(Q+2s)n}\iint_{\mathcal{Q}^\ominus_n(\delta)}\left(w_+\psi^2\nu^2\right)(\xi,t)\,d\xi\left\{\int_{B_R\setminus B_n}\frac{w_+(\eta,t)}{|\eta|_{\hn}^{Q+2s}}\,d\eta\right\}dt\nonumber\\
    &\quad\quad+\gamma\iint_{\mathcal{Q}^\ominus_n(\delta)}\left(w_+\psi^2\nu^2\right)(\xi,t)\,d\xi\left\{\int_{\hn\setminus B_R}\frac{w_+(\eta,t)}{|\eta|_{\hn}^{Q+2s}}\,d\eta\right\}dt\nonumber\\
    &\leq \gamma 2^{(Q+2s)n}\frac{\lambda\omega}{\rho^{2s}}\iint_{\mathcal{Q}_n^\ominus(\delta)}w_+(\xi,t)\,d\xi+\gamma \iint_{\mathcal{Q}_n^\ominus(\delta)}g'(t)\left(w_+\psi^2\nu^2\right)\,d\xi dt\nonumber\\
    &\leq \gamma 2^{(Q+2s)n}\frac{(\lambda\omega)^2}{\rho^{2}}|\mathcal{A}_n|+\gamma \int_{\mathcal{Q}_n^\ominus(\delta)}g'(t)\left(w_+\psi^2\nu^2\right)\, d\xi dt.
\end{align}
In the second to last step, we employ Lemma \ref{L2.3} and in the last step, we use definition of $\mathcal{A}_n$. Next, changing the arbitrary constant from $\gamma$ to $\mathfrak{C}$ in the last line of (\ref{eq5.2}), we can cancel out $I_5$. Hence, combining all the estimate into (\ref{eq5.1}), we have 
\begin{align}\label{eq5.3}
    \esssup_{-\delta\rho^2_{n+1}<t<0}\int_{B_{n+1}}&w^2_+\,d\xi+\iint_{\mathcal{Q}_{n+1}^\ominus(\delta)}|\nabla_{\hn}w_+|^2\,d\xi dt\nonumber\\
    &\leq \gamma \frac{2^{n(Q+2s+2)}}{\delta\rho^2}(\lambda\omega)^2|\mathcal{A}_n|.
\end{align}
Consequently, using H\"older inequality, (\ref{eq5.3}) and parabolic Sobolev inequality
\begin{align*}
    \frac{\lambda\omega}{2^{n+2}}|\mathcal{A}_{n+1}|&\leq\iint_{\mathcal{Q}_n^\ominus(\delta)}w_+(\xi,t)\,d\xi dt\\
    & \leq \left(\iint_{\mathcal{Q}^\ominus_n(\delta)}w_+^{2\kappa}\,d\xi dt\right)^\frac{1}{2\kappa}|\mathcal{A}_n|^{1-\frac{1}{2\kappa}}\\
    &\leq \gamma \left(\rho^2\iint_{\mathcal{Q}^\ominus_n(\delta)}|\nabla_{\hn}u(\xi,t)|^2\,d\xi dt+\iint_{\mathcal{Q}^\ominus_n(\delta)}|u(\xi,t)|^2\,d\xi dt\right)^\frac{1}{2\kappa}\\
    &\quad \quad\times\left(\esssup_{-\delta\rho_n^2<t<0}\fint_{B_n}w_+^2\,d\xi\right)^\frac{1}{Q\kappa}|\mathcal{A}_n|^{1-\frac{1}{2\kappa}}\\
    &\leq \gamma b^n\delta^{-\left[\frac{1}{\kappa}+\frac{2}{Q\kappa}\right]}\rho^{-(Q+2)\frac{1}{Q\kappa}}(\lambda\omega)^{2\left[\frac{1}{Q\kappa}+\frac{1}{2\kappa}\right]}|\mathcal{A}_n|^{1+\frac{1}{\delta\kappa}},
\end{align*}
where $b=b(Q)$. In the course of these calculations, we have denoted $\frac{Q}{Q-2}$ as $\kappa$. Now, the previous estimate leads to a recursive inequality by $Y_n=\frac{|\mathcal{A}_n|}{|\mathcal{Q}^\ominus_n(\delta)|}$, viz, there is some constant (depending only on \texttt{data}) such that 
\begin{align}
    Y_{n+1}\leq \gamma(2b)^n\delta^{-\left[\frac{1}{\kappa}+\frac{2}{Q\kappa}\right]}Y_n^{1+\frac{1}{\delta\kappa}}.
\end{align}
Hence by using the convergence Lemma \ref{L2.4}, taking $g(T_2)\leq \frac{1}{4}\lambda\omega$ and redefining $4\mathfrak{C}$ as $\mathfrak{C}$ (as done in \cite[Lemma 3.1]{Kar26_9}), we have the result.
\end{proof}
Next, we prove the De Giorgi lemma with respect to the forward-in-time version. 
\begin{lemma}\label{L5.2}
    Let $\lambda\in (0,1)$ and $u$ be a locally bounded weak sub(super)solution of the equation (\ref{eq1.1}) in $\Omega_T$. There exist $\tilde{\gamma}(>1)$ and $\iota_0\in(0,1)$, depending only on the \textup{\texttt{data}} and independent of $\lambda$, such that $B_{\rho/2}(\xi_0)\times(t_0,t_0+\iota_0\rho^{2}]\subset\mathcal{Q}$ and if 
    $$\pm(\mu^\pm-u(\cdot,t_0))\geq \lambda\omega\;\;\;\;\;\mbox{ a.e. in } B_\rho(\xi_0),$$
    then either $$\tilde{\gamma}R^{-2}\int_{T_1}^{T_2}\textup{Tail }[(u-\mu^\pm)_\pm(t);R,\xi_0]\,dt>\lambda\omega,$$
    or
    $$\pm(\mu^\pm-u)\geq \frac{1}{4}\lambda\omega\;\;\;\;\;\mbox{ a.e. in }B_{\rho/2}(\xi_0)\times(t_0,t_0+\iota_0\rho^2].$$
\end{lemma}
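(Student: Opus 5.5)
We argue for the supersolution case, $\mu^-\leq u$ and $u(\cdot,t_0)-\mu^-\geq\lambda\omega$ a.e. in $B_\rho(\xi_0)$; the subsolution case is symmetric. We normalize $(\xi_0,t_0)=(0,0)$. The plan is a De Giorgi iteration exactly parallel to the proof of \cref{L5.1}, with one change: the cylinders shrink only in space and keep the full time interval $(0,\iota_0\rho^2]$. The time cut-off is replaced by the initial datum, since at $t=0$ the truncations vanish identically on $B_\rho$.

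\textbf{Setup.} We set
\[
\rho_n=\tfrac{\rho}{2}+\tfrac{\rho}{2^{n+1}},\qquad \tilde\rho_n=\tfrac{\rho_n+\rho_{n+1}}{2},\qquad B_n=B_{\rho_n},\qquad \mathcal{Q}_n=B_n\times(0,\iota_0\rho^2],
\]
with levels $k_n=\mu^-+\lambda_n\omega$, where $\lambda_n=\tfrac{\lambda}{4}+\tfrac{\lambda}{2^{n+2}}$, so that $k_0=\mu^-+\tfrac{\lambda}{2}\omega$. We use the time-dependent truncation $w_-=(u+g(t)-k_n)_-$ with
\[
g(t)=\mathfrak{C}\int_0^t\int_{\hn\setminus B_R}\frac{(u-\mu^-)_-(\eta,\tau)}{|\eta|_{\hn}^{Q+2s}}\,d\eta\,d\tau .
\]
Since $g(0)=0$ and $u(\cdot,0)\geq\mu^-+\lambda\omega>k_n$, we get $w_-(\cdot,0)=0$ on $B_n$.

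\textbf{Energy estimate.} We apply \eqref{eq3.6} on $B_n\times(0,t]$ for every $t\leq\iota_0\rho^2$, taking $\nu\equiv1$ and a spatial cut-off $\psi$ between $B_{n+1}$ and $\tilde B_n$ with $|\nabla_{\hn}\psi|\leq\gamma 2^n/\rho$. This is legitimate because the initial term vanishes, so no $\partial_t\nu$ term appears. The remaining terms are handled exactly as $I_1,I_2,I_4,I_5$ in \cref{L5.1}. The two nonlocal terms and the gradient term are each bounded by $\gamma 2^{n(Q+2s+2)}\rho^{-2}(\lambda\omega)^2|\mathcal{A}_n|$, using \cref{L2.3} and $\rho<1$. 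Here $\mathcal{A}_n=\{u+g<k_n\}\cap\mathcal{Q}_n$, and we use $w_-\leq\lambda\omega$ in $\mathcal{Q}$ since $u\geq\mu^-$. The far tail produces a $g'(t)$ term, which is absorbed by the $\mp\mathfrak{C}\,\textup{Tail}$ term once $\mathfrak{C}$ is chosen large. This gives
\[
\esssup_{0<t<\iota_0\rho^2}\int_{B_{n+1}}w_-^2\,d\xi+\iint_{\mathcal{Q}_{n+1}}|\nabla_{\hn}w_-|^2\,d\xi\,dt\leq\gamma\frac{2^{bn}}{\rho^2}(\lambda\omega)^2|\mathcal{A}_n|.
\]

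\textbf{Recursion.} Combining this with H\"older's inequality and \cref{T2.10}, as in \cref{L5.1}, and setting $Y_n=|\mathcal{A}_n|/|\mathcal{Q}_n|$, we obtain
\[
Y_{n+1}\leq\gamma\, b^n\,\iota_0^{\frac{2}{Q+2}}\,Y_n^{1+\frac{2}{Q+2}}.
\]
The positive power of $\iota_0$ comes from $|\mathcal{Q}_n|\sim\iota_0\rho^{Q+2}$: the energy bound carries $\rho^{-2}$, while the Sobolev embedding produces the full cylinder volume. This step is where care is needed. There is no measure-theoretic hypothesis here, so smallness of $Y_0$ is unavailable, and the smallness must instead come entirely from $\iota_0$. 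We therefore verify that the exponent of $\iota_0$ is strictly positive and independent of $\lambda$. Since $Y_0\leq1$ trivially, \cref{L2.4} applies as soon as $\gamma\,\iota_0^{2/(Q+2)}\leq b^{-(Q+2)/2}$, and this choice of $\iota_0$ depends only on \texttt{data}. Then $Y_n\to0$, so $u+g\geq\mu^-+\tfrac{\lambda}{4}\omega$ in $B_{\rho/2}\times(0,\iota_0\rho^2]$.

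\textbf{The tail alternative.} Finally, $g(t)\leq\mathfrak{C}R^{-2}\int_{T_1}^{T_2}\textup{Tail}[(u-\mu^-)_-;R,\xi_0]\,dt$ for every $t$ in the interval, using $|\eta|^{-Q-2s}\lesssim R^{-2}\cdot R^{2}|\eta|^{-Q-2s}$ as in the definition of the tail. We take $\tilde\gamma$ to be a suitable multiple of $\mathfrak{C}$. If $\tilde\gamma R^{-2}\int_{T_1}^{T_2}\textup{Tail}\leq\lambda\omega$, then $g\leq\tfrac{\lambda}{8}\omega$, and the conclusion $u-\mu^-\geq\tfrac{1}{4}\lambda\omega$ follows after a harmless readjustment of the levels, for instance running the iteration to $\tfrac{3}{8}\lambda$ instead of $\tfrac{1}{4}\lambda$. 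Otherwise the first alternative of the statement holds.
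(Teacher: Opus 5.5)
Your proposal is correct and follows essentially the same route as the paper: a forward-in-time De Giorgi iteration with a time-independent cut-off, the truncation vanishing on the initial slice $t_0$, the far tail absorbed through the time-dependent level $g$, and smallness supplied by the choice $\delta=\iota_0$ instead of a measure-theoretic hypothesis. You also fill in details the paper only asserts: you check that $\iota_0$ enters the recursion with the positive power $2/(Q+2)$, because with no $\partial_t\nu$ term the energy bound carries $\rho^{-2}$ rather than $(\delta\rho^2)^{-1}$, and you keep the time interval fixed with $g(t_0)=0$.
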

\begin{proof}
    Without loss of any generality, we assume $(\xi_0,t_0)=(0,0)$. Unlike the previous lemma, this relies on point wise initial data. We want to prove this lemma using the energy estimate (\ref{eq3.6}) but on the forward-in-time domain. Moreover, as far as cut-off function is concerned, we only consider time-independent cut-off function i.e. $\partial_t\nu\equiv 0$.  \vskip 1mm
    Consider $w_+$ and $g(t)$ ass in the previous lemma. Let us introduce $\rho_n,\tilde{\rho}_n,B_n,\tilde{B}_n,\mathcal{Q}^\oplus_n(\delta),\tilde{\mathcal{Q}}^\oplus_n(\delta),k_n,\lambda_n$ as follows
    \begin{align*}
        \begin{cases}
            \rho_n=\frac{\rho}{2}+\frac{\rho}{2^{n+1}},\,\,\tilde{\rho}_n=\frac{\rho_n+\rho_{n+1}}{2},\\
            B_n=B_{\rho_n}(0),\,\,\tilde{B}_n=B_{\tilde{\rho}_n}(0),\\
            \mathcal{Q}^\oplus_n(\delta)=B_n\times (0,\delta\rho_n^2],\,\,\tilde{\mathcal{Q}}^\oplus_n(\delta)=\tilde{B}_n\times (0,\delta\tilde{\rho}_n^2],\\
            \lambda_n=\frac{\lambda}{2}+\frac{\lambda}{2^{n+1}},\,\,k_n=\mu^+-\lambda_n\omega.
        \end{cases}
    \end{align*}
    Moreover, cut-off function $\psi(x)$ in $B_n$ is defined as
    \begin{align*}
        \psi\equiv \begin{cases}
            1\,\,\,\,\mbox{on } B_{n+1}\\
            0\,\,\,\,\mbox{in } \tilde{B}^c_n
        \end{cases}
        \,\,\,\,\,\,\mbox{such that }|\nabla_{\hn}\psi|\leq \frac{\gamma 2^{n+4}}{\rho}.
    \end{align*}
    As a result, we have 
    \begin{align*}
        \esssup_{0<t<\delta\rho_{n+1}^2}&\int_{B_{n+1}}w^2_+\,d\xi+\iint_{\mathcal{Q}_n^\oplus(\delta)}|\nabla_{\hn}w_+|^2\,d\xi dt\\
        &\leq \gamma\int_0^{\delta\rho_n^2}\iint_{B_n\times B_n}\max\left\{w^2_+(\xi,t),w_+^2(\eta,t)\right\}\frac{|\psi(\xi)-\psi(\eta)|^2}{\dist[\xi]{\eta}^{Q+2s}}\,d\xi d\eta dt\\
        &\quad\quad+\gamma\iint_{\mathcal{Q}^\oplus_n(\delta)}\left(w_+\nabla_{\hn}\psi\,\nu\right)^2\,d\xi dt+\iint_{\mathcal{Q}^\oplus_n(\delta)}\left(w_+\psi^2\nu^2\right)\,d\xi\left\{\int_{\hn\setminus B_n}\frac{w_+(\eta,t)}{\dist[\xi]{\eta}^{Q+2s}}\,d\eta\right\}dt\\
        &\quad\quad\quad\quad -\mathfrak{C}\iint_{\mathcal{Q}^\oplus_n(\delta)}g'(t)(w_+\psi^2\nu^2)(\xi,t)\,d\xi dt.
    \end{align*}
    Proceeding as in the previous lemma with suitable choice of $\mathfrak{C}$ and $\gamma$, we conclude that the left hand side of the previous inequality is dominated by
    \begin{align*}
        \gamma\frac{2^{(Q+2s+2)n}}{\delta\rho^2}(\lambda\omega)^2|\mathcal{A}_n|.
    \end{align*}
    where we use the definition $\mathcal{A}_n:=\{(\xi,t)\in\mathcal{Q}_n^\oplus(\delta):u(\xi,t)-g(t)>k_n\}$. After running the De Giorgi iteration (as done in Lemma \ref{L5.1}) on the last estimation, there exists a constant $\iota_0$ (depending only on data), such that with the choice of $\delta=\iota_0$, we have
\begin{align*}
 u(\xi,t)-g(t)\leq \mu^+-\frac{1}{2}\xi\omega\,\,\,\textup{ a.e. in } B_{\frac{\rho}{2}}\times (0,\iota_0\rho^2).
\end{align*}
Consequently, if we impose  $g(T_2)\leq \frac{\xi\omega}{4}$ after redefining $\mathfrak{C}=4\mathfrak{C}$, then
\begin{align*}
    u\leq -\frac{\xi\omega}{4}+\mu^+\,\,\,\textup{ a.e. in } B_{\frac{\rho}{2}}\times (0,\iota_0\rho^2).
\end{align*}
\end{proof}
The following lemma propagates the measure theoretical information from a time slice to a time interval. 
\begin{lemma}\label{L5.3}
    Let $\lambda\in(0,1)$ and $\alpha\in(0,1]$. For locally bounded, local weak sub(super) solution $u$ of (\ref{eq1.1}) in $\Omega_T$ there exists $\delta,\varepsilon\in (0,1)$ depending only on the \textup{\texttt{data}} and $\alpha$ such that if
    $$|\{\pm(\mu^\pm-u(\cdot,t_0))\geq \lambda\omega\}\cap B_\rho(\xi_0)|\geq \alpha|B_\rho(\xi_0)|,$$
    then either
    $$\frac{1}{\delta}R^{-2}\int_{T_1}^{T_2}\textup{Tail }[(u-\mu^\pm)_\pm(t);R,\xi_0]\,dt>\lambda\omega,$$
    or
    $$|\{\pm(\mu_\pm-u(\cdot,t)\geq \varepsilon\lambda\omega\}\cap B_\rho(\xi_0)|\geq\frac{\alpha}{2}|B_\rho(\xi_0)|,\;\;\;\;\;\textup{ for all } t\in (t_0,t_0+\delta\rho^{2}] $$
    provided $B_\rho(\xi_0)\times(t_0,t_0+\delta\rho^{2}]\subset\mathcal{Q}$. Moreover, we have $\delta\approx\alpha^{Q+3}$ and $\varepsilon\approx\alpha$.
\end{lemma}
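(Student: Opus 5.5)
We follow the standard ``propagation of measure information in time'' argument (DiBenedetto; cf.\ \cite{SZ22_8,Kar26_9}), adapted to the time-dependent level $g$. We treat the supersolution case with $\mu^-$; the other case is symmetric. Take $(\xi_0,t_0)=(0,0)$ and write $B_\rho=B_\rho(0)$.

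\emph{Energy estimate.} We apply the energy estimate \eqref{eq3.6} on the forward cylinder $B_\rho\times(0,\delta\rho^2]$ to
\[
w_-=(u-\mu^- + g(t) - \lambda\omega)_- ,\qquad g(t)=\mathfrak{C}R^{-2}\int_0^t\textup{Tail}[(u-\mu^-)_-(\tau);R,0]\,d\tau .
\]
The sign of $g$ is chosen so that the level $\mu^-+\lambda\omega-g(t)$ decreases in time. We use a time-independent cutoff ($\nu\equiv1$, so no $\partial_t\nu$ term) and a spatial cutoff $\psi$ with $\psi\equiv1$ on $B_{(1-\sigma)\rho}$, supported in $B_\rho$, and $|\nabla_{\hn}\psi|\le C/(\sigma\rho)$.

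At $t=0$, where $g(0)=0$, the hypothesis gives $w_-(\cdot,0)=0$ on a set of measure at least $\alpha|B_\rho|$, and $w_-\le\lambda\omega$ everywhere in $B_\rho$. Hence the initial term is at most $(\lambda\omega)^2(1-\alpha)|B_\rho|$.

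We then bound the right-hand side of \eqref{eq3.6} term by term, each time using $w_-\le\lambda\omega$ and $\rho<1$:
\begin{itemize}
\item the nonlocal cutoff term, via Lemma \ref{L2.3}, by $C(\lambda\omega)^2\sigma^{-2}\rho^{-2}\,\delta\rho^2|B_\rho|$;
\item the gradient-of-cutoff term by the same quantity;
\item the inner tail, over $B_R\setminus B_\rho$ where $u\ge\mu^-$, by $C\sigma^{-Q-2s}(\lambda\omega)^2\delta|B_\rho|$.
\end{itemize}
The far tail outside $B_R$ produces $\int g'(t)\,w_-\psi^2\,d\xi\,dt$. As in Lemma \ref{L5.1}, this is absorbed by the $\mathfrak{C}$-term that comes from the time-dependent level, once $\mathfrak{C}$ is chosen large. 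The outcome is, for all $t\in(0,\delta\rho^2]$,
\[
\int_{B_{(1-\sigma)\rho}} w_-^2(\xi,t)\,d\xi\le(\lambda\omega)^2|B_\rho|\Bigl[(1-\alpha)+C\delta\sigma^{-(Q+2)}\Bigr].
\]

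\emph{Measure estimate.} On the set where $u-\mu^-<\varepsilon\lambda\omega$, we want $w_-\ge(1-\varepsilon)\lambda\omega-g(t)$. For this we use the dichotomy. If the tail alternative fails, i.e. $\frac1\delta R^{-2}\int\textup{Tail}\le\lambda\omega$, then $g(\delta\rho^2)\le\mathfrak{C}\delta\lambda\omega\le\varepsilon\lambda\omega$, provided $\delta\le\varepsilon/\mathfrak{C}$. This yields $w_-\ge(1-2\varepsilon)\lambda\omega$ on that set. Therefore
\[
|\{u(\cdot,t)-\mu^-<\varepsilon\lambda\omega\}\cap B_\rho|\le \frac{(1-\alpha)+C\delta\sigma^{-(Q+2)}}{(1-2\varepsilon)^2}|B_\rho|+Q\sigma|B_\rho|.
\]

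\emph{Choice of parameters.} We choose them in the following order so that the right-hand side is at most $(1-\alpha/2)|B_\rho|$:
\begin{itemize}
\item $\sigma\approx\alpha/(8Q)$;
\item $\varepsilon\approx\alpha/8$, so that $(1-\alpha)(1-2\varepsilon)^{-2}\le1-\tfrac34\alpha$;
\item $\delta\approx\alpha\,\sigma^{Q+2}\approx\alpha^{Q+3}$, also ensuring $\delta\le\varepsilon/\mathfrak{C}$.
\end{itemize}
This gives the stated dependences $\delta\approx\alpha^{Q+3}$ and $\varepsilon\approx\alpha$.

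\emph{Main obstacle.} The delicate step is the treatment of the time-dependent level $g$. It must cancel the far-tail term in the energy estimate, while its total size $g(\delta\rho^2)$ stays below $\varepsilon\lambda\omega$ under the negation of the tail alternative. This forces the $\frac1\delta$ factor in front of the tail, and $\mathfrak{C}$ must be fixed before $\delta$. One must also check that the initial-time term in \eqref{eq3.6} is correctly read off through Lemma \ref{L2.1} from the time regularization.
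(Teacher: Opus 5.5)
Your argument is correct, and its skeleton is the paper's. That shared skeleton is: a forward energy estimate on $B_\rho\times(0,\delta\rho^2]$ with a time-independent cutoff, the initial term bounded by $(1-\alpha)(\lambda\omega)^2|B_\rho|$, passage to $B_{(1-\sigma)\rho}$ at the cost $Q\sigma|B_\rho|$, and the choices $\sigma\approx\alpha/(8Q)$, $\delta\approx\alpha\sigma^{Q+2}$.

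The real difference is how you handle the far tail. In this lemma the paper uses no time-dependent level. It truncates at the constant level $\mu^+-\lambda\omega$ and bounds the contribution of $\hn\setminus B_R$ directly by the negated alternative $R^{-2}\int_{T_1}^{T_2}\textup{Tail}\le\delta\lambda\omega$. This makes that contribution the same size, $\delta\sigma^{-(Q+2)}(\lambda\omega)^2|B_\rho|$, as the other error terms. Because that tail integral is not localized to $(0,\delta\rho^2]$, it is the paper's route that genuinely forces the factor $1/\delta$.

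You instead absorb the far tail into the $g'$-term and pay only through the shift $g(\delta\rho^2)\le\varepsilon\lambda\omega$. So you actually need the alternative only with the constant $\mathfrak{C}/\varepsilon\sim\alpha^{-1}$. That is a stronger statement than the one with $1/\delta\sim\alpha^{-(Q+3)}$, and the stated version follows because $\delta\le\varepsilon/\mathfrak{C}$. Your remark that the shift ``forces'' $1/\delta$ is therefore not accurate.

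Your route has three costs:
\begin{itemize}
\item You need the energy estimate for the level $u+g-k$, not (\ref{eq3.6}) as written. For $w=u-g-k$ the $g$-term has the wrong sign in the supersolution case, and you correctly avoided this.
\item You must keep the harmless remainder from $w_-\le(u-\mu^-)_-+\lambda\omega$ outside $B_R$.
\item The factor $(1-\varepsilon)^2$ becomes $(1-2\varepsilon)^2$. With $\varepsilon=\alpha/8$ the inequality $(1-\alpha)(1-2\varepsilon)^{-2}\le1-\frac34\alpha$ fails for small $\alpha$. Taking $\varepsilon=\alpha/16$ works and still gives $\varepsilon\approx\alpha$.
\end{itemize}
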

\begin{proof}
    For the sake of simplicity, we assume $(\xi_0,t_0)=(0,0)$. We prove this lemma for the subsolution case. For convenience, let us denote $aM:=-a\lambda\omega+\mu^+$ for any real number $a$, and for any $t>0$
    \begin{align*}
        \mathcal{A}_{aM,\rho}(t):=\{u(\cdot,t)>aM\}\cap B_\rho,
    \end{align*}
    then it is evident that
    \begin{align}\label{eq5.5}
        |\mathcal{A}_{aM,\rho}(0)|\leq (1-\alpha)|B_\rho|.
    \end{align}
    Moreover, we set a time-independent cut-off function $\psi$ on $B_\rho$ as follows
    \begin{align*}
    \psi\equiv
        \begin{cases}
           1\,\,\,\,\mbox{ on } B_{(1-\sigma)\rho},\\
           0\,\,\,\,\mbox{ on } B^c_{(1-\sigma/2)\rho},
        \end{cases}
        \,\,\,\,\mbox{ such that }|\nabla_{\hn}\psi|\leq \frac{\gamma}{\sigma\rho}.
    \end{align*}
    Next, consider the truncated function $w_+=(u-M)_+$ and examine the energy estimate (\ref{eq3.6}) over the domain $B_\rho\times (0,\delta\rho^2]$. Then, for all $t\in (0,\delta\rho^2]$, we have
    \begin{align}\label{eq5.6}
        \int_{B(1-\sigma)\rho}w^2_+(\xi,t)\,d\xi&\leq \int_{B_\rho}w^2_+(\xi,0)\,d\xi+\gamma\int_0^{\delta\rho^2}\int_{B_\rho}\left(w^2_+\,\nabla_{\hn}\psi\right)^2(\xi,t)\,d\xi\nonumber\\
        &\quad+\gamma\int_0^{\delta\rho^2}\max\{w^2_+(\xi,t),w^2_+(\eta,t)\}\frac{|\psi(\xi)-\psi(\eta)|}{\dist[\xi]{\eta}^{Q+2s}}\,d\xi d\eta dt\\
        &\quad\quad+\gamma\int_0^{\delta\rho^2}\int_{B_\rho}w_+(\xi,t)\left\{\int_{\hn\setminus B_\rho}\frac{w_+(\eta,t)}{\dist[\xi]{\eta}^{Q+2s}}\,d\eta\right\}\,d\xi dt\nonumber\\
        &\quad\quad\quad =:I_1+I_2+I_3+I_4,\nonumber
    \end{align}
    where $\gamma=\gamma(\texttt{data})$. Now, we estimate the terms on the right hand side of previous inequality one by one. Consider $I_3$ and $I_2$ successively.
    \begin{align*}
        I_3&\leq \gamma\delta\rho^2\left(\frac{\lambda\omega}{\sigma\rho}\right)^2\iint_{B_\rho\times B_\rho}\frac{1}{\dist[\xi]{\eta}^{Q+2s-2}}\,d\xi dt\\
        &=\gamma\delta \left(\frac{\lambda\omega}{\sigma}\right)^2|B_\rho|,
    \end{align*}
    and
    \begin{align*}
        I_2&\leq \gamma\left(\frac{\lambda\omega}{\sigma\rho}\right)^2\delta\rho^2|B_\rho|\\
        &=\gamma\delta\left(\frac{\lambda\omega}{\sigma}\right)^2|B_\rho|.
    \end{align*}
    Before proceeding towards $I_4$, we observe that for $\xi\in \mbox{ supp }\psi$ i.e. $|\xi|\leq (1-\frac{\sigma}{2})\rho$ and $|\eta|\geq \rho$, we have 
    \begin{align*}
        \frac{\dist[\xi]{\eta}}{|\eta|_{\hn}}\geq 1-\frac{|\xi|_{\hn}}{|\eta|_{\hn}}\geq \frac{\sigma}{2}.
    \end{align*}
     Also, when $|\eta|_{\hn}\geq R$ and $|\xi|_{\hn}\leq \rho$, we have
    \begin{align*}
        \frac{\dist[\xi]{\eta}}{|\eta|_{\hn}}\geq\,\,\,\textup{ provided }\,\rho\leq \frac{R}{2}.
    \end{align*}
    So, $I_4$ implies with the enforcement of 
    \begin{align*}
        \frac{1}{\delta}R^{-2}\int_{T_1}^{T_2}\textup{Tail }[(u-\mu^\pm)_\pm(t);R,\xi_0]\leq\lambda\omega,
    \end{align*}
    we have
    \begin{align*}
        I_4&\leq 2^{Q+2s}\frac{\lambda\omega}{\sigma^{Q+2s}}|B_\rho|\left\{\int_0^{\delta\rho^2}\int_{B_R\setminus B_\rho} \frac{w_+(\eta,t)}{|\eta|_{\hn}^{Q+2s}}\,d\eta dt+\int_0^{\delta\rho^2}\int_{\hn\setminus B_R} \frac{w_+(\eta,t)}{|\eta|_{\hn}^{Q+2s}}\,d\eta dt\right\}\\
        &\leq \gamma \frac{\lambda\omega}{\sigma^{Q+2}}|B_\rho|\left\{\gamma\delta\lambda\omega+\int_0^{\delta\rho^2}\int_{\hn\setminus B_R} \frac{w_+(\eta,t)}{|\eta|_{\hn}^{Q+2s}}\,d\eta dt\right\}\\
        &\leq \gamma \frac{\delta(\lambda\omega)^2}{\sigma^{Q+2}}|B_\rho|.
    \end{align*}
    Combining all the estimates into (\ref{eq5.6}), we have
\begin{align*}
    \int_{B(1-\sigma)\rho}w^2_+(\xi,t)\,d\xi&\leq \int_{B_\rho}w^2_+(\xi,0)\,d\xi+\gamma\frac{\delta(\lambda\omega)^2}{\sigma^{Q+2}}|B_\rho|\\
    &\leq \left[(1-\alpha)+\frac{\gamma\delta}{\sigma^{Q+2}}\right](\lambda\omega)^2|B_\rho|.
\end{align*}
for all $t\in(0,\delta\rho^2]$. In the last step of the above estimates, we used (\ref{eq5.5}). The left-hand side of the above inequality can be estimated for $\varepsilon\in(0,1)$, 
\begin{align*}
    \int_{B(1-\sigma)\rho}w^2_+(\xi,t)\,d\xi&\geq \int_{B_{(1-\sigma)\rho}\cap\{u(\cdot,t)>\varepsilon M\}}w^2_+(\xi,t)\,d\xi\\
    &\geq (1-\varepsilon)^2(\lambda\omega)^2|\mathcal{A}_{\varepsilon M,(1-\sigma)\rho}(t)|.
\end{align*}
Also
\begin{align*}
  |\mathcal{A}_{\varepsilon M,\rho}(t)|&=\left|\mathcal{A}_{\varepsilon M,(1-\sigma)\rho}(t)\cup\left(\mathcal{A}_{\varepsilon M,\rho}(t)-\mathcal{A}_{\varepsilon M,(1-\sigma)\rho}(t)\right)\right|\\
  &\leq |\mathcal{A}_{\varepsilon M,(1-\sigma)\rho}(t)|+|B_\rho-B_{(1-\sigma)\rho}|\\
  & \leq |\mathcal{A}_{\varepsilon M,(1-\sigma)\rho}(t)|+Q\sigma|B_\rho|.
\end{align*}
Now, proceeding as in \cite[Lemma 3.3]{Kar26_9}, with a suitable choice of parameters $\sigma=\frac{\alpha}{8Q},\,\delta=\frac{1}{8\gamma}\sigma^{Q+2}\alpha,\,(1-\varepsilon)\geq \sqrt{\frac{1-\frac{3}{4}\alpha}{1-\frac{1}{2}\alpha}}$, we have our result. 
\end{proof}
In the purely non-local case, in proving {\em expansion of positivity}, we are unable to use {\em De Giorgi Isoperimetric Inequality} due to the presence of possible jumps in the function of $W^{s,2}$ space. In the mixed local-nonlocal case, things are different. Unlike the nonlocal case, we work with the function space $W^{1,2}$. This allow us to use De Giorgi isoperimetric inequality to expand the positivity from a certain time level to a time interval. 
\begin{lemma}\label{L5.4}
    Consider some positive constants $\lambda\in (0,1)$ and $\alpha\in(0,1]$. For locally bounded, local weak sub(super)solution to (\ref{eq1.1}) in $\Omega_T$, there exists constants $\delta,\eta\in (0,1)$ and $\hat{\gamma}>1$ depending upon \textup{\texttt{data}} and $\alpha$ such that if $B_{8\rho}(\xi_0
    )\times (-\delta\rho^2,\delta\rho^2]\subset\mathcal{Q}$ and 
    \begin{align*}
        |\{\pm(\mu^\pm-u(\cdot,t_0)\geq \lambda\omega\}\cap B_\rho(\xi_0)|\geq \alpha|B_\rho(\xi_0)|.
    \end{align*}
    Then either
    \begin{align*}
       \hat{\gamma}R^{-2}\int_{T_1}^{T_2}\textup{Tail }[(u-\mu^\pm)_\pm(t);R,\xi_0]\,dt>\lambda\omega,
    \end{align*}
    or
    \begin{align*}
        \pm(\mu^\pm-u)\geq \varkappa\lambda\omega,\,\,\,\,\,\,\textup{ a.e. in }\,\, B_{2\rho}(\xi_0)\times \left(t_0+\frac{1}{2}\delta\rho^2,t_0+\delta\rho^2\right].
    \end{align*}
    Moreover, $\delta\approx \alpha^{Q+3}$ and $\varkappa\approx \alpha$.
\end{lemma}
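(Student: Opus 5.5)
We argue for the supersolution case ($\mu^-$, lower bounds), the subsolution case being symmetric, and we take $(\xi_0,t_0)=(0,0)$. The plan is the standard three-step expansion of positivity: propagate the measure information forward in time, enlarge it in space, and then upgrade it to a pointwise bound via De Giorgi. Throughout, the alternative in which the tail term is large is kept as the first alternative. The constant $\hat{\gamma}$ is taken as the maximum of the constants $1/\delta$ from Lemma \ref{L5.3} and $\tilde{\gamma}$ from Lemma \ref{L5.1}, adjusted by the level factors introduced below.

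\textbf{Step 1: propagation in time.} Assume the tail alternative fails. Apply Lemma \ref{L5.3} to the slice information at $t_0=0$ on $B_\rho$. This gives $\delta_1\approx\alpha^{Q+3}$ and $\varepsilon\approx\alpha$ such that
\[
|\{u(\cdot,t)-\mu^-\geq\varepsilon\lambda\omega\}\cap B_\rho|\geq\tfrac{\alpha}{2}|B_\rho|\quad\text{for all } t\in(0,\delta_1\rho^2].
\]
Since $B_\rho\subset B_{8\rho}$ and $|B_{8\rho}|=\mathcal{C}|B_\rho|$, the same set satisfies
\[
|\{u(\cdot,t)-\mu^-\geq\varepsilon\lambda\omega\}\cap B_{8\rho}|\geq\tfrac{\alpha}{2\mathcal{C}}|B_{8\rho}|
\]
on this whole time interval. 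We will choose $\delta=\delta_1$.

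\textbf{Step 2: shrinking the measure of the bad set.} This is the main obstacle. The goal is to show that, on the cylinder $\mathcal{Q}':=B_{4\rho}\times(\tfrac12\delta\rho^2,\delta\rho^2]$ (or a slightly larger one), the set $\{u-\mu^-\leq\varepsilon\lambda\omega/2^{j_*}\}$ has measure at most $\iota|\mathcal{Q}'|$, where $\iota$ is the constant from Lemma \ref{L5.1}.

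Set $k_j=\varepsilon\lambda\omega/2^j$ and apply the isoperimetric Lemma \ref{L2.5} to $u(\cdot,t)$ on $B_{4\rho}$ with levels $k_{j+1}<k_j$, for each time $t$. The lower bound from Step 1 on the denominator gives
\[
\frac{k_j}{2}\,|\{u-\mu^-<k_{j+1}\}\cap B_{4\rho}|\leq\frac{\gamma\rho^{Q+1}}{\alpha\rho^Q}\int_{B_{4\rho}\cap\{k_{j+1}<u-\mu^-<k_j\}}|\nabla_{\hn}u|\,d\xi .
\]
Integrate in time and apply Cauchy--Schwarz. The gradient term is controlled by the energy estimate \eqref{eq3.6} for $w_-=(u-\mu^--k_j)_-$, using a cut-off equal to $1$ on $B_{4\rho}$ and supported in $B_{8\rho}$, with a time cut-off. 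The tail term there is absorbed by $g$ exactly as in Lemma \ref{L5.1}, using the failure of the tail alternative with threshold $k_j$; this forces $\hat{\gamma}$ to scale like $2^{j_*}/\varepsilon$. The energy bound reads
\[
\iint|\nabla_{\hn}w_-|^2\leq\gamma\,k_j^2\,\delta^{-1}\rho^{-2}\,|B_{8\rho}\times(0,\delta\rho^2]| .
\]
Squaring and summing over $j=0,\dots,j_*$ yields $Y_{j_*}\leq C(\alpha,\delta)\,j_*^{-1/2}$ for the relative measure. Choosing $j_*$ large makes this at most $\iota$.

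A subtle point to handle is the time-dependent level $g(t)$. The isoperimetric argument should be run on $u-g$, and one must check that $g\leq k_{j_*}/4$ under the tail hypothesis.

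\textbf{Step 3: pointwise conclusion.} Apply Lemma \ref{L5.1} with $\lambda$ replaced by $\varepsilon\lambda/2^{j_*}$ and radius $4\rho$, on cylinders whose top lies in $(\tfrac12\delta\rho^2,\delta\rho^2]$, with time scale $\delta$. Either the tail alternative holds, or
\[
u-\mu^-\geq\frac{\varepsilon\lambda\omega}{2^{j_*+2}}\quad\text{a.e. in } B_{2\rho}\times(\tfrac12\delta\rho^2,\delta\rho^2],
\]
by covering this interval with backward cylinders $\mathcal{Q}^\ominus_{4\rho}(\delta')$ whose tops range over the interval. This gives the claim with $\varkappa=\varepsilon 2^{-j_*-2}$.

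Since $j_*$ depends only on $\alpha$ and the data, the dependences $\varkappa\approx\alpha$ and $\delta\approx\alpha^{Q+3}$ hold up to data-dependent constants, given that $j_*$ is controlled. Verifying this control on $j_*$ is the delicate bookkeeping in Step 2.
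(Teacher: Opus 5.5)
Your argument is the paper's proof of Lemma \ref{L5.4} step for step. You use Lemma \ref{L5.3} to carry the slice information forward in time. You then apply the isoperimetric Lemma \ref{L2.5} at the levels $\varepsilon\lambda\omega 2^{-j}$, combined with the gradient bound from (\ref{eq3.6}) (spatial cut-off equal to $1$ on $B_{4\rho}$ and supported in $B_{8\rho}$, time cut-off vanishing at $-\delta\rho^2$), squared and summed to get relative measure $\lesssim j_*^{-1/2}$. Finally you apply Lemma \ref{L5.1} at the level $\varepsilon\lambda\omega 2^{-j_*}$, with $\hat{\gamma}$ of order $2^{j_*}/\varepsilon$. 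Your covering of $(\tfrac12\delta\rho^2,\delta\rho^2]$ by backward cylinders of radius $4\rho$, with the measure estimate run on all of $B_{4\rho}\times(0,\delta\rho^2]$ so that they fit, is a cleaner way than the paper's single application of Lemma \ref{L5.1} to reach $B_{2\rho}$ and the whole half-interval.

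One small correction: in the supersolution case the time-dependent level must enter as $u+g$, as in Lemma \ref{L6.5}, not $u-g$. Only with this sign does the $g'$ term cancel the far-away tail of $u_-$ in (\ref{eq3.6}). Since $g\geq 0$, the lower bound from Step 1 then passes to the denominator of the isoperimetric inequality for free.

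More importantly, the item you defer, controlling $j_*$ so that $\varkappa\approx\alpha$, cannot be closed along this route. Your own bookkeeping gives $Y_{j_*}\lesssim \alpha^{-1}\delta^{-1/2}j_*^{-1/2}$, so you need $j_*\gtrsim \alpha^{-2}\delta^{-1}\iota(\delta)^{-2}$. Then $\varkappa=\varepsilon 2^{-j_*-2}$ decays faster than any power of $\alpha$. The paper's proof has exactly the same feature: $\varkappa=\tfrac14\varepsilon 2^{-n^*}$, with $n^*$ depending on $\alpha$ through $\tilde{\gamma}=\mathcal{C}\gamma/\alpha$ and through $\iota$. So what both arguments actually establish is $\varkappa=\varkappa(\texttt{data},\alpha)$. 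A power-like dependence on $\alpha$ is only recovered later, in Proposition \ref{P6.2}, via the clustering Lemma \ref{L6.1}.
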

\begin{proof}
    For convenience, we will assume $(\xi_0,t_0)=(0,0)$. Consider the larger cylinder 
    \begin{align*}
        \mathcal{Q}_{8\rho}(\delta)=\mathcal{Q}^\oplus_{8\rho}(\delta)\cup \mathcal{Q}^\ominus_{8\rho}(\delta)=B_{8\rho}(0)\times(-\delta\rho^2,\delta\rho^2]
    \end{align*}
    and the truncated function
    \begin{align*}
        w_+=(u-g-M_n)_+,\,\,\,\,\,\textup{ where } M_n:=-\frac{\varepsilon\lambda\omega}{2^n}+\mu^+,\,g(t)=\mathfrak{C}\int_{-R^2}^t\int_{\hn\setminus B_R}\frac{|u_+(\eta,\tau)|}{|\eta^{-1}|_{\hn}^{Q+2s}}\,d\eta d\tau,
    \end{align*}
    for $n=0,1,2,3,...$ and $\varepsilon$ is as in previous Lemma \ref{L5.3}. The nonnegative, piecewise smooth cut-off functions $\psi$ and $\nu$ are defined as follows
    \begin{align*}
        \psi\equiv\begin{cases}
            1\,\,\,\mbox{ on } B_{4\rho},\\
            0\,\,\,\mbox{ on } B^c_{8\rho},
        \end{cases}
        \,\,\,\,\,\,\mbox{ such that }|\nabla_{\hn}\psi|\leq \frac{\gamma}{\rho},
    \end{align*}
    and 
    \begin{align*}
        \nu\equiv\begin{cases}
            1\,\,\,\mbox{ on }\, t\geq 0,\\
            0\,\,\,\mbox{ on }\,t\leq -\delta\rho^2,
        \end{cases}
        \,\,\,\,\,\,\mbox{ such that }|\partial_t\nu|\leq \frac{\gamma}{\delta\rho^2}.
    \end{align*}
    Using all this entities in (\ref{eq3.6}), we have 
    \begin{align}\label{eq5.7}
        \iint_{\mathcal{Q}_{4\rho}^\oplus(\delta)}|\nabla_{\hn}w_+|^2\,d\xi dt&\leq \gamma\int_{-\delta\rho^2}^{\delta\rho^2}\iint_{B_{8\rho}\times B_{8\rho}}\max\left\{w^2_+(\xi,t),w^2_+(\eta,t)\right\}\frac{|\psi(\xi)-\psi(\eta)|}{\dist[\xi]{\eta}^{Q+2s}}\,d\xi d\eta dt\nonumber\\
        &\quad+ \gamma\iint_{\mathcal{Q}_{8\rho}(\delta)}\left(w_+\,\nabla_{\hn}\psi\,\nu\right)^2(\xi,t)\,d\xi dt+\iint_{\mathcal{Q}_{8\rho}(\delta)}\left(w^2_+\psi^2\nu\,\partial_t\nu\right)(\xi,t)\,d\xi dt\nonumber\\
        &\quad\quad+\iint_{\mathcal{Q}_{8\rho}(\delta)}\left(w_+\psi^2\nu^2\right)(\xi,t)\,d\xi\left\{\int_{\hn\setminus B_{8\rho}}\frac{w_+(\eta,t)}{\dist[\xi]{\eta}^{Q+2s}}\,d\eta\right\}\,dt\\
        &\quad\quad\quad-\mathfrak{C}\iint_{\mathcal{Q}^\ominus_n}g'(t)(w_+\psi^2\nu^2)(\xi,t)\,d\xi dt.\nonumber
    \end{align}
    Note that we remove the first, second and third terms on the left hand side of (\ref{eq3.6}) due to choice of cut-off function and non-negativeness of the terms  . Next, we estimate each term on the right hand side of (\ref{eq5.7}) one by one. The first term on the right hand side of (\ref{eq5.7}) is estimated as
    \begin{align*}
        \int_{-\delta\rho^2}^{\delta\rho^2}\iint_{B_{8\rho}\times B_{8\rho}}&\max\left\{w^2_+(\xi,t),w^2_+(\eta,t)\right\}\frac{|\psi(\xi)-\psi(\eta)|}{\dist[\xi]{\eta}^{Q+2s}}\,d\xi d\eta dt
        \\&\leq \frac{\gamma}{\rho^2}\left(\frac{\varepsilon\lambda\omega}{2^n}\right)^2\,(2\delta\rho^2)\iint_{B_{8\rho}\times B_{8\rho}}\frac{\chi_{\{u(\xi,t)-g(t)>M_n\}}}{\dist[\xi]{\eta}^{Q+2(s-1)}}\,d\xi d\eta\\
        &\leq \gamma \delta\left(\frac{\varepsilon\lambda\omega}{2^n}\right)^2|B_{4\rho}|.
    \end{align*}
    In these course of calculations, we have used the measure theoretical assumptions of our lemma and $|B_{8\rho}|=\mathcal{C}|B_{4\rho}|$. The second and third terms of the right hand side of (\ref{eq5.7}) can be dealt with as follows with the help of the measure theoretical information, $|B_{8\rho}|=\mathcal{C}|B_{4\rho}|$ and properties of cut-off functions 
    \begin{align*}
        \iint_{\mathcal{Q}_{8\rho}(\delta)}\left(w_+\,\nabla_{\hn}\psi\,\nu\right)^2(\xi,t)\,d\xi dt&\leq \frac{\gamma}{\rho^2}\left(\frac{\varepsilon\lambda\omega}{2^n}\right)^2|\mathcal{Q}_{8\rho}(\delta)|\\
        &\leq\gamma\delta\left(\frac{\varepsilon\lambda\omega}{2^n}\right)^2|B_{4\rho}|,
    \end{align*}
    and 
    \begin{align*}
       \iint_{\mathcal{Q}_{8\rho}(\delta)}\left(w^2_+\psi^2\,\partial_t\nu\right)(\xi,t)\,d\xi dt&\leq \frac{\gamma}{\delta\rho^2}\left(\frac{\varepsilon\lambda\omega}{2^n}\right)^2|\mathcal{Q}_{8\rho}(\delta)|\\
       &\leq \gamma \left(\frac{\varepsilon\lambda\omega}{2^n}\right)^2|B_{4\rho}|.
    \end{align*}
    The second to last term on the right hand side of (\ref{eq5.7}) is estimated by employing {\em tail estimation}. Prior to the estimation we want to mention the following observations
    \begin{itemize}
        \item If $|\xi|_{\hn}\leq 4\rho$, $|\eta|_{\hn}\geq 8\rho$, then 
        \begin{align*}
            \frac{\dist[\xi]{\eta}}{|\eta|_{\hn}}\geq 1-\frac{|\xi|_{\hn}}{|\eta|_{\hn}}\geq 1-\frac{4\rho}{8\rho}=\frac{1}{2}.
        \end{align*}
        \item If $|\xi|\leq 4\rho$, $|\eta|\geq R$ and suppose $\rho\leq \frac{R}{8}$, then
        \begin{align*}
            \frac{\dist[\xi]{\eta}}{|\eta|_{\hn}}\geq 1-\frac{|\xi|_{\hn}}{|\eta|_{\hn}}\geq 1-\frac{4\rho}{R}\geq\frac{1}{2}.
        \end{align*}
    \end{itemize}
    We have the following estimate.
    \begin{align*}
        \iint_{\mathcal{Q}_{8\rho}(\delta)}&\left(w_+\psi^2\nu^2\right)(\xi,t)\,d\xi\left\{\int_{\hn\setminus B_{8\rho}}\frac{w_+(\eta,t)}{\dist[\xi]{\eta}^{Q+2s}}\,d\eta\right\}\,dt\\
        &=\iint_{\mathcal{Q}_{8\rho}(\delta)}\left(w_+\psi^2\nu^2\right)(\xi,t)\,d\xi\left\{\int_{\hn\setminus B_R}\frac{w_+(\eta,t)}{\dist[\xi]{\eta}^{Q+2s}}\,d\eta+\int_{B_R\setminus B_{8\rho}}\frac{w_+(\eta,t)}{\dist[\xi]{\eta}^{Q+2s}}\,d\eta\right\}\,dt\\
        &\leq \gamma \iint_{\mathcal{Q}_{8\rho}(\delta)}\left(w_+\psi^2\nu^2\right)(\xi,t)\,d\xi\left\{\int_{\hn\setminus B_R}\frac{w_+(\eta,t)}{|\eta|_{\hn}^{Q+2s}}\,d\eta+\int_{B_R\setminus B_{8\rho}}\frac{w_+(\eta,t)}{|\eta|_{\hn}^{Q+2s}}\,d\eta\right\}\,dt\\
        &\leq \gamma \iint_{\mathcal{Q}_{8\rho}(\delta)}\left(w_+\psi^2\nu^2\right)(\xi,t)\,d\xi\int_{\hn\setminus B_R}\frac{u_+(\eta,t)}{|\eta|_{\hn}^{Q+2s}}\,d\eta dt\\
        &\quad \quad +\gamma \iint_{\mathcal{Q}_{8\rho}(\delta)}\left(w_+\psi^2\nu^2\right)(\xi,t)\,d\xi\int_{B_R\setminus B_{8\rho}}\frac{w_+(\eta,t)}{|\eta|_{\hn}^{Q+2s}}\,d\eta dt\\
        &\leq \gamma\delta\left(\frac{\varepsilon\lambda\omega}{2^n}\right)^2|B_{4\rho}|+\gamma \iint_{\mathcal{Q}_{8\rho}(\delta)}\left(w_+\psi^2\nu^2\right)(\xi,t)\,d\xi\int_{\hn\setminus B_R}\frac{u_+(\eta,t)}{|\eta|_{\hn}^{Q+2s}}\,d\eta dt.
    \end{align*}
    In this series of calculations, we have used Lemma \ref{L2.3} and the property $|B_{8\rho}|=\mathcal{C}|B_{4\rho}|$. Moreover changing $\gamma$ to $\mathfrak{C}$ in the last integral of the last step to cancel out the last term on the right hand side of (\ref{eq5.7}). Now, combining all the estimates into (\ref{eq5.7}) and considering $\delta<1$, we have
    \begin{align}
        \iint_{\mathcal{Q}_{4\rho}^\oplus(\delta)}|\nabla_{\hn}w_+|^2\,d\xi dt\leq \gamma\left(\frac{\varepsilon\lambda\omega}{2^n}\right)^2|B_{4\rho}|,
    \end{align}
   where $\gamma$ depends upon only \texttt{data}.  Next, we recall De Giorgi isoperimetric inequality for the levels
    \begin{align*}
        l= M_{n+1}=-\frac{\varepsilon\lambda\omega}{2^{n+1}}+\mu^+,\,\,\textup{ and } \,\,k=M_n=-\frac{\varepsilon\lambda\omega}{2^{n}}+\mu^+\,\,\textup{ for } n=0,1,2,3,...
    \end{align*}
    that is 
    \begin{align}\label{eq5.9}
        \frac{\varepsilon\lambda\omega}{2^{n+1}}|\mathcal{A}_{M_{n+1},4\rho}(t)|\leq \gamma\frac{\mathcal{C}\rho}{\alpha}\int_{B_{4\rho}\cap\{M_n<u(\cdot,t)-g(t)<M_{n+1}\}}|\nabla_{\hn}u|\,d\xi.
    \end{align}
    In this calculation, we have made use of the inequality
    \begin{align*}
        \left|\{u(\cdot,t)-g(t)\leq -\varepsilon\lambda\omega+\mu^+\}\cap B_{4\rho}\right|\geq \left|\{u(\cdot,t)\leq -\varepsilon\lambda\omega+\mu^+\}\cap B_{4\rho}\right|\geq \frac{\alpha}{\mathcal{C}}|B_{4\rho}|.
    \end{align*}
    This can be deduced from the measure theoretical information in the hypothesis, Lemma \ref{L5.3} and the property $|B_{4\rho}|=\mathcal{C}|B_\rho|$. \vskip 1mm
    Now, integrate (\ref{eq5.9}) with respect to time over $(0,\delta\rho^2)$ and set
    \begin{align*}
        |\mathcal{A}_n|=|\{u-g>M_n\}\cap\mathcal{Q}_{4\rho}^\oplus(\delta)|=\int_0^{\delta\rho^2}|\mathcal{A}_{M_n,4\rho}(\tau)|\,d\tau.
    \end{align*}
    Then, (\ref{eq5.9}) yields with help of H\"older inequality,
    \begin{align*}
        \frac{\varepsilon\lambda\omega}{2^{n+1}}|\mathcal{A}_{n+1}|&\leq \tilde{\gamma}\rho\iint_{\mathcal{Q}_{4\rho}^\oplus(\delta)\cap\{M_n<u(\cdot,t)-g(t)<M_{n+1}\}}|\nabla_{\hn}u|\,d\xi dt\\
        &\leq \tilde{\gamma}\rho\left(\iint_{\mathcal{Q}_{4\rho}^\oplus(\delta)}|\nabla_{\hn}w_+|^2\,d\xi dt\right)^\frac{1}{2}|\mathcal{A}_n-\mathcal{A}_{n+1}|^\frac{1}{2}\\
        &\leq 2\tilde{\gamma}\,\frac{\varepsilon\lambda\omega}{2^{n+1}}\sqrt{|\mathcal{Q}_{4\rho}^\oplus(\delta)|}\left(|\mathcal{A}_n|-|\mathcal{A}_{n+1}|\right)^\frac{1}{2},\,\,\,\,\,\textup{ for } n=0,1,2,3,...,
    \end{align*}
 where $\tilde{\gamma}:=\frac{\mathcal{C}\gamma}{\alpha}$ is depending upon \texttt{data} and $\alpha$. Squaring both side of the above inequality, we have 
 \begin{align*}
    |\mathcal{A}_{n+1}|^2\leq (2\tilde{\gamma})^2|\mathcal{Q}_{4\rho}^\oplus(\delta)|^2\left(|\mathcal{A}_n|-|\mathcal{A}_{n+1}|\right).
 \end{align*}Next, adding all these inequalities for $n=0,1,2,3,...,n^*-1$, where $n^*$ is a positive integer (to be determined later). Since $\mathcal{A}_{n^*}\subset \mathcal{A}_i$ for any positive integer $i<n^*$. So the estimate yields
 \begin{align*}
     n^*|\mathcal{A}_{n^*}|^2\leq \sum_{n=0}^{n^*-1}|\mathcal{A}_{n+1}|^2&\leq (2\tilde{\gamma)}^2|\mathcal{Q}_{4\rho}^\oplus(\delta)|\sum_{n=0}^{\infty}\left(|\mathcal{A}_n|-|\mathcal{A}_{n+1}|\right)\\
     &\leq (2\tilde{\gamma})^2|\mathcal{Q}_{4\rho}^\oplus(\delta)|^2.
 \end{align*}
 Hence
 \begin{align}\label{eq5.10}
     |\mathcal{A}_{n^*}|^2\leq \frac{(2\tilde{\gamma})^2}{n^*}|\mathcal{Q}_{4\rho}^\oplus(\delta)|^2.
 \end{align}
 Thus, considering fixed $\iota\in (0,1)$, one can choose $n^*$ so large that 
 \begin{align}\label{eq5.11}
     \frac{|\{u-g>-\varepsilon_\iota\lambda\omega+\mu^+\}\cap\mathcal{Q}_{4\rho}^\oplus(\delta)|}{|\mathcal{Q}_{4\rho}^\oplus(\delta)|}<\iota,\,\,\,\,\textup{ for }\,\,\, \frac{2\tilde{\gamma}}{\sqrt{n^*}}\leq\iota,\,\,\,\textup{ and }\,\,\varepsilon_\iota=\frac{\varepsilon}{2^{n^*}}.
 \end{align}
 Next, consider the De Giorgi Lemma \ref{L5.1} over forward parabolic cylinder $\mathcal{Q}_{4\rho}^\oplus(\delta)$ with replacement of $\lambda\omega$ with $\varepsilon_\iota\lambda\omega$. In comparison with Lemma \ref{L5.1}, both $\iota$ depends only on \texttt{data} and $\delta$ (that is fixed term $\alpha$). When such $\iota$ is determined and fixed in terms of \texttt{data}, we calculate $n^*$ by the indicated procedure. The conditions for (\ref{eq5.11}) are satisfied in that case as well. Also the conditions of Lemma \ref{L5.1} hold. Then, from the conclusion of Lemma \ref{L5.1}, we have 
 \begin{align*}
     u(\xi,t)-g(t)\leq -\frac{1}{2}\varepsilon_\iota\lambda\omega+\mu^+\,\,\,\,\,\textup{ a.e. in }\,\, B_{4\rho}\times (\frac{1}{2}\delta\rho^2,\delta\rho^2].
 \end{align*}
 Now considering $\hat{\gamma}:=\max\{4\mathfrak{C},\frac{1}{\delta},\frac{4}{\varepsilon_\iota}\}$ and considering $g(T_2)\leq \frac{1}{4}\varepsilon_\iota\lambda\omega$, we have
 \begin{align*}
     u\leq -\varkappa\lambda\omega+\mu^+\,\,\,\,\,\textup{ a.e. in }\,\, B_{4\rho}\times (\frac{1}{2}\delta\rho^2,\delta\rho^2],
 \end{align*}
 where $\varkappa=\frac{1}{4}\varepsilon_\iota$.
\end{proof}
The next proposition improves the result in Lemma \ref{L5.4}. This type of result is inspired from the result in \cite[Prop. 4.2]{Lia21_14} or \cite[Prop. 4.5]{Nak23_3}. In contrast to the previous result (Lemma \ref{L5.4}), it shows that the measure theoretical information of positivity at certain time slice can be propagated further in time as much as we want. This will be useful to draw the conclusion of Proposition \ref{P6.2}. 
\begin{proposition}\label{P5.5}
    Let $L,K>0$ be arbitrary numbers and $u$ be a locally bounded, weak sub (super) solution to the problem (\ref{eq1.1}) in $\Omega_T$. Let $\rho>0$ and assume that $B_{2\rho}(\xi_0)\times (t_0,t_0+K\rho^2]\subset\mathcal{Q}\Subset\Omega_T$. Suppose that there exists $\alpha\in (0,1]$ such that 
    \begin{align*}
        |\{\pm(\mu^\pm-u(\cdot,t_0)\geq L\}\cap B_\rho(\xi_0)|\geq \alpha|B_\rho(\xi_0)|.
    \end{align*}
    Then there exists $\varkappa\in (0,1)$ depending only on \textup{\texttt{data}} and $\alpha$ such that for all time $t_0<t\leq t_0+K\rho^2$
    \begin{align*}
        \pm(\mu^\pm-u(\cdot,\xi))\geq \varkappa L-R^{-2}\int_{T_1}^{T_2}\textup{ Tail}[(u-\mu^\pm)_\pm(t);R,\xi_0]\,dt\,\,\,\,\textup{ a.e. in }\, B_\rho(\xi_0).
    \end{align*}
\end{proposition}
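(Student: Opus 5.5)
The plan is to iterate Lemma \ref{L5.4} forward in time, each time feeding its pointwise conclusion back in as new measure information, with $\alpha=1$ after the first step. I treat the supersolution case with $\mu^-$; the other is symmetric. Set $\mathcal{T}:=R^{-2}\int_{T_1}^{T_2}\textup{Tail}[(u-\mu^-)_-(t);R,\xi_0]\,dt$. If $\varkappa L\le\mathcal{T}$ the claim is trivial, since $u-\mu^-\ge 0$ in $\mathcal{Q}$. So I assume $\varkappa L>\mathcal{T}$, and in fact $\hat\gamma\mathcal{T}\le \lambda\omega$ at every level used below; this is exactly where the small constant $\varkappa$ and the shift by $\mathcal{T}$ in the statement get absorbed.

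\textbf{Step 1: the first time step.} Apply Lemma \ref{L5.4} at $t_0$ with $\lambda\omega$ replaced by $L$ and with the given $\alpha$. If the tail alternative fails, this gives $u-\mu^-\ge \varkappa_\alpha L$ a.e.\ in $B_{2\rho}\times(t_0+\tfrac12\delta_\alpha\rho^2,t_0+\delta_\alpha\rho^2]$. To cover the initial gap $(t_0,t_0+\tfrac12\delta_\alpha\rho^2]$, I would instead use Lemma \ref{L5.3}: it gives measure density $\alpha/2$ at level $\varepsilon L$ on all of $(t_0,t_0+\delta\rho^2]$. Then I apply Lemma \ref{L5.4} at each intermediate time slice $s\in(t_0,t_0+\delta\rho^2]$, possibly on slightly smaller radii (for instance $\rho/4$), so that the translated windows $(s+\tfrac12\delta\rho^2,s+\delta\rho^2]$ cover everything after $t_0$. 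The remaining short interval right after $t_0$ can alternatively be handled by running the same argument with $\rho$ replaced by a comparable smaller radius; this changes the constants only by factors depending on \texttt{data} and $\alpha$.

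\textbf{Step 2: iteration with $\alpha=1$.} Once $u-\mu^-\ge \varkappa_1 L$ a.e.\ in $B_{2\rho}\supset B_\rho$ at a time $t_1$, the measure hypothesis holds with $\alpha=1$. Applying Lemma \ref{L5.4} again then yields $u-\mu^-\ge \varkappa_*\varkappa_1 L$ on the next time window of length $\tfrac12\delta_1\rho^2$, where $\delta_1,\varkappa_*$ depend only on \texttt{data}. Repeating this $n\approx 2K/\delta_1$ times reaches $t_0+K\rho^2$ with bound $\varkappa_*^{\,n}\varkappa_1 L$. At each step the tail alternative is the same $\hat\gamma\mathcal{T}$ compared against a geometrically smaller level. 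Whenever it fires, $\varkappa_*^{\,j}L<\hat\gamma\mathcal{T}$ for some $j$, and the desired inequality follows trivially from nonnegativity after enlarging constants.

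\textbf{Main obstacle.} The constant produced this way depends on $K$ through $\varkappa_*^{K}$, while the statement asks for $\varkappa$ depending only on \texttt{data} and $\alpha$. To remove this dependence I would follow \cite[Prop.~4.2]{Lia21_14} and \cite[Prop.~4.5]{Nak23_3}. The key observation is that the level need not decay once positivity is full on $B_{2\rho}$. Indeed, if $u\ge \mu^-+M$ on all of $B_{2\rho}$ at time $t_1$, then Lemma \ref{L5.2} (forward De Giorgi with pointwise initial data) gives $u\ge\mu^-+\tfrac14 M$ on $B_\rho\times(t_1,t_1+\iota_0\rho^2]$. That still loses a factor at each step, so the refinement is to apply Lemma \ref{L5.2} with a level scheme whose losses are summable, or better, to compare directly with a barrier. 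Concretely, I would use the time-independent cut-off energy estimate (\ref{eq3.6}) on $(u-\mu^- -k)_-$ over the whole interval $(t_1,t_0+K\rho^2]$, with $k=\varkappa_1 L$ minus the tail contribution $g(t)$ built into the level. The aim is to show that a sub-level set that is empty at $t_1$ stays small, uniformly in $K$, because the time-dependent level $g$ absorbs the tail term. Then a single application of Lemma \ref{L5.1} on each window $(s-\iota_0\rho^2,s]$ yields a bound independent of $K$. This uniformity in $K$ is the delicate part, and it is where I expect the proof to need the most care.
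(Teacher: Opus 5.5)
\textbf{First gap: the constant cannot be made independent of $K$.} The goal of your last paragraph is unattainable, so the energy/barrier argument sketched there cannot be completed. Take $\mu^-=0$ and $R=8\rho$, and let $u$ solve (\ref{eq1.1}) in $B_{9\rho}(\xi_0)$ with zero exterior values and $u(\cdot,t_0)=L$. Then $u\geq 0$ everywhere, so the tail term vanishes. Testing with $u$ and using the Poincar\'e inequality in $W^{1,2}_0(B_{9\rho})$ (the nonlocal form is nonnegative) gives $\|u(t)\|_{L^2(B_\rho)}\leq CL|B_\rho|^{1/2}e^{-c(t-t_0)/\rho^2}$. Evaluating at $t=t_0+K\rho^2$ forces $\varkappa\leq Ce^{-cK}$.

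Your proposed tool shows the same defect. With a time-independent cut-off, the right-hand side of (\ref{eq3.6}) over $(t_1,t_0+K\rho^2]$ is of size $K\sigma^{-(Q+2)}k^2|B_\rho|$ times the relative measure of the level set. The required smallness can therefore only be bought by lowering the level by a $K$-dependent amount.

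The paper's proof makes no $K$-uniform claim. It says at the outset that $\bar\varepsilon$ depends on \texttt{data}, $\alpha$ and $K$, and Step IV of Proposition \ref{P6.2} uses $\hat\varkappa_0(\texttt{data},K)$. The statement must be read with $\varkappa=\varkappa(\texttt{data},\alpha,K)$; the arguments you cite necessarily have the same dependence.

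With that reading, your Steps 1--2 are a genuine alternative route. You chain pointwise positivity by reapplying Lemma \ref{L5.4} with $\alpha=1$. The paper instead chains only measure information over about $K/\delta$ intervals of length $\delta\rho^2$. Each interval uses a Lemma \ref{L5.3}-type estimate with adaptively chosen $\sigma_j$ plus the level-shrinking bound behind (\ref{eq5.10}), and loses at most $\frac{\alpha\delta}{4K}$ of density. Your route even yields the natural exponential dependence $\varkappa_*^{cK}$. By contrast, the paper's choice $i_j\gtrsim K^{2(Q+3)}$ makes each of its level drops depend on $K$ as well. The cost of your route is that each call of Lemma \ref{L5.4} needs $B_{8\rho}\times(s-\delta\rho^2,s+\delta\rho^2]\subset\mathcal{Q}$, which is more than the hypothesis $B_{2\rho}(\xi_0)\times(t_0,t_0+K\rho^2]\subset\mathcal{Q}$ provides.

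\textbf{Second gap: Step 1 does not cover the initial interval.} Lemma \ref{L5.4} applied at a slice $s>t_0$ gives positivity only on $(s+\frac12\delta\rho^2,s+\delta\rho^2]$. These windows never reach below $t_0+\frac12\delta\rho^2$. Passing to a smaller radius $r$ only shrinks the gap to $\frac12\delta r^2$, and you have no density information on $B_r$ in any case.

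This cannot be repaired. For $\alpha<1$ the hypothesis allows $u(\cdot,t_0)-\mu^-$ to be arbitrarily small on a set of measure $(1-\alpha)|B_\rho|$. By the $L^2$-continuity in time built into the notion of solution, $u(\cdot,t)-\mu^-$ then stays below $\varkappa L$ on part of that set for $t$ close to $t_0$. So a pointwise bound on all of $B_\rho$ for every $t\in(t_0,t_0+K\rho^2]$ is false in general.

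What does hold on the whole interval is measure information, and that is exactly what the paper's proof establishes: the reduction to (\ref{eq5.12}) and the density-$\frac{\alpha}{2}$ conclusion of its Step IV. Pointwise bounds then follow from Lemma \ref{L5.4} only after the waiting time. The proposition is applied, in Step IV of Proposition \ref{P6.2}, only with $\alpha=1$.
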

\begin{proof}
    This proof will follow the steps of \cite[Prop. 4.5]{Nak23_3}. Considering lemmas \ref{L5.3} and \ref{L5.4}, note that it is sufficient to show that there exists some $\bar{\varepsilon}>0$ depending on $\texttt{data}, \alpha$ and $K$, such that
\begin{align}\label{eq5.12}
    \left|\{\pm(\mu^\pm-u+g(t))\geq \bar{\varepsilon}M\}\cap B_\rho(\xi_0)\right|\geq \alpha|B_\rho(\xi_0)|, 
\end{align}
where $g(t)$ as in Lemma \ref{L5.1}. We prove this lemma for the subsolution case and choose
\begin{align*}
    aM:=-aL+\mu^+,\,\,\,\,\textup{ for any } a.
\end{align*}
For the supersolution case, similar steps will be followed with some required sign changes.\\[1mm]
{\bf Step I: }\vskip 1mm
Consider $\mathcal{A}_{aM,\rho}(t)=\left\{u(\cdot,t)-g(t)>aM\right\}\cap B_\rho$. Choose truncated function $w_+:=(u-g(t)-M)_+$, where $g(t)$ as in Lemma \ref{L5.1}. Proceeding as in (\ref{eq5.6}) in Lemma \ref{L5.3}, for $t\in (0,\delta\rho^2]$, we have

\begin{align}\label{eq5.13}
    \int_{B(1-\sigma)\rho}w^2_+(\xi,t)\,d\xi&\leq \gamma\int_{B_\rho}w^2_+(\xi,0)\,d\xi+\gamma\int_0^{\delta\rho^2}\int_{B_\rho}\left(w^2_+\,\nabla_{\hn}\psi\right)^2(\xi,t)\,d\xi\nonumber\\
        &\quad+\gamma\int_0^{\delta\rho^2}\max\{w^2_+(\xi,t),w^2_+(\eta,t)\}\frac{|\psi(\xi)-\psi(\eta)|}{\dist[\xi]{\eta}^{Q+2s}}\,d\xi d\eta dt\nonumber\\
        &\quad\quad+\gamma\int_0^{\delta\rho^2}\int_{B_\rho}w_+(\xi,t)\left\{\int_{\hn\setminus B_\rho}\frac{w_+(\eta,t)}{\dist[\xi]{\eta}^{Q+2s}}\,d\eta\right\}\,d\xi dt\\
        &\quad\quad\quad -\mathfrak{C}\iint_{\mathcal{Q}_\rho^\oplus(\delta)}g'(t)(w_+\psi^2)(\xi,t)\,d\xi dt\nonumber\\
        &\quad\quad\quad\quad =:I_1+I_2+I_3+I_4+I_5,\nonumber
\end{align}
where we set a time-independent cut-off function $\psi$ on $B_\rho$ as follows
    \begin{align*}
    \psi\equiv
        \begin{cases}
           1\,\,\,\,\mbox{ on } B_{(1-\sigma)\rho},\\
           0\,\,\,\,\mbox{ on } B^c_{(1-\sigma/2)\rho},
        \end{cases}
        \,\,\,\,\mbox{ such that }|\nabla_{\hn}\psi|\leq \frac{\gamma}{\sigma\rho}.
    \end{align*}
    Now, we estimate the terms on the right-hand side of previous inequality one by one. Consider $I_3$ and $I_2$ successively,
    \begin{align*}
        I_3&\leq \gamma\left(\frac{L}{\sigma\rho}\right)^2\int_0^{\delta\rho^2}\iint_{B_\rho\times B_\rho}\frac{\chi_{\mathcal{A}_{M,\rho}(t)}}{\dist[\xi]{\eta}^{Q+2s-2}}\,d\xi dt\\
        &=\gamma\delta \left(\frac{L}{\sigma}\right)^2\frac{\left|\left\{u-g>M\right\}\cap \mathcal{Q}_\rho^\oplus(\delta)\right|}{|\mathcal{Q}_\rho^\oplus(\delta)|}|B_\rho|,
    \end{align*}
    and similarly 
    \begin{align*}
        I_2&\leq \gamma\delta \left(\frac{L}{\sigma}\right)^2\frac{\left|\left\{u-g>M\right\}\cap \mathcal{Q}_\rho^\oplus(\delta)\right|}{|\mathcal{Q}_\rho^\oplus(\delta)|}|B_\rho|.
    \end{align*}
    Before proceeding towards $I_4$, we observe that for $\xi\in \mbox{ supp }\psi$ i.e. $|\xi|\leq (1-\frac{\sigma}{2})\rho$ and $|\eta|\geq \rho$, we have 
    \begin{align*}
        \frac{\dist[\xi]{\eta}}{|\eta|_{\hn}}\geq 1-\frac{|\xi|_{\hn}}{|\eta|_{\hn}}\geq \frac{\sigma}{2}.
    \end{align*}
    Also, when $|\eta|_{\hn}\geq R$ and $|\xi|_{\hn}\leq \rho$, we have
    \begin{align*}
        \frac{\dist[\xi]{\eta}}{|\eta|_{\hn}}\geq\frac{1}{2}\,\,\,\textup{ provided }\,\rho\leq \frac{R}{2}.
    \end{align*}
    So, $I_4$ implies 
    \begin{align*}
        I_4&\leq 2^{Q+2s}\frac{1}{\sigma^{Q+2s}}\int_0^{\delta\rho^2}\int_{B_\rho}w_+\,d\xi\left\{\int_{B_R\setminus B_\rho} \frac{w_+(\eta,t)}{|\eta|_{\hn}^{Q+2s}}\,d\eta +\int_{\hn\setminus B_R} \frac{w_+(\eta,t)}{|\eta|_{\hn}^{Q+2s}}\,d\eta \right\}dt.
    \end{align*}
    Similar to the respective part of Lemma \ref{L5.1}, we can cancel out the second integral on the right hand of previous inequality with $I_5$ by suitable choice of $\mathfrak{C}$ and $\gamma$. Then rest part on the right hand side of the above integral can be dominated by 
    \begin{align*}
        \gamma\delta\frac{L^2}{\sigma^{Q+2}}\frac{\left|\left\{u-g>M\right\}\cap \mathcal{Q}_\rho^\oplus(\delta)\right|}{|\mathcal{Q}_\rho^\oplus(\delta)|}|B_\rho|.
    \end{align*}
    The left hand side of (\ref{eq5.13}) has the following estimation for $\varepsilon\in  (0,1)$ 
    \begin{align*}
       \int_{B_{(1-\sigma)\rho}}w^2_+(\xi,t)\,d\xi \geq C(1-\varepsilon)^2L^2|\mathcal{A}_{\varepsilon M,(1-\sigma)\rho}(t)|,
    \end{align*}
    where $C$ is some positive constant. Combining all the estimates and proceeding as in Lemma \ref{L5.3}, we have
    \begin{align}\label{eq5.14}
        \left|\mathcal{A}_{\varepsilon M,\rho}(t)\right|\leq \frac{1}{(1-\varepsilon)^2}\left[(1-\alpha)+\frac{\gamma\delta}{\sigma^{Q+2}}\frac{\left|\left\{u-g>M\right\}\cap \mathcal{Q}_\rho^\oplus(\delta)\right|}{|\mathcal{Q}_\rho^\oplus(\delta)|}+Q\sigma\right]|B_\rho|
    \end{align}
    Choose $\sigma\in (0,1)$, so that
    \begin{align*}
        \sigma=\delta^\frac{1}{Q+3}\left(\frac{\left|\left\{u-g>M\right\}\cap \mathcal{Q}_\rho^\oplus(\delta)\right|}{|\mathcal{Q}_\rho^\oplus(\delta)|}\right)^\frac{1}{Q+3}.
    \end{align*}
    Then, (\ref{eq5.14}) implies, 
    \begin{align*}
        \left|\mathcal{A}_{\varepsilon M,\rho}(t)\right|\leq \frac{1}{(1-\varepsilon)^2}\left[(1-\alpha)+\gamma\delta^\frac{1}{Q+3}\left(\frac{\left|\left\{u-g>M\right\}\cap \mathcal{Q}_\rho^\oplus(\delta)\right|}{|\mathcal{Q}_\rho^\oplus(\delta)|}\right)^\frac{1}{Q+3}\right]|B_\rho|.
    \end{align*}
    Further, choose 
    \begin{align}\label{eq5.15}
        \begin{cases}
            \delta=\left(\frac{\alpha}{8\gamma}\right)^{Q+3},\\
            \frac{1}{(1-\epsilon)^2}\leq \frac{1-\frac{3}{4}\alpha}{1-\frac{7}{8}\alpha},
        \end{cases}
    \end{align}
    then
    \begin{align}\label{eq5.16}
        \left|\mathcal{A}_{\varepsilon M,\rho}(t)\right|\leq\left(1-\frac{3}{4}\alpha\right)|B_\rho|,
    \end{align}
    for any $0<t\leq \delta\rho^2=:t_1$.\\[2mm]
    {\bf Step II: }\vskip 1mm
    Set  
    \begin{align*}
         \begin{cases}
             L_1:=\varepsilon L,\,\,\varepsilon_1:=\frac{1}{2^{n_1+i_1+1}},\,\,M_1:=\varepsilon M=-L_1+\mu^+,\,\,\mathcal{M}_1:=-\frac{L_1}{2^{i_1+1}}+\mu^+,\\
             \mathcal{Q}_1:=B_\rho\times [t_1,t_2]\,\,\textup{ with }\,t_2:=t_1+\delta\rho^2,
         \end{cases}
    \end{align*}
    where both natural numbers $i_1,n_1$ will be calculated later. Next, write (\ref{eq5.13}) with these parameters and truncated function $w_+:=(u-g(t)-\mathcal{M}_1)_+$ over the cylinder $\mathcal{Q}_1$ to have for any $t\in (t_1,t_2)$,
    \begin{align}\label{eq5.17}
        \int_{B(1-\sigma_1)\rho}w^2_+(\xi,t)\,d\xi&\leq \int_{B_\rho}w^2_+(\xi,t_1)\,d\xi+\gamma\int_{t_1}^{t_2}\int_{B_\rho}\left(w^2_+\,\nabla_{\hn}\psi\right)^2(\xi,t)\,d\xi\nonumber\\
        &\quad+\gamma\int_{t_1}^{t_2}\max\{w^2_+(\xi,t),w^2_+(\eta,t)\}\frac{|\psi(\xi)-\psi(\eta)|}{\dist[\xi]{\eta}^{Q+2s}}\,d\xi d\eta dt\nonumber\\
        &\quad\quad+\gamma\int_{t_1}^{t_2}\int_{B_\rho}w_+(\xi,t)\left\{\int_{\hn\setminus B_\rho}\frac{w_+(\eta,t)}{\dist[\xi]{\eta}^{Q+2s}}\,d\eta\right\}\,d\xi dt\\
        &\quad\quad\quad -\mathfrak{C}\iint_{\mathcal{Q}_1}g'(t)(w_+\psi^2)(\xi,t)\,d\xi dt\nonumber\\
        &\quad\quad\quad\quad =:I_1+I_2+I_3+I_4+I_5\nonumber,
    \end{align}
    for some $\sigma_1\in (0,1)$ to be determined later. Recalling the measure theoretical information (\ref{eq5.16}) at time level $t=t_1$, we have 
    \begin{align*}
        I_1\leq \left(\frac{L_1}{2^{i_1+1}}\right)^2\left|\{u(\cdot,t_1)-g(t_1)>\mathcal{M}_1\}\cap B_\rho \right|&\leq \left(\frac{L_1}{2^{i_1+1}}\right)^2\left|\{u(\cdot,t_1)-g(t_1)> M_1\}\cap B_\rho \right|\\
        &\leq \left(1-\frac{3}{4}\alpha\right) \left(\frac{L_1}{2^{i_1+1}}\right)^2|B_\rho|.
    \end{align*}
    Rest of the right hand side of (\ref{eq5.17}) can be estimated as in {\bf Step I} to have 
    \begin{align*}
        I_2+I_3+I_4+I_5\leq \frac{\gamma(\texttt{data})\delta}{\sigma_1^{Q+2}}\left(\frac{L_1}{2^{i_1+1}}\right)^2\frac{\left|\left\{u-g>\mathcal{M}_1\right\}\cap \mathcal{Q}_1\right|}{|\mathcal{Q}_1|}|B_\rho|.
    \end{align*}
    Combining all the estimates into (\ref{eq5.17}), we have
    \begin{align}\label{eq5.18}
        \int_{B(1-\sigma_1)\rho}w^2_+(\xi,t)\,d\xi\leq \left[\left(1-\frac{3}{4}\alpha\right)+\frac{\gamma(\texttt{data})\delta}{\sigma_1^{Q+2}}\frac{\left|\left\{u-g>\mathcal{M}_1\right\}\cap \mathcal{Q}_1\right|}{|\mathcal{Q}_1|}\right] \left(\frac{L_1}{2^{i_1+1}}\right)^2|B_\rho|.
    \end{align}
    Similar to the {\bf Step I}, the left hand side of (\ref{eq5.18}) can be estimated , 
    \begin{align*}
        \int_{B(1-\sigma_1)\rho}w^2_+(\xi,t)\,d\xi
        &\geq \int_{\mathcal{A}_{\varepsilon_1M_1,(1-\sigma_1)\rho}(t)}\left[(\varepsilon_1 M_1-\mathcal{M}_1)\right]\,d\xi\\
        & \geq (1-2^{-n_1})^2\left(\frac{L_1}{2^{i_1+1}}\right)^2\left|\mathcal{A}_{\varepsilon_1M_1,(1-\sigma_1)\rho}(t)\right|,
    \end{align*}
    for some constant $C$. Therefore inserting this into (\ref{eq5.18}), we get , for every $t_1\leq t\leq t_2$
    \begin{align}\label{eq5.19}
        \left|\mathcal{A}_{\varepsilon_1M_1,\rho}(t)\right|\leq \frac{1}{(1-2^{-n_1})^2}\left[\left(1-\frac{3}{4}\alpha\right)+\frac{\gamma(\texttt{data})\delta}{\sigma_1^{Q+2}}\frac{\left|\left\{u-g>M_1\right\}\cap \mathcal{Q}_1\right|}{|\mathcal{Q}_1|}+Q\sigma_1\right]|B_\rho|.
    \end{align}
    Choose $\sigma_1$ as,
    \begin{align*}
        \sigma_1=\delta^{\frac{1}{Q+3}}\left(\frac{\left|\left\{u-g>\mathcal{M}_1\right\}\cap \mathcal{Q}_1\right|}{|\mathcal{Q}_1|}\right)^{\frac{1}{Q+3}}.
    \end{align*}
    Then (\ref{eq5.19}) implies,
    \begin{align*}
         \left|\mathcal{A}_{\varepsilon_1M_1,\rho}(t)\right|\leq \frac{1}{(1-2^{-n_1})^2}\left[\left(1-\frac{3}{4}\alpha\right)+\gamma\delta^{\frac{1}{Q+3}}\left(\frac{\left|\left\{u-g>\mathcal{M}_1\right\}\cap \mathcal{Q}_1\right|}{|\mathcal{Q}_1|}\right)^{\frac{1}{Q+3}}\right]|B_\rho|.
    \end{align*}
    At this point, keeping the parameters $L_1,\varepsilon_1$ in hand, we recall the arguments and steps that lead to (\ref{eq5.10}) in Lemma \ref{L5.4}, we get
    \begin{align*}
        \frac{\left|\left\{u-g>M_1\right\}\cap \mathcal{Q}_1\right|}{|\mathcal{Q}_1|}\leq \frac{\gamma}{\alpha\sqrt{i_1}}\leq \frac{\gamma}{\alpha\delta\sqrt{i_1}},
    \end{align*}
    for some constant $\gamma=\gamma(\texttt{data})$. Therefore, the previous estimate looks like
    \begin{align*}
        \left|\mathcal{A}_{\varepsilon_1M_1,\rho}(t)\right|\leq \frac{1}{(1-2^{-n_1})^2}\left[\left(1-\frac{3}{4}\alpha\right)+\frac{\gamma(\texttt{data)}}{\alpha^{\frac{1}{Q+3}}i_1^{\frac{1}{2(Q+3)}}}\right]|B_\rho|.
    \end{align*}
    For given $K$, we choose $i_1,n_1\in \mathbb{N}$ in such a way that
    \begin{align*}
        \frac{\gamma(\texttt{data)}}{\alpha^{\frac{1}{Q+3}}i_1^{\frac{1}{2(Q+3)}}(1-2^{-n_1})^2}\leq \frac{\alpha\delta}{8K},\,\,\,\,\,\,\frac{\left(1-\frac{3}{4}\alpha\right)}{(1-2^{-n_1})^2}\leq 1-\frac{3}{4}\alpha+ \frac{\alpha\delta}{8K},
    \end{align*}
    to obtain, for all $t_1\leq t\leq t_2$
    \begin{align*}
         \left|\mathcal{A}_{\varepsilon_1M_1,\rho}(t)\right|\leq\left[1-\frac{3}{4}\alpha+ \frac{\alpha\delta}{4K}\right]|B_\rho|.
    \end{align*}\\[2mm]
    {\bf Step III: }\vskip 1mm
    In this step, we begin the induction process. Fix a natural number $j(>2)\in\mathbb{N}$. In Step II, $i=1$ case has been done and we assume that procedure has been done inductively upto $(j-1)-$th steps; in turn, three sequences $\{\varepsilon_i\}_{i=1}^{j-1}$, $\{L_i\}_{i=1}^{j-1}$ and $\{t_i\}_{i=1}^{j-1}$ have been selected up to the $(j-1)-$th and assume that
    \begin{align}\label{eq5.20}
        \left|\mathcal{A}_{\varepsilon_{j-1}M_{j-1},\rho}(t)\right|\leq\left[1-\frac{3}{4}\alpha+ (j-1)\frac{\alpha\delta}{4K}\right]|B_\rho|,\,\,\,\,\,\textup{ for all } t\in [t_{j-1},t_j],
    \end{align}
    where $t_j:=t_{j-1}+\delta\rho^2$ and $\varepsilon_{j-1}:=\frac{1}{2^{n_{j-1}+i_{j-1}+1}}$ with $i_{j-1},n_{j-1}\in\mathbb{N}$ being large enough. Also note that the parameters $\delta$ and $\alpha$ have been determined in {\em Step I}. Now set 
    \begin{align*}
       \begin{cases}
           L_j:=\varepsilon L_{j-1},\,\varepsilon_j:=\frac{1}{2^{n_j+i_j+1}},\,\,M_j:=\varepsilon^j M=-L_j+\mu^+,\,\,\mathcal{M}_j:=-\frac{L_j}{2^{i_j+1}}+\mu^+,\\
           \mathcal{Q}_j:=B_\rho\times [t_j,t_{j+1}]\,\,\textup{ with } t_{j+1}:=t_j+\delta\rho^2,
       \end{cases}
    \end{align*}
  where both natural numbers $n_j,i_j$ are large enough so that 
  \begin{align*}
      n_j\geq n_{j-1}\,\,\textup{ and }\,\,i_j\geq n_{j-1}+i_{j-1}.
  \end{align*}
  Write (\ref{eq5.13}) with these parameters and truncated function $w_+:=(u-g(t)-\mathcal{M}_j)_+$ over the cylinder $\mathcal{Q}_j$ and estimate similar to {\em Step II}, we get
  \begin{align}\label{eq5.21}
      \int_{B_{(1-\sigma_j)\rho}}w^2_+(\xi,t)\,d\xi\leq \int_{B_\rho}w^2_+(\xi,t_j)\,d\xi+ \frac{\gamma(\texttt{data})\delta}{\sigma_j^{Q+2}}\left(\frac{L_j}{2^{i_j+1}}\right)^2\frac{\left|\left\{u-g>\mathcal{M}_j\right\}\cap \mathcal{Q}_j\right|}{|\mathcal{Q}_j|}|B_\rho|,
  \end{align}
  where $\sigma_j\in (0,1)$ to be determined later. Using the measure theoretical information (\ref{eq5.20}) at time level $t=t_j$, the first integral on the right hand side of (\ref{eq5.21}) can be estimated as 
  \begin{align*}
      \int_{B_\rho}w^2_+(\xi,t_j)\,d\xi&\leq \left(\frac{L_j}{2^{i_j+1}}\right)^2\left|\{u(\cdot,t_j)-g(t_j)>\mathcal{M}_j\}\cap B_\rho \right|\\
      & \leq \left(\frac{L_j}{2^{i_j+1}}\right)^2\left|\{u(\cdot,t_j)-g(t_j)>-\frac{L_{j-1}}{2^{n_{j-1}+i_{j-1}+1}}+\mu^+\}\cap B_\rho \right|\\
      &\leq \left(\frac{L_j}{2^{i_j+1}}\right)^2\left|\mathcal{A}_{\varepsilon_{j-1}M_{j-1},\rho}(t)\right|\\
      & \leq \left(\frac{L_j}{2^{i_j+1}}\right)^2\left[1-\frac{3}{4}\alpha+ (j-1)\frac{\alpha\delta}{4K}\right]|B_\rho|.
  \end{align*}
  Similar to what was done before, the left hand side of (\ref{eq5.21}) can be estimated as 
  \begin{align*}
      \int_{B_{(1-\sigma_j)\rho}}w^2_+(\xi,t)\,d\xi&\geq \int_{\mathcal{A}_{\varepsilon_j M_j,(1-\sigma_j)\rho}(t)}w^2_+(\xi,t)\,d\xi\\
      &\geq (\varepsilon_jM_j-\mathcal{M}_j)^2|\mathcal{A}_{\varepsilon_j M_j,(1-\sigma_j)\rho}(t)|\\
      &\geq (1-2^{-n_j})^2\left(\frac{L_j}{2^{i_j+1}}\right)^2\left[|\mathcal{A}_{\varepsilon_j M_j,\rho}(t)|-Q\sigma_j|B_\rho|\right].
  \end{align*}
  Combining all these estimates into (\ref{eq5.21}), we have
  \begin{align*}
      |\mathcal{A}_{\varepsilon_j M_j,\rho}(t)|\leq \frac{1}{(1-2^{-n_j})^2}\left[1-\frac{3}{4}\alpha+ (j-1)\frac{\alpha\delta}{4K}+\frac{\gamma\delta}{\sigma_j^{Q+2}}\frac{\left|\left\{u-g>\mathcal{M}_j\right\}\cap \mathcal{Q}_j\right|}{|\mathcal{Q}_j|}+Q\sigma_j\right]|B_\rho|.
  \end{align*}
   Choose $\sigma_j\in (0,1)$ as 
   \begin{align*}
       \sigma_j=\delta^{\frac{1}{Q+3}}\left(\frac{\left|\left\{u-g>\mathcal{M}_j\right\}\cap \mathcal{Q}_j\right|}{|\mathcal{Q}_j|}\right)^{\frac{1}{Q+3}}.
   \end{align*}
  Then it implies that
  \begin{align*}
      |\mathcal{A}_{\varepsilon_j M_j,\rho}(t)|\leq \frac{1}{(1-2^{-n_j})^2}\left[1-\frac{3}{4}\alpha+ (j-1)\frac{\alpha\delta}{4K}+\gamma\delta^{\frac{1}{Q+3}}\left(\frac{\left|\left\{u-g>\mathcal{M}_j\right\}\cap \mathcal{Q}_j\right|}{|\mathcal{Q}_j|}\right)^{\frac{1}{Q+3}}\right]|B_\rho|.
  \end{align*}
  Again similar to {\em Step II}, considering Lemma \ref{L5.4}, we have 
  \begin{align*}
      \frac{\left|\left\{u-g>\mathcal{M}_j\right\}\cap \mathcal{Q}_j\right|}{|\mathcal{Q}_j|}\leq \frac{\gamma}{\alpha\delta\sqrt{i_j}},
  \end{align*}
  for some constant $\gamma=\gamma(\texttt{data})$. Then, we get
  \begin{align*}
      \left|\mathcal{A}_{\varepsilon_jM_j,\rho}(t)\right|\leq \frac{1}{(1-2^{-n_j})^2}\left[1-\frac{3}{4}\alpha+(j-1)\frac{\alpha\delta}{4K}+\frac{\gamma}{\alpha^{\frac{1}{Q+3}}i_j^{\frac{1}{2(Q+3)}}}\right]|B_\rho|.
  \end{align*}
  Finally for given $K$, we choose $i_j,n_j\in\mathbb{N}$ in such a way that
  \begin{align*}
      \frac{\gamma}{\alpha^{\frac{1}{Q+3}}i_j^{\frac{1}{2(Q+3)}}(1-2^{-n_j})^2}\leq \frac{\alpha\delta}{8K}\,\,\textup{ and }\,\,\frac{1-\frac{3}{4}\alpha+(j-1)\frac{\alpha\delta}{4K}}{(1-2^{-n_j})^2}\leq 1-\frac{3}{4}\alpha+(j-1)\frac{\alpha\delta}{4K}+\frac{\alpha\delta}{8K}.
  \end{align*}
  Consequently, these all yield, for every $t\in[t_j,t_{j+1}]$,
  \begin{align}\label{eq5.22}
      \left|\mathcal{A}_{\varepsilon_jM_j,\rho}(t)\right|\leq\left[1-\frac{3}{4}\alpha+j\frac{\alpha\delta}{4K}\right]|B_\rho|.
  \end{align}
  Hence, induction procedure is complete. \\[2mm]
  {\bf Step IV: }\vskip 1mm
  Repeating the procedure which leads to (\ref{eq5.22}) up to $j\in\mathbb{N}$ satisfying
  \begin{align*}
      \frac{K}{\delta}-1<j\leq\frac{K}{\delta}
  \end{align*}
  and choose $\bar{\varepsilon}\in (0,1)$ in such a way that
  \begin{align*}
      \bar{\varepsilon}L=\varepsilon_jL_j=\frac{L_j}{2^{n_j+i_j+1}}.
  \end{align*}
  Therefore, we get 
  \begin{align*}
      |\mathcal{A}_{\bar{\varepsilon}M,\rho}(t)|\leq \left(1-\frac{\alpha}{2}\right)|B_\rho|,
  \end{align*}
  for all $0<t\leq \sum_{i=1}^{j+1}(t_i-t_{i-1})=(j+1)\delta\rho^2$ which eventually implies that
  \begin{align*}
      &|\{u(\cdot,t)-g(t)\leq -\bar{\varepsilon}L+\mu^+\}\cap B_\rho|\geq \frac{\alpha}{2}|B_\rho|,
  \end{align*}
  for all $0<t\leq K\rho^2$.
\end{proof}

\section{Harnack Inequality}\label{S6}
\subsection{Weak Harnack Inequality I}
We start this subsection with a lemma, known as {\em Clustering Lemma}, is a quantitative version of the Severini-Egorov Theorem \cite{Sev10_21,Ego11_20}. For Sobolev case, this theorem is discussed in \cite{DBGV06_22} and for fractional case, see \cite{DIV23_23}. The interesting part of this lemma is that unlike to the Euclidean space, we can not have the covering of cubes due to the geometrical structure of Heisenberg group. For this purpose, we relies on the covering result \cite[Lemma 3.2]{Lu94_16} and steps in \cite[Lemma 2.5]{KMMJP21_24},\cite[Lemma 5.1]{Nak23_3}. This lemma is used to refine the result in Lemma \ref{L5.4} to the power like dependence result in Proposition \ref{P6.2}. The important feature of this lemma is that it helps to prove $\delta$ independent Weak Harnack Inequality (Weak Harnack Inequality I).

Note that, we will consider $W^{1,2}(B_\rho(\xi_0))$ instead of $W^{s,2}(B_\rho(\xi_0))$ as the former one can be embedded into the later one (see Lemma \ref{L2.8}).
\begin{lemma}\label[lemma]{L6.1}(Clustering Lemma)
   Let, $\rho_0>\rho>0$ with $s\in (0,1)$. Consider $\xi_0\in\hn$ and $u\in W^{1,2}(B_\rho(\xi_0))$ satisfies
   \begin{align}\tag{C-I}\label{eqCI}
       \left|\left\{u(.)>c\right\}\cap B_\rho(\xi_0)\right|>\alpha|B_\rho(\xi_0)|,
   \end{align}
   and
   \begin{align}\tag{C-II}\label{eqCII}
       [u]_{W^{1,2}(B_\rho(\xi_0)}\leq B,
   \end{align}
   for some $\alpha\in (0,1)$ and $c, B>0$. Then, for every $\delta,\lambda\in (0,1)$ there exists $\xi_*\in B_\rho(\xi_0)$ and a parameter $k\equiv k(Q,s,\alpha, B, \delta, \lambda)\in (0,1)$ such that
   \begin{align*}
       \left|\left\{u(.)>\lambda c\right\}\cap B_{k\rho}(\xi_*)\right|>(1-\delta)|B_{k\rho}(\xi_*)|.
   \end{align*}
   \end{lemma}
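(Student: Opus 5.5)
The plan follows the De Giorgi–Gianazza–Vespri argument, adapted to the Heisenberg group through a covering by subballs instead of dyadic cubes. We argue by contradiction and fix a small radius $k\rho$, with $k\in(0,1)$ to be chosen. We cover $B_\rho(\xi_0)$ by a family of balls $\{B_{k\rho}(\xi_i)\}_{i=1}^{N_k}$ with $\xi_i\in B_\rho(\xi_0)$. By the covering lemma of \cite[Lemma 3.2]{Lu94_16} and the doubling property $|B_{2R}|\le C(Q)|B_R|$, this family can be taken with bounded overlap $\sum_i\chi_{B_{k\rho}(\xi_i)}\le C(Q)$ (the balls $B_{k\rho/2}(\xi_i)$ pairwise disjoint). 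Moreover, the part of $B_\rho(\xi_0)$ that is covered by balls which themselves sit inside a slightly enlarged ball $B_{c\rho}(\xi_0)$ has measure comparable to $|B_\rho|$. Replacing $u$ by $\min\{u,c\}$ and $u$ by $u/c$ (which rescales $B$ to $B/c$), we may assume $0\le u\le 1$, $c=1$, $[u]_{W^{1,2}}\le B$. The truncation does not increase $|\nabla_{\hn}u|$.

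Suppose the conclusion fails for every $\xi_*$. Then every covering ball is ``bad'', that is,
\begin{align*}
\left|\{u\le\lambda\}\cap B_{k\rho}(\xi_i)\right|\ge\delta\,|B_{k\rho}(\xi_i)|\quad\text{for all } i.
\end{align*}
Split the indices into \emph{good-mass} balls, where $|\{u>\tfrac{1+\lambda}{2}\}\cap B_{k\rho}(\xi_i)|\ge \frac{\alpha}{2C}|B_{k\rho}(\xi_i)|$, and the rest. Since $|\{u>1\}\cap B_\rho|>\alpha|B_\rho|$ and the overlap is bounded, a counting argument shows that the good-mass balls have total measure $\sum_{i\in I}|B_{k\rho}(\xi_i)|\ge c(Q)\alpha|B_\rho|$. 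On each good-mass ball we apply the De Giorgi isoperimetric inequality (Lemma \ref{L2.5}) with levels $k=\lambda<l=\frac{1+\lambda}{2}$:
\begin{align*}
\frac{1-\lambda}{2}\,\frac{\alpha}{2C}|B_{k\rho}|\le \gamma\frac{(k\rho)^{Q+1}}{\delta|B_{k\rho}|}\int_{B_{k\rho}(\xi_i)}|\nabla_{\hn}u|\,d\xi .
\end{align*}
Since $|B_{k\rho}|\simeq (k\rho)^Q$, this becomes $\int_{B_{k\rho}(\xi_i)}|\nabla_{\hn}u|\ge c\,\alpha\delta(1-\lambda)(k\rho)^{Q-1}$.

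Summing over $i\in I$ and using the bounded overlap, we get
\begin{align*}
\int_{B_\rho}|\nabla_{\hn}u|\ge c\,\alpha\delta(1-\lambda)(k\rho)^{Q-1}\,\#I \ge c\,\alpha^2\delta(1-\lambda)\,\frac{\rho^{Q-1}}{k},
\end{align*}
because $\#I\gtrsim \alpha|B_\rho|/|B_{k\rho}|\simeq\alpha k^{-Q}$. On the other hand, Hölder's inequality with (\ref{eqCII}) gives
\begin{align*}
\int_{B_\rho}|\nabla_{\hn}u|\le |B_\rho|^{1/2}B\lesssim \rho^{Q/2}B .
\end{align*}
Comparing the two bounds, they are incompatible as soon as $k< c\,\alpha^2\delta(1-\lambda)\rho^{Q/2-1}/B$. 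This is where the dependence of $k$ on $B$ (and on $\rho$ through the scaling) enters. Alternatively, one normalises $B$ scale-invariantly as $B\rho^{Q/2-1}$, which is the form used in applications. This choice of $k$ yields a ball $B_{k\rho}(\xi_*)$ with the claimed density. The parameter $s$ plays no role beyond Lemma \ref{L2.8}.

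The main obstacle is the covering step, since Heisenberg balls do not tile. We need a Vitali-type family of $k\rho$-balls, centred in $B_\rho(\xi_0)$, that both has bounded overlap and captures a fixed fraction of $\{u>1\}\cap B_\rho$ within $B_\rho(\xi_0)$ itself. The centres of the balls must lie in $B_\rho(\xi_0)$ while the integrals may extend slightly outside it. I would handle this by running the argument on $B_{\rho/2}$-scale information (or by enlarging to $B_{2\rho}$ and using doubling), accepting constants depending on $Q$ and the pseudo-triangle constant. The counting argument that produces a proportion of good-mass balls would follow \cite[Lemma 2.5]{KMMJP21_24}.
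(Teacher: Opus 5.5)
Your argument is correct in substance, but it follows a different route from the paper. The paper does not use the isoperimetric inequality here. It passes to the Gagliardo seminorm via Lemma~\ref{L2.8} and keeps the balls on which $\{u>c\}$ has density $\gtrsim\alpha$ in $B_{2k\rho}(\xi_i)$. On each such ball where the conclusion fails, it pairs $\xi\in\{u>c\}\cap B_{2k\rho}(\xi_i)$ with $\eta\in\{u\le\lambda c\}\cap B_{k\rho}(\xi_i)$, so that $|u(\xi)-u(\eta)|>c(1-\lambda)$ and $|\eta^{-1}\circ\xi|_{\hn}\le 3k\rho$. Summing with bounded overlap yields $c^2\alpha\delta(1-\lambda)^2|B_\rho|\lesssim (k\rho)^{2s}\iint_{B_\rho\times B_\rho}|u(\xi)-u(\eta)|^2|\eta^{-1}\circ\xi|_{\hn}^{-Q-2s}\,d\xi\,d\eta$, which fails for small $k$; this is the fractional argument of D\"uzg\"un--Iannizzotto--Vespri.

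You instead run the DiBenedetto--Gianazza--Vespri $W^{1,1}$ argument: Lemma~\ref{L2.5} on each good-mass ball, then summation, then H\"older. This gives the explicit choice $k\simeq\alpha^2\delta(1-\lambda)\,c\,\rho^{\frac{Q}{2}-1}/B$. Comparing the two:
\begin{itemize}
\item Your version uses only the horizontal gradient and has no artificial $1/(2s)$-power dependence; in fact $s$ plays no role at all.
\item It also avoids Lemma~\ref{L2.8}, whose stated hypothesis $u\in W^{1,2}_0(\Omega)$ is not satisfied by $u\in W^{1,2}(B_\rho(\xi_0))$.
\item The paper's pairing step, on the other hand, is purely measure-theoretic and would work verbatim for $W^{s,2}$ functions.
\end{itemize}

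Your scaling remark is right, and it applies equally to the paper's proof, which fixes $\rho$ and lets $k\to 0$. Both arguments give $k$ depending on $B\rho^{1-\frac{Q}{2}}/c$. Since $Q>2$, rescaling a function that oscillates at a fine scale shows that no $k$ independent of $\rho$ can work for fixed $B$. So the lemma should be read with $[u]_{W^{1,2}(B_\rho(\xi_0))}\le B\rho^{\frac{Q-2}{2}}$, which is exactly the bound Proposition~\ref{P6.2} provides.

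Two repairs are needed.

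First, your suggested boundary fixes do not work as stated. Restricting to $B_{\rho/2}$ loses the density hypothesis, which may be concentrated near $\partial B_\rho(\xi_0)$, and $u$ is not defined on $B_{2\rho}(\xi_0)$. Instead, take a maximal $k\rho$-separated set of centres in $B_{(1-k)\rho}(\xi_0)$. Then every $B_{k\rho}(\xi_i)$ lies in $B_\rho(\xi_0)$, and the balls cover $B_{(1-k)\rho}(\xi_0)$ with overlap at most $3^Q$. Since $|B_\rho\setminus B_{(1-k)\rho}|\le Qk|B_\rho|$, for $k\le\alpha/(2Q)$ they still capture a portion of $\{u>c\}$ of measure greater than $\frac{\alpha}{2}|B_\rho|$. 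After that your counting goes through unchanged.

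Second, after replacing $u$ by $\min\{u,c\}/c$ the set $\{u>1\}$ is empty. Work with $\{u\ge 1\}$, or simply drop the truncation, which your argument never uses.
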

\begin{proof}
    Let, $0<k<\frac{\rho_0-\rho}{2m\rho}$, for some constant $m>1$. Let $\left\{ B_{k\rho}(\xi)\right\}_{x\in B_\rho(\xi_0)}$ be a covering of $B_\rho(\xi_0)$, then there exists disjoints sequence $\left\{ B_{k\rho}(\xi_i)\right\}_{\xi_i\in B_\rho(\xi_0)}$ (see \cite{MS79_13}), for all $i\in I$, such that 
    \begin{align}\label{eq6.1}
        B_\rho(\xi_0)\subset\cup_{i\in I}B_{2k\rho}(\xi_0)\subset B_{\rho_0}(x_0).
    \end{align}
    It is obvious that $B_{2mk\rho}(\xi_i)\subset B_R(\xi_0)$. Moreover, the ball $B_{2mk\rho}(\xi_i)$ have bounded overlap with bound independent of $k$, due to the measure doubling property.  Next, let us define
    \begin{align*}
        \begin{cases}
            D^+:=\left\{i\in I: \left|\{u>c\}\cap B_{2k\rho}(\xi_0)\right|>\frac{\alpha}{2\mathcal{C}}|B_{2k\rho}(\xi_0)|\right\},\\
            D^-:=\left\{i\in I: \left|\{u>c\}\cap B_{2k\rho}(\xi_0)\right|\leq \frac{\alpha}{2\mathcal{C}}|B_{2k\rho}(\xi_0)|\right\}.
        \end{cases}
    \end{align*}
    By assumption, we get 
    \begin{align*}
        \alpha|B_\rho(\xi_0)|&\leq \left|\left\{u(.)> c\right\}\cap B_\rho(\xi_0)\right|\\
        &\leq \sum_{i\in D^+}\left|\left\{u(.)> c\right\}\cap B_{2k\rho}(\xi_0)\right|+\sum_{i\in D^-}\left|\left\{u(.)> c\right\}\cap B_{2k\rho}(\xi_0)\right|\\
        & \leq\underbrace{ \sum_{i\in D^+}\left|\left\{u(.)> c\right\}\cap B_{2k\rho}(\xi_0)\right|}_{A}+\frac{\alpha}{2\mathcal{C}}\sum_{i\in D^-}\left|B_{2k\rho}(\xi_i)\right|\\
        &\leq A+\frac{\alpha}{2}\sum_{i\in D^-}\left|B_{k\rho}(\xi_i)\right|\\
        & \leq A+\frac{\alpha}{2} \left|B_{(1+k)\rho}(\xi_0)\right|.
    \end{align*}
    The last inequality  follows by the fact that $\cup_{i\in I}B_{k\rho}(\xi_i)\subset B_{(1+k)\rho}(\xi_0)$. Consequently, we have 
    \begin{align}\label{eq6.2}
        \frac{\alpha}{2}\left\{|B_\rho(\xi_0)|-|B_{(1+k)\rho}(\xi_0)\setminus B_{k\rho}(\xi_0)|\right\}\leq A.
    \end{align}
   Whenever \ref{eqCI} and \ref{eqCII} are true, there might be two outcomes namely either
   \begin{align}\tag{I}\label{I}
       \left|\left\{u>\lambda c\right\}\cap B_{k\rho}(\xi_i)\right|>(1-\delta)|B_{k\rho}(\xi_i)|,
   \end{align}
   or
   \begin{align}\tag{II}\label{II}
       \left|\left\{u>\lambda c\right\}\cap B_{k\rho}(\xi_i)\right|\leq(1-\delta)|B_{k\rho}(\xi_i)|.
   \end{align}
   Now, if (I) holds, then let $\xi_*\in B_\rho(\xi_0)$ be the center of $B_{k\rho}$ and the result follows. If (II) is true for $i\in D^+$, then 
   \begin{align}\label{eq6.3}
        \left|\left\{u\leq\lambda c\right\}\cap B_{k\rho}(\xi_i)\right|>\delta|B_{k\rho}(\xi_i)|.
   \end{align}
   For every $\xi\in \left\{u(.)>c\right\}\cap B_{2k\rho}(\xi_i)$ and $\eta\in \left\{u(.)\leq\lambda c\right\}\cap B_{k\rho}(\xi_i)$, we have
   \begin{align*}
       u(\xi)-u(\eta)>c(1-\lambda)>0,
   \end{align*}
   and 
\begin{align*}
    \dist[\xi]{\eta}&\leq \dist[\xi]{\xi_i}+\dist[\xi_i]{\eta}\\
    &\leq 3k\rho.
\end{align*}
Therefore, summing up over $D^+$ and using the bounded overlapping property, assumptions \ref{eqCII}, (\ref{eq6.2}) and (\ref{eq6.3}), we have by Lemma \ref{L2.8}
\begin{align*}
    \frac{\alpha\delta}{2}(1-\lambda)^2&\left[|B_\rho(\xi_0)|-|B_{(1+k)\rho}(\xi_0)\setminus B_\rho(\xi_0)\right]\\
    &\leq C \sum_{i\in D^+}\int_{\{u(.)<\lambda c\}\cap B_{k\rho}(\xi_i)}\int_{\{u(.)> c\}\cap B_{2k\rho}(\xi_i)}\frac{(3k\rho)^{Q+2s}}{|B_{k\rho}(\xi_i)|}\frac{|u(\xi-u(\eta)|^2}{\dist[\xi]{\eta}^{Q+2s}}\, d\xi d\eta\\
    &\leq 3^{Q+2s}C(k\rho)^{2s}\iint_{B_\rho(\xi_0)\times B_\rho(\xi_0)} \frac{|u(\xi-u(\eta)|^2}{\dist[\xi]{\eta}^{Q+2s}}\, d\xi d\eta\\
    & \leq 3^{Q+2s}CB(k\rho_0)^{2s},
\end{align*}
where $C=C(Q,s)$ is a constant. Consequently, sending $k\rightarrow0$ in the last inequality renders that
\begin{align*}
    \frac{\alpha\delta}{2}(1-\lambda)^2|B_\rho(\xi_0)|=0,
\end{align*}
which contradicts that $|B_\rho(\xi_0)|>0$. 
\end{proof}
The following lemma is a refinement of the {\em Expansion of Positivity} Lemma \ref{L5.4}.  This lemma propagates the said result with a power like dependence. Unlike \cite[Lemma 4.5]{CN25_2}, we have this result using the clustering lemma (Lemma \ref{L6.1}). The idea of the proof of this lemma is taken from \cite[Prp.5.3]{Nak23_3}. In contrast to the result in \cite[Prp.5.3]{Nak23_3}, our respective result needs no restriction over $s$.  
\begin{proposition}\label{P6.2}
    Let $(\xi_0,t_0)\in\Omega_T$ and $u$ be a local supersolution of the problem (\ref{eq1.1}) in $\mathcal{Q}$. For any $M>0$, there exists $\alpha\in (0,1]$ such that
    \begin{align*}
       \left|\{u(t_0)\geq M\}\cap B_\rho(\xi_0)\right|\geq \alpha|B_\rho(\xi_0)|.
    \end{align*}
  Then, there exists $\delta_0,\tilde{\varkappa}\in (0,1)$ and $d>1$, all depending on \textup{\texttt{data}} and $\alpha$, yields
  \begin{align*}
      u(t)\geq \tilde{\varkappa}\alpha^dM-R^{-2}\int_{T_1}^{T_2}\textup{Tail}[u_-(t);R,\xi_0]\,\,\,\,\,\textup{ a.e. in } B_{2\rho}(\xi_0),
  \end{align*}
  for every $t_0+\frac{1}{2}\delta_0\rho^2\leq r\leq t_0+\delta_0\rho^2$. 
\end{proposition}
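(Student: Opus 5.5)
The plan follows \cite[Prop.~5.3]{Nak23_3}: combine the measure propagation of Lemma \ref{L5.3}, the Clustering Lemma \ref{L6.1} and the expansion of positivity of Lemma \ref{L5.4}. The point is to replace the $\alpha$-dependence of $\delta$ and $\varkappa$ in Lemma \ref{L5.4}, which is uncontrolled through $\varepsilon_\iota=\varepsilon 2^{-n^*}$, by a power of $\alpha$. We work with the supersolution $u$, take $\mu^-=0$ (so that $u\geq 0$ in $\mathcal{Q}$ and $(u-\mu^-)_-=u_-$), and put $\lambda\omega=M$. Throughout we may assume that $R^{-2}\int_{T_1}^{T_2}\textup{Tail}[u_-(t);R,\xi_0]\,dt$ is at most a small multiple of $\alpha^d M$. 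Otherwise the claimed lower bound is negative, and it holds trivially because $u\geq 0$.

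\textbf{Step 1 (measure information on a time interval, and an energy bound).} Apply Lemma \ref{L5.3} with $\lambda\omega=M$. This gives $|\{u(\cdot,t)\geq \varepsilon M\}\cap B_\rho|\geq \tfrac{\alpha}{2}|B_\rho|$ for every $t\in(t_0,t_0+\delta\rho^2]$, with $\delta\approx\alpha^{Q+3}$ and $\varepsilon\approx\alpha$. Next, apply the energy estimate (\ref{eq3.6}) to the truncation $w=(u-\varepsilon M)_-$ on $B_{2\rho}\times(t_0,t_0+\delta\rho^2]$, with cut-offs as in Lemma \ref{L5.4} and the tail handled by $g$. This yields
\[
\int_{t_0}^{t_0+\delta\rho^2}\int_{B_\rho}|\nabla_{\hn}(u-\varepsilon M)_-|^2\,d\xi\,dt\leq \gamma (\varepsilon M)^2\rho^{Q}.
\]
By Chebyshev's inequality in time, there is a slice $t_*$ in the later half of the interval at which
\[
\int_{B_\rho}|\nabla_{\hn}(u-\varepsilon M)_-(\cdot,t_*)|^2\,d\xi\leq \gamma\,\delta^{-1}(\varepsilon M)^2\rho^{Q-2}
\]
and the measure information above still holds.

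\textbf{Step 2 (clustering).} Apply Lemma \ref{L6.1} at time $t_*$ to the rescaled function $v=\min\{u,\varepsilon M\}/(\varepsilon M)$ on $B_\rho$, after normalising $\rho=1$. The two hypotheses are satisfied: \eqref{eqCI} holds with $c=1$ and $\alpha/2$, and \eqref{eqCII} holds with $B\approx\gamma\delta^{-1}$, which is a power of $\alpha^{-1}$. Fix $\lambda=\tfrac12$ and the density $1-\delta'$, where $\delta'$ is the threshold $\iota$ of the De Giorgi Lemma \ref{L5.1}. The lemma produces $\xi_*\in B_\rho$ and $k\in(0,1)$ with
\[
|\{u(\cdot,t_*)>\tfrac12\varepsilon M\}\cap B_{k\rho}(\xi_*)|>(1-\delta')|B_{k\rho}(\xi_*)|.
\]
The main obstacle is this step. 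Lemma \ref{L6.1} is proved by contradiction, so it gives no explicit size for $k$, and we need $k\gtrsim \alpha^{d_1}$. To obtain this we would redo the covering argument quantitatively. Its final chain of inequalities reads $\tfrac{\alpha\delta'}{2}(1-\lambda)^2|B_\rho|\lesssim B\,k^{2s}$; since $W^{1,2}$ embeds in $W^{s,2}$ by Lemma \ref{L2.8}, we may even take $s$ close to $1$. Choosing $k$ as a power of $\alpha\delta'/B$ therefore forces alternative (I), which gives $k\approx\alpha^{d_1}$ with $d_1$ depending only on the \texttt{data}.

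\textbf{Step 3 (from a dense small ball to all of $B_{2\rho}$).} On $B_{k\rho}(\xi_*)$ the set $\{u>\tfrac12\varepsilon M\}$ has nearly full measure at time $t_*$, so Lemma \ref{L5.4} applies there with $\alpha$ replaced by a fixed number, say $\tfrac12$. Its constants $\delta,\varkappa$ are then absolute, so the lower bound $u\geq\varkappa\tfrac12\varepsilon M$ holds on $B_{2k\rho}(\xi_*)$ over a time window of length $\approx (k\rho)^2$. Iterating Lemma \ref{L5.4} $n\approx\log_2(1/k)$ times (radii $2^jk\rho$, each application again with fixed $\alpha=1$ on the enlarged ball) reaches $B_{2\rho}(\xi_0)$. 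Each step loses a fixed factor $\varkappa_0$, so the final lower bound is $\varkappa_0^{n}\varepsilon M\approx k^{\log_2(1/\varkappa_0)}\alpha M\approx\alpha^{d}M$. The successive time windows add up to a total delay comparable to $\rho^2$, which fixes $\delta_0$. To make the final window exactly $(\tfrac12\delta_0\rho^2,\delta_0\rho^2]$, we use Proposition \ref{P5.5} to keep the positivity alive over the remaining time. The tail is accumulated once through the single function $g$, which yields the error term $-R^{-2}\int_{T_1}^{T_2}\textup{Tail}[u_-(t);R,\xi_0]\,dt$.
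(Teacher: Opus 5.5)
Your proposal is correct and follows the paper's own route. That route runs Lemma \ref{L5.3} with an energy bound and a good time slice, then the Clustering Lemma \ref{L6.1}, then Lemma \ref{L5.4} iterated on dyadically growing balls from $B_{k\rho}(\xi_*)$, and finally Proposition \ref{P5.5} to place the positivity on a fixed time window. The one real difference is that you extract an explicit $k\approx\alpha^{d_1}$ from the quantitative chain in the clustering argument and keep the iterated constants absolute, so your $d$ depends only on \texttt{data}. The paper instead defines $d$ through the non-explicit $k$ and an $\alpha$-dependent $\varkappa_*$; this is allowed by the statement, but it is weaker than what the layer-cake step in the proof of Theorem \ref{T1.2} actually needs.
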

\begin{proof}
    We prove this result through several steps, inspired from the proof of the Proposition 5.3 in \cite{Nak23_3}. \\[1mm]
    {\em Step I}\vskip 2mm
    Due to the assumptions, Lemma \ref{L5.3}, there exists $\varepsilon,\delta\in (0,1)$ (depending upon \texttt{data}) such that 
    \begin{align}
        \left|\{u(t)\geq \varepsilon M\}\cap B_\rho(\xi_0)\right|\geq \frac{\alpha}{2}|B_\rho(\xi_0)|,
    \end{align}
    whenever
    \begin{align*}
        \frac{1}{\delta}R^{-2}\int_{T_1}^{T_2}\textup{Tail }[u_-(t);R,\xi_0]\,dt\leq M,
    \end{align*}
    for all $t\in [t_0,t_0+\delta\rho^2]$. Set two parabolic cylinders as follows
    \begin{align*}
        & \mathcal{Q}_1:= B_{2\rho}(\xi_0)\times (t_0+\frac{1}{2}\delta\rho^2,t_0+\delta\rho^2],\\
        & \mathcal{Q}_2:= B_{4\rho}(\xi_0)\times (t_0,t_0+\delta\rho^2].
    \end{align*}
    Taking into account the truncated function $(u(\xi,t)-M)_-$ in the energy estimate (\ref{eq3.6}) and proceeding as in Lemma \ref{L5.1}, we achieve the following 
    \begin{align}\label{eq6.5}
        \int_{\frac{1}{2}\delta\rho^2}^{\delta\rho^2}\int_{B_{2\rho}(\xi_0)}|\nabla_{\hn}(u-M)_-|^2\,d\xi dt\leq \gamma M^2|B_{2\rho}(\xi_0)|,
    \end{align}
    where $\gamma$ is a constant depending only on $\texttt{data}$. \\[2 mm]
    {\em Step II}\vskip 1mm
    We start this step with rescaled function
    \begin{align*}
        w(\xi,t):=\frac{1}{\varepsilon}\left(1-\frac{\left(u(\xi,t)-M\right)_-}{M}\right).
    \end{align*}
    Using this rescaled version in the previous inequality (\ref{eq6.5}), it follows that
    \begin{align*}
        \int_{\frac{1}{2}\delta\rho^2}^{\delta\rho^2}\fint_{B_{2\rho}(\xi_0)}|\nabla_{\hn}w(\xi,t)|^2\,d\xi dt\leq \frac{\gamma}{\varepsilon^2},
    \end{align*}
    and 
    \begin{align*}
        \left|\{w(t)\geq 1\}\cap B_\rho(\xi_0)\right|\geq \frac{\alpha}{2}\left|B_\rho(\xi_0)\right|,
    \end{align*}
    for all $t\in \left(t_0+\frac{1}{2}\delta\rho^2,t_0+\delta\rho^2\right]$. Then there exists a time level $t_1\in \left(t_0+\frac{1}{2}\delta\rho^2,t_0+\delta\rho^2\right]$ such that the previous inequality implies
    \begin{align*}
        \fint_{B_{2\rho}(\xi_0)}|\nabla_{\hn}w(\xi,t_1)|^2\,d\xi dt\leq \frac{\gamma}{\delta\rho^2\varepsilon^2},
    \end{align*}
    and
    \begin{align*}
        \left|\{w(t_1)\geq 1\}\cap B_\rho(\xi_0)\right|\geq \frac{\alpha}{2}\left|B_\rho(\xi_0)\right|.
    \end{align*}
    Next, we are in stage to apply Clustering Lemma \ref{L6.1} with $\delta=\lambda=1$ and $c=1$. This implies that there exists a point $\xi_*\in B_\rho(\xi_0)$ and a scale parameter $k=k(Q,s,\alpha,\varepsilon)\in (0,1)$ such that
    \begin{align*}
        &\left|\left\{w(t_1)>\frac{1}{2}\right\}\cap B_{k\rho}(\xi_*)\right|>\frac{|B_{k\rho}(\xi_*)}{2}>\frac{\alpha}{2}|B_{k\rho}(\xi_*)|\\
        \iff &\left|\left\{w(t_1)>\frac{1}{2}\varepsilon M\right\}\cap B_{k\rho}(\xi_*)\right|>\frac{\alpha}{2}|B_{k\rho}(\xi_*)| .
        \end{align*}
    It is useful to note that without loss of any generality, the parameter $k>0$ can be stipulated to satisfy $k<\frac{1}{4}$.\\[2mm]
    {\em Step III}\vskip 1mm
    Applying Lemma \ref{L5.4} with $\alpha=\frac{1}{2},\,(\xi_0,t_0)=(\xi_*,t_1)$ and $\lambda\omega$ replaced with $\frac{\varepsilon M}{2}$, then there exists $\varkappa_1,\delta\in (0,1)$, both depending only on \texttt{data}, such that
    \begin{align}\label{eq6.6}
        u(t_1)\geq \varkappa_1\frac{\varepsilon M}{2}\,\,\,\,\textup{ a.e. in } B_{2k\rho}(\xi_*)\times \left(t_1+\frac{\delta_1}{2}(k\rho)^2,t_1+\delta_1(k\rho)^2\right],
    \end{align}
    whenever
    \begin{align}\label{eq6.7}
        R^{-2}\int_{T_1}^{T_2}\textup{Tail }[u_-(t);R,\xi_0]\,dt\leq \varkappa_1\frac{\varepsilon M}{2}.
    \end{align}
    Then, like {Step II}, by (\ref{eq6.6}) we have, at the time level $t_2=t_1+\delta_1(k\rho)^2$, 
    \begin{align*}
        \left|\left\{u(t_2)\geq \varkappa_1\frac{\varepsilon M}{2}\right\}\cap B_{2k\rho}(\xi_*)\right|=|B_{2k\rho}(\xi_*)|.
    \end{align*}
    Recalling, Lemma \ref{L5.4} with $\varkappa_2, \delta_2\in (0,1)$, both depending only on \texttt{data}, such that 
    \begin{align}\label{eq6.8}
        u(t)\geq \varkappa_2\varkappa_1\frac{\varepsilon M}{2}\,\,\,\,\textup{ a.e. in } B_{4k\rho}(\xi_*)\times \left(t_2+\frac{\delta_2}{2}(2k\rho)^2, t_2+\delta_2(2k\rho)^2\right],
    \end{align}
    whenever
    \begin{align}\label{eq6.9}
       R^{-2}\int_{T_1}^{T_2}\textup{Tail }[u_-(t);R,\xi_0]\,dt\leq \varkappa_1\varkappa_2\frac{\varepsilon M}{2}.
    \end{align}
    Let, $\hat{n}\in \mathbb{N}\setminus \{1,2\}$ so that
    \begin{align*}
        2^{2-\hat{n}}<k\leq 2^{3-\hat{n}}\,\,\,\textup{and}\,\,\,B_{2\rho}(\xi_0)\subset B_{2^{\hat{n}}k\rho}(\xi_*).
    \end{align*}
    Continuing the same procedures which led to the inequality (\ref{eq6.8}) and (\ref{eq6.9}), we have the sequences of parameters viz. $\{\varkappa_i\}_{i=1}^{\hat{n}}$, $\{\delta_i\}_{i=1}^{\hat{n}}$ in $(0,1)$, depending on \texttt{data}, such that 
    \begin{align}\label{eq6.10}
        u(t)\geq \left(\Pi_{i=1}^{\hat{n}}\varkappa_i\right)\frac{\varepsilon}{2}M\,\,\,\,\textup{ a.e. in } B_{2^{\hat{n}}k\rho}(\xi_*)\times\left(t_{\hat{n}}+\frac{\delta_{\hat{n}}}{2}\left(2^{\hat{n}-1}k\rho\right)^2, t_{\hat{n}}+\delta_{\hat{n}}\left(2^{\hat{n}-1}k\rho\right)^2\right],
    \end{align}
    whenever
    \begin{align}\label{eq6.11}
        R^{-2}\int_{T_1}^{T_2}\textup{Tail }[u_-(t);R,\xi_0]\,dt\leq \left(\Pi_{i=1}^{\hat{n}}\varkappa_i\right)\frac{\varepsilon M}{2},
    \end{align}
    where $t_{\hat{n}}:=t_{\hat{n}-1}+\delta_{\hat{n}-1}\left(2^{\hat{n}-2}k\rho\right)^2$. Unlike, the both $\varkappa_1$ and $\delta_1$, the others are independent of $\alpha$. Moreover, conditions (\ref{eq6.7}) and (\ref{eq6.9}) can be verified from (\ref{eq6.11}). From the proofs of Lemma \ref{L5.3} and \ref{L5.4}, we can stipulate $\delta_1$ and $\varkappa_1$ to fulfill
    \begin{align*}
        \delta_1\leq \frac{\alpha^{Q+3}}{8\gamma\,(8Q)^{Q+2}},\,\,\,\,\,\,\,\,\,\,\,\varkappa_1\leq \frac{1}{2^{n_1}}\left[1-\left(\frac{1-\frac{3}{4}\alpha}{1-\frac{1}{2}\alpha}\right)^\frac{1}{2}\right],
    \end{align*}
    where $n_1\in \mathbb{N}$ is  chosen in such a way that $\frac{\gamma(\texttt{data},\alpha,\delta_1)}{\sqrt{n_1}}\leq \iota$ where $\iota=\iota(Q,\alpha)\in (0,1)$ comes from Lemma \ref{L5.1}. Additionally, note that $\delta_i$ and $\varkappa_i$ would be allowed to choose commonly as
    \begin{align*}
        \delta_i=\frac{1}{8\gamma.\,(8Q)^{Q+2}},\,\,\,\,\,\,\,\,\,\,\,\varkappa_i\leq\frac{1}{2^{n_2}}\left[1-\left(\frac{1}{8}\right)^\frac{1}{2}\right], 
    \end{align*}
    for all $i\in \{2,3,4,...,\hat{n}\}$ and $n_2$ satisfies $\frac{\gamma(\texttt{data},\alpha,\delta_1)}{\sqrt{n_2}}\leq \iota$. It is easy to see that, for all $i\in\{2,3,4,...,\hat{n}\}$, $\delta_1\leq \delta_i$ and $\varkappa_1\leq\varkappa_i$. Therefore redefining these parameters as,
    \begin{align*}
        \delta_i\equiv \delta_*:=\frac{\alpha^{Q+3}}{10^6\,(8Q)^{Q+2}},\,\,\,\,\,\,\,\,\,\,\,\varkappa_i\equiv\varkappa_*\leq \frac{1}{2^{n_0}}\left[1-\left(\frac{1-\frac{3}{4}\alpha}{1-\frac{1}{2}\alpha}\right)^\frac{1}{2}\right]\leq \frac{1}{8},
    \end{align*}
    for some large $n_0\in\mathbb{N}$. Next, recalling all the observations and defining recursively $\{t_i\}_{i=2}^{\hat{n}+1}$ by
    \begin{align*}
        t_i:=t_{i-1}+\delta_{\hat{n}-1}\left(2^{i-2}k\rho\right)^2,
    \end{align*}
    we finally conclude that if 
    \begin{align*}
        R^{-2}\int_{T_1}^{T_2}\textup{Tail}[u_-(t);R,\xi_0]\,dt\leq \frac{1}{2}\varkappa_*^{\hat{n}}\varepsilon M,
    \end{align*}
    then,
    \begin{align}\label{eq6.12}
        u(t)\geq \frac{1}{2}\varkappa_*^{\hat{n}}\varepsilon M\,\,\,\,\textup{a.e. in } B_{2^{\hat{n}}k\rho}(\xi_*)\times \left(t_{\hat{n}+1}(\xi_*)(2^{\hat{n}-1}k\rho)^2, t_{\hat{n}+1}\right].
    \end{align}
    By prior assumption $4\leq 2^{\hat{n}}k\leq 8$, 
    \begin{align*}
        \varkappa_*^{\hat{n}}=\left(2^{\hat{n}}\right)^{\log_2{\varkappa_*}}\begin{cases}
            >\left(\frac{k}{4}\right)^{\log_2(1/\varkappa_*)},\\
            \leq \left(\frac{k}{8}\right)^{\log_2(1/\varkappa_*)}.
        \end{cases}
    \end{align*}
    Define $d=d(\texttt{data},\alpha)$ and $\varkappa_0=\varkappa_0(\texttt{data},\alpha)\in (0,1)$ by 
    \begin{align*}
        d:=\log_\alpha\left[\left(\frac{k}{8}\right)^{\log_2(1/\varkappa_*)}\right],\,\,\,\,\,\,\varkappa_0:=\frac{\varepsilon}{2}.
    \end{align*}
    Then the result in (\ref{eq6.12}) along with the {\em tail} are estimated as 
    \begin{align*}
        \frac{1}{2}\varkappa_*^{\hat{n}}\varepsilon M\geq \varkappa_0\alpha^dM,
    \end{align*}
    and
    \begin{align*}
        R^{-2}\int_{T_1}^{T_2}\textup{Tail}[u_-(t);R,\xi_0]\,dt\leq \frac{1}{2}\varkappa_*^{\hat{n}}\varepsilon M&\leq \frac{1}{2}\left(\frac{k}{8}\right)^{\log_2(1/\varkappa_*)}\varepsilon M\\
        &=\varkappa_0\alpha^dM,
    \end{align*}
    respectively. As a result, we recall the fact $B_{2\rho}(\xi_0)\subset B_{2^{\hat{n}}k\rho}(\xi_*)$ and merge the previous results with display (\ref{eq6.12}) to have for every $t_{\hat{n}+1}-\frac{\delta_*}{2}\left(2^{\hat{n}-1}k\rho\right)^2<t\leq t_{\hat{n}+1}$,
    \begin{align}\label{eq6.13}
        u(t)\geq \varkappa_0\alpha^dM-R^{-2}\int_{T_1}^{T_2}\textup{Tail}[u_-(t);R,\xi_0]\,dt\,\,\,\,\,\,\textup{ a.e. in } B_{2\rho}(\xi_0). 
    \end{align}
    Here, note that time interval still depends on $\hat{n}$. So we need to find some subinterval that is independent of $\hat{n}$. For this purpose a direct computation can be done as follows
    \begin{align*}
        t_{\hat{n}+1}&=t_1+\delta_*\sum_{i=2}^{\hat{n}+1}\left(2^{i-2}k\rho\right)^2\\
        &=t_1+\delta_*\rho^2\frac{(2^{\hat{n}-1}k)^2-k^2}{2^2-1}.
    \end{align*}
    Without any loss of generality, by considering $k$ sufficiently small as required, we may assume that $\log_2(1/k)$ is an integer and choose $\hat{n}=3+\log_2(1/k)$. Then we have
    \begin{align*}
        t_{\hat{n}+1}=t_1+\left(\delta_*\frac{4^2-k^2}{3}\right)\rho^2,
    \end{align*}
    and
    \begin{align*}
        t_{\hat{n}+1}-\frac{\delta_*}{2}\left(2^{\hat{n}-1}k\rho\right)^2&=t_1+\delta_*\rho^2\left[\frac{4^2-k^2}{3}-\frac{1}{2}\left(2^{\hat{n}-1}k\right)^2\right]\\
        &=t_1++\delta_*\rho^2\left[\frac{4^2-k^2}{3}-\frac{1}{2}\left(4\right)^2\right]\\
        &<t_1+\delta_*\rho^2\frac{4^2-k^2}{3}.
    \end{align*}
    Take $\hat{\delta}_*:=\delta_*\frac{4^2-k^2}{3}$, which depends only on \texttt{data} and $\alpha$. Next, (\ref{eq6.13}) yields that for every $t_1+\frac{1}{2}\hat{\delta}_*\rho^2<t\leq t_1+\hat{\delta}_*\rho^2$ 
    \begin{align}\label{eq6.14}
        u(t)\geq \varkappa_0\alpha^dM-R^{-2}\int_{T_1}^{T_2}\textup{Tail}[u_-(t);R,\xi_0]\,dt\,\,\,\,\,\textup{ a.e. in } B_{2\rho}(\xi_0).
    \end{align}
    Moreover, note that the selection of $\delta_*$ assure that 
    \begin{align*}
        \hat{\delta}_*=\delta_*\frac{4^2-k^2}{3}=\frac{\alpha^{Q+3}}{10^6(8Q)^{Q+2}}\frac{4^2-k^2}{3}<1.
    \end{align*}
    \\[2mm]
    {\em Step IV}\vskip 1mm
    The result in (\ref{eq6.14}) is similar to our required result but in a different time interval. We need to shift the qualitative location of $t_1$ to the original time layer $t_0$ by the use of the result in Proposition \ref{P5.5}. From (\ref{eq6.14}), we can write
    \begin{align*}
        \left|\left\{u\left(t_1+\frac{1}{2}\hat{\delta}_*\rho^2\right)\geq\varkappa_0\alpha^dM\right\}\cap B_{2\rho}(\xi_0)\right|=|B_{2\rho}(\xi_0)|,
    \end{align*}
    therefore, from the result in Proposition \ref{P5.5} with $\alpha=1$ and for any $K>0$, there exists $\hat{\varkappa}_0\equiv\hat{\varkappa}_0(\texttt{data},K)\in (0,1)$ such that
    \begin{align*}
        u(t)\geq \hat{\varkappa}_0(\varkappa_0\alpha^dM)-R^{-2}\int_{T_1}^{T_2}\textup{Tail}[u_-(t);R,\xi_0]\,dt\,\,\,\,\textup{ a.e. in } B_{2\rho}(\xi_0),
    \end{align*}
    for every time $t_1+\frac{1}{2}\hat{\delta}_*\rho^2\leq t\leq t_1+\hat{\delta}_*\rho^2+K\rho^2$. Now, recalling the range of $t_1$ such that for all $\theta\in (1/2,1]$ such that $t_1=t_0+\theta\delta\rho^2$ and taking
    \begin{align*}
        K:=\theta\delta+\hat{\delta}_*=:\frac{\delta_0}{2},\,\,\,\,\tilde{\varkappa}:=\hat{\varkappa}_0\varkappa_0,
    \end{align*}
    we obtain our result. Also note that $\delta_0,\tilde{\varkappa}$ depend upon $\texttt{data}$ and $\alpha$.
 \end{proof}    
   \subsection{Proof of the Weak Harnack Inequality I} 
   Fix the parameters $\delta_0,\tilde{\varkappa}=\delta_0,\tilde{\varkappa}(\texttt{data},\alpha)$ and $d>1$ as in Proposition \ref{P6.2}. Set 
   \begin{align*}
       m:=\inf_{B_\rho(\xi_0)\times (t_0+\frac{1}{2}\delta_0\rho^2,t_0+\delta_0\rho^2]}u+R^{-2}\int_{T_1}^{T_2}\textup{Tail}[u_-(t);R,\xi_0].
   \end{align*}
    For some $0<q<\frac{1}{d}$ and $m>0$, we compute
    \begin{align}\label{eq6.15}
        \fint_{B_\rho(\xi_0)}u(t_0)^q\,d\xi&= q\int_0^\infty\frac{|\{u(t_0)\geq M\}\cap B_\rho(\xi_0)|}{|B_\rho|}M^{q-1}\, dM\nonumber\\
        &\leq q\int_0^m M^{q-1}\,dM+q \int_m^\infty\frac{|\{u(t_0)\geq M\}\cap B_\rho(\xi_0)|}{|B_\rho|}M^{q-1}\, dM\nonumber\\
        &= m^q+q \int_m^\infty\frac{|\{u(t_0)\geq M\}\cap B_\rho(\xi_0)|}{|B_\rho|}M^{q-1}\, dM.
    \end{align}
    Then with the help of the Proposition \ref{P6.2} with $\alpha:=\frac{|\{u(t_0)\geq L\}\cap B_\rho(\xi_0)|}{|B_\rho|}$, we have 
    \begin{align*}
        m\geq \tilde{\varkappa}M\left(\frac{|\{u(t_0)\geq M\}\cap B_\rho(\xi_0)|}{|B_\rho|}\right)^d,
    \end{align*}
    and the second integral on the right hand side of (\ref{eq6.15}) can be estimated as
    \begin{align*}
        q \int_m^\infty\frac{|\{u(t_0)\geq M\}\cap B_\rho(\xi_0)|}{|B_\rho|}M^{q-1}\, dM\leq \frac{q}{\tilde{\varkappa}^\frac{1}{d}}m^\frac{1}{d}\int_m^\infty M^{q-\frac{1}{d}-1}\,dM.
    \end{align*}
    Hence, (\ref{eq6.15}) can be rewritten as
    \begin{align*}
         \fint_{B_\rho(\xi_0)}u(t_0)^q\,d\xi&\leq m^q+\frac{q}{\tilde{\varkappa}^\frac{1}{d}}m^\frac{1}{d}\int_m^\infty M^{q-\frac{1}{d}-1}\,dM\\
         &=m^q+\frac{q}{\tilde{\varkappa}^\frac{1}{d}}m^\frac{1}{d}\left(\frac{1}{\frac{1}{d}-q}\,m^{q-\frac{1}{d}}\right)\\
         &=\left(1+\frac{q}{\tilde{\varkappa}^\frac{1}{d}\left(\frac{1}{d}-q\right)}\right)m^q\\
         &=\left(1+\frac{q}{\tilde{\varkappa}^\frac{1}{d}\left(\frac{1}{d}-q\right)}\right)\left[\inf_{B_\rho(\xi_0)\times (t_0+\frac{1}{2}\delta_0\rho^2,t_0+\delta_0\rho^2]}u+R^{-2}\int_{T_1}^{T_2}\textup{Tail}[u_-(t);R,\xi_0]\right]^q.
    \end{align*}
    In this series of calculations, we use the fact $0<q<\frac{1}{d}$ in the second line. Consequently, this leads to our result. 
    \subsection{Weak Harnack Inequality II} The next two lemmas concern measure theoretical information. Both use the {\em good term} to deduce the results.  
    \begin{lemma}
        Let $u$ be a local sub (super) solution of the problem (\ref{eq1.1}) in $\Omega_T$ and $u\geq 0$ in $\mathcal{Q}\Subset \Omega_T$. Let $M>0$ be an arbitrary number. Then either
        \begin{align*}
            R^{-2}\int_{T_1}^{T_2}\textup{Tail}[u_-(t);R,\xi_0]\,dt\leq M,
        \end{align*}
        or, there exists some constant $\gamma(\textup{\texttt{data}})$ such that
        \begin{align*}
            \inf_{t\in (t_0-\rho^2,t_0]}\left|\left\{u(\cdot,t)\leq M\right\}\cap B_\rho(\xi_0)\right|\leq \frac{\gamma M}{\left(\int_{z_0+\mathcal{Q}\rho^\ominus}\frac{u_+}{(2\rho)^{Q+2s}}\,d\xi dt\right)}|B_\rho(\xi_0)|.
        \end{align*}
    \end{lemma}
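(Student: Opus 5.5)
We test the weak (super)solution formulation against the truncation $(u-M)_-$, localized in space and time. The aim is to exploit the nonlocal \emph{good term} on the left of the energy estimate (\ref{eq3.6}),
\begin{align*}
\int_{t_0-\theta_2}^{t_0}\int_{B_r}(w_-\psi^2\nu^2)(\xi,t)\Bigg\{\int_{\hn}\frac{w_+(\eta,t)}{\dist[\xi]{\eta}^{Q+2s}}\,d\eta\Bigg\}\,d\xi\,dt ,
\end{align*}
with $w=u-M$. As in the proof of Lemma \ref{L5.1}, we absorb the outer tail through the time dependent level $g(t)$. This is where the dichotomy enters: either the tail of $u_-$ is at most $M$, or it is large. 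In the second case we run the argument with the level $M$, and the correction $g(T_2)\lesssim M$ is absorbed into the constant.

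\textbf{Step 1 (energy inequality).} Set $w_-=(u-M)_-$, which satisfies $0\le w_-\le M$ in $\mathcal{Q}$ because $u\ge0$ there. Take $\psi\equiv1$ on $B_\rho$ with support in $B_{3\rho/2}$ and $|\nabla_{\hn}\psi|\le\gamma/\rho$, and take $\nu\equiv1$ on $(t_0-\rho^2,t_0]$ vanishing for $t\le t_0-(3\rho/2)^2$. Apply (\ref{eq3.6}) on $B_{2\rho}\times(t_0-(2\rho)^2,t_0]$. The right-hand terms are the cutoff terms (local and nonlocal), the $\partial_t\nu$ term, and the inner tail. Since $w_-\le M$, each cutoff and $\partial_t\nu$ term is bounded by $\gamma M^2\rho^{-2}|B_{2\rho}|\rho^2=\gamma M^2|B_\rho|$, using Lemma \ref{L2.3} and $\rho<1$. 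The tail term involves $w_-$ outside $B_{2\rho}$. Inside $B_R$ it is bounded by $M$ times Lemma \ref{L2.3}, and outside $B_R$ it is absorbed by $g'$ as in Lemma \ref{L5.1}. This yields the bound $\gamma M^2|B_\rho|$ for the whole right-hand side.

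\textbf{Step 2 (lower bound for the good term).} Restrict to $\xi\in\{u(\cdot,t)\le M/2\}\cap B_\rho$, where $w_-\ge M/2$, and to $\eta\in B_{2\rho}$, where $|\eta^{-1}\circ\xi|_{\hn}\le 2\rho$ up to the constant of the quasi-triangle inequality. There $w_+(\eta,t)\ge u_+(\eta,t)-M$. Hence the good term dominates
\begin{align*}
\frac{M}{2}\int_{t_0-\rho^2}^{t_0}\big|\{u(\cdot,t)\le M/2\}\cap B_\rho\big|\int_{B_{2\rho}}\frac{u_+(\eta,t)-M}{(2\rho)^{Q+2s}}\,d\eta\,dt .
\end{align*}
We then bound the measure below by its infimum over $t\in(t_0-\rho^2,t_0]$. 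The $-M$ contribution is at most $M^2\rho^{2-2s}|B_\rho|\lesssim M^2|B_\rho|$, and it moves to the right-hand side. Dividing by $M$ gives
\begin{align*}
\inf_t\big|\{u(\cdot,t)\le M/2\}\cap B_\rho\big|\le \frac{\gamma M|B_\rho|}{\iint u_+(2\rho)^{-Q-2s}} .
\end{align*}
Renaming $M/2$ as $M$ gives the statement, with the cylinder $z_0+\mathcal{Q}_\rho^\ominus$ replaced by a comparable one if needed.

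\textbf{Main obstacle.} The hardest part is the bookkeeping between the time-dependent level $g(t)$ and the dichotomy. Testing with $(u-g-M)_-$ shifts the level, so $\{u-g\le M\}$ must be compared with $\{u\le 2M\}$ using $g\le M$, which is what the tail alternative supplies. The domain of the $\eta$-integral in the good term must also match the cylinder in the statement. I would first verify that the sign conventions in (\ref{eq3.6}) for the "$-$" case put the $g'$ term on the favorable side.
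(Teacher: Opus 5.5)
Your proposal is essentially the paper's proof. You test (\ref{eq3.6}) with $(u-M)_\pm$, bound every right-hand term by $\gamma M^2|B_\rho(\xi_0)|$ using $0\le(u-M)_-\le M$ (from $u\ge0$ in $\mathcal{Q}$) and $\rho<1$, and then bound the good term from below via $|\eta^{-1}\circ\xi|_{\hn}\lesssim\rho$, $(u-M)_+\ge u_+-M$ and an infimum in time. Your restriction to $\{u\le M/2\}$, on which $(u-M)_-\ge M/2$, actually repairs a step the paper writes backwards, namely $\int_{\{u\le M\}\cap B_\rho}(u-M)_-\,d\xi\ge M\,|\{u\le M\}\cap B_\rho|$.

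Two adjustments are worth making. First, no level $g(t)$ is needed: since $(u-M)_-\le M+u_-$, the far tail is bounded directly by $\gamma M|B_\rho(\xi_0)|\big(M+R^{-2}\int_{T_1}^{T_2}\textup{Tail}[u_-(t);R,\xi_0]\,dt\big)$. This is the paper's ``like Lemma \ref{L5.3}'' step, and it removes your ``main obstacle.'' Second, that bound uses the small-tail condition, so what you prove is the estimate under $R^{-2}\int_{T_1}^{T_2}\textup{Tail}[u_-(t);R,\xi_0]\,dt\le M$, not in your ``second case.'' This is also what the paper's proof uses; the displayed ``either/or'' in the statement has its inequality reversed.
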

    \begin{proof}
        For simplicity, we assume $(\xi_0,t_0)=(0,0)$. Consider time independent cut-off function $\psi$ in $B_\rho$ such that
        \begin{align*}
            \psi\equiv\begin{cases}
                1\,\,\,\,\textup{ on } B_\rho,\\
                0\,\,\,\,\textup{ on } B_{\frac{3}{2}\rho},
            \end{cases}
            \,\,\,\,\textup{ such that }|\nabla_{\hn}\psi|\leq \frac{\gamma}{\rho},
        \end{align*}
        where $\gamma=\gamma(\texttt{data)}$. For any $M>0$, consider the truncated function $w_\pm:=(u-M)_\pm$ and use this in the energy estimate (\ref{eq3.6}) over the cylinder $\mathcal{Q}_\rho^\ominus$ where $\mathcal{Q}_\rho^\ominus=B_\rho\times(-\rho^2,0]$, we have
        \begin{align}\label{eq6.16}
            \int_{\mathcal{Q}_\rho^\ominus}&w_-(\xi,t)d\xi\left(\int_{B_\rho}\frac{w_+(\eta,t)}{\dist[\xi]{\eta}^{Q+2s}}d\eta\right)dt+\int_{\mathcal{Q}_\rho^\ominus}w_-(\xi,t)d\xi\left(\int_{\hn\setminus B_\rho}\frac{w_+(\eta,t)}{\dist[\xi]{\eta}^{Q+2s}}d\eta\right)dt\nonumber\\
            &\leq \gamma\iint_{\mathcal{Q}_\rho^\ominus}\left(w^2_-\nabla_{\hn}\psi\right)^2(\xi,t)\,d\xi dt+\gamma\int_{-\rho^2}^0\iint_{B_\rho\times B_\rho}\max\{w^2_-(\xi,t),w^2_-(\eta,t)\}\frac{|\psi(\xi)-\psi(\eta)|^2}{\dist[\xi]{\eta}^{Q+2s}}\,d\xi d\eta dt\nonumber\\
            &\quad+\gamma\int_{\mathcal{Q}_\rho^\ominus}w_-(\xi,t)d\xi\left(\int_{\hn\setminus B_\rho}\frac{w_-(\eta,t)}{\dist[\xi]{\eta}^{Q+2s}}\,d\eta\right)dt+\int_{B_\rho}w^2_-(\xi,-\rho^2)\, d\xi.
        \end{align}
        Estimating like its counterpart in Lemma \ref{L5.3}, we obtain that the right hand side of the previous inequality is dominated by 
        \begin{align*}
            \gamma M^2|B_\rho|,
        \end{align*}
        for some constant $\gamma=\gamma(\texttt{data})$. Recalling the non-negative property of $u$ in $\mathcal{Q}_\rho^\ominus$, then we have from the inequality (\ref{eq6.16}),
       \begin{align*}
            \gamma M^2|B_\rho|&\geq \int_{\mathcal{Q}_\rho^\ominus}w_-(\xi,t)d\xi\left(\int_{B_\rho}\frac{w_+(\eta,t)}{\dist[\xi]{\eta}^{Q+2s}}d\eta\right)dt\\
            &\geq \gamma\inf_{t\in (-\rho^2,0]}\int_{B_\rho}w_-(\xi,t)\,d\xi\left(\int_{\mathcal{Q}\rho^\ominus}\frac{u_+}{(2\rho)^{Q+2s}}\,d\xi dt-M\right)\\
            &=\gamma \left(\int_{\mathcal{Q}\rho^\ominus}\frac{u_+}{(2\rho)^{Q+2s}}\,d\xi dt\right)\inf_{t\in (-\rho^2,0]}\int_{\{u(\cdot,t)\leq M\}\cap B_\rho}w_-(\xi,t)\,d\xi-\gamma M^2|B_\rho|\\
            &\geq \gamma \left(\int_{\mathcal{Q}\rho^\ominus}\frac{u_+}{(2\rho)^{Q+2s}}\,d\xi dt\right) \inf_{t\in (-\rho^2,0]}\left|\left\{u(\cdot,t)\leq M\right\}\cap B_\rho\right|\,M-\gamma M^2|B_\rho|.
       \end{align*}
       In this sequence of calculations, we use the non-negativity of $\int_{\mathcal{Q}_\rho^\ominus}w_-(\xi,t)d\xi\left(\int_{\hn\setminus B_\rho}\frac{w_+(\eta,t)}{\dist[\xi]{\eta}^{Q+2s}}d\eta\right)dt$ in the first line and the second line follows by considering $\rho<1$ and  the simple observations 
       \begin{align*}
           u_+\leq \gamma\left((u-M)_++M\right)\,\,\,\textup{ and }\,\,\,\dist[\xi]{\eta}\leq 2\rho\,\,\, \textup{ for all }x,y\in B_\rho. 
       \end{align*}
       Finally, adjusting all the constants, we have our result from previous inequality. 
    \end{proof}
    \begin{lemma}\label{L6.4}
        Let $u$ be a local sub (super) solution of the problem (\ref{eq1.1}) in $\Omega_T$ and $u\geq 0$ in $\mathcal{Q}\Subset \Omega_T$. Let $M>0$ be an arbitrary number. Then either
        \begin{align*}
            R^{-2}\int_{T_1}^{T_2}\textup{Tail}[u_-(t);R,\xi_0]\,dt\leq M,
        \end{align*}
        or there exists some constant $\gamma(\textup{\texttt{data}})$ such that
        \begin{align*}
            \inf_{t\in (t_0-\rho^2,t_0]}\left|\left\{u(\cdot,t)\leq M\right\}\cap B_\rho(\xi_0)\right|\leq \frac{\gamma M}{\fint_{-\rho^2}^0\textup{Tail}[u_+(t);\rho,0]\, dt}|B_\rho(\xi_0)|.
        \end{align*}
    \end{lemma}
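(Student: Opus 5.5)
The plan is to run the same argument as in the previous lemma, but to extract the nonlocal \emph{good term} from the far region $\hn\setminus B_\rho$ instead of from $B_\rho$. Take $(\xi_0,t_0)=(0,0)$, the time-independent cut-off $\psi$ equal to $1$ on $B_\rho$, supported in $B_{3\rho/2}$, with $|\nabla_{\hn}\psi|\le\gamma/\rho$, and the truncation $w_-=(u-M)_-$ in the energy estimate (\ref{eq3.6}) on $\mathcal{Q}_\rho^\ominus$. The cylinder must be slightly enlarged, say to $B_{2\rho}\times(-\rho^2,0]$, so that $\psi$ is supported inside the ball where the estimate is written. Exactly as in (\ref{eq6.16}), the right-hand side is bounded by $\gamma M^2|B_\rho|$, because $0\le w_-\le M$ in $\mathcal{Q}$ (recall $u\ge0$ there). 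The term-by-term justification is the following.
\begin{itemize}
\item The local gradient term and the $\max\{w_-^2\}|\psi(\xi)-\psi(\eta)|^2$ term are controlled as in Lemma \ref{L5.3}, using Lemma \ref{L2.3} with exponent $2s-2$.
\item The initial-time term is at most $M^2|B_\rho|$.
\item In the tail of $w_-$, the part outside $B_R$ is bounded by the quantity $R^{-2}\int_{T_1}^{T_2}\textup{Tail}[u_-(t);R,\xi_0]\,dt$. Outside the first alternative this quantity is at most $M$, so the whole tail of $w_-$ contributes at most $\gamma M^2|B_\rho|$.
\end{itemize}

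On the left-hand side, discard everything except the nonnegative term
\[
\int_{-\rho^2}^0\int_{B_\rho}w_-(\xi,t)\Big(\int_{\hn\setminus B_{2\rho}}\frac{w_+(\eta,t)}{\dist[\xi]{\eta}^{Q+2s}}\,d\eta\Big)\,d\xi\,dt .
\]
Two pointwise facts are needed here.
\begin{itemize}
\item For $\xi\in B_\rho$ and $\eta\notin B_{2\rho}$, the pseudo-triangle inequality gives $\dist[\xi]{\eta}\le \tfrac32|\eta|_{\hn}$. Hence the kernel is bounded below by $\gamma^{-1}|\eta|_{\hn}^{-Q-2s}$.
\item $w_+\ge u_+-M$.
\end{itemize}
Together these give
\[
\int_{\hn\setminus B_{2\rho}}\frac{w_+(\eta,t)}{\dist[\xi]{\eta}^{Q+2s}}\,d\eta\;\ge\;\gamma^{-1}\rho^{-2}\,\textup{Tail}[u_+(t);2\rho,0]\;-\;\gamma M\rho^{-2s},
\]
using Lemma \ref{L2.3} for $\int_{\hn\setminus B_{2\rho}}|\eta|^{-Q-2s}$. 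Since $\rho<1$, we have $\rho^{-2s}\le\rho^{-2}$.

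The main obstacle is that the time-dependent tail multiplies $\int_{B_\rho}w_-(\xi,t)\,d\xi$, and we must pull out an infimum in time. Bound
\[
\int_{B_\rho}w_-(\xi,t)\,d\xi\ \ge\ \inf_{\tau\in(-\rho^2,0]}\int_{B_\rho}w_-(\xi,\tau)\,d\xi ,
\]
and then integrate the tail over $(-\rho^2,0]$, which produces $\rho^2\fint_{-\rho^2}^0\textup{Tail}[u_+(t);2\rho,0]\,dt$. A second mismatch is that the statement uses $\textup{Tail}[u_+;\rho,0]$. The difference between the $\rho$- and $2\rho$-tails is the integral over $B_{2\rho}\setminus B_\rho$. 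Using $u\ge0$ in $\mathcal{Q}$, I would recover it by taking the cut-off equal to $1$ on $B_{\rho/2}$ and working with $B_{\rho/2}$ inside, combining the good term over $B_\rho\setminus B_{\rho/2}$ (as in the previous lemma) with the far region. Doubling then restores $|B_\rho|$. Failing this, one can simply state the bound with the comparable $2\rho$-tail.

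The final step is to use $w_-\ge M/2$ on $\{u\le M/2\}$, or to rename $M$ as $2M$. This yields
\[
\gamma M^2|B_\rho|\ \ge\ \gamma^{-1}M\Big(\fint_{-\rho^2}^0\textup{Tail}[u_+(t);\rho,0]\,dt-\gamma M\Big)\inf_{t}\big|\{u(\cdot,t)\le M\}\cap B_\rho\big| .
\]
If the averaged tail is at most $2\gamma M$, the claim is trivial once the constant is enlarged, since the measure is at most $|B_\rho|$. Otherwise the bracket is at least half the tail, and dividing gives the stated inequality.
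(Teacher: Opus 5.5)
Your argument is the paper's in all essentials. You keep only the nonlocal good term on the left of (\ref{eq3.6}), bound the right-hand side by $\gamma M^2|B_\rho|$ using $0\le w_-\le M$ in $\mathcal{Q}$ and the tail bound, bound the kernel below by a multiple of $|\eta|_{\hn}^{-Q-2s}$, use $w_+\ge u_+-M$, and pull out the infimum in time. The one place where you deviate, however, creates a defect that your repairs do not remove. Keeping only $\eta\in\hn\setminus B_{2\rho}$ produces $\textup{Tail}[u_+(t);2\rho,0]$, and neither fallback yields the stated inequality. Shrinking to $\xi\in B_{\rho/2}$ controls only $\inf_t|\{u(\cdot,t)\le M\}\cap B_{\rho/2}|$. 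That quantity is dominated by the one in the statement, not the other way around, so doubling can restore $|B_\rho|$ on the right but not the set on the left. The two tails are not comparable either. Only $\textup{Tail}[u_+(t);2\rho,0]\le 4\,\textup{Tail}[u_+(t);\rho,0]$ holds, because the annulus $B_{2\rho}\setminus B_\rho$ contributes roughly $\rho^{2-2s}\fint_{B_{2\rho}\setminus B_\rho}u_+\,d\eta$ to the $\rho$-tail and nothing to the $2\rho$-tail. A bound with the $2\rho$-tail in the denominator is therefore strictly weaker than the claim.

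The restriction is unnecessary. The good term in (\ref{eq3.6}) integrates $\eta$ over all of $\hn$, not over the complement of the ball on which the estimate is written. So, with the estimate on $B_{2\rho}$ and $\psi\equiv 1$ on $B_\rho$, restrict to $\xi\in B_\rho$ and keep every $\eta\in\hn\setminus B_\rho$. There $|\xi^{-1}\circ\eta|_{\hn}\le|\xi|_{\hn}+|\eta|_{\hn}\le 2|\eta|_{\hn}$, which gives directly
\[
\int_{\hn\setminus B_\rho}\frac{w_+(\eta,t)}{|\xi^{-1}\circ\eta|_{\hn}^{Q+2s}}\,d\eta\;\ge\;2^{-Q-2s}\Big(\rho^{-2}\,\textup{Tail}[u_+(t);\rho,0]-c\,M\rho^{-2s}\Big).
\]
This is exactly the paper's route. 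It is also your idea of combining the annulus with the far region, carried out at the original scale. Two small points remain. First, pull the infimum in time out while the $\eta$-integrand is still the nonnegative $w_+$, and only then insert $w_+\ge u_+-M$, since the right-hand side above can be negative. Second, what you and the paper actually prove, and use in (\ref{eq6.23}), is the implication ``tail $\le M$ $\Rightarrow$ measure bound''. The statement's ``either/or'' has the tail inequality the wrong way round, which is why your phrase ``outside the first alternative'' reads backwards.
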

    \begin{proof}
        Without loss of any generality, consider $(\xi_0,t_0)=(0,0)$. We mainly use the inequality (\ref{eq6.16}) with the property that $\int_{\mathcal{Q}_\rho^\ominus}w_-(\xi,t)d\xi\left(\int_{B_\rho}\frac{w_+(\eta,t)}{\dist[\xi]{\eta}^{Q+2s}}d\eta\right)dt\geq 0$. Since $\dist[\xi]{\eta}\leq 2|\eta|_{\hn}$ for all $\xi\in B_\rho$ and $\eta\in \hn\setminus B_\rho$,upon using the estimate $u_+\leq \gamma((u-M)_++M)$, we deduce that
        \begin{align*}
            \gamma M^2|B_\rho|&\geq \int_{\mathcal{Q}_\rho^\ominus}w_-(\xi,t)d\xi\left(\int_{\hn\setminus B_\rho}\frac{w_+(\eta,t)}{\dist[\xi]{\eta}^{Q+2s}}d\eta\right)dt\\
            &\geq \frac{\gamma(\texttt{data})}{2^{Q+2s}} \inf_{t\in(-\rho^2,0]}\int_{B_\rho}w_-(\xi,t)\,d\xi\left(\int_{-\rho^2}^0\int_{\hn\setminus B_\rho}\frac{u_+(\eta,t)}{|\eta|^{Q+2s}}d\eta dt-M\right)\\
            & = \frac{\gamma(\texttt{data})}{2^{Q+2s}}\left[\inf_{t\in (-\rho^2,0]}\left|\left\{u(\cdot,t)\leq M\right\}\cap B_\rho\right|\, M\left(\fint_{-\rho^2}^0\textup{Tail}[u_+(t);\rho,0]\, dt\right)-M^2|B_\rho|\right].
        \end{align*}
        Consequently, adjusting all the relevant constants, we have
       \begin{align*}
            \inf_{t\in (-\rho^2,0]}\left|\left\{u(\cdot,t)\leq \frac{M}{2}\right\}\cap B_\rho\right|\leq \frac{\gamma M}{\fint_{-\rho^2}^0\textup{Tail}[u_+(t);\rho,0]\, dt}|B_\rho|.
       \end{align*}
    \end{proof}
    The proof of the following lemma is similar to the De Giorgi Lemma \ref{L5.1}. But the main difference with Lemma \ref{L5.1} is that this lemma is proven with weaker conditions as compare to the Lemma \ref{L5.1}. This lemma is proven by considering measure theoretical information in a spatial ball at a time slice whereas Lemma \ref{L5.1} relies on measure theoretical information on a parabolic cylinder. 
\begin{lemma}\label{L6.5}
    Let $(\xi_0,t_0)\in \Omega_T$ and $u$ be a locally bounded weak solution of the problem (\ref{eq1.1}) in $\Omega_T$. Suppose $B_{2\rho}(\xi_0)\times (t_0,t_0+\delta\rho^2]\subseteq\mathcal{Q}\Subset \Omega_T$. Let $0<\rho<\frac{R}{2},\lambda>0$, $\delta\in (0,1)$ and there exists $\iota_*\in (0,1)$, depending only on \textup{\texttt{data}} and $\delta$, such that, if 
    \begin{align*}
        \left|\{u(\cdot,t)<\lambda\}\cap B_{2\rho}(\xi_0)\right|\leq\iota_*|B_{2\rho}(\xi_0)|.
    \end{align*}
    Then either
    \begin{align*}
        R^{-2}\int_{T_1}^{T_2}\textup{Tail}[u_-(t);R,\xi_0]\,dt>\frac{\lambda}{8},
    \end{align*}
    or
    \begin{align*}
        u\geq \frac{\lambda}{8}\,\,\,\,\,\textup{a.e. in }\, B_\frac{\rho}{2}(\xi_0)\times \left(t_0+\frac{1}{2}\delta\rho^2,t_0+\delta\rho^2\right].
    \end{align*}
\end{lemma}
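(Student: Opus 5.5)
The plan is a two-stage argument. First, propagate the smallness of the measure of $\{u(\cdot,t_0)<\lambda\}$ forward in time, using the energy estimate with a time-independent cutoff. Second, run the De Giorgi iteration of Lemma \ref{L5.1} on the forward cylinder.

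We assume $(\xi_0,t_0)=(0,0)$. We work with $w_-=(u+g(t)-k)_-$, where $g(t)=\mathfrak{C}R^{-2}\int_{T_1}^t\textup{Tail}[u_-(\tau);R,0]\,d\tau$, so that $g(T_2)$ is controlled by the tail. The hypothesis is read at the initial slice $t=t_0$.

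\textbf{Step 1: forward propagation of the measure condition.} Take the truncation $w_-=(u+g-\lambda)_-$ and a cutoff $\psi$ equal to $1$ on $B_{(1-\sigma)2\rho}$, vanishing outside $B_{(1-\sigma/2)2\rho}$, with no time cutoff. Insert these into (\ref{eq3.6}) over $B_{2\rho}\times(0,\delta\rho^2]$, exactly as in the proof of Lemma \ref{L5.3}, with the signs reversed for supersolutions. Then for every $t\in(0,\delta\rho^2]$,
\begin{align*}
\int_{B_{(1-\sigma)2\rho}}w_-^2(\xi,t)\,d\xi\le \lambda^2\iota_*|B_{2\rho}|+\gamma\frac{\delta}{\sigma^{Q+2}}\lambda^2|B_{2\rho}|,
\end{align*}
where the far-away part of the tail term is absorbed by the $-g'$ term through the choice of $\mathfrak{C}$. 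Bound the left side below on $\{u<\lambda/2\}$ and pay $Q\sigma|B_{2\rho}|$ for the annulus. This gives
\begin{align*}
|\{u(\cdot,t)<\lambda/2\}\cap B_{2\rho}|\le 4\big(\iota_*+\gamma\delta\sigma^{-Q-2}+Q\sigma\big)|B_{2\rho}| \quad\text{for all } t\in(0,\delta\rho^2].
\end{align*}
Integrating in time, the density of $\{u<\lambda/2\}$ in the forward cylinder $B_{2\rho}\times(0,\delta\rho^2]$ is at most the same quantity.

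\textbf{Step 2: De Giorgi iteration.} Apply the forward-in-time analogue of Lemma \ref{L5.1} (supersolution case, with $\mu^-=0$ because $u\ge0$ in $\mathcal{Q}$, and $\lambda\omega$ replaced by $\lambda/2$). Use shrinking cylinders $B_{\rho_n}\times(\delta\rho^2/2-\text{shrink},\delta\rho^2]$ with $\rho_n\downarrow\rho/2$ and a time cutoff that vanishes near $t=\delta\rho^2/4$. This yields $u+g\ge\lambda/8$ on $B_{\rho/2}\times(\tfrac12\delta\rho^2,\delta\rho^2]$, provided the density from Step 1 is below the threshold $\iota(\texttt{data},\delta)$ of Lemma \ref{L5.1}. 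If $R^{-2}\int\textup{Tail}[u_-]\le\lambda/8$, then after renormalizing $\mathfrak{C}$ we get $g\le\lambda/16$, and the conclusion $u\ge\lambda/8$ follows after adjusting the constants by a factor of $2$. Otherwise the first alternative holds.

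\textbf{Main obstacle: the order of the parameters.} The threshold $\iota$ depends on $\delta$, while Step 1 produces a term $\gamma\delta\sigma^{-Q-2}$. For the given $\delta$, first fix $\iota=\iota(\texttt{data},\delta)$. The $\delta$-term does not become small on its own, so in Step 1 I would run the energy estimate on the short time window of length $\delta'\rho^2$ with $\delta'\ll\delta$ and iterate it $\lceil\delta/\delta'\rceil$ times. Each step loses a factor in the level, as in Proposition \ref{P5.5}. Alternatively, I would choose $\sigma=\iota/(16Q)$ and accept that the statement only requires $\iota_*$ to depend on $\delta$: take $\iota_*=\iota/16$ and restrict Step 1 to the smaller time scale via Proposition \ref{P5.5} with $K=\delta$, which gives a positive level $\varkappa\lambda$ on the whole window. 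The level loss $\varkappa$ is then absorbed into the factor $1/8$ by redefining $\lambda$, and checking this bookkeeping is where care is needed.
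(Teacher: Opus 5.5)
\textbf{The gap is your last step, and it is not a bookkeeping issue.} You rightly note that Step~1 cannot feed Step~2. Uniformly in $\sigma\in(0,1)$ and in $\iota_*$, the bound $4(\iota_*+\gamma\delta\sigma^{-Q-2}+Q\sigma)$ stays above a positive constant of order $\min\{1,(\gamma\delta)^{1/(Q+3)}\}$. So shrinking $\iota_*$ never brings your density \emph{bound} below the threshold $\iota(\texttt{data},\delta)$ of Lemma~\ref{L5.1}.

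Neither repair restores smallness. The short-window iteration and Proposition~\ref{P5.5} only keep a positive \emph{fraction} of the good set, at a reduced level. Lemma~\ref{L5.4} then turns this into pointwise information, and each of these steps costs a multiplicative factor. What your route actually yields is
\[
R^{-2}\int_{T_1}^{T_2}\textup{Tail}[u_-(t);R,\xi_0]\,dt>\varkappa\lambda \quad\text{or}\quad u\ge\varkappa\lambda \ \text{ in } B_{\rho/2}(\xi_0)\times\big(t_0+\tfrac12\delta\rho^2,t_0+\delta\rho^2\big],
\]
with $\varkappa=\varkappa(\texttt{data},\delta)$, possibly tiny. You cannot absorb $\varkappa$ into $1/8$ by ``redefining $\lambda$''. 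The level $\lambda$ is pinned by the hypothesis $|\{u(\cdot,t_0)<\lambda\}\cap B_{2\rho}(\xi_0)|\le\iota_*|B_{2\rho}(\xi_0)|$, which gives no information at the level $\lambda/(8\varkappa)$.

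\textbf{No argument can close this with the fixed constant $1/8$ under the single-slice reading.} This is your reading, and also the one the paper uses at $t^*$ in the proof of Theorem~\ref{T1.3}. Here is a counterexample sketch.
\begin{itemize}
\item Take $N\ge 3$. Let $u$ be the global solution started slightly before $t_0$ from a slight enlargement of $\lambda\chi_{B_{2\rho}(\xi_0)}$. Then the hypothesis holds with any prescribed $\iota_*$, $u\ge 0$ everywhere, and the tail alternative fails.
\item Since $|\cdot|_{\hn}\ge|z|$, compare $u$ with the solution started from $\lambda\chi_{\{|z-z_0|<2\rho\}}$, where $z_0$ is the horizontal part of $\xi_0$. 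That solution depends only on $z$ and solves $\partial_tv-\Delta_{\mathbb{R}^{2N}}v+c(-\Delta_{\mathbb{R}^{2N}})^sv=0$.
\item Using Anderson's inequality to discard the stable jump part, $u(\cdot,t_0+\delta\rho^2)$ is bounded by roughly $\lambda\,\mathbb{P}(\chi^2_{2N}<2/\delta)$. For $N=3$ this tends to about $0.08\,\lambda<\lambda/8$ as $\delta\to 1$.
\end{itemize}

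\textbf{How the paper gets $1/8$.} The paper never uses the slice. In its Case~I it assumes the space--time density $|\mathcal{A}_{n_0}|/|\mathcal{Q}^\oplus_{n_0}|$ is at most $\iota_*$. With $\iota_*\le 2^{-Q}\iota$ it applies Lemma~\ref{L5.1} on the backward cylinder with vertex $(\xi_0,t_0+\delta\rho^2)$, gets $u+g\ge\lambda/4$, and then $u\ge\lambda/8$ from $g(T_2)\le\lambda/8$. The slice hypothesis is never converted into that cylinder smallness.

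\textbf{What you can prove.} You have two honest options.
\begin{itemize}
\item Require the smallness for every $t\in(t_0,t_0+\delta\rho^2]$. Then your Step~2 alone proves the lemma, using Lemma~\ref{L5.1} on a few backward cylinders inside the window.
\item Keep the slice hypothesis and replace $1/8$ by a $\delta$-dependent $\varkappa$. Your Proposition~\ref{P5.5}/Lemma~\ref{L5.4} route is the right tool for that version.
\end{itemize}
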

\begin{proof}
    This lemma mainly follow the steps of the De Giorgi Lemma \ref{L5.1}. For simplicity $(\xi_0,t_0)=(0,0)$. Let us introduce $\rho_n,\tilde{\rho}_n,B_n,\tilde{B}_n,\mathcal{Q}_n^\oplus,\tilde{\mathcal{Q}}_n^\oplus,\lambda_n$ as follows
    \begin{align*}
        \begin{cases}
            \rho_n=\rho+\frac{\rho}{2^{n+1}},\,\tilde{\rho}_n=\frac{\rho_n+\rho_{n+1}}{2},\\
            B_n=B_{\rho_n}(0),\,\tilde{B}_n=B_{\tilde{\rho}_n}(0),\\
            \mathcal{Q}_n^\oplus=B_n\times(0,\delta\rho_n^2],\,\tilde{\mathcal{Q}}_n^\oplus=\tilde{B}_n\times(0,\delta\tilde{\rho}_n^2],\\
            \lambda_n=\lambda+\frac{\lambda}{2^{n+1}}.
        \end{cases}
    \end{align*}
    Unlike Lemma \ref{L5.1}, we consider time independent cut-off function $\psi$ as follows
    \begin{align*}
        \psi\equiv\begin{cases}
            1\,\,\,\,\textup{ on } B_{n+1},\\
            0\,\,\,\,\textup{ on } B^c_n,
        \end{cases}
        \,\,\,\,\textup{ such that }|\nabla_{\hn}\psi|\leq \gamma\frac{2^{n+4}}{\rho}.
    \end{align*}
   Set truncated function
   \begin{align*}
       w_-:=\left(u+g(t)-\lambda_n\right)_-\,\,\,\,\textup{ where } \,g(t)=\mathfrak{C}\int_{R^2}^t\int_{\hn\setminus B_R}\frac{|u(\eta,t)|}{|\eta|^{Q+2s}_{\hn}}\,d\eta dt,
   \end{align*}
   where positive constant $\mathfrak{C}$ to be chosen later. Moreover, we define
   \begin{align*}
       &\mathcal{A}_n:=\left\{u(\xi,t)+g(t)<\lambda_n\right\}\cap\mathcal{Q}_n^\oplus,\\
       &\mathcal{A}_n(t):=\left\{u(\xi,t)+g(t)<\lambda_n\right\}\cap B_n\,\,\,\,\textup{ for }\, t\in (0,\delta\rho^2].
   \end{align*}
   Considering all these entities in (\ref{eq3.6}), we have
   \begin{align}\label{eq6.17}
       \esssup_{0<t\leq \delta\rho_{n+1}^2}&\int_{B_{n+1}}w^2_-(\xi,t)\,d\xi+\iint_{\mathcal{Q}_{n+1}}|\nabla_{\hn}w_-|^2\,d\xi dt\nonumber\\
       &\leq \gamma\int_0^{\delta\rho^2}\iint_{B_n\times B_n}\max\{w^2_-(\xi,t),w^2_-(\eta,t)\}\frac{|\psi(\xi)-\psi(\eta)|}{\dist[\xi]{\eta}^{Q+2s}}\,d\xi d\eta dt\nonumber\\
       &\quad+\gamma \iint_{\mathcal{Q}_n^\oplus}(w_-\nabla_{\hn}\psi\nu)^2\,d\xi dt+\iint_{\mathcal{Q}_n^\oplus}(w_-\psi^2\nu^2)\,d\xi\left\{\frac{w_-(\eta,t)}{\dist[\xi]{\eta}^{Q+2s}}\,d\eta\right\}dt\nonumber\\
       &\quad\quad-\mathfrak{C}\iint_{\mathcal{Q}_n^\oplus}g'(t)(w_-\psi^2\nu^2)(\xi,t)\,d\xi dt.
   \end{align}

   Next, considering the respective calculations in Lemma \ref{L5.2} with suitable choice of $\mathfrak{C}$ and $\gamma$, we conclude that the left hand side of the previous inequality is dominated by
   \begin{align*}
       \gamma\frac{2^{(Q+2s+2)n}}{\delta\rho^2}\lambda^2|\mathcal{A}_n|.
   \end{align*}
   After that by similar calculations as in Lemma \ref{L5.1} with the forward in time interval, we have the recursive inequality
   \begin{align}
       Y_{n+1}\leq \gamma (2b)^n\delta^{-\left[\frac{1}{\kappa}+\frac{2}{q\kappa}\right]}Y_n,
   \end{align}
   where $b=b(Q),\,\gamma=\gamma(\texttt{data}),\,\kappa=\frac{Q}{Q-2}$ and $Y_n=\frac{|\mathcal{A}_n|}{|\mathcal{Q}_n^\oplus|}.$\\
   At this stage, we consider two possibilities.\vskip 1mm
   {\bf Case I: } For some $n_0\in\mathbb{N}$, let 
   \begin{align*}
       \frac{|\mathcal{A}_{n_0}|}{|\mathcal{Q}_{n_0}^\oplus|}\leq \iota_*.
   \end{align*}
 This yields the following
 \begin{align*}
     &\left|\{u+g(t)\leq \lambda_{n_0}\}\cap \mathcal{Q}_{n_0}^\oplus\right|\leq \iota_*|\mathcal{Q}_{n_0}^\oplus|\\
     \implies&\left|\{u+g(t)\leq \lambda\}\cap (0,\delta\rho^2)+\mathcal{Q}_\rho^\ominus(\delta)\right|\leq \iota_*2^Q|(0,\delta\rho^2)+\mathcal{Q}_\rho^\ominus(\delta)|.
 \end{align*}
 We get the last line because $\rho< \rho_{n_0}\leq 2$ and $\lambda\leq \lambda_{n_0}$. We choose $\iota_*$ sufficiently small such that
 \begin{align}\label{eq6.19}
     \iota_*\leq 2^{-Q}\iota,
 \end{align}
 where $\iota$ is chosen as in Lemma \ref{L5.1}.  Then we have
 \begin{align}\label{eq6.20}
     \left|\{u+g(t)\leq \lambda\}\cap (0,\delta\rho^2)+\mathcal{Q}_\rho^\ominus(\delta)\right|\leq \iota|(0,\delta\rho^2)+\mathcal{Q}_\rho^\ominus(\delta)|.
 \end{align}
 At this point, by {\em tail} condition and (\ref{eq6.20}), we are allowed to apply Lemma \ref{L5.1} with the center $(0,\delta\rho^2)$ such that 
 \begin{align*}
     u+g(t)\geq \frac{\lambda}{4}\,\,\,\,\textup{a.e. in } B_\frac{\rho}{2}\times \left(\frac{3}{4}\delta\rho^2,\delta\rho^2\right].
 \end{align*}
 Considering the {\em tail} condition with $g(T_2)\leq \frac{1}{8}\lambda$, we have
 \begin{align}\label{eq6.21}
     u\geq \frac{\lambda}{8}\,\,\,\,\textup{a.e. in } B_\frac{\rho}{2}\times \left(\frac{3}{4}\delta\rho^2,\delta\rho^2\right].
 \end{align}
 \vskip 1mm
 {\bf Case II: } If for any $n\in\mathbb{N}$,
 \begin{align*}
     \frac{|\mathcal{A}_n|}{|\mathcal{Q}_n^\oplus|}> \iota_*.
 \end{align*}
 Then applying convergence result (Lemma \ref{L2.4}), we have $\iota\in (0,1)$ as in Lemma \ref{L5.1} such that 
 \begin{align*}
     Y_0\leq \iota,
 \end{align*}
 and $Y_n\rightarrow 0$ as $n\rightarrow 0$. Therefore, 
 \begin{align*}
     u+g(t)\geq \lambda\,\,\,\,\textup{a.e. in } B_\frac{\rho}{2}\times \left(0,\delta\rho^2\right].
 \end{align*}
 Considering the {\em tail} condition with $g(T_2)\leq \frac{1}{8}\lambda$, we have
 \begin{align}\label{eq6.22}
     u\geq \frac{7\lambda}{8}\,\,\,\,\textup{a.e. in } B_\frac{\rho}{2}\times \left(0,\delta\rho^2\right]
 \end{align}
Finally, combining both (\ref{eq6.21}) and (\ref{eq6.22}), we have our result. 
\end{proof}
\subsection{Proof of the Weak Harnack Inequality II}
\begin{proof}
Without loss of generality, let us consider $(\xi_0,t_0)=(0,0)$. Recalling the Lemma \ref{L6.4} with redefining $M$, such that
\begin{align*}
    \inf_{(-(2\rho)^2,0]}\left|\{u(\cdot,t)\leq M\}\cap B_{2\rho}\right|\leq \frac{\gamma(\textup{\texttt{data}}) M}{\fint_{-{(2\rho)}^2}^0\textup{Tail}[u_+(t);2\rho,0]\, dt}|B_{2\rho}|,
\end{align*}
provided that 
\begin{align}\label{eq6.23}
   R^{-2}\int_{T_1}^{T_2}\textup{Tail}[u_-(t);R,0]\, dt\leq M.
\end{align}
Take $\iota\in (0,1)$ as in Lemma \ref{L5.1}. Then choose $M>0$ in such a way that 
\begin{align*}
    &\frac{\gamma M}{\fint_{-{(2\rho)}^2}^0\textup{Tail}[u_+(t);2\rho,0]\, dt}\leq \iota\\
    \implies&\frac{\gamma M}{\left[\left(\fiint_{\mathcal{Q}_{2\rho}^\ominus}u_+^2\,d\xi  dt\right)^\frac{1}{2}+\left(\fint_{-{(2\rho)}^2}^{0}\left(\fint_{B_{2\rho}}u_+(\xi,t)\,d\xi\right)^2\,dt\right)^\frac{1}{2}+\fint_{-{(2\rho)}^2}^0\textup{Tail}[u_+(t);2\rho,0]\, dt+1\right]}\leq \iota.
\end{align*}
Set $M$ as 
\begin{align*}
    M=\frac{\iota}{4\gamma}\left[\left(\fiint_{\mathcal{Q}_{2\rho}^\ominus}u_+^2\,d\xi  dt\right)^\frac{1}{2}+\left(\fint_{-{(2\rho)}^2}^{0}\left(\fint_{B_{2\rho}}u_+(\xi,t)\,d\xi\right)^2\,dt\right)^\frac{1}{2}+\fint_{-{(2\rho)}^2}^0\textup{Tail}[u_+(t);2\rho,0]\, dt+1\right].
\end{align*}
Due to choice of this $M$, there exists $t^*\in (-(2\rho)^2,0]$ such that
\begin{align*}
    \inf_{(-(2\rho)^2,0]}\left|\{u(\cdot,t)\leq M\}\cap B_{2\rho}\right|\leq \iota|B_{2\rho}|.
\end{align*}
Therefore, applying the Lemma \ref{L6.5}, we have 
\begin{align}\label{eq6.24}
    u\geq \frac{1}{8}M\,\,\,\,\textup{ a.e in }\, B_{\frac{\rho}{2}}\times (t^*+\frac{3}{4}\delta\rho^2,t^*+\delta\rho^2],
\end{align}
where $\delta\in (0,1)$ depends upon \texttt{data} and $\alpha$ with $B_{2\rho}\times\left(-(2\rho)^2,\delta\rho^2\right]\subseteq \mathcal{Q}\Subset \Omega_T $ and 
\begin{align}\label{eq6.25}
     R^{-2}\int_{T_1}^{T_2}\textup{Tail}[u_-(t);R,0]\, dt\leq \frac{M}{8}.
\end{align}
Let, $\bar{t}\in (t^*+\frac{3}{4}\delta\rho^2,t^*+\delta\rho^2] $, then from (\ref{eq6.24}), we have 
\begin{align*}
    \left|\{u(\cdot,\bar{t})\geq \frac{1}{4} M\}\cap B_{2\rho}\right|=|B_{2\rho}|.
\end{align*}
Since (\ref{eq6.25}) automatically implies (\ref{eq6.23}), so we can proceed with the {\em tail} condition in (\ref{eq6.25}). Next, we can use Proposition \ref{P6.2}, i.e., the power-like expansion of positivity with $\alpha=1$. Thus, we have some $\tilde{\varkappa}\in (0,1)$ and $\delta_0$ depending only on \texttt{data}, such that
\begin{align}
    u\geq \tilde{\varkappa} M\,\,\,\,\textup{ a.e. in } B_\rho\times (t^*+\frac{3}{4}\delta\rho^2+\frac{1}{2}\delta_0\rho^2,t^*+\delta\rho^2+\delta_0\rho^2+K\rho^2],
\end{align}
where $K>0$ (see in Step IV of Proposition \ref{P6.2}) will be chosen later and provided
\begin{align}
     R^{-2}\int_{T_1}^{T_2}\textup{Tail}[u_-(t);R,0]\, dt\leq \tilde{\varkappa}M.
\end{align}
Take minimum between $\frac{1}{8}$ and $\tilde{\varkappa}$ and proceed with this {\em tail} condition. Take $K=32$, then for any $t^*\in(-(2\rho)^2,0] $, it is obvious to have
\begin{align*}
    \left(\frac{5}{4}\rho^2,(4\rho)^2\right]\subseteq (t^*+\frac{3}{4}\delta\rho^2+\frac{1}{2}\delta_0\rho^2,t^*+\delta\rho^2+\delta_0\rho^2+K\rho^2],
\end{align*}
provided $B_{2\rho}\times\left(-(2\rho)^2, 2(4\rho)^2\right]\subseteq\mathcal{Q}\Subset \Omega_T$. Therefore, due to the choice of $K$, we can conclude that 
\begin{align*}
    u\geq \frac{\iota\tilde{\varkappa}}{2\gamma}&\Bigg[\left(\fiint_{\mathcal{Q}_{2\rho}^\ominus}u_+^2\,d\xi  dt\right)^\frac{1}{2}+\left(\fint_{-{(2\rho)}^2}^{0}\left(\fint_{B_{2\rho}}u_+(\xi,t)\,d\xi\right)^2\,dt\right)^\frac{1}{2}\\&\quad+\fint_{-{(2\rho)}^2}^0\textup{Tail}[u_+(t);2\rho,0]\, dt+1\Bigg]
     - R^{-2}\int_{T_1}^{T_2}\textup{Tail}[u_-(t);R,0]\, dt,
\end{align*}
for a.e. in $B_\rho\times \left(\frac{5}{4}\rho^2,(4\rho)^2\right]$, provided that
\begin{align*}
    B_{2\rho}\times\left(-(2\rho)^2, 2(4\rho)^2\right]\subseteq\mathcal{Q}\Subset \Omega_T.
\end{align*}
Hence, we can conclude the result by taking $\varkappa=\frac{\iota\tilde{\varkappa}}{2\gamma}$. 
\end{proof}


\begin{thebibliography}{KMMJP12}

\bibitem[APT25]{APT25_38}
Karthik Adimurthi, Harsh Prasad, and Vivek Tewary.
\newblock Local {Hölder} regularity for nonlocal parabolic p-{Laplace}
  equations.
\newblock {\em Annali Scuola Normale Superiore - Classe Di Scienze}, page~18,
  May 2025.
\newblock URL:
  \url{https://journals.sns.it/index.php/annaliscienze/issue/view/forthcoming_articles},
  \href {https://doi.org/10.2422/2036-2145.202401_012}
  {\path{doi:10.2422/2036-2145.202401_012}}.

\bibitem[BdCN13]{BdCN13_29}
Daniel Blazevski and Diego del Castillo-Negrete.
\newblock Local and nonlocal anisotropic transport in reversed shear magnetic
  fields: {S}hearless {C}antori and nondiffusive transport.
\newblock {\em Physical Review E—Statistical, Nonlinear, and Soft Matter
  Physics}, 87(6):063106, 2013.
\newblock URL: \url{https://doi.org/10.1103/PhysRevE.87.063106}.

\bibitem[BFS18]{BFS18_28}
Zolt\'an~M. Balogh, Katrin F\"assler, and Hernando Sobrino.
\newblock Isometric embeddings into {H}eisenberg groups.
\newblock {\em Geom. Dedicata}, 195:163--192, 2018.
\newblock \href {https://doi.org/10.1007/s10711-017-0282-5}
  {\path{doi:10.1007/s10711-017-0282-5}}.

\bibitem[BGKL26]{BGKL26_52}
Souvik Bhowmick, Sekhar Ghosh, Vishvesh Kumar and R Lakshmi.
\newblock Harnack inequality for superposition operators of mixed fractional order.
\newblock {\em arXiv preprint arXiv:2606.01449}, 2026. 
\newblock URL: \url{https://doi.org/10.48550/arXiv.2606.01449}

\bibitem[BLS21]{BLS21_7}
Lorenzo Brasco, Erik Lindgren, and Martin Str\"omqvist.
\newblock Continuity of solutions to a nonlinear fractional diffusion equation.
\newblock {\em J. Evol. Equ.}, 21(4):4319--4381, 2021.
\newblock \href {https://doi.org/10.1007/s00028-021-00721-2}
  {\path{doi:10.1007/s00028-021-00721-2}}.

\bibitem[BLU07]{BLU07_25}
A.~Bonfiglioli, E.~Lanconelli, and F.~Uguzzoni.
\newblock {\em Stratified {L}ie groups and potential theory for their
  sub-{L}aplacians}.
\newblock Springer Monographs in Mathematics. Springer, Berlin, 2007.

\bibitem[CKSV12]{CKSV12_46}
Zhen-Qing Chen, Panki Kim, Renming Song, and Zoran Vondra{\v{c}}ek.
\newblock Boundary {H}arnack principle for {$\Delta$}+ {$\Delta^{\alpha/2}$}.
\newblock {\em Transactions of the American Mathematical Society},
  364(8):4169--4205, 2012.

\bibitem[CN25]{CN25_2}
Simone Ciani and Kenta Nakamura.
\newblock Nonlocal parabolic {D}e {G}iorgi classes.
\newblock {\em arXiv preprint arXiv:2508.16247}, 2025.
\newblock URL: \url{https://doi.org/10.48550/arXiv.2508.16247}.

\bibitem[CTR24]{CTR24_18}
Debajyoti Choudhuri, Leandro~S Tavares, and Du{\v{s}}an~D Repov{\v{s}}.
\newblock A multiphase eigenvalue problem on a stratified {L}ie group.
\newblock {\em Rendiconti del Circolo Matematico di Palermo Series 2},
  73(7):2533--2546, 2024.
\newblock URL: \url{https://doi.org/10.1007/s12215-024-01035-1}.

\bibitem[DB88]{DB88_34}
Emmanuele Di~Benedetto.
\newblock Intrinsic {H}arnack type inequalities for solutions of certain
  degenerate parabolic equations.
\newblock {\em Archive for Rational Mechanics and Analysis}, 100(2):129--147,
  1988.
\newblock URL: \url{https://doi.org/10.1007/BF00282201}.

\bibitem[DBGV08]{DBGV08_36}
Emmanuele Di~Benedetto, Ugo Gianazza, and Vincenzo Vespri.
\newblock Harnack estimates for quasi-linear degenerate parabolic differential
  equations.
\newblock {\em Acta Math}, 200:181--209, 2008.
\newblock URL: \url{https://doi.org/10.1007/s11511-008-0026-3}.

\bibitem[DCKP14]{DCKP14_35}
Agnese Di~Castro, Tuomo Kuusi, and Giampiero Palatucci.
\newblock Nonlocal {H}arnack inequalities.
\newblock {\em Journal of Functional Analysis}, 267(6):1807--1836, 2014.
\newblock URL: \url{https://doi.org/10.1016/j.jfa.2014.05.023}.

\bibitem[DGV06]{DBGV06_22}
Emmanuele DiBenedetto, Ugo Gianazza, and Vincenzo Vespri.
\newblock Local clustering of the non-zero set of functions in {$W^{1, 1}(E)$}.
\newblock {\em Atti Della Accademia Nazionale Dei Lincei. Rendiconti Lincei.
  Matematica E Applicazioni}, 17:223--225, 2006.
\newblock URL: \url{https://doi.org/10.1007/bf00382749}.

\bibitem[DiB93]{DiB93_10}
Emmanuele DiBenedetto.
\newblock {\em Degenerate parabolic equations}.
\newblock Universitext. Springer-Verlag, New York, 1993.
\newblock \href {https://doi.org/10.1007/978-1-4612-0895-2}
  {\path{doi:10.1007/978-1-4612-0895-2}}.

\bibitem[DIV25]{DIV23_23}
Fatma~Gamza D{\"u}zg{\"u}n, Antonio Iannizzotto, and Vincenzo Vespri.
\newblock A clustering theorem in fractional {S}obolev spaces.
\newblock {\em Annales Fennici Mathematici}, 50:243--252, 2025.
\newblock URL: \url{https://doi.org/10.54330/afm.161328}.

\bibitem[DLV22]{DLV22_30}
Serena Dipierro, Edoardo~Proietti Lippi, and Enrico Valdinoci.
\newblock (non)local logistic equations with {N}eumann conditions.
\newblock {\em Annales de l'Institut Henri Poincar{\'e} C}, 40(5):1093--1166,
  2022.
\newblock URL: \url{https://doi.org/10.4171/aihpc/57}.

\bibitem[Ego11]{Ego11_20}
D.~Egorov.
\newblock Sur les suites des fonctions meaurables.
\newblock {\em Comptes Rendus de Acad. des Sc. de Paris}, 152:244--246, 1911.

\bibitem[FK13]{FK13_42}
Matthieu Felsinger and Moritz Kassmann.
\newblock Local regularity for parabolic nonlocal operators.
\newblock {\em Communications in Partial Differential Equations},
  38(9):1539--1573, 2013.
\newblock URL: \url{https://doi.org/10.1080/03605302.2013.808211}.

\bibitem[Foo09]{Foo09_45}
Mohammud Foondun.
\newblock Heat kernel estimates and {H}arnack inequalities for some {D}irichlet
  forms with non-local part.
\newblock {\em Electron. J. Probab.}, 14(11):314--340, 2009.
\newblock URL: \url{http://www.math.washington.edu/~ejpecp/}.

\bibitem[FSZ22]{FSZ22_51}
Yuzhou Fang, Bin Shang, and Chao Zhang.
\newblock Regularity theory for mixed local and nonlocal parabolic p-laplace
  equations.
\newblock {\em The Journal of Geometric Analysis}, 32(1):22, 2022.

\bibitem[Gar25]{Gar25_19}
Prashanta Garain.
\newblock On mixed local-nonlocal {S}obolev-type inequalities and their
  connection with singular equations in the {H}eisenberg group.
\newblock {\em arXiv preprint arXiv:2512.11449}, 2025.
\newblock URL: \url{https://doi.org/10.48550/arXiv.2512.11449}.

\bibitem[GK22]{GK22_44}
Prashanta Garain and Juha Kinnunen.
\newblock On the regularity theory for mixed local and nonlocal quasilinear
  elliptic equations.
\newblock {\em Transactions of the American Mathematical Society},
  375(08):5393--5423, 2022.
\newblock URL: \url{https://doi.org/10.1090/tran/8621}.

\bibitem[HaK00]{HaK00_26}
Piotr Haj\l~asz and Pekka Koskela.
\newblock Sobolev met {P}oincar\'e.
\newblock {\em Mem. Amer. Math. Soc.}, 145(688):x+101, 2000.
\newblock \href {https://doi.org/10.1090/memo/0688}
  {\path{doi:10.1090/memo/0688}}.

\bibitem[HKST15]{HKST15_27}
Juha Heinonen, Pekka Koskela, Nageswari Shanmugalingam, and Jeremy~T. Tyson.
\newblock {\em Sobolev spaces on metric measure spaces}, volume~27 of {\em New
  Mathematical Monographs}.
\newblock Cambridge University Press, Cambridge, 2015.
\newblock An approach based on upper gradients.
\newblock \href {https://doi.org/10.1017/CBO9781316135914}
  {\path{doi:10.1017/CBO9781316135914}}.

\bibitem[Jer86]{Jer86_15}
David Jerison.
\newblock The {P}oincar{\'e} inequality for vector fields satisfying
  {H}{\"o}rmander’s condition.
\newblock {\em Duke Math. J.}, 53(2):503--523, 1986.
\newblock \href {https://doi.org/10.1215/S0012-7094-86-05329-9}
  {\path{doi:10.1215/S0012-7094-86-05329-9}}.

\bibitem[Kar26]{Kar26_9}
Debraj Kar.
\newblock Optimal in tail {H}\"older estimates for weak solutions of the
  nonlocal parabolic p-{L}aplace equations on the {H}eisenberg group.
\newblock 2026.
\newblock URL: \url{https://doi.org/10.48550/arXiv.2602.10612}.

\bibitem[KMMJP12]{KMMJP21_24}
Juha Kinnunen, Niko Marola, Michele Miranda~Jr, and Fabio Paronetto.
\newblock Harnack's inequality for parabolic {De} {Giorgi} classes in metric
  spaces.
\newblock {\em Adv. Differ. Equ.}, 17:801--832, 2012.
\newblock \href {https://doi.org/10.57262/ade/1355702923}
  {\path{doi:10.57262/ade/1355702923}}.

\bibitem[KS13]{KS13_39}
Moritz Kassmann and Russell~W Schwab.
\newblock Regularity results for nonlocal parabolic equations.
\newblock {\em arXiv preprint arXiv:1305.5418}, 2013.
\newblock URL: \url{https://doi.org/10.48550/arXiv.1305.5418}.

\bibitem[KT26]{KT25_6}
Debraj Kar and Vivek Tewary.
\newblock Nonlocal quasilinear parabolic equations in {H}eisenberg group:
  {L}ocal boundedness with an optimal tail.
\newblock {\em Discrete and Continuous Dynamical Systems}, 51:46--95, 2026.
\newblock \href {https://doi.org/10.3934/dcds.2026021}
  {\path{doi:10.3934/dcds.2026021}}.

\bibitem[KW24]{KW24_40}
Moritz Kassmann and Marvin Weidner.
\newblock The parabolic {H}arnack inequality for nonlocal equations.
\newblock {\em Duke Mathematical Journal}, 173(17):3413--3451, 2024.
\newblock URL: \url{https://doi.org/10.1215/00127094-2024-0008}.

\bibitem[Lia21]{Lia21_14}
Naian Liao.
\newblock Remarks on parabolic {D}e {G}iorgi classes.
\newblock {\em Annali di Matematica Pura ed Applicata (1923-)},
  200(6):2361--2384, 2021.
\newblock \href {https://doi.org/10.1007/s10231-021-01084-8}
  {\path{doi:10.1007/s10231-021-01084-8}}.

\bibitem[Lu94]{Lu94_16}
Guozhen Lu.
\newblock The sharp {P}oincar{\'e} inequality for free vector fields: an
  endpoint result.
\newblock {\em Revista Matem{\'a}tica Iberoamericana}, 10(2):453--466, 1994.
\newblock URL: \url{https://doi.org/10.4171/rmi/158}.

\bibitem[LW25]{LW25_49}
Naian Liao and Marvin Weidner.
\newblock Time-insensitive nonlocal parabolic {H}arnack estimates.
\newblock {\em Proceedings of the London Mathematical Society}, 130(5):e70051,
  2025.
\newblock URL: \url{https://doi.org/10.1112/plms.70051}.

\bibitem[Mos64]{Mos64_32}
J{\"u}rgen Moser.
\newblock A {H}arnack inequality for parabolic differential equations.
\newblock {\em Communications on pure and applied mathematics}, 17(1):101--134,
  1964.
\newblock URL: \url{https://doi.org/10.1002/cpa.3160170106}.

\bibitem[MPPP23]{MPPP23_1}
Maria Manfredini, Giampiero Palatucci, Mirco Piccinini, and Sergio Polidoro.
\newblock H\"older continuity and boundedness estimates for nonlinear
  fractional equations in the {H}eisenberg group.
\newblock {\em J. Geom. Anal.}, 33(3):Paper No. 77, 41, 2023.
\newblock \href {https://doi.org/10.1007/s12220-022-01124-6}
  {\path{doi:10.1007/s12220-022-01124-6}}.

\bibitem[MS79]{MS79_13}
Roberto~A. Mac\'ias and Carlos Segovia.
\newblock A decomposition into atoms of distributions on spaces of homogeneous
  type.
\newblock {\em Adv. in Math.}, 33(3):271--309, 1979.
\newblock \href {https://doi.org/10.1016/0001-8708(79)90013-6}
  {\path{doi:10.1016/0001-8708(79)90013-6}}.

\bibitem[MZ21]{MZ21_17}
Shirsho Mukherjee and Xiao Zhong.
\newblock {$C^{1,\alpha}$}-regularity for variational problems in the
  {H}eisenberg group.
\newblock {\em Anal. PDE}, 14(2):567--594, 2021.
\newblock \href {https://doi.org/10.2140/apde.2021.14.567}
  {\path{doi:10.2140/apde.2021.14.567}}.

\bibitem[Nak23a]{Nak23_48}
Kenta Nakamura.
\newblock {H}arnack’s estimate for a mixed local--nonlocal doubly nonlinear
  parabolic equation.
\newblock {\em Calculus of Variations and Partial Differential Equations},
  62(2):40, 2023.
\newblock URL: \url{https://doi.org/10.1007/s00526-022-02378-2}.

\bibitem[Nak23b]{Nak23_3}
Kenta Nakamura.
\newblock Local properties of fractional parabolic {D}e {G}iorgi classes of
  order {$s, p$}.
\newblock {\em J. Funct. Anal.}, 285(7):Paper No. 110049, 68, 2023.
\newblock \href {https://doi.org/10.1016/j.jfa.2023.110049}
  {\path{doi:10.1016/j.jfa.2023.110049}}.

\bibitem[Nhi96]{Nhi96_50}
Duy-Minh Nhieu.
\newblock {\em The extension problem for Sobolev spaces on the Heisenberg
  group}.
\newblock Purdue University, 1996.
\newblock URL:
  \url{http://gateway.proquest.com/openurl?url_ver=Z39.88-2004&rft_val_fmt=info:ofi/fmt:kev:mtx:dissertation&res_dat=xri:pqdiss&rft_dat=xri:pqdiss:9713571}.

\bibitem[Pra23]{Pra23_43}
Harsh Prasad.
\newblock On the weak {H}arnack estimate for nonlocal equations.
\newblock {\em Calc. Var.}, 63, 2023.
\newblock URL: \url{https://doi.org/10.1007/s00526-024-02670-3}.

\bibitem[Sev10]{Sev10_21}
Carlo Severini.
\newblock {\em Sulle successioni di funzioni ortogonali}.
\newblock Tip. Galatola, 1910.

\bibitem[SZ22]{SZ22_8}
Bin Shang and Chao Zhang.
\newblock H\"older regularity for mixed local and nonlocal {$p$}-{L}aplace
  parabolic equations.
\newblock {\em Discrete Contin. Dyn. Syst.}, 42(12):5817--5837, 2022.
\newblock \href {https://doi.org/10.3934/dcds.2022126}
  {\path{doi:10.3934/dcds.2022126}}.

\bibitem[SZ23]{SZ23_5}
Bin Shang and Chao Zhang.
\newblock Harnack inequality for mixed local and nonlocal parabolic
  {$p$}-{L}aplace equations.
\newblock {\em J. Geom. Anal.}, 33(4):Paper No. 124, 24, 2023.
\newblock \href {https://doi.org/10.1007/s12220-022-01173-x}
  {\path{doi:10.1007/s12220-022-01173-x}}.

\bibitem[Tru68]{Tru68_33}
Neil~S Trudinger.
\newblock Pointwise estimates and quasilinear parabolic equations.
\newblock {\em Communications on Pure and Applied Mathematics}, 21(3):205--226,
  1968.
\newblock URL: \url{https://doi.org/10.1002/cpa.3160210302}.


\end{thebibliography}
\end{document}